\documentclass[hidelinks, 11pt]{article}

\usepackage[UKenglish]{babel}
\usepackage[utf8]{inputenc}
\usepackage[T1]{fontenc}
\usepackage{erewhon}
\usepackage{amsmath}
\usepackage{mathrsfs}
\usepackage{amsthm}
\usepackage{amsfonts}
\usepackage{amssymb}
\usepackage{mathtools}
\usepackage{dsfont} 

\usepackage{quiver}
\usepackage{adjustbox}

\usepackage[dvipsnames]{xcolor}
\usepackage{hyperref}
\usepackage{fullpage}
\usepackage{microtype}
\usepackage{booktabs}
\usepackage{enumitem} 
\usepackage{comment}
\usepackage{authblk} 
\usepackage{fix-cm}

\tikzset{node distance=2cm, auto}
\tikzcdset{row sep/normal=2.7em,column sep/normal=3.5em}

\allowdisplaybreaks

\setlist[description]{font=\normalfont\bfseries\:\!}

\renewcommand\labelenumi{\normalfont[\arabic{enumi}]}
\renewcommand\theenumi\labelenumi

\hypersetup{
    colorlinks=true,
    linkcolor=gray,
    filecolor=gray,      
    urlcolor=gray,
    citecolor=gray
    }
\numberwithin{equation}{section}

\newtheorem{theorem}{Theorem}[section]
\newtheorem{proposition}[theorem]{Proposition}
\newtheorem{lemma}[theorem]{Lemma}
\newtheorem{corollary}[theorem]{Corollary}

\theoremstyle{definition}
\newtheorem{definition}[theorem]{Definition}
\newtheorem{example}[theorem]{Example}

\newtheorem{remark}[theorem]{Remark}
\newtheorem{convention}[theorem]{Convention}

\newenvironment{content}[1][blank]{\vskip .25cm\noindent{\bfseries\scshape\MakeLowercase{#1}}.\thinspace\noindent}{}
\newenvironment{acknowledgements}{\begin{content}[Acknowledgements]}{\end{content}}

\newenvironment{conventions}{\begin{content}[Conventions]}{\end{content}}

\renewenvironment{abstract}{\begin{content}[Abstract]}{\end{content}}

\newcommand{\Tag}[1]{\tag{$#1$}}

\newcommand{\CategoryFont}[1]{\mathsf{#1}}

\newcommand{\ConnFont}[1]{\mathrm{#1}}

\DeclareMathOperator{\Altsum}{\overset{\ominus}{\sum}}

\renewcommand{\.}{\cdot}
\renewcommand{\,}{,{\dots},}

\newcommand{\0}{\mathbf{0}}
\renewcommand{\=}{\mathrel{\mathop:}=}
\renewcommand{\o}{\circ}

\newcommand{\<}{\langle}
\renewcommand{\>}{\rangle}

\renewcommand{\*}{\ast}
\newcommand{\id}{\mathsf{id}}

\newcommand{\Real}{\mathbb{R}}

\newcommand{\X}{\mathbb{X}}

\newcommand{\q}{\mathbf{q}}
\newcommand{\p}{\mathbf{p}}
\newcommand{\T}{\mathrm T}
\newcommand{\TT}{\mathbb T}
\newcommand{\V}{\ConnFont{V}}
\newcommand{\F}{\ConnFont{F}}

\newcommand{\VF}{\mathsf{VF}}
\newcommand{\VVF}{\mathsf{VVF}}
\newcommand{\FVF}{\mathsf{FVF}}
\newcommand{\HVF}{\mathsf{HVF}}
\newcommand{\DB}{\mathsf{DB}}

\newcommand{\VC}{\mathsf{VC}}
\newcommand{\HC}{\mathsf{HC}}
\newcommand{\C}{\mathsf{EC}}

\newcommand{\Dsply}{\mathscr{D}}
\newcommand{\Sbm}{\mathscr{S}}
\newcommand{\FVC}{\mathsf{fVC}}

\newcommand{\AVC}{\mathsf{AVC}}
\newcommand{\sAVC}{\mathsf{sAVC}}

\newcommand{\U}{\mathsf{U}}

\newcommand{\R}{\ConnFont{R}}
\renewcommand{\H}{\ConnFont{H}}
\newcommand{\K}{\mathsf{K}}

\renewcommand{\d}{\mathsf{d}}

\newcommand{\Curv}{\mathsf{C}}
\newcommand{\pb}{\mathsf{pb}}

\title{Ehresmann connections in tangent categories}
\author{\uppercase{Geoffrey Cruttwell}}
\affil{\normalsize\textit{Mount Allison University, Department of Mathematics and Computer Science}}
\author{\uppercase{Marcello Lanfranchi}}
\affil{\normalsize\textit{Macquarie University, School of Mathematical and Physical Sciences}}
\date{}

\begin{document}

\maketitle

\begin{abstract}
The theory of connections is at the very core of differential geometry. Discovered by Levi-Civita and Christoffel and later studied by Cartan, Koszul, and others, connections appear in their most general form under the name of Ehresmann connections. An Ehresmann connection consists of a splitting of the tangent bundle of a submersion into the vertical sub-bundle and a given horizontal distribution. In this paper, we generalize Ehresmann connection to a categorical setting called \emph{tangent categories}.  

Initially introduced by Rosick\'y in 1984 and later generalized by Cockett and the first author in 2014, tangent categories provide a categorical framework to study geometry that extends well beyond smooth manifolds, including algebraic geometry and non-commutative geometry.

In this paper we introduce and study Ehresmann connections in the context of tangent categories. We give various equivalent formulations in term of full and abstract connections and prove that they generalize Koszul connections. We also define parallel transport and curvature for such connections, and prove the structural equation and the Bianchi identity for the curvature.
\end{abstract}

\begin{acknowledgements}
We would like to thank JS Lemay and Geoff Vooys for the precious discussions on submersions and Richard Garner for advice and suggestions. The first author is supported by an NSERC Discovery Grant, while the second author is supported by the AFOSR under award number FA9550-24-1-0008.
\end{acknowledgements}

\par\noindent\rule{\textwidth}{1pt}


\tableofcontents

\section{Introduction}
\label{section:introduction}
Connections are one of the most important concepts of differential geometry and they are at the very core of modern physics and mathematics. One of the most important applications in physics is found in the geometric formulation of general relativity, where the curvature of the Levi-Civita connection determines the gravitational field~\cite{wald:general-relativity}. Gravity is not the only field that has a geometric interpretation. The field strengh of a gauge field, such as the electromagnetic or the bosonic fields of quantum chromodynamics, are also determined by the curvature form of a principal connection~\cite{hamilton:gauge-theory}.

\par Historically, the study of connections began at the end of the 19th century and in the early 20th century with the work of Christoffel and Levi-Civita. However, it was only with Cartan that it became an independent theory, separate from Riemannian geometry. Cartan's goal to generalize Klein's famous Erlangen program culminated with the discovery of Cartan geometries and Cartan connections. A few years later, Kozsul shed new light on the subject by interpreting connections as differential operators, in opposition to Cartan's definition in terms of Lie algebra-valued differential forms.

\par Later, Ehresmann generalized Cartan's approach to describe both Kozsul's covariant derivatives and Cartan's Lie algebra-valued differential forms. We suggest the reader consult~\cite{marle:history-ehresmann-connections} for a historical review of connections and Ehresmann's contributions to the field.

\par Koszul connections were internalized and studied in the context of tangent categories in~\cite{cockett:connections} by Cockett and the first author. In this paper, we do the same for Ehresmann connections.

\begin{content}[Tangent categories]
Tangent categories offer a categorical framework for differential geometry. Introduced by Rosick\'y in~\cite{rosicky:tangent-cats} and later generalized in~\cite{cockett:tangent-cats}, a tangent category consists of a collection of objects, interpreted as models of geometric spaces, a collection of morphisms, intepreted as models of smooth functions, and an assignment, $\T$, which sends each object $M$ to a new object $\T M$ and each morphism $f\colon M\to N$ to a new morphism $\T f\colon\T M\to\T N$.

\par The object $\T M$ should be regarded as the space of tangent vectors of $M$, known as the \textbf{tangent bundle} of $M$, while $\T f$ is interpreted as the smooth function which sends each tangent vector $u\in\T M$ of $M$ at $x$ to the tangent vector $\d_xf(u)$, where $\d_xf$ denotes the differential of $f$ at $x$. The assignment $\T$ satisfies the equations of a functor, that is, $\T$ preserves identities and composition of morphisms.

\par A tangent category also comes with a family of natural transformations, called the \textbf{structural natural transformations} of the tangent structure. The first of these natural transformations is called the \textbf{projection} $p_M\colon\T M\to M$ and is interpreted as the map that sends each tangent vector to its base point. The projection $p_M$ comes with a section, that we call the \textbf{zero morphism} $z_M\colon M\to\T M$, which corresponds to the map that sends each point $x$ of $M$ to the zero tangent vector $0_x$ at $x$. The zero morphism is the unit of the \textbf{sum morphism} $s_M\colon\T_2M\to\T M$, where $\T_2M$ denotes the pullback
\begin{equation*}
\begin{tikzcd}
{\T_2M} & {\T M} \\
{\T M} & M
\arrow["{\pi_2}", from=1-1, to=1-2]
\arrow["{\pi_1}"', from=1-1, to=2-1]
\arrow["\lrcorner"{anchor=center, pos=0.125}, draw=none, from=1-1, to=2-2]
\arrow["{p_M}", from=1-2, to=2-2]
\arrow["{p_M}"', from=2-1, to=2-2]
\end{tikzcd}
\end{equation*}
of the projection along itself, and is interpreted as the space of pairs of tangent vectors $(u,v)$ having the same base point. The sum $s_M$ is regarded as the map that sends each pair $(u,v)$ to $u+v$.

It is sometimes asked that $\T M$ be equipped with a \textbf{negation}, $n_M\colon\T M\to\T M$, which sends each tangent vector $u$ to $-u$. In that case, we say that the tangent category \textbf{has negatives}.
\par Tangent structure has two more natural transformations, the \textbf{vertical lift}, $l_M\colon\T M\to\T^2M$, where $\T^2M$ is the tangent bundle of the tangent bundle of $M$, that is, $\T^2M=\T\T M$, and the \textbf{canonical flip} $c_M\colon\T^2M\to\T^2M$. The vertical lift is used to construct a morphism
\begin{align*}
&\xi_M\=(l_M\times z_{\T M})\T s_M\colon\T_2M\to\T^2M
\end{align*}
which makes the following diagram
\begin{equation*}
\begin{tikzcd}
{\T_2M} & {\T^2M} \\
M & {\T M}
\arrow["{\xi_M}", from=1-1, to=1-2]
\arrow["{\pi_1p_M}"', from=1-1, to=2-1]
\arrow["{\T p_M}", from=1-2, to=2-2]
\arrow["{z_M}"', from=2-1, to=2-2]
\end{tikzcd}
\end{equation*}
into a pullback. This universal condition is interpreted as follows: each fibre $\T_xM\=p^{-1}(x)$ of the tangent bundle $\T M$ of $M$ is \textit{linear}, that is, the tangent bundle $\T\T_xM$ of $\T_xM$ is isomorphic to the Cartesian product $\T_xM\times\T_xM$. This linearity condition is precisely the condition enjoyed by Euclidean spaces, that are $\Real^n$, for which $\T\Real^n\cong\Real^n\times\Real^n$. This expresses the idea that an object in a tangent category is a \emph{locally linear} space and a morphism is a \emph{locally linear} function.

Finally, the canonical flip, $c_M\colon\T^2M\to\T^2M$, is an isomorphism of the double tangent bundle, which encodes the symmetry of the Hessian matrix, explicitly, that the order in which two sequential partial derivatives $\partial_i$, $\partial_j$ are taken does not matter.

\par The archetypal example is the category of finitely-dimensional smooth manifolds with the usual notion of the tangent bundle functor. However, tangent categories go well beyond smooth manifolds. In~\cite{cruttwell:algebraic-geometry}, the category of affine schemes was shown to carry a tangent structure which captures some key aspects of algebraic geometry. This construction was later extended in~\cite{ikonicoff:operadic-algebras-tangent-cats} to algebras over an arbitrary algebraic operad. Convenient manifolds and $C^\infty$-rings also form tangent categories as well as the subcategory of microlinear objects of a model of Synthetic Differential Geometry (SDG) and every Cartesian Differential Category (CDC)~\cite{cockett:tangent-cats}.

\par Numerous concepts of differential geometry have already been successfully extended to tangent categories. Vector fields, Euclidean spaces, vector bundles, Kozsul connections, and ordinary differential equations are only some of the concepts of differential geometry that have been internalized to tangent categories. In this paper, we add to the list Ehresmann connections.
\end{content}

\begin{content}[Submersions]
In their most general version, Ehresmann connections are defined on submersions. A submersion, in differential geometry, consists of a smooth function $q\colon E\to M$, whose differential $\d_yq$ at each point $y\in E$ has maximal rank, that is, for every tangent vector $v\in\T_xM$ of $M$ at $x=q(y)$, there exists a tangent vector $u\in\T_yE$ of $E$ at $y$ such that $\d_yq(u)=v$.
\par A submersion in a tangent category (Definition~\ref{definition:submersion}) consists of a morphism $q\colon E\to M$, for which (1) the pullback $\F q$ of $q$ along the projection $p_M$ exists, and (2) the unique morphism $\pi_q\colon\T E\to\F q$ which makes the following diagram commutative
\begin{equation*}
\begin{tikzcd}
{\T E} \\
& {\F q} & E \\
& {\T M} & M
\arrow["{\pi_q}", dashed, from=1-1, to=2-2]
\arrow["{p_E}", curve={height=-12pt}, from=1-1, to=2-3]
\arrow["{\T q}"', curve={height=12pt}, from=1-1, to=3-2]
\arrow[from=2-2, to=2-3]
\arrow[from=2-2, to=3-2]
\arrow["\lrcorner"{anchor=center, pos=0.125}, draw=none, from=2-2, to=3-3]
\arrow["q", from=2-3, to=3-3]
\arrow["{p_M}"', from=3-2, to=3-3]
\end{tikzcd}
\end{equation*}
is a regular epimorphism in the appropriate category. The morphism $\pi_q$ is called the \textbf{horizontal descent} of $q$ and $\q^\F\colon\F q\to E$ is the \textbf{Finsler bundle} of $q$ (Definition~\ref{definition:finsler-bundle}). The pullback $\F q$ can be regarded as the space of triples $(x,u,y)$ formed by a point $x$ of $M$, a tangent vector $u\in\T_xM$ of $M$ at $x$, and an element $y\in E$ such that $q(y)=x$; the morphism $\pi_q\colon\T E\to\F q$ sends each tangent vector $v\in\T_yE$ of $E$ at $y$ to the triple $(q(y),\d_yv,y)$. Thus, requiring $\pi_q$ be regular epi is to say that for every triple $(x,u,y)$ of $\F q$ there exists a tangent vector $v\in\T_yE$ of $E$ at $y$ such that $\d_yv=u$, which is precisely to say that $q$ is a submersion. We will review this concept in Section~\ref{subsection:fundamental-ses}.
\end{content}

\begin{content}[Tangent display maps]
Although not immediately apparent from their definition, submersions in differential geometry exhibit good behaviour with respect to two important operations: the tangent bundle functor and pullbacks along arbitrary maps. In particular, the pullback of a submersion along a smooth function exists and is still a submersion. From a tangent categorical perspective, stability under pullbacks plays a crucial role in the study of structures and properties of differential geometry. However, it is also a subtle problem to establish whether or not a certain morphism admits all pullbacks and whether or not those pullbacks are also preserved by the tangent bundle functor. In particular, pullbacks in the category of smooth manifolds are notoriously ill-behaved: one cannot assume their existence in full generality, and even when they exist, one cannot expect them to be preserved by the tangent bundle functor.
\par In previous work~\cite{cruttwell:tangent-display-maps}, we tackled this issue by introducing a new class of maps that we called \textbf{tangent display maps} (Definition~\ref{definition:tangent-display-map}). A tangent display map consists of a morphism $q\colon E\to M$ satisfying the following property. For every $m,n\geq 0$, the pullback of $\T^nq\colon\T^nE\to\T^nM$ along any morphism $f\colon N\to\T^nM$ exists and is preserved by all iterates $\T^m\=\T\o\dots\o\T$ of the tangent bundle functor.
In~\cite[Theorem~2.31]{cruttwell:tangent-display-maps}, we shown that in the category of smooth manifolds, tangent display maps are equivalent to submersions.

However, on the one hand it is clear to us that tangent display maps are far too general to represent a good definition of submersion in a tangent category. On the other hand, submersions in the context of tangent categories do not need to admit pullbacks along arbitrary maps. Thus, we only consider submersions whose underlying map is also tangent display, that is, for us a submersion is a tangent display map $q\colon E\to M$ whose horizontal descent $\pi_q\colon\F q\to\T E$ is regular epi in the appropriate category (Definition~\ref{definition:submersion}).
\end{content}

\begin{content}[The vertical and the Finsler bundle]
One of the key properties of a tangent display map is that it has associated \emph{vertical} and \emph{Finsler} bundles. In differential geometry the vertical bundle of a map $q\colon E\to M$ consists of the vector sub-bundle $\q^\V\colon\V q\to E$ of the tangent bundle $\p_E\colon\T E\to E$ of $E$, whose local fibre $\V_yq$ at a point $y$ of $E$ is the kernel $\ker\d_yq$ of the differential of the map $q$ at $y$.

If you imagine to move within the space $E$, you can also imagine to move \emph{vertically} with respect to the map $q$, that is, moving in a such a way to leave the position projected down to the base space $M$ by $q$, unchanged. The tangent vectors that generate these vertical degrees of freedom are precisely those tangent vectors forming the vertical bundle of $q$.

In a tangent category, the \textbf{vertical bundle} of a tangent display map $q\colon E\to M$ (Definition~\ref{definition:vertical-bundle}) is obtained by pulling back $\T q$ along the zero morphism $z_M\colon M\to\T M$:
\begin{equation*}
\begin{tikzcd}
{\V q} & {\T E} \\
M & {\T M}
\arrow["{\iota_q}", from=1-1, to=1-2]
\arrow["{\T^\V q}"', from=1-1, to=2-1]
\arrow["\lrcorner"{anchor=center, pos=0.125}, draw=none, from=1-1, to=2-2]
\arrow["{\T q}", from=1-2, to=2-2]
\arrow["{z_M}"', from=2-1, to=2-2]
\end{tikzcd}
\end{equation*}
The vertical bundle is then defined as the bundle $\q^\V\colon\V q\to E$ obtained by composing $\iota_q\colon\V q\to\T E$ with $p_E\colon\T E\to E$. We will show that the vertical bundle carries the structure of a differential bundle, the tangent categorical-analogue of a vector bundle.

While the vertical vectors capture the vertical movements within the space $E$, there is no canonical \emph{horizontal} direction, that is, a direction \emph{parallel} to the base space that leaves the vertical component unchanged. An Ehresmann connection provides precisely such notion. In order to define an Ehresmann connection, one needs the Finsler bundle. As mentioned above, the Finsler bundle appears in the definition of a submersion.

In a tangent category, the Finsler bundle of a tangent display map $q\colon E\to M$ (Definition~\ref{definition:finsler-bundle}) is constructed by pulling back $q$ along the projection $p_M\colon\T M\to M$:
\begin{equation*}
\begin{tikzcd}
{\F q} & E \\
{\T M} & M
\arrow["{q^\F}", from=1-1, to=1-2]
\arrow["{\T^\F q}"', from=1-1, to=2-1]
\arrow["\lrcorner"{anchor=center, pos=0.125}, draw=none, from=1-1, to=2-2]
\arrow["q", from=1-2, to=2-2]
\arrow["{p_M}"', from=2-1, to=2-2]
\end{tikzcd}
\end{equation*}
The pullback projection $\q^\F\colon\F q\to E$ comes equipped with the structure of a differential bundle, that we call, the Finsler bundle of $q$. As mentioned earlier, the Finsler bundle should be regarded as the bundle containing pairs $(y,u)$ formed by a point $y$ of $E$ together with a tangent vector $u$ of $M$ at $x=q(y)$.
\end{content}

\begin{content}[Ehresmann connections]
In differential geometry, an Ehresmann connection on a submersion $q\colon E\to M$ consists of a choice of a sub-bundle $\H q\to E$ of $p_E\colon\T E\to E$ called the \textbf{horizontal bundle} which is (1) isomorphic to the Finsler bundle $\q^\F\colon\F q\to E$ and (2) is in direct sum with the vertical bundle $\q^\V\colon\V q\to E$, explicitly, $\H_yq\subseteq\T_yE$ and at each point $y$ of $E$, the tangent space $\T_yE$ is equal to the direct sum $\V_yq\oplus\H_yq$.
\par An Ehresmann connection thus provides a system of coordinates for the total space $E$ of $q\colon E\to M$, which allows one to write every tangent vector $u$ of $E$ uniquely as the sum of a vertical vector $u_\V$, that is, a vector belonging to the vertical bundle $\q^\V\colon\V q\to E$, and a horizontal vector $u_\H$, which is a vector of the Finsler bundle $\q^\F\colon\F q\to E$ once embedded in the tangent bundle $\p_E\colon\T E\to E$.

To understand this notion in an arbitrary tangent category, it is useful to introduce the fundamental short exact sequence of a submersion.
\end{content}

\begin{content}[The fundamental short exact sequence]
Every submersion $q\colon E\to M$ in a tangent category defines a short exact sequence
\begin{align*}
\0\to\q^\V\xrightarrow{\iota_q}\p_E\xrightarrow{\pi_q}\q^\F\to\0
\end{align*}
that we call the \textbf{fundamental short exact sequence} of $q$, where $\q^\V\colon\V q\to E$ is the vertical bundle of $q$ and $\q^\F\colon\F q\to E$ denotes the Finsler bundle.
\par The first major contribution of this paper is to define an \textbf{Ehresmann connection} (Definition~\ref{definition:ehresmann-connection}) on a submersion $q\colon E\to M$ in a tangent category as a \emph{splitting} of the fundamental short exact sequence of $q$ (Theorem~\ref{theorem:splitting-of-ses}). In particular, with an Ehresmann connection, $\T E$ becomes (isomorphic to) the fibred biproduct $\V q\oplus_E\F q$ of the vertical bundle and the Finsler bundle.
\end{content}

\begin{content}[Full and abstract connections]
Our definition of an Ehresmann connection decomposes into two parts: a \textbf{vertical} and a \textbf{horizontal connection} (Definitions~\ref{definition:vertical-connection} and~\ref{definition:horizontal-connection}). However, using the splitting lemma, we show that, in a tangent category with negatives, one determines necessarily the other one (Theorem~\ref{theorem:vertical-horizontal-connections-duality}).

A different approach is taken by full connections. A \textbf{full vertical connection} (Definition~\ref{definition:full-vertical-connection}) is a vertical connection $\R$ subject to an extra axiom \textbf{[FVC]}. This extra axiom suffices to uniquely reconstruct the horizontal connection which completes $\R$ into an Ehresmann connection (Theorem~\ref{theorem:full-connections}).

Similarly, a \textbf{full horizontal connection} (Definition~\ref{definition:full-horizontal-connection}) is a horizontal connection $\H$ subject to an extra axiom \textbf{[FHC]}. Every full horizontal connection admits a unique vertical connection which completes $\H$ into an Ehresmann connection (Theorem~\ref{theorem:full-connections}).

Vertical, horizontal, and Ehresmann connections can also be equivalently formulated in terms of connection forms (Definitions~\ref{definition:vertical-connection-form},~\ref{definition:horizontal-connection-form},~\ref{definition:ehresmann-connection-form}). The correspondences between connections and their forms suggests a new generalization to Ehresmann connections, that we call abstract connections.

An \textbf{abstract vertical connection} is a linear idempotent $\phi$ of the tangent bundle $\p_E\colon\T E\to E$, subject to some properties (Definition~\ref{definition:abstract-vertical-connection}). The novelty in this definition is that it does not rely on the vertical bundle and therefore, it can be defined on maps which are not necessarily tangent display.  However, Theorem~\ref{theorem:abstract-vertical-connection} shows that a splitting of an abstract vertical connection is equivalent to a \emph{choice} of the vertical bundle (the vertical bundle only being defined up to a unique isomorphism) together with a full vertical connection on it.

An \textbf{abstract horizontal connection} is a linear idempotent $\psi$ of the tangent bundle $\p_E\colon\T E\to E$, subject to some properties (Definition~\ref{definition:abstract-horizontal-connection}). Again, this definition does not require the Finsler bundle. However, as with abstract vertical connections, Theorem~\ref{theorem:abstract-horizontal-connection} proves that a splitting of an abstract horizontal connection gives a \emph{choice} of the Finsler bundle together with a full horizontal connection on it.

This proves an equivalence between full, split abstract, and Ehresmann connections (Theorem~\ref{theorem:all-equivalent-forms-of-connections}).
\end{content}

\begin{content}[Koszul connections]
Koszul connections on differential bundles have already been introduced in tangent categories in~\cite{cockett:connections} by Cockett and the first author. Differential bundles are the tangent-categorical analogue of a vector bundle, that is, a fibre bundle $q\colon E\to M$ whose local fibres carry the structure of a vector space.  In fact, MacAdam and Burke proved that differential bundles in the category of finitely-dimensional smooth manifolds are equivalent to vector bundles~\cite{macadam:vector-bundles}.

\par As Theorem~\ref{theorem:koszul-linear-connection} proves, Koszul connections on differential bundles turn out to be a special type of Ehresmann connections: \textbf{linear} Ehresmann connections. We also show an interesting alternative point of view: a Koszul connection on a differential bundle $\q\colon E\to M$ can be seen simply as an Ehresmann connection on $q$ when $q$ is regarded as a morphism in the tangent category of differential bundles (Theorem~\ref{theorem:linear-connections-as-connections-on-dbs}).
\end{content}

\begin{content}[Parallel transport and curvature]
After spending Sections~\ref{section:vector-fields} and~\ref{section:distributions} to define \textbf{vertical}, \textbf{Finsler}, and \textbf{horizontal vector fields} (Definitions~\ref{definition:vertical-vector-field},~\ref{definition:finsler-vector-field},and~\ref{definition:horizontal-vector-field}) and the notions of \textbf{distributions} and \textbf{involutivity} (Definitions~\ref{definition:distribution} and~\ref{definition:involutive-distribution}), we show that Ehresmann connections in a tangent category define \textbf{parallel transport} (assuming the tangent category has a suitable \emph{curve object}) and carry a notion of \textbf{curvature} (Section~\ref{section:curvature}). We end by proving the structural equation for the curvature and the Bianchi identity (Theorems~\ref{theorem:structural-equation} and~\ref{theorem:bianchi-identity}).
\end{content}

\begin{content}[Related work]
In~\cite{lemay:submersions}, Lemay and Vooys introduced some important classes of maps in a tangent category, including submersions. As aknowledged in their paper, we independently discovered the definition of submersion while working on the story of this paper, including the fundamental short exact sequence which, in their paper is called the \emph{relative cotangent sequence}. However, Lemay and Vooys spent much more time in working out a full theory of submersions. We suggest the reader consult their paper for full details on this concept.

Lemay and Vooys also considered \emph{linear} sections of the horizontal descent~\cite[Section~8.3]{lemay:submersions}, which we call, horizontal connections, however, they did not focus of the geometric meaning of these linear sections.

They also decided to not consider tangent display maps as we do, and instead require the existence of tangent pullbacks as needed. In particular, they use the notions of \emph{$0$-carrable} and \emph{$p$-carrable}, which respectively means, that a map admits tangent pullbacks along the zero $z_M\colon M\to\T M$ and the projection $p_M\colon\T M\to M$, respectively. This allows them to define the vertical and the Finsler bundle without the need of other pullbacks. The downside of this choice is the lost of stability under pullbacks: if $q$ is $0$-carrable, this might not be the case for the pullback of $\T q$ along $z_M$.

They also call the Finsler bundle, \emph{horizontal} bundle and use different notations which are closer to the ones adopted in Algebraic geometry. Our naming convention avoids confusion with the horizontal distribution, which is the Finsler bundle once is embedded in the tangent bundle via the horizontal connection.
\end{content}

\begin{content}[Significance]
Internalizing Ehresmann connections in tangent categories is not a mere exercise of generalization of an existing concept. Since 2014 (\cite{cockett:tangent-cats}), there has been a lot of effort to rebuild the familiar constructions of differential geometry within the setting of tangent categories (\cite{cockett:differential-bundles,cockett:differential-equations,macadam:phd-thesis}). Conceptually, this program aims to upgrade tangent categories from an abstraction of the tangent bundle functor to a full theory of geometry, which provides a rich setting to support ``doing geometry'' in its own right, without committing in advance to any particular notion of geometric space.

At the same time, there has been a complementary drive to produce new examples of tangent categories to capture geometric worlds beyond smooth manifolds. Algebraic geometry as well as other geometric theories such as non-commutative geometry took great inspiration from differential geometry. In this spirit, the emergence of tangent categories built from (affine) schemes (\cite{cruttwell:algebraic-geometry}), and later from operadic affine schemes (\cite{ikonicoff:operadic-algebras-tangent-cats}), makes a strong statement: what looks like quite different forms of geometry can all be understood as models of the same tangent-categorical theory. Differential, algebraic, and operadic geometry then appear not as unrelated subjects, but as different models of a shared abstract structure.

This justifies our effort to construct a theory of Ehresmann connections for tangent categories. Instead of being a concept of differential geometry, it now becomes a subtheory of tangent category theory that will reflect across the different models.

A further advantage of working categorically is that it makes the structural dependencies of the constructions particularly transparent: one can see exactly which axioms and which bits of structure are doing the work, and therefore what really needs to be assumed to obtain a given result.

This clarity becomes especially visible in three parts of the paper. First, the paper introduces abstract Ehresmann connections as certain linear idempotents on the tangent bundle, subject to compatibility conditions (Section~\ref{subsection:abstract-connection}). Framed this way, connections are not restricted to the classical context of submersions; they can be defined over arbitrary maps. Within this enlarged notion, the “split” abstract connections are shown to coincide precisely with ordinary connections, so the new definition both extends the classical picture and recovers it under the splitting condition (Theorem~\ref{theorem:all-equivalent-forms-of-connections}).

Second, the paper shows that Koszul connections, which in differential geometry correspond to the usual notion of covariant derivatives, fit naturally into this framework (Theorem~\ref{theorem:linear-connections-as-connections-on-dbs}). More precisely, Koszul connections are Ehresmann connections in the category of differential bundles, in the appropriate sense. Rather than being a parallel theory, Koszul connections become an application, and in a sense a consequence, of the general theory of Ehresmann connections once it is set up at the right level of abstraction.

Third, the paper introduces a categorical notion of curvature (Definition~\ref{definition:curvature}). Here curvature is understood as measuring a particular failure: the connection does not behave as a morphism of connections over itself in the relevant categorical setting. This reframes a central geometric idea as an instance of a precise categorical phenomenon, relating the intuitive content of curvature to an obstruction to a structural condition.
\end{content}

\begin{content}[Upcoming work]
One of the objectives of this project is to provide a foundational workground for a theory of \emph{principal connections} in tangent categories. In differential geometry, a principal connection can be equivalently defined as an Ehresmann connection on a principal bundle that is equivariant with respect to the group action. This will be the subject of an upcoming paper, in which many of the definitions and results introduced and proved in this paper will play a crucial role.
\end{content}

\begin{conventions}
In this paper, we adopt diagrammatic composition, that is, we shall denote by $fg$ the composition of $f\colon A\to B$ followed by $g\colon B\to C$.
\end{conventions}


\section{The vertical and the Finsler bundles of a submersion}
\label{section:tangent-display-map}
In this section, we recall the definition of a tangent display map and show that every tangent display map admits a vertical bundle and a Finsler bundle. We then introduce submersions and construct the associated fundamental exact sequence.

\subsection{Tangent display maps}
\label{subsection:tangent-display-map}
Here we recall the main definitions and results of~\cite{cruttwell:tangent-display-maps}, beginning with the definition of a tangent display map.

\begin{definition}
\label{definition:tangent-display-map}
A \textbf{tangent display map} in a tangent category $(\X,\TT)$ consists of a morphism $q\colon E\to M$ subject to the following property. For every $n\geq 0$ and every $f\colon N\to\T^nM$, the pullback
\begin{equation*}
\begin{tikzcd}
P & {\T^nE} \\
N & {\T^nM}
\arrow[from=1-1, to=1-2]
\arrow[from=1-1, to=2-1]
\arrow["\lrcorner"{anchor=center, pos=0.125}, draw=none, from=1-1, to=2-2]
\arrow["{\T^nq}", from=1-2, to=2-2]
\arrow["f"', from=2-1, to=2-2]
\end{tikzcd}
\end{equation*}
of $\T^nq$ along $f$ exists and is a \textbf{tangent pullback}, that is, it is preserved by all iterates $\T^m\=T\o\dots\o\T$ of the tangent bundle functor.
\end{definition}

\begin{example}
\label{example:trivial-display}
Every category $\X$ comes equipped with a trivial tangent structure, whose tangent bundle functor and structural natural transformations are the identity. In such a tangent category, a tangent display map is equivalent to a display map, namely, a map $q\colon E\to M$ for which, for any $f\colon N\to M$, the pullback of $q$ along $f$ exists.
\end{example}

\begin{example}
\label{example:differential-geometry-display}
As proved in~\cite[Theorem~2.31]{cruttwell:tangent-display-maps}, tangent display maps in the tangent category of finitely-dimensional smooth manifolds are equivalent to submersions.
\end{example}

\begin{example}
\label{example:algebraic-geometry-display}
In the tangent category of affine schemes (see~\cite{cruttwell:algebraic-geometry}), every morphism is a tangent display map, since the category of affine schemes admits all pullbacks and the tangent bundle functor preserves those.
\end{example}

The main benefit of working with tangent display maps is that they form the maximal \textbf{tangent display system}, which is a family of morphisms $\Dsply$ stable under tangent pullbacks and the tangent bundle functor.

\begin{theorem}[{\cite[Theorem~2.11]{cruttwell:tangent-display-maps}}]
\label{theorem:maximal-tangent-display-system}
Tangent display maps in a tangent category $(\X,\TT)$ form the maximal tangent display system $\Dsply(\X,\TT)$. Furthermore, $\Dsply(\X,\TT)$ is closed under composition.
\end{theorem}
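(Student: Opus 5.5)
We first fix what a \emph{tangent display system} must satisfy, following~\cite{cruttwell:tangent-display-maps}: a class $\Dsply$ of morphisms such that (i) for every $q\in\Dsply$ and every $f\colon N\to M$ the pullback of $q$ along $f$ exists, is a tangent pullback, and its projection $f^\*q$ again lies in $\Dsply$; and (ii) $\Dsply$ is closed under $\T$, i.e.\ $q\in\Dsply$ implies $\T q\in\Dsply$. The plan has three steps: show the tangent display maps satisfy (i) and (ii), show any tangent display system consists of tangent display maps, and show closure under composition.

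\emph{Closure under $\T$.} Let $q$ be tangent display. To see that $\T q$ is tangent display, take $n\geq0$ and $f\colon N\to\T^n(\T M)$. Since $\T^n\T q=\T^{n+1}q$, the required pullback of $\T^n\T q$ along $f$ is one of the pullbacks already guaranteed by the definition for $q$ (with index $n+1$). It therefore exists and is preserved by all $\T^m$.

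\emph{Closure under pullback.} Let $f\colon N\to M$ and write $f^\*q\colon P\to N$ for the pulled-back map. Fix $n$ and $g\colon K\to\T^nN$. Because the square for $P$ is a tangent pullback, applying $\T^n$ gives a pullback square exhibiting $\T^nP$ as the pullback of $\T^nq$ along $\T^nf$. Now form the pullback of $\T^nq$ along $g\,\T^nf\colon K\to\T^nM$, which exists and is tangent by hypothesis. By the pasting lemma it is also the pullback of $\T^nf^\*q$ along $g$. It remains to check preservation by $\T^m$. Apply $\T^m$ to both squares. The outer square and the right square remain pullbacks: the outer one by hypothesis, the right one because $\T^{m+n}$ preserves the original tangent pullback. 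The pasting lemma, in its direction "outer and right pullback imply left pullback", then shows the left square is a pullback. Hence $f^\*q$ is tangent display.

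\emph{Maximality.} Let $\Dsply'$ be any tangent display system and $q\in\Dsply'$. By (ii), $\T^nq\in\Dsply'$ for every $n$. By (i), the pullback of $\T^nq$ along any $f$ exists and is tangent. This is exactly the definition of a tangent display map, so $\Dsply'\subseteq\Dsply(\X,\TT)$.

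\emph{Composition.} Let $q\colon E\to M$ and $r\colon M\to B$ be tangent display, with $g\colon K\to\T^nB$. First pull back $\T^nr$ along $g$ to obtain $Q\to K$ together with a map $Q\to\T^nM$. Then pull back $\T^nq$ along $Q\to\T^nM$. Since $\T^n(qr)=\T^nq\,\T^nr$, the pasting lemma shows the composite square is the pullback of $\T^n(qr)$ along $g$. This square is tangent because each of its two squares is preserved by every $\T^m$, and pasting of pullbacks is preserved.

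The main obstacle is the pullback-stability step: one must invoke the "left square is a pullback" direction of pasting after applying $\T^m$. That is the only point where tangency of the original square, and not just its existence, is genuinely used.
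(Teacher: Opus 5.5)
Your proof is correct and is the standard argument for this result, which the paper does not reprove but imports from \cite{cruttwell:tangent-display-maps}. You get closure under $\T$ by reindexing $\T^n\T q=\T^{n+1}q$, stability under pullback via the pasting property of Lemma~\ref{lemma:property-tangent-pullbacks} applied to the $\T^m$-image of the diagram, maximality by unwinding the definitions, and closure under composition by pasting two tangent pullbacks. One small correction to your closing remark: tangency of the square defining $P$ is already needed, through $\T^n$, to exhibit $\T^nP$ as the pullback of $\T^nq$ along $\T^nf$, and this is what gives the existence of the pullback of $\T^n(f^{\ast}q)$ along $g$, so it is used before the $\T^m$-preservation step as well.
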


Since tangent display maps are stable under the tangent bundle functor, they also form a tangent sub-category $\Dsply(\X,\TT)$ of the arrow tangent category. Concretely, the objects of $\Dsply(\X,\TT)$ are tangent display maps and morphisms, commutative squares between the tangent display maps.

Thanks to Theorem~\ref{theorem:maximal-tangent-display-system}, the (tangent) pullback of a tangent display map along a morphism is again a tangent display map. Since we will be working extensively with tangent pullbacks, we recall here a simple yet important property of tangent pullbacks which generalizes a well-known property of pullbacks.

\begin{lemma}[{\cite[Lemma~2.3]{cruttwell:tangent-display-maps}}]
\label{lemma:property-tangent-pullbacks}
Consider the following two commutative diagrams in any tangent category:
\begin{equation*}
\begin{tikzcd}
A & B & C \\
{A'} & {B'} & {C'}
\arrow[from=1-1, to=1-2]
\arrow[from=1-1, to=2-1]
\arrow[from=1-2, to=1-3]
\arrow[from=1-2, to=2-2]
\arrow[from=1-3, to=2-3]
\arrow[from=2-1, to=2-2]
\arrow[from=2-2, to=2-3]
\end{tikzcd}\quad
\begin{tikzcd}
A & B \\
{A'} & {B'} \\
{A''} & {B''}
\arrow[from=1-1, to=1-2]
\arrow[from=1-1, to=2-1]
\arrow[from=1-2, to=2-2]
\arrow[from=2-1, to=2-2]
\arrow[from=2-1, to=3-1]
\arrow[from=2-2, to=3-2]
\arrow[from=3-1, to=3-2]
\end{tikzcd}
\end{equation*}
If the right and the outer squares of the first diagram are tangent pullbacks, so is the left square. Moreover, if the bottom and the outer squares of the second diagram are tangent pullbacks, so is the top square.
\end{lemma}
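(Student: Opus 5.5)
The plan is to reduce both claims to the classical pullback pasting lemma, applied once to the original diagram and once to its image under every iterate $\T^m$. Being a tangent pullback means two things: the square is a pullback, and for every $m\geq 0$ its image under $\T^m$ is again a pullback. Since $\T^m$ is a functor, it sends a commutative diagram of two adjacent squares to a commutative diagram of the same shape. It also sends the outer rectangle to the outer rectangle of the image, because $\T^m$ preserves composition. Both statements will therefore follow by applying the ordinary lemma levelwise.

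For the first diagram, fix $m\geq 0$ (with $m=0$ meaning the diagram itself) and apply $\T^m$. The right square becomes $\T^mB\to\T^mC$ over $\T^mB'\to\T^mC'$, which is a pullback by hypothesis. The outer square becomes the square with top $\T^m(A\to B\to C)$, which equals the composite $\T^mA\to\T^mB\to\T^mC$, and with bottom the analogous composite; it is a pullback by hypothesis. By the standard pasting lemma (if the right and outer squares are pullbacks then so is the left), the square $\T^mA,\T^mB,\T^mA',\T^mB'$ is a pullback. Since this holds for every $m$, the left square is a pullback preserved by all $\T^m$, that is, a tangent pullback.

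The second diagram is the same statement transposed: the vertical pasting of the top and bottom squares is the horizontal pasting after swapping the roles of rows and columns. Applying the pasting lemma in the transposed form, for each $m$ separately, gives the claim for the top square.

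There is no real obstacle. The only point needing care is the order of the pasting lemma's argument. Given a cone over the left (or top) square, compose with the right-hand (or bottom) map to get a cone over the outer square. Use the outer pullback to produce the mediating map. Then check the required equation against $B$ using the uniqueness clause of the right (or bottom) pullback. Uniqueness of the mediating map for the left square follows from uniqueness for the outer square. Functoriality of $\T^m$ is what justifies reusing this argument verbatim at every level.
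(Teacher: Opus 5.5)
Your proof is correct. Applying the ordinary pullback pasting lemma to the image of each diagram under every iterate $\T^m$ is the standard argument: functoriality of $\T^m$ identifies the image of the outer rectangle with the pasting of the images, and the second claim is the transposed form of the first. The paper gives no proof here and simply cites \cite[Lemma~2.3]{cruttwell:tangent-display-maps}; your levelwise pasting argument is the natural proof of that cited lemma.
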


Next, we recall a key notion in tangent category theory: differential bundles, which generalize (smooth) vector bundles. 

\begin{definition}[{\cite[Definition~2.3]{cockett:differential-bundles}}]
\label{definition:differential-bundle}
A \textbf{differential bundle} in a tangent category $(\X,\TT)$ consists of a pair of objects $E$ and $M$ together with the following structural morphisms
\begin{align*}
&q\colon E\to M         &&z_q\colon M\to E      &&s_q\colon E_2\to E    &&l_q\colon E\to\T E
\end{align*}
respectively called, the \textbf{projection}, the \textbf{zero}, the \textbf{sum} morphism, and the \textbf{vertical lift}, where $E_n$ denotes the $n$-fold pullback of $q$ along itself, which is required to exist and be preserved by all iterates of the tangent bundle functor. The triple $(q,z_q,s_q)$ forms an \textbf{additive bundle}, that is, the following conditions hold:
\begin{equation*}
\adjustbox{width=\linewidth}{
\begin{tikzcd}
{E_2} & E \\
E & M
\arrow["{s_q}", from=1-1, to=1-2]
\arrow["{\pi_1}"', from=1-1, to=2-1]
\arrow["q", from=1-2, to=2-2]
\arrow["q"', from=2-1, to=2-2]
\end{tikzcd}\quad
\begin{tikzcd}
E & {E_2} \\
E & E
\arrow["{\<\id_E,qz_q\>}", from=1-1, to=1-2]
\arrow[equals, from=1-1, to=2-1]
\arrow["{s_q}", from=1-2, to=2-2]
\arrow[equals, from=2-1, to=2-2]
\end{tikzcd}\quad
\begin{tikzcd}
M & E \\
M & M
\arrow["{z_q}", from=1-1, to=1-2]
\arrow[equals, from=1-1, to=2-1]
\arrow["q", from=1-2, to=2-2]
\arrow[equals, from=2-1, to=2-2]
\end{tikzcd}\quad
\begin{tikzcd}
{E_3} & {E_2} \\
{E_2} & E
\arrow["{s_q\times_M\id_E}", from=1-1, to=1-2]
\arrow["{\id_E\times_Ms_q}"', from=1-1, to=2-1]
\arrow["{s_q}", from=1-2, to=2-2]
\arrow["{s_q}"', from=2-1, to=2-2]
\end{tikzcd}\quad
\begin{tikzcd}
{E_2} & {E_2} \\
E & E
\arrow["\tau", from=1-1, to=1-2]
\arrow["{s_q}"', from=1-1, to=2-1]
\arrow["{s_q}", from=1-2, to=2-2]
\arrow[equals, from=2-1, to=2-2]
\end{tikzcd}
}
\end{equation*}
where $\tau\colon A\times B\to B\times A$ denotes the canonical symmetry. Furthermore:
\begin{enumerate}
\item The pair $(z,l_q)\colon q\to\T q$ is an additive bundle morphism:
\begin{equation*}
\begin{tikzcd}
E & {\T E} \\
M & {\T M}
\arrow["{l_q}", from=1-1, to=1-2]
\arrow["q"', from=1-1, to=2-1]
\arrow["{\T q}", from=1-2, to=2-2]
\arrow["z_M"', from=2-1, to=2-2]
\end{tikzcd}\quad
\begin{tikzcd}
E & {\T E} \\
M & {\T M}
\arrow["{l_q}", from=1-1, to=1-2]
\arrow["{z_q}", from=2-1, to=1-1]
\arrow["z_M"', from=2-1, to=2-2]
\arrow["{\T z_q}"', from=2-2, to=1-2]
\end{tikzcd}\quad
\begin{tikzcd}
{E_2} & {\T E_2} \\
E & {\T E}
\arrow["{l_q\times_zl_q}", from=1-1, to=1-2]
\arrow["{s_q}"', from=1-1, to=2-1]
\arrow["{\T s_q}", from=1-2, to=2-2]
\arrow["{l_q}"', from=2-1, to=2-2]
\end{tikzcd}
\end{equation*}

\item The pair $(z_q,l_q)\colon q\to p_E$ is an additive bundle morphism:
\begin{equation*}
\begin{tikzcd}
E & {\T E} \\
M & E
\arrow["{l_q}", from=1-1, to=1-2]
\arrow["q"', from=1-1, to=2-1]
\arrow["{p_E}", from=1-2, to=2-2]
\arrow["{z_q}"', from=2-1, to=2-2]
\end{tikzcd}\quad
\begin{tikzcd}
E & {\T E} \\
M & E
\arrow["{l_q}", from=1-1, to=1-2]
\arrow["{z_q}", from=2-1, to=1-1]
\arrow["{z_q}"', from=2-1, to=2-2]
\arrow["z_E"', from=2-2, to=1-2]
\end{tikzcd}\quad
\begin{tikzcd}
{E_2} & {\T_2E} \\
E & {\T E}
\arrow["{l_q\times_{z_q}l_q}", from=1-1, to=1-2]
\arrow["{s_q}"', from=1-1, to=2-1]
\arrow["s_E", from=1-2, to=2-2]
\arrow["{l_q}"', from=2-1, to=2-2]
\end{tikzcd}
\end{equation*}

\item The vertical lift satisfies the following universal property: the diagram
\begin{equation*}
\begin{tikzcd}
{E_2} & {\T E} \\
M & {\T M}
\arrow["{\xi_q}", from=1-1, to=1-2]
\arrow["{\pi_1q}"', from=1-1, to=2-1]
\arrow["\lrcorner"{anchor=center, pos=0.125}, draw=none, from=1-1, to=2-2]
\arrow["{\T q}", from=1-2, to=2-2]
\arrow["z_M"', from=2-1, to=2-2]
\end{tikzcd}
\end{equation*}
is a tangent pullback diagram, where $\xi_q\colon E_2\xrightarrow{\<l_q,z_E\>}\T E_2\xrightarrow{s_q}\T E$;

\item The vertical lift $l_q$ is compatible with the vertical lift of the tangent bundle $l_E$:
\begin{equation*}
\begin{tikzcd}
{\T E} & {\T^2E} \\
E & {\T E}
\arrow["l_E", from=1-1, to=1-2]
\arrow["{l_q}", from=2-1, to=1-1]
\arrow["{l_q}"', from=2-1, to=2-2]
\arrow["{\T l_q}"', from=2-2, to=1-2]
\end{tikzcd}
\end{equation*}
\end{enumerate}
A \textbf{display differential bundle} is a differential bundle whose projection is a tangent display map.
\end{definition}

We denote a (display) differential bundle by $\q\colon E\to M$ and the corresponding projection, zero morphism, sum morphism, and vertical lift by $q$, $z_q$, $s_q$, and $l_q$, respectively. When the symbol adopted for a differential bundle is decorated with a superscript or a lowerscript, e.g., $\q'_\bullet$, we use the same decorations for the structural morphisms, that is, the structural morphisms of $\q'_\bullet$ are denoted as follows: $q'_\bullet$, ${z'_q}_\bullet$, ${s'_q}_\bullet$, and ${l'_q}_\bullet$.

\begin{example}
\label{example:db-trivial-tangent-bundle}
In any tangent category, every object $M$ canonically has two differential bundles over it:
\begin{itemize}
\item The trivial differential bundle $\0_M\colon M \to M$, whose structural morphisms $q$, $z_q$, and $s_q$ are simply the identity on $M$ and whose vertical lift coincides with the zero morphism $z_M\colon M\to\T M$. Notice that, as the notation suggests, $\0_M$ is the zero object of the category $\DB(\X,\TT;M)$ of differential bundles over $M$;

\item The \textbf{tangent bundle} $\p_M\colon TM \to M$.
\end{itemize}
\end{example}

\begin{example}
\label{example:db-smooth-manifold}
In the tangent category of smooth manifolds, differential bundles are precisely vector bundles~\cite{macadam:vector-bundles}.
\end{example}

\begin{example}
\label{example:db-affine-schemes}
In the tangent category of affine schemes, differential bundles correspond to modules over rings~\cite[Prop. 4.15]{cruttwell:algebraic-geometry}.
\end{example}

We shall now turn our attention on morphisms of differential bundles.

\begin{definition}[{\cite[Definition~2.3]{cockett:differential-bundles}}]
\label{definition:morphism-differential-bundles}
A \textbf{morphism of} (\textbf{display}) \textbf{differential bundles} $(f,g)\colon\q\to\q'$ from a (display) differential bundle $\q\colon E\to M$ to a (display) differential bundle $\q'\colon E'\to M'$ consists of a commutative square
\begin{equation*}
\begin{tikzcd}
E & {E'} \\
M & {M'}
\arrow["g", from=1-1, to=1-2]
\arrow["q"', from=1-1, to=2-1]
\arrow["{q'}", from=1-2, to=2-2]
\arrow["f"', from=2-1, to=2-2]
\end{tikzcd}
\end{equation*}
between the projections. A morphism is \textbf{linear} provided that it preserves the lift; that is, the diagram
\begin{equation*}
\begin{tikzcd}
{\T E} & {\T  E'} \\
E & {E'}
\arrow["{\T g}", from=1-1, to=1-2]
\arrow["{l_q}", from=2-1, to=1-1]
\arrow["g"', from=2-1, to=2-2]
\arrow["{l_q'}"', from=2-2, to=1-2]
\end{tikzcd}
\end{equation*}
commutes.
\end{definition}

(Display) differential bundles together with linear morphisms form a tangent category denoted by $\DB(\X,\TT)$ whose tangent bundle functor sends a differential bundle $\q\colon E\to M$ to the differential bundle $\T\q\colon\T E\to\T M$ whose projection, zero, and sum morphisms are $\T q$, $\T z_q$, and $\T s_q$, respectively, and whose vertical lift is the morphism:
\begin{align*}
&l_{\T q}\colon\T E\xrightarrow{\T l_q}\T^2E\xrightarrow{c_E}\T^2E
\end{align*}
Furthermore, we denote by $\DB(\X,\TT;M)$ the tangent category of differential bundles with fixed target $M$ and linear morphisms, whose tangent bundle functor is defined as in the slice tangent category.

\begin{convention}
\label{convention:display-differential-bundles}
In the following, unless otherwise indicated, we assume the differential bundles of a tangent category to be display; we also refer to a display differential bundle simply as a differential bundle, and we stress \textit{ordinary differential bundles} when we only consider differential bundles which are not necessarily display.
\end{convention}

As proved in~\cite[Corollaries~3.8 and~3.9]{cruttwell:tangent-display-maps}, under mild conditions, every differential bundle is display, provided that tangent bundles are display.

\subsection{The vertical bundle}
\label{subsection:vertical-bundle}
In differential geometry, the vertical bundle of a submersion $q\colon E\to M$ is the sub-bundle $\q^\V\colon\V q\to E$ of the tangent bundle $\p_E\colon\T E\to E$ on $E$ spanned by the \emph{vertical tangent vectors}, which are those tangent vectors $v\in\T E$ of $E$ sent to zero by the differential of the projection $q$. In other words, the fibre $\V_yq$ of $\q^\V$ at $y\in E$ is the kernel of the differential $\d_yq\colon\T_yE\to\T_{q(y)}M$.

\par An important property exhibited by tangent display maps is that they always admit a vertical bundle. To begin, let us consider a tangent display map $q\colon E\to M$ in a tangent category $(\X,\TT)$. By the properties of the tangent display maps, the (tangent) pullback
\begin{equation}
\label{equation:vertical-bundle-pullback}
\begin{tikzcd}
{\V q} & {\T E} \\
M & {\T M}
\arrow["{\iota_q}", from=1-1, to=1-2]
\arrow["{\T^\V q}"', from=1-1, to=2-1]
\arrow["\lrcorner"{anchor=center, pos=0.125}, draw=none, from=1-1, to=2-2]
\arrow["{\T q}", from=1-2, to=2-2]
\arrow["{z_M}"', from=2-1, to=2-2]
\end{tikzcd}
\end{equation}
of $\T q$ along the zero morphism exists. The vertical bundle of $q$ is the map $q^\V\colon\V q\to E$ obtained by composing $\iota_q\colon\V q\to\T E$ with the projection $p_E\colon\T E\to E$. We want to show that such a map $q^\V$ carries the structure of a differential bundle.

One could directly construct the structural morphisms of this differential bundle. Instead, we follow a more structural approach that clarifies the origin of this differential bundle.

\par As already proved by Rosick\'y in~\cite{rosicky:tangent-cats}, the slice category $(\X,\TT)/M$, that is, the category whose objects are tangent display maps with target $M$ and whose morphisms are commutative triangles, comes equipped with a canonical tangent structure. In this tangent category, known as the \textbf{slice tangent category}, the tangent bundle functor sends each $q\colon E\to M$ to $\T^\V q\colon\V q\to M$ defined by the pullback of Equation~\eqref{equation:vertical-bundle-pullback} (see also~\cite[Section~3.3]{cruttwell:tangent-display-maps} for details on this construction).

\par To begin, recall that a \textbf{lax tangent morphism}~\cite[Definition~2.7]{cockett:tangent-cats} from a tangent category $(\X,\TT)$ to a tangent category $(\X',\TT')$ consists of a functor $F\colon\X\to\X'$ together with a natural transformation $\alpha_M\colon F\T M\to\T'FM$, natural in $M$, which is compatible with the tangent structures. Recall that, a \textbf{Cartesian tangent morphism}~\cite[Definition~4.16]{cockett:differential-bundles} is a lax tangent morphism $(F,\alpha)\colon(\X,\TT)\to(\X',\TT')$ whose underlying functor $F$ preserves tangent pullbacks and whose distributive law is Cartesian, that is, for each morphism $f\colon M\to N$ of $(\X,\TT)$, the naturality square of $f$
\begin{equation*}
\begin{tikzcd}
{F\T M} & {\T'FM} \\
{F\T N} & {\T'FN}
\arrow["{\alpha_M}", from=1-1, to=1-2]
\arrow["{F\T f}"', from=1-1, to=2-1]
\arrow["{\T'Ff}", from=1-2, to=2-2]
\arrow["{\alpha_N}"', from=2-1, to=2-2]
\end{tikzcd}
\end{equation*}
is a tangent pullback.

\begin{lemma}
\label{lemma:tangent-morphism-slice}
The domain functor $\Pi\colon(\X,\TT)/M\to(\X,\TT)$ which sends a tangent display map $q\colon E\to M$ to $E$ and a morphism $f\colon q^E_M\to q'^{E'}_M$ to $f\colon E\to E'$ extends to a Cartesian tangent morphism with distributive law defined as follows:
\begin{align*}
&\Pi(\T^\V q)=\V q\xrightarrow{\iota_q}\T E=\T\Pi(q)
\end{align*}
\end{lemma}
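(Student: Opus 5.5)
Three things have to be checked: that $\alpha_q\=\iota_q\colon\V q\to\T E$ is natural, that $(\Pi,\alpha)$ is a lax tangent morphism, and that it is Cartesian.

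\textbf{Naturality.} Recall how the slice tangent structure acts on morphisms. A morphism $f\colon q\to q'$ over $M$, i.e.\ $fq'=q$, is sent to the map $\T^\V f\colon\V q\to\V q'$. This map is induced by the universal property of the pullback~\eqref{equation:vertical-bundle-pullback} for $q'$, using the pair $(\iota_q\T f,\T^\V q)$; it is well defined because $\iota_q\T f\T q'=\iota_q\T q=\T^\V q\,z_M$. By construction $\T^\V f\,\iota_{q'}=\iota_q\T f$, which is exactly the naturality square for $\alpha$.

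\textbf{Compatibility with the structural transformations.} I would take the slice structure maps as in~\cite[Section~3.3]{cruttwell:tangent-display-maps}. Each is the unique map into a pullback of $\T^k q$ along $\T^{k-1}$-iterates of $z_M$ whose component into $\T^kE$ is the corresponding structural map of $E$ precomposed with $\iota$. Explicitly:
\begin{itemize}
\item the projection is $\iota_q p_E$;
\item the zero is induced by $(z_E,q)$;
\item the sum is induced from $s_E$ on $\V_2q$, a pullback of $\T_2q$;
\item the lift is induced by $(\iota_q l_E,\ldots)$, since $l_E\T^2q=\T q\,l_M$ and $z_Ml_M=z_Mz_{\T M}$;
\item the flip is induced from $c_E$ on $\V^2 q$, which embeds in $\T^2E$ via $\iota_{\T^\V q}\T\iota_q$.
\end{itemize}
Each compatibility axiom of a lax tangent morphism then reduces to the commutation of $\alpha$, or its composites $\alpha\o\alpha$ and $\alpha_2$, with these maps. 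Since every slice structure map is defined by precisely that commutation, the axioms hold by definition. To be careful, I would verify that the canonical comparison maps $\V_2 q\to\T_2E$ and $\V^2q\to\T^2E$ are the ones induced by $\iota$; this is where tangentness of the pullbacks is used.

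\textbf{Preservation of tangent pullbacks.} $\Pi$ preserves tangent pullbacks. A pullback in the slice over $M$ of tangent display maps is computed as the pullback in $\X$ of the underlying map; such pullbacks exist and are tangent because tangent display maps are stable under pullback (Theorem~\ref{theorem:maximal-tangent-display-system}). The tangent bundle of the slice is $\T^\V$, and it preserves these pullbacks because $\T$ does and pullbacks commute with pullbacks.

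\textbf{Cartesianness.} This is the main step. For $f\colon q\to q'$, I need the square
\[
\begin{tikzcd}
\V q \arrow[r,"\iota_q"]\arrow[d,"\T^\V f"'] & \T E\arrow[d,"\T f"]\\
\V q'\arrow[r,"\iota_{q'}"'] & \T E'
\end{tikzcd}
\]
to be a tangent pullback. Paste it above the pullback square~\eqref{equation:vertical-bundle-pullback} for $q'$, i.e.\ $\iota_{q'}\T q'=\T^\V q'z_M$. The composite vertical map on the left is $\T^\V f\,\T^\V q'=\T^\V q$, and on the right it is $\T f\T q'=\T q$. So the outer rectangle is exactly the pullback defining $\V q$, which is a tangent pullback because $q$ is tangent display. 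The bottom square is also a tangent pullback, since $q'$ is tangent display. The second part of Lemma~\ref{lemma:property-tangent-pullbacks} then shows the top square is a tangent pullback.

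The delicate point is the bookkeeping in the structural-compatibility step, chiefly the flip. I would handle it by presenting $\V^2q$ as the tangent pullback of $\T^2q$ along $z_{\T M}z_M$-type maps, using pullback pasting.
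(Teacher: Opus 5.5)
Your naturality check and the structural compatibilities are fine, since they hold by construction of the slice structure. Your Cartesianness step is exactly the argument the paper's one-line proof alludes to. Pasting the naturality square of $f$ on top of the defining pullback of $\V q'$ makes the outer rectangle the defining pullback of $\V q$. The second half of Lemma~\ref{lemma:property-tangent-pullbacks} then finishes it.

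The paragraph on preservation of tangent pullbacks, however, fails as written. Theorem~\ref{theorem:maximal-tangent-display-system} lets you pull back a tangent display map. But the legs $f\colon q\to q''$ and $g\colon q'\to q''$ of a cospan in $(\X,\TT)/M$ are arbitrary maps over $M$; only their domains and codomains are display over $M$. So nothing guarantees that the pullback exists in $\X$. Moreover, the universal property in the slice is only tested against tangent display maps into $M$, so a slice pullback need not be a pullback in $\X$ at all.

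Here is a concrete failure in smooth manifolds over $M=\Real$. Let $A=B=C=\Real^2$ project onto the first coordinate, and take $f(x,y)=(x,y^2-x)$ and $g(x,z)=(x,0)$. Consider a submersive test object $m\colon Y\to\Real$ with $u=(m,\beta)\colon Y\to A$ and $v=(m,\zeta)\colon Y\to B$ satisfying $uf=vg$. This condition forces $m=\beta^2$, and submersivity of $m$ forces $\beta\neq 0$. Hence the slice pullback is $\{(y,z):y\neq 0\}$, and the same computation shows that the $\T^\V$-iterates preserve it. By contrast, the pullback of manifolds is $\Real^2$, parametrized by $(y,z)\mapsto((y^2,y),(y^2,z))$, and it contains the points with $y=0$ that the slice pullback misses.

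What does hold is a restricted version. Suppose one leg, say $g$, is tangent display in $\X$. Then the $\X$-pullback exists and is a tangent pullback, and its map to $M$ is display by closure under composition. It is therefore the slice pullback, and $\Pi$ preserves it. The paper's proof does not address this clause of the statement. So, read with the paper's paraphrase of a Cartesian tangent morphism, the statement needs this qualification; it is not something your pasting argument can supply.
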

\begin{proof}
Straightforward by using Lemma~\ref{lemma:property-tangent-pullbacks}.    
\end{proof}

\cite[Proposition~4.22]{cockett:differential-bundles} establishes that every Cartesian morphism $(F,\alpha)\colon(\X,\TT)\to(\X',\TT')$ sends ordinary differential bundles to ordinary differential bundles. In particular, given an ordinary differential bundle $\q\colon E\to M$ of $(\X,\TT)$, $\DB(F,\alpha)[\q]$ is the ordinary differential bundle whose structural morphisms are defined as follows:
\begin{align*}
FE\xrightarrow{Fq}FM      &&FM\xrightarrow{Fz_q}FE      &&(FE)_2\xrightarrow{\<F\pi_1,F\pi_2\>}F(E_2)\xrightarrow{Fs_q}FE &&FE\xrightarrow{Fl_q}F\T E\xrightarrow{\alpha_E}\T'FE
\end{align*}
Therefore, by Lemma~\ref{lemma:tangent-morphism-slice}, the domain functor $\Pi\colon(\X,\TT)/M\to(\X,\TT)$ induces a lax tangent morphism
\begin{align*}
\DB(\Pi,\iota)&\colon\DB((\X,\TT)/M)\to\DB(\X,\TT)
\end{align*}
from the category of differential bundles of the slice tangent category to the category of differential bundles of the base tangent category. Recall that in any tangent category, each tangent bundle $\p_X\colon\T X\to X$ is a differential bundle (Example~\ref{example:db-trivial-tangent-bundle}). Therefore, given a tangent display map $q\colon E\to M$ of $(\X,\TT)$, the functor $\DB(\Pi,\iota)$ sends the tangent bundle $\p^\V_q\colon\T^\V q\to q$ of $q$ in $(\X,\TT)/M$ to a differential bundle $\q^\V\colon\V q\to E$.

\begin{definition}
\label{definition:vertical-bundle}
The \textbf{vertical bundle} of a tangent display map $q\colon E\to M$ in a tangent category $(\X,\TT)$ is the differential bundle $\q^\V\colon\V q\to E$ image of the tangent bundle $\p^\V_q\colon\T^\V q\to q$ of $q$ in the slice tangent category over $M$ via the Cartesian tangent morphism $\DB(\Pi,\iota)$.
\end{definition}

\begin{example}
\label{example:vertical-bundle-differential-bundles}
The universal property enjoyed by the vertical lift $l_q$ of a differential bundle $\q\colon E\to M$ can be equivalently expressed by saying that the vertical bundle $\q^\V$ of $\q$ is necessarily trivial, that is, $\V\q=E_2$ and $\q^\V=\pi_1\colon E_2\to E$. This property of differential bundles can be understood as being ``locally linear''.
\end{example}

Concretely, the vertical bundle $\q^\V$ of a tangent display map is the differential bundle so defined:
\begin{description}
\item[Projection] The projection $q^\V\colon\V q\to E$ is the morphism defined as follows:
\begin{align*}
&q^\V\colon\V q\xrightarrow{\iota_q}\T E\xrightarrow{p_E}E
\end{align*}

\item[Zero morphism] The zero morphism $z_q^\V\colon E\to\V q$ is the morphism defined as follows:
\begin{equation*}
\begin{tikzcd}
E \\
& {\V q} & {\T E} \\
& M & {\T M}
\arrow["{z_q^\V}", dashed, from=1-1, to=2-2]
\arrow["{z_E}", curve={height=-12pt}, from=1-1, to=2-3]
\arrow["q"', curve={height=12pt}, from=1-1, to=3-2]
\arrow["{\iota_q}", from=2-2, to=2-3]
\arrow["{\T^\V q}"', from=2-2, to=3-2]
\arrow["\lrcorner"{anchor=center, pos=0.125}, draw=none, from=2-2, to=3-3]
\arrow["{\T q}", from=2-3, to=3-3]
\arrow["{z_M}"', from=3-2, to=3-3]
\end{tikzcd}
\end{equation*}

\item[Sum morphism] The sum morphism $s_q^\V\colon\V_2q\to\V q$ is the morphism defined as follows
\begin{equation*}
\begin{tikzcd}
	{\V_2q} && {\T_2E} \\
	& {\V q} & {\T E} \\
	{\V q} & M & {\T M}
	\arrow["{\iota_q\times\iota_q}", from=1-1, to=1-3]
	\arrow["{s_q^\V}", dashed, from=1-1, to=2-2]
	\arrow["{\pi_1^\V}"', from=1-1, to=3-1]
	\arrow["{s_E}", from=1-3, to=2-3]
	\arrow["{\iota_q}", from=2-2, to=2-3]
	\arrow["{\T^Mq}"', from=2-2, to=3-2]
	\arrow["\lrcorner"{anchor=center, pos=0.125}, draw=none, from=2-2, to=3-3]
	\arrow["{\T q}", from=2-3, to=3-3]
	\arrow["{\T^Mq}"', from=3-1, to=3-2]
	\arrow["{z_M}"', from=3-2, to=3-3]
\end{tikzcd}
\end{equation*}
where $\V_2q$ denotes the pullback of $q^\F$ along itself and $\pi_1^\V\colon\V_2q\to\V q$, the projection;

\item[Vertical lift] The vertical lift $l_q^\V\colon\V q\to\T\V q$ is the morphism defined as follows:
\begin{equation*}
\begin{tikzcd}
{\V q} && {\T E} \\
& {\T\V q} & {\T^2E} \\
M & {\T M} & {\T^2M}
\arrow["{\iota_q}", from=1-1, to=1-3]
\arrow["{l_q^\V}", dashed, from=1-1, to=2-2]
\arrow["{\T^\V q}"', from=1-1, to=3-1]
\arrow["{l_E}", from=1-3, to=2-3]
\arrow["{\T\iota_q}", from=2-2, to=2-3]
\arrow["{\T\T^\V q}"', from=2-2, to=3-2]
\arrow["\lrcorner"{anchor=center, pos=0.125}, draw=none, from=2-2, to=3-3]
\arrow["{\T^2q}", from=2-3, to=3-3]
\arrow["{z_M}"', from=3-1, to=3-2]
\arrow["{\T z_M}"', from=3-2, to=3-3]
\end{tikzcd}
\end{equation*}
\end{description}

\begin{remark}
\label{remark:vertical-bundle-0-carrability}
Definition~\ref{definition:vertical-bundle} introduces the vertical bundle of a tangent display map. However, this definition can be easily extended to any map for which the tangent pullback of Equation~\eqref{equation:vertical-bundle-pullback} exists. In~\cite{lemay:submersions}, those maps are called \textbf{0-carrable}. In our paper, we need this level of generality only twice. In those cases, we will simply say that a map \textbf{admits the vertical bundle}.
\end{remark}

\begin{remark}
\label{remark:vertical-bundle-choice}
It is important to realize that \emph{the} vertical bundle of a tangent display map is only defined up to a unique isomorphism, since it is defined through tangent pullbacks. Using our notation $\q^\V$ for the vertical bundle, we are implicitly making a \emph{choice} of the vertical bundle. In Section~\ref{subsection:abstract-connection}, we will develop a theory of connections free from this choice.
\end{remark}

The vertical bundle $\q^\V$ of a tangent display map can be regarded as a sub-bundle of the tangent bundle $\p_E\colon\T E\to E$, as proved by the next lemma.

\begin{lemma}
\label{lemma:iota-tangent-monic}
The morphism $\iota_q\colon\V q\to\T E$ is \textbf{tangent monic}, that is, for every $n\geq0$, $\T^n\iota_q$ is monic.
\end{lemma}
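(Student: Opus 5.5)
The plan is to use the fact that pullbacks of monomorphisms are monomorphisms, applied at every level $\T^n$. The key observation is that the zero morphism $z_M\colon M\to\T M$ is a split monomorphism: $z_Mp_M=\id_M$. Applying $\T^n$ preserves this, so $\T^nz_M\,\T^np_M=\id_{\T^nM}$ and each $\T^nz_M$ is again split monic.

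First I use that the square defining $\V q$ in Equation~\eqref{equation:vertical-bundle-pullback} is a tangent pullback. This holds because $q$ is tangent display, by Definition~\ref{definition:tangent-display-map} with $n=1$ and $f=z_M$. Hence, for every $n\geq0$, the square
\begin{equation*}
\begin{tikzcd}
{\T^n\V q} & {\T^{n+1}E} \\
{\T^nM} & {\T^{n+1}M}
\arrow["{\T^n\iota_q}", from=1-1, to=1-2]
\arrow["{\T^n\T^\V q}"', from=1-1, to=2-1]
\arrow["{\T^{n+1}q}", from=1-2, to=2-2]
\arrow["{\T^nz_M}"', from=2-1, to=2-2]
\end{tikzcd}
\end{equation*}
is a pullback. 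Within this square, $\T^n\iota_q$ is the pullback of the monomorphism $\T^nz_M$ along $\T^{n+1}q$.

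Second, I invoke the standard fact that monomorphisms are stable under pullback. To keep the proof self-contained, I would verify it directly. Take $a,b\colon X\to\T^n\V q$ with $a\,\T^n\iota_q=b\,\T^n\iota_q$. Then
\begin{align*}
a\,\T^n\T^\V q\,\T^nz_M=a\,\T^n\iota_q\,\T^{n+1}q=b\,\T^n\iota_q\,\T^{n+1}q=b\,\T^n\T^\V q\,\T^nz_M.
\end{align*}
Since $\T^nz_M$ is monic, this gives $a\,\T^n\T^\V q=b\,\T^n\T^\V q$. So $a$ and $b$ agree on both pullback projections, and the uniqueness part of the universal property forces $a=b$. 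This proves that $\T^n\iota_q$ is monic for every $n$, i.e.\ that $\iota_q$ is tangent monic.

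There is no real obstacle. The only point needing care is that $\T^n$ preserves the defining pullback, and this is exactly the tangent-pullback clause in the definition of a tangent display map. No additional hypotheses such as negatives are needed.
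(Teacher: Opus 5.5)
Your proof is correct and follows the paper's argument: since $q$ is tangent display, each $\T^n\iota_q$ is the pullback of the split monic $\T^nz_M$ along $\T^{n+1}q$, and monics are stable under pullback. The only difference is that you verify that stability fact directly, whereas the paper cites it as standard.
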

\begin{proof}
For starters, $\iota_q$ is the pullback of $z_M$ along $\T q$. Thus, since monics are stable under pullbacks and $z_M$ is a section of the projection $p_M$, $\iota_q$ is necessarily monic. Furthermore, since $q$ is tangent display, for each $n\geq0$, $\T^n\iota_q$ is the pullback of $\T^nz_M$ along $\T^{n+1}q$. However, each $\T^nz_M$ is a section of $\T^np_M$. Thus, again $\T^n\iota_q$ is the pullback of a monic and therefore a monic itself.
\end{proof}

We conclude this section with a technical result that will be useful later.

\begin{lemma}
\label{lemma:functoriality-V}
There is a strong tangent morphism $(-)^\V\colon\Dsply(\X,\TT)\to\DB(\X,\TT)$ which sends each tangent display map $q\colon E\to M$ to the vertical bundle $\q^\V\colon\V q\to E$ of $q$.
\end{lemma}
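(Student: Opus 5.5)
The plan is to build $(-)^\V$ as a composite of functors already available, and then to check that the resulting distributive law is invertible.

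\textbf{Step 1: the functor.} Think of an object of $\Dsply(\X,\TT)$ as a tangent display map $q\colon E\to M$, and of a morphism $(f,g)\colon q\to q'$ as a commutative square with $g\colon E\to E'$ on top and $f\colon M\to M'$ below. Define $(f,g)^\V\=(g,\V(f,g))$, where $\V(f,g)\colon\V q\to\V q'$ is induced by the universal property of the pullback~\eqref{equation:vertical-bundle-pullback} for $q'$, applied to $\iota_q\T g\colon\V q\to\T E'$ and $\T^\V q\,f\colon\V q\to M'$. These are compatible because $\iota_q\T g\T q'=\iota_q\T q\,\T f=\T^\V q\,z_M\T f=\T^\V q\,f z_{M'}$. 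Uniqueness in the pullback gives functoriality.

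\textbf{Step 2: the morphism is linear.} We must show that $(g,\V(f,g))$ is a morphism of differential bundles for which $\V(f,g)$ preserves the lifts $l_q^\V$ and $l_{q'}^\V$. Each of $l^\V_q$, $s^\V_q$ and $z^\V_q$ is defined as the unique map into a tangent pullback. So it suffices to postcompose both sides with the jointly monic pair $\T\iota_{q'}$, $\T\T^\V q'$. Using $\iota_q\T g=\V(f,g)\iota_{q'}$, the equation reduces to naturality of $l$, $s$ and $z$. Lemma~\ref{lemma:iota-tangent-monic} makes these cancellations legitimate.

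\textbf{Step 3: the tangent structure.} The tangent bundle of $q$ in $\Dsply(\X,\TT)$ is $\T q\colon\T E\to\T M$, and the tangent bundle of $\q^\V$ in $\DB(\X,\TT)$ is $\T\q^\V\colon\T\V q\to\T E$ with lift $\T l^\V_q\,c_{\V q}$. The candidate distributive law $\alpha_q\colon\V(\T q)\to\T\V q$ is the comparison map between two pullbacks of $\T^2q$:
\begin{itemize}
\item $\V\T q$ is the pullback of $\T^2q$ along $z_{\T M}$;
\item $\T\V q$ is the pullback of $\T^2q$ along $\T z_M$, since the pullback~\eqref{equation:vertical-bundle-pullback} is tangent.
\end{itemize}
These are identified by the flip: $z_{\T M}c_M=\T z_M$. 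So I will define $\alpha_q$ by the universal property using $\iota_{\T q}c_E$, and construct its inverse symmetrically using $c$ again. Since $c^2=\id$, $\alpha_q$ is an isomorphism, which is what makes the morphism strong. Its base component is $\id_{\T E}$.

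\textbf{Step 4: the tangent morphism axioms.} It remains to verify that $\alpha$ is natural, that it is compatible with $p$, $z$, $s$ and $l$ (and $c$), and that it is a linear bundle morphism. Each is again checked by postcomposing with the monic $\T^n\iota$ together with the pullback projection, which reduces it to the corresponding coherence equation for $c$ in $(\X,\TT)$.

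\textbf{Main obstacle.} The linearity of $\alpha_q$ with respect to the lifts, that is, that $\alpha_q$ intertwines $l^\V_{\T q}$ with $\T l^\V_q\,c_{\V q}$, is the step I expect to be hardest. After pushing through $\T^2\iota_q$ it becomes the equation $c_{\T E}\T c_E\,l_{\T E}$-type versus $\T l_E\,c$. I would settle it with the standard coherence $l_{\T E}\T c_E\,c_{\T E}=c_E\T l_E$.
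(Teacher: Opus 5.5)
Your proposal is correct and takes essentially the paper's route: $\V_fg$ is induced by the universal property of the vertical-bundle pullback, and the distributive law is the comparison map $\V\T q\to\T\V q$ determined by $\iota_{\T q}c_E$ and $\T^\V(\T q)$, which is invertible because $c$ is an involution and $z_{\T M}c_M=\T z_M$. You supply more detail than the paper, which leaves linearity and the coherence checks to the reader, and the identity $l_{\T E}\T c_E\,c_{\T E}=c_E\T l_E$ you use for the linearity of $\alpha_q$ is an equivalent rearrangement of the standard $l$--$c$ coherence axiom of a tangent structure.
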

\begin{proof}
For starters, let us show that $(-)^\V$ is functorial. Consider a morphism $(f,g)\colon q^E_M\to q'^{E'}_{M'}$ of bundles, that is, a pair of morphisms $f\colon M\to M'$ and $g\colon E\to E'$ such that $qf=gq'$. Define $\V_fg$ as the unique morphism satisfying the following equations. $\V_fg\T^{M'}q'=\T^\V qf$ and $\V_fg\iota_{q'}=\iota_q\T g$. We leave it to the reader to show that $(g,\V_fg)\colon\q^\V\to\q'^\V$ is a linear morphism of differential bundles. Finally, to define a tangent morphism, we need to define a distributive law $(\T q)^\V\to\T^\DB q^\V$ between $(-)^\V$ and the tangent bundle functors. Define $(\id_{\T E},\gamma^\V_q)$ where $\gamma^\V_q\colon\V\T q\to\T\V q$ is the unique morphism such that $\gamma^\V_q\T\T^\V q=\T^{\T M}(\T q)$ and $\gamma^\V_q\T\iota_q=\iota_{\T q}c_E$. Since the canonical flip $c_E$ is invertible, it is not hard to show that $(\id_{\T M},\gamma^\V_q)$ is in fact an isomorphism. We leave it to the reader to show the compatibility conditions between the distributive law and the tangent structures.
\end{proof}

\subsection{The Finsler bundle}
\label{subsection:finsler-bundle}
We now introduce another differential bundle associated to a tangent display map. We refer to this as the \textbf{Finsler bundle} of a tangent display map. This naming choice was inspired by the name in differential geometry given to the bundle $\T_2M\to M$, which is called the Finsler bundle \cite[Remark 4.1.1]{kertesz:connections}. Our definition generalizes this bundle to an arbitrary tangent display map.

In differential geometry, given a submersion $q\colon E\to M$ we may define a vector bundle $\q^\F\colon\F q\to E$ on $E$ whose fibre $\F_yq$ at each $y\in E$ is the Cartesian product of the tangent space $\T_xM$ of $M$ at $x\=q(y)$ and the fibre $E_x$ of $q$ over $x$.

To generalize this to tangent categories, consider a tangent display map $q\colon E\to M$ in a tangent category $(\X,\TT)$. The tangent pullback of $q$ along the projection $p_M\colon\T M\to M$
\begin{equation}
\label{equation:finsler-bundle-pullback}
\begin{tikzcd}
{\F q} & E \\
{\T M} & M
\arrow["{q^\F}", from=1-1, to=1-2]
\arrow["{\T^\F q}"', from=1-1, to=2-1]
\arrow["\lrcorner"{anchor=center, pos=0.125}, draw=none, from=1-1, to=2-2]
\arrow["q", from=1-2, to=2-2]
\arrow["{p_M}"', from=2-1, to=2-2]
\end{tikzcd}
\end{equation}
exists. By~\cite[Lemma~2.7]{cockett:differential-bundles}, the tangent pullback of a differential bundle is still a differential bundle. Thus, since the tangent bundle $\p_M\colon\T M\to M$ is a differential bundle, the projection $q^\F\colon\F q\to E$ also carries the structure of a differential bundle $\q^\F$.

\begin{definition}
\label{definition:finsler-bundle}
The \textbf{Finsler bundle} of a tangent display map $q\colon E\to M$ is the differential bundle $\q^\F\colon\F q\to E$, which is the tangent pullback of $\p_M\colon\T M\to M$ along $q$.
\end{definition}

\begin{remark}
\label{remark:finsler-bundle-p-carrable}
As with the vertical bundle (see Remark~\ref{remark:vertical-bundle-0-carrability}), also the definition of the Finsler bundle can be easily extended to every map $q\colon E\to M$ for which the tangent pullback of Equation~\eqref{equation:finsler-bundle-pullback} exists. In~\cite{lemay:submersions}, those maps are called \textbf{p-carrable}. In our paper, we only make use of this level of generality once. In that case, we will simply say that a map \textbf{admits the Finsler bundle}. The theory of connections we are developing extends to the more general case, as long as the required tangent pullbacks exist. Our choice of using tangent display maps allows us not to worry about these pullbacks. We also point out that Lemay and Vooys in~\cite{lemay:submersions} call the Finsler bundle the horizontal bundle. However, this clashes with the usual notion of horizontal distribution, which is the embedding of the Finsler bundle in the tangent bundle given by the connection.
\end{remark}

\begin{remark}
\label{remark:finler-bundle-choice}
As mentioned in Remark~\ref{remark:vertical-bundle-choice} for the vertical bundle, \emph{the} Finsler bundle of a tangent display map is only defined up to a unique isomorphism, since it is defined through tangent pullbacks. In using our notation $\q^\F$ we are implicitly making a choice. In Section~\ref{subsection:abstract-connection}, we will develop a theory of connections free from this choice.
\end{remark}

\begin{remark}
\label{remark:finsler-bundle}
Notice that, since we have assumed all the differential bundles are display, each tangent bundle $\p_M$ is automatically a tangent display map. Therefore, every morphism $q\colon E\to M$ admits the tangent pullback of $p_M$ along $q$. In particular, assuming the tangent bundles of a tangent category being display is equivalent to requiring the existence of the Finsler bundle for every morphism of the category.
\end{remark}

Concretely, the Finsler bundle $\q^\F\colon\F q\to E$ of a tangent display map $q\colon E\to M$ consists of the differential bundle so defined:
\begin{description}
\item[Projection] The projection of $\q^\F$ is $q^\F\colon\F q\to E$;

\item[Zero morphism] The zero morphism $z_q^\F\colon E\to\F q$ is defined as follows:
\begin{equation*}
\begin{tikzcd}
E \\
& {\F q} & E \\
M & {\T M} & M
\arrow["{z_q^\F}", dashed, from=1-1, to=2-2]
\arrow[curve={height=-18pt}, equals, from=1-1, to=2-3]
\arrow["q"', from=1-1, to=3-1]
\arrow["{q^\F}", from=2-2, to=2-3]
\arrow["{\T^\F q}"', from=2-2, to=3-2]
\arrow["\lrcorner"{anchor=center, pos=0.125}, draw=none, from=2-2, to=3-3]
\arrow["q", from=2-3, to=3-3]
\arrow["{z_M}"', from=3-1, to=3-2]
\arrow["{p_M}"', from=3-2, to=3-3]
\end{tikzcd}
\end{equation*}

\item[Sum morphism] The sum morphism $s_q^\F\colon\F_2q\to\F q$ is defined as follows
\begin{equation*}
\begin{tikzcd}
{\F_2q} && {\F q} \\
& {\F q} & E \\
{\T_2M} & {\T M} & M
\arrow["{\pi_1^\F}", from=1-1, to=1-3]
\arrow["{s_q^\F}", dashed, from=1-1, to=2-2]
\arrow["{\T^\F q\times\T^\F q}"', from=1-1, to=3-1]
\arrow["{q^\F}", from=1-3, to=2-3]
\arrow["{q^\F}", from=2-2, to=2-3]
\arrow["{\T^\F q}"', from=2-2, to=3-2]
\arrow["\lrcorner"{anchor=center, pos=0.125}, draw=none, from=2-2, to=3-3]
\arrow["q", from=2-3, to=3-3]
\arrow["{s_M}"', from=3-1, to=3-2]
\arrow["{p_M}"', from=3-2, to=3-3]
\end{tikzcd}
\end{equation*}
where $\pi_1^\F$ denotes the projection of the pullback of $q^\F$ along itself;

\item[Vertical lift] The vertical lift $l_q^\F\colon\F q\to\T\F q$ is defined as follows:
\begin{equation*}
\begin{tikzcd}
{\F q} && E \\
& {\T\F q} & {\T E} \\
{\T M} & {\T^2M} & {\T M}
\arrow["{q^\F}", from=1-1, to=1-3]
\arrow["{l_q^\F}", dashed, from=1-1, to=2-2]
\arrow["{\T^\F q}"', from=1-1, to=3-1]
\arrow["{z_E}", from=1-3, to=2-3]
\arrow["{\T q^\F}", from=2-2, to=2-3]
\arrow["{\T\T^\F q}"', from=2-2, to=3-2]
\arrow["\lrcorner"{anchor=center, pos=0.125}, draw=none, from=2-2, to=3-3]
\arrow["{\T q}", from=2-3, to=3-3]
\arrow["{l_M}"', from=3-1, to=3-2]
\arrow["{\T p_M}"', from=3-2, to=3-3]
\end{tikzcd}
\end{equation*}
\end{description}
The Finsler bundle $\q^\F\colon\F q\to E$ of a tangent display map $q\colon E\to M$ comes with an important morphism that will play a crucial role in the definition of a connection.

\begin{definition}
\label{definition:horizontal-descent}
The \textbf{horizontal descent} of a tangent display map $q\colon E\to M$ is the unique morphism $\pi_q\colon\T E\to\F q$ which makes the following diagram
\begin{equation*}
\begin{tikzcd}
{\T E} \\
& {\F q} & E \\
& {\T M} & M
\arrow["{\pi_q}", dashed, from=1-1, to=2-2]
\arrow["{p_E}", curve={height=-18pt}, from=1-1, to=2-3]
\arrow["{\T q}"', curve={height=18pt}, from=1-1, to=3-2]
\arrow["{q^\F}", from=2-2, to=2-3]
\arrow["{\T^\F q}"', from=2-2, to=3-2]
\arrow["\lrcorner"{anchor=center, pos=0.125}, draw=none, from=2-2, to=3-3]
\arrow["q", from=2-3, to=3-3]
\arrow["{p_M}"', from=3-2, to=3-3]
\end{tikzcd}
\end{equation*}
commutative.
\end{definition}

\begin{lemma}
\label{lemma:horizontal-descent-linear}
The horizontal descent of a tangent display map defines a linear morphism of differential bundles $\pi_q=(\id_E,\pi_q)\colon\p_E\to\q^\F$.
\end{lemma}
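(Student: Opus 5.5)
To show that $(\id_E,\pi_q)\colon\p_E\to\q^\F$ is a linear morphism of differential bundles, two things need checking. First, it is a morphism of bundles over $E$, that is, $\pi_q q^\F=p_E$. Second, it preserves the vertical lifts, that is, $l_E\T\pi_q=\pi_q l_q^\F$ as maps $\T E\to\T\F q$. The first condition is immediate from Definition~\ref{definition:horizontal-descent}. All the work is in the second.

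Since $q$ is tangent display, the square defining $\F q$ is a tangent pullback. So $\T\F q$ is the pullback of $\T q$ along $\T p_M$, with projections $\T q^\F$ and $\T\T^\F q$. It therefore suffices to check that both composites agree after postcomposing with these two projections.

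For the first projection, $\T q^\F$:
\begin{itemize}
\item On the left, $l_E\T\pi_q\T q^\F=l_E\T p_E$. By the tangent category axiom $l_E\T p_E=p_Ez_E$, this equals $p_Ez_E$.
\item On the right, the definition of $l_q^\F$ gives $l_q^\F\T q^\F=q^\F z_E$. Hence $\pi_q l_q^\F\T q^\F=\pi_q q^\F z_E=p_Ez_E$.
\end{itemize}
The two sides agree.

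For the second projection, $\T\T^\F q$:
\begin{itemize}
\item On the left, $l_E\T\pi_q\T\T^\F q=l_E\T^2q$. By naturality of $l$, this equals $\T q\, l_M$.
\item On the right, $l_q^\F\T\T^\F q=\T^\F q\, l_M$. Hence $\pi_q l_q^\F\T\T^\F q=\pi_q\T^\F q\, l_M=\T q\, l_M$.
\end{itemize}
Again the two sides agree, so by the universal property of the tangent pullback, $l_E\T\pi_q=\pi_q l_q^\F$.

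Once linearity is established, compatibility with zeros and sums follows. By Cockett–Cruttwell, linear morphisms between differential bundles automatically preserve the additive structure, so this needs no separate argument. Alternatively, it can be checked directly in the same way, using the pullback projections $q^\F$ and $\T^\F q$ together with naturality of $z$ and $s$.

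The only potential obstacle is bookkeeping: getting the concrete description of $l_q^\F$ right, namely $l_q^\F\T q^\F=q^\F z_E$ and $l_q^\F\T\T^\F q=\T^\F q\,l_M$. After that, the argument uses only the axiom $l\T p=pz$ and naturality of $l$.
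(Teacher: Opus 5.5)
Your proof is correct and is exactly the ``straightforward calculation'' the paper alludes to. The bundle condition $\pi_q q^\F=p_E$ is the defining equation of $\pi_q$, and linearity $l_E\T\pi_q=\pi_q l_q^\F$ follows by testing against the projections $\T q^\F$ and $\T\T^\F q$ of the tangent pullback $\T\F q$, using $l_E\T p_E=p_Ez_E$ and naturality of $l$ just as you do (your closing remark on zeros and sums is not needed for the paper's notion of linear morphism).
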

\begin{proof}
Follows by straightforward calculation.  
\end{proof}

We conclude this section with a technical result that will be useful later.

\begin{lemma}
\label{lemma:functoriality-F}
There is a strong tangent morphism $(-)^\F\colon\Dsply(\X,\TT)\to\DB(\X,\TT)$ which sends each tangent display map $q\colon E\to M$ to the Finsler bundle $\q^\F\colon\F q\to E$ of $q$.
\end{lemma}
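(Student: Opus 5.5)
Following the pattern of Lemma~\ref{lemma:functoriality-V}, I will proceed in three steps: define $(-)^\F$ on morphisms, verify functoriality and linearity, and construct the distributive law and show it is invertible.

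\textbf{Action on morphisms.} Given a morphism $(f,g)\colon q\to q'$ in $\Dsply(\X,\TT)$, so $qf=gq'$, let $\F_fg\colon\F q\to\F q'$ be the unique morphism into the pullback~\eqref{equation:finsler-bundle-pullback} for $q'$ satisfying
\begin{align*}
&\F_fg\,q'^\F=q^\F g &&\F_fg\,\T^\F q'=\T^\F q\,\T f.
\end{align*}
This is well defined because $q^\F gq'=q^\F qf=\T^\F q\,p_Mf=\T^\F q\,\T f\,p_{M'}$ by naturality of $p$. Functoriality follows from the uniqueness part of the universal property. Linearity of $(g,\F_fg)\colon\q^\F\to\q'^\F$ means $l_q^\F\T\F_fg=\F_fg\,l_{q'}^\F$. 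Since $\T\F q'$ is a tangent pullback, it suffices to check this after postcomposing with $\T q'^\F$ and $\T\T^\F q'$. The resulting equations reduce to naturality of $z$ and $l$ together with the defining equations of $l_q^\F$ and $\F_fg$. Compatibility with zero and sum is checked the same way.

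\textbf{Distributive law.} The tangent bundle of $q$ in $\Dsply(\X,\TT)$ is $\T q\colon\T E\to\T M$, whose Finsler bundle is $\F\T q$, the pullback of $\T q$ along $p_{\T M}$. The tangent bundle in $\DB(\X,\TT)$ of $\q^\F$ is $\T\F q\to\T E$, where $\T\F q$ is the pullback of $\T q$ along $\T p_M$, since the pullback is tangent. I take the distributive law to be $(\id_{\T E},\gamma^\F_q)$, where $\gamma^\F_q\colon\F\T q\to\T\F q$ is the unique map with
\begin{align*}
&\gamma^\F_q\,\T q^\F=(\T q)^\F &&\gamma^\F_q\,\T\T^\F q=\T^\F(\T q)\,c_M.
\end{align*}
This is well defined because $c_M\T p_M=p_{\T M}$. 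Since $c_M$ is an involutive isomorphism, the analogous map built with $c_M$ in the other direction is an inverse, again by uniqueness. Hence $(\id_{\T E},\gamma^\F_q)$ is an isomorphism, and the morphism is strong.

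\textbf{Main obstacle.} The remaining work is to check that $\gamma^\F$ is linear and natural, and that it satisfies the tangent-morphism coherences with $p,z,s,l,c$. Linearity is the hard part, because the vertical lift of $\T\q^\F$ is the twisted one, $\T l_q^\F c_{\F q}$. Each coherence reduces, by joint monicity of the tangent-pullback projections, to an identity among the structural maps of $\TT$ on $M$ and $E$. The identities I expect to need are the axioms relating $c$ and $l$: $l\,c=\T l\ldots$, $c\T c\,c$-type braiding, and $l_M\T c_M\ldots$. I would verify the lift coherence first, since it involves $c_{\F q}$ on both sides and is the only place where a nontrivial tangent axiom enters. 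The other coherences are routine naturality computations that I would leave to the reader, mirroring Lemma~\ref{lemma:functoriality-V}.
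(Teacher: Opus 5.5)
Your proposal is correct and follows essentially the same route as the paper. You define $\F_fg$ and $\gamma^\F_q$ by the same universal-property equations, get invertibility from $c_M$ being an involution, and leave linearity and the tangent-morphism coherences as routine checks, as the paper does. Your write-up is slightly more careful: it checks well-definedness explicitly (via naturality of $p$ and $c_M\T p_M=p_{\T M}$), and it correctly uses $\id_{\T E}$ rather than the paper's $\id_{\T M}$ as the base component of the distributive law.
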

\begin{proof}
For starters, let us show that $(-)^\F$ is functorial. Consider a morphism $(f,g)\colon q^E_M\to q'^{E'}_{M'}$ of bundles, that is, a pair of morphisms $f\colon M\to M'$ and $g\colon E\to E'$ such that $qf=gq'$. Define $\F_fg$ as the unique morphism satisfying the following equations. $\F_fg\T^\F q=\T^\F q\T f$ and $\F_fgq'^\F=q^\F g$. We leave it to the reader to show that $(g,\F_fg)\colon\q^\F\to\q'^\F$ is a linear morphism of differential bundles. Finally, to define a tangent morphism, we need to define a distributive law $(\T q)^\F\to\T^\DB q^\F$ between $(-)^\F$ and the tangent bundle functors. Define $(\id_{\T E},\gamma^\F_q)$ where $\gamma^\F_q\colon\F\T q\to\T\F q$ is the unique morphism such that $\gamma^\F_q\T^\F q=\T^\F q c_M$ and $\gamma^\F_q\T(q^\F)=(\T q)^\F$. Since the canonical flip $c_M$ is invertible, it is not hard to show that $(\id_{\T M},\gamma^\F_q)$ is in fact an isomorphism. We leave it to the reader to show the compatibility conditions between the distributive law and the tangent structures.
\end{proof}

\subsection{Submersions and the fundamental short exact sequence}
\label{subsection:fundamental-ses}
Lemma~\ref{lemma:iota-tangent-monic} proves that the vertical bundle $\q^\V\colon\V q\to E$ of a tangent display map can be allegedly regarded as a sub-bundle of the tangent bundle $\p_M\colon\T E\to E$, since $\iota_q$ is tangent monic. One would expect the horizontal descent $\pi_q\colon\T E\to\F q$ to be an epimorphism and that the Finsler bundle $\q^\F$ could be regarded as the cokernel of $\iota_q$. However, in general, this is not guaranteed.
\par In the context of differential geometry, the horizontal descent $\pi_q$ of a smooth function $q$ sends a tangent vector $v\in\T_yE$ of $E$ at $y$ to the pair $(\d_yq(v),y)\in\T_xM\times E_x$, where $x=q(y)$. Therefore, asking that $\pi_q$ is surjective is equivalent to saying that for every tangent vector $u\in\T_xM$ of $M$ at $x$ and every $y\in E$ such that $q(y)=x$, there exists a tangent vector $v\in\T_yE$ such that $\d_yq(v)=u$. However, this is precisely what it means for $q$ to be a submersion.
\par We start by recalling a result due to Lemay and Vooys, which shows that the vertical bundle is always the kernel of $\pi_q$.

\begin{lemma}[{\cite[Theorem~4.2.1]{lemay:submersions}}]
\label{lemma:vertical-bundle-kernel}
The vertical bundle $\q^\V$ of a tangent display map $q\colon E\to M$ is the kernel of the horizontal descent, explicitly, the diagram
\begin{equation*}
\begin{tikzcd}
{\q^\V} & {\p_E} \\
{\0_E} & {\q^\F}
\arrow["{\iota_q}", from=1-1, to=1-2]
\arrow["{q^\V}"', from=1-1, to=2-1]
\arrow["{\pi_q}", from=1-2, to=2-2]
\arrow["{z_q^\F}"', from=2-1, to=2-2]
\end{tikzcd}
\end{equation*}
is a pullback in $\DB(\X,\TT;E)$.
\end{lemma}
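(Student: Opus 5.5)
We prove that the square is a pullback in $\DB(\X,\TT;E)$ by first showing it is a pullback in $\X$, and then checking that the induced comparison maps are linear. Once the underlying square is a pullback in $\X$, the remaining work is a linearity check. Throughout, we use that $\0_E$ has underlying object $E$, that $z_q^\F$ is the zero of $\q^\F$, and that $q^\V=\iota_qp_E$.

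We begin with commutativity in $\X$. It suffices to compose both paths with the two pullback projections $q^\F$ and $\T^\F q$ of $\F q$.
\begin{itemize}
\item Composing with $q^\F$, we get $\iota_q\pi_qq^\F=\iota_qp_E=q^\V$ and $q^\V z_q^\F q^\F=q^\V$.
\item Composing with $\T^\F q$, we get $\iota_q\pi_q\T^\F q=\iota_q\T q=\T^\V q\,z_M$ and $q^\V z_q^\F\T^\F q=\iota_qp_Eqz_M=\iota_q\T q\,p_M z_M$. The latter equals $\T^\V q\,z_Mp_Mz_M=\T^\V q\,z_M$, using naturality of $p$ and $\T^\V q z_M p_M=\T^\V q$.
\end{itemize}
Next we verify the universal property. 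Take $a\colon X\to\T E$ and $b\colon X\to E$ with $a\pi_q=bz_q^\F$. Postcomposing with $\T^\F q$ gives $a\T q=bqz_M$. By the defining pullback \eqref{equation:vertical-bundle-pullback} of $\V q$, there is a unique $h\colon X\to\V q$ with $h\iota_q=a$ and $h\T^\V q=bq$. We must also check $hq^\V=b$. This follows from $hq^\V=ap_E=a\pi_qq^\F=bz_q^\F q^\F=b$. Uniqueness of $h$ follows from $\iota_q$ being monic (Lemma~\ref{lemma:iota-tangent-monic}).

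We then lift the pullback to $\DB(\X,\TT;E)$. We need three facts:
\begin{itemize}
\item $\iota_q$ and $q^\V$ are linear morphisms over $E$, and the other two sides are linear as well.
\item Given a differential bundle $\mathbf{a}\colon A\to E$ with linear maps $a\colon\mathbf{a}\to\p_E$ and $b\colon\mathbf{a}\to\0_E$ (linear into the zero bundle just means $b$ projects correctly), the induced $h$ is linear.
\item Pullbacks in $\DB(\X,\TT;E)$ are computed underlying: a cone of linear maps determines a unique underlying map, so it suffices to show that map is linear.
\end{itemize}
For the sides, $\pi_q$ is linear by Lemma~\ref{lemma:horizontal-descent-linear}, and $z_q^\F$ is linear as the zero of $\q^\F$. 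Linearity of $\iota_q$ is $l^\V_q\T\iota_q=\iota_ql_E$, which holds by the construction of $l_q^\V$ as a map into the tangent pullback $\T\V q$. Linearity of $q^\V$ into $\0_E$ reduces to $l_q^\V\T q^\V=q^\V z_E$. This follows by postcomposing with the identity and using $\iota_q l_E\T p_E=\iota_qp_Ez_E$.

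For the induced map, linearity of $h$ means $l_Ah\T\iota$-style compatibility, namely $l_A\,\T h=h\,l^\V_q$. Since $\T\iota_q$ is monic (Lemma~\ref{lemma:iota-tangent-monic}), it suffices to check this after composing with $\T\iota_q$. Both sides then become $l_A\T a=a\,l_E$, by linearity of $a$ and of $\iota_q$.

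The main obstacle is this linearity check for $h$, specifically showing $l_q^\V$ makes $\iota_q$ linear. The key point is that tangent monicity of $\iota_q$ reduces everything to the linearity of $a$. Everything else is routine pullback chasing.
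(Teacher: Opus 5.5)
Your proof is correct. The paper itself does not prove this lemma. It imports the result from Lemay--Vooys, where it is stated as $\q^\V$ being the equalizer in $\DB(\X,\TT)$ of $\pi_q$ and $p_Ez_q^\F$, and Remark~\ref{remark:vertical-bundle-kernel} just asserts that this is equivalent to the pullback form.

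Your argument instead verifies the statement directly, using only the defining pullbacks of $\V q$ and $\F q$:
\begin{itemize}
\item Composing the cone condition $a\pi_q=bz_q^\F$ with $\T^\F q$ gives exactly the cone condition $a\T q=bqz_M$ for the defining pullback of $\V q$.
\item Composing it with $q^\F$ recovers $hq^\V=b$.
\item Uniqueness and linearity of the comparison map both follow from tangent monicity of $\iota_q$ (Lemma~\ref{lemma:iota-tangent-monic}).
\end{itemize}
This avoids translating between the equalizer-in-$\DB(\X,\TT)$ and pullback-in-$\DB(\X,\TT;E)$ formulations, which the paper leaves implicit. It also gives a bit more. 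The underlying square, with $E$ in the corner, is already a pullback in $\X$. Rerunning the same argument after applying $\T^n$ (using that $\V q$ is a tangent pullback and each $\T^n\iota_q$ is monic) shows it is even a tangent pullback there.

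Two small points of phrasing:
\begin{itemize}
\item Your claim that ``pullbacks in $\DB(\X,\TT;E)$ are computed underlying'' is not a general fact. You do not need it, though: you prove directly that this particular square is a pullback in $\X$ and that the comparison map is linear, and that suffices because the forgetful functor is faithful.
\item Linearity of $q^\V$ into $\0_E$ is just the differential-bundle axiom $l_q^\V\T q^\V=q^\V z_E$ for $\q^\V$. Equivalently, it is the computation $l_q^\V\T\iota_q\T p_E=\iota_ql_E\T p_E=\iota_qp_Ez_E$. No ``postcomposing with the identity'' is involved.
\end{itemize}
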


\begin{remark}
\label{remark:vertical-bundle-kernel}
The original statement of~\cite[Theorem~4.2.1]{lemay:submersions} establishes that the vertical bundle is the equalizer in $\DB(\X,\TT)$ of the horizontal descent $\pi_q\colon\T E\to\F q$ against the morphism:
\begin{align*}
&\T E\xrightarrow{p_E}E\xrightarrow{z^\F_q}\F q
\end{align*}
We decided to adopt the equivalent form in terms of a pullback.
\end{remark}

We may now define the concept of submersions.

\begin{definition}
\label{definition:submersion}
A \textbf{submersion} in a tangent category is a tangent display map $q\colon E\to M$ for which the diagram
\begin{equation}
\label{equation:submersion-diagram}
\begin{tikzcd}
{\q^\V} & {\p_E} \\
{\0_E} & {\q^\F}
\arrow["{\iota_q}", from=1-1, to=1-2]
\arrow["{q^\V}"', from=1-1, to=2-1]
\arrow["{\pi_q}", from=1-2, to=2-2]
\arrow["{z_q^\F}"', from=2-1, to=2-2]
\end{tikzcd}
\end{equation}
is a pushout in $\DB(\X,\TT;E)$.
\end{definition}

\begin{remark}
\label{remark:pushout-in-DB}
It is important to realize that in Definition~\ref{definition:submersion}, the diagram~\ref{equation:submersion-diagram} is required to be a pushout in the category of differential bundles and linear morphisms over $E$, not in the base tangent category.
\end{remark}

For a submersion, the square Diagram~\ref{equation:submersion-diagram} is both a pullback and a pushout in the category of differential bundles. Recall that in a \textbf{semi-additive category}\footnote{In the literature, it is sometimes called, \emph{additive} (see~\cite{lucyshyn:connections}). We will reserve this name to categories with biproducts, enriched over Abelian groups.}, which is a category with finite biproducts, such as $\DB(\X,\TT;E)$ (\cite{lucyshyn:connections}), a \textbf{short exact sequence} consists of a sequence $0\to A\xrightarrow{\iota} B\xrightarrow{\pi} C\to 0$, where $0$ is the zero object, such that the following diagram
\begin{equation*}
\begin{tikzcd}
A & B \\
0 & C
\arrow["\iota", from=1-1, to=1-2]
\arrow[from=1-1, to=2-1]
\arrow["\pi", from=1-2, to=2-2]
\arrow[from=2-1, to=2-2]
\end{tikzcd}
\end{equation*}
is both a pullback and a pushout square.

\begin{theorem}
\label{theorem:submersion}
For a given tangent display map $q\colon E\to M$ of a tangent category, the following are equivalent:
\begin{enumerate}
\item $q$ is a submersion;

\item The horizontal descent $\pi_q$ of $q$ is the cokernel of $\iota_q$;

\item The sequence of differential bundles
\begin{align*}
&\0_E\to\q^\V\xrightarrow{\iota_q}\p_E\xrightarrow{\pi_q}\q^\F\to\0_E
\end{align*}
is short exact in $\DB(\X,\TT;E)$.
\end{enumerate}
\end{theorem}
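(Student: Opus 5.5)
The three conditions are essentially reformulations of one another once we combine Definition~\ref{definition:submersion} with Lemma~\ref{lemma:vertical-bundle-kernel}. I will prove the cycle [1]$\Rightarrow$[2]$\Rightarrow$[3]$\Rightarrow$[1], working throughout in the semi-additive category $\DB(\X,\TT;E)$. Its zero object is $\0_E$, so the square~\eqref{equation:submersion-diagram} has $\0_E$ in its bottom-left corner, and $z^\F_q\colon\0_E\to\q^\F$ is the unique (zero) morphism out of the zero object.

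\textbf{[1]$\Leftrightarrow$[2].} Since $\0_E$ is initial, a cocone on the span $\0_E\leftarrow\q^\V\xrightarrow{\iota_q}\p_E$ is just a linear morphism $f\colon\p_E\to\mathbf{k}$ together with the zero map $\0_E\to\mathbf{k}$, subject to $\iota_q f=q^\V 0=0$. So the square~\eqref{equation:submersion-diagram} is a pushout precisely when $\pi_q$ is universal among linear morphisms killing $\iota_q$. That is exactly the statement that $\pi_q$ is a cokernel of $\iota_q$.

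Before this identification I must check that the square commutes, i.e.\ that $\iota_q\pi_q=q^\V z^\F_q$. This holds because it is the pullback square of Lemma~\ref{lemma:vertical-bundle-kernel}. Concretely, one can verify it by composing with the two pullback projections $q^\F$ and $\T^\F q$ of $\F q$: the first gives $\iota_qp_E=q^\V$, and the second gives $\iota_q\T q=\T^\V q\,z_M$, which is the defining equation of $\V q$.

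One subtlety is which category the cokernel lives in. I read [2] as a statement in $\DB(\X,\TT;E)$, in line with Remark~\ref{remark:pushout-in-DB}. I also need the fact that the zero morphism $\q^\V\to\q^\F$ in this category is $q^\V z^\F_q$. This follows because zero morphisms in a category with a zero object factor through $\0_E$, and the bundle morphism $\0_E\to\q^\F$ is $z^\F_q$.

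\textbf{[1]$\Rightarrow$[3] and [3]$\Rightarrow$[1].} A short exact sequence requires the square to be both a pullback and a pushout. The pullback half always holds by Lemma~\ref{lemma:vertical-bundle-kernel}, and the pushout half is exactly Definition~\ref{definition:submersion}. So [1] and [3] are equivalent immediately.

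The only real obstacle is bookkeeping: making sure that "cokernel" and "pushout along the zero object" are computed in the same category $\DB(\X,\TT;E)$, and that $z^\F_q$, $q^\V$ really are the bundle morphisms to and from the zero object $\0_E$. For the latter, I would check that the pair $(\id_E,z^\F_q)$ is a linear morphism $\0_E\to\q^\F$, and that $(\id_E,q^\V)$ is the unique morphism $\q^\V\to\0_E$. Both checks are routine and use the pullback description of $\F q$.
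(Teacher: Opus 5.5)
Your proposal is correct and follows essentially the same route as the paper. The paper states this theorem without a written proof, treating it as immediate from Definition~\ref{definition:submersion}, Lemma~\ref{lemma:vertical-bundle-kernel}, and the definition of a short exact sequence as a square that is both a pullback and a pushout. Your two observations are exactly the content of that implicit argument: a pushout along the zero object $\0_E$ in $\DB(\X,\TT;E)$ is the same thing as a cokernel, and the pullback half is always supplied by the kernel lemma.
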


\begin{definition}
\label{definition:fundamental-ses}
The \textbf{fundamental short exact sequence} of a submersion is the short exact sequence of differential bundles of Theorem~\ref{theorem:submersion}.
\end{definition}

\begin{remark}
\label{remark:fundamental-ses}
In~\cite[Definition~4.2.2]{lemay:submersions}, the fundamental short exact sequence is called the \textit{relative cotangent sequence} in agreement with the algebraic geometry literature.
\end{remark}

One of the desirable properties of submersions in differential geometry is their stability under pullbacks and the tangent bundle functor. Unfortunately, in a general tangent category, there is no reason for a submersion, as per Definition~\ref{definition:submersion}, to behave well with respect to these two operations. For this reason, we introduce a stronger concept.

\begin{definition}
\label{definition:display-submersion}
A \textbf{tangent display submersion} consists of a tangent display map $q\colon E\to M$ for which, for any integer $n\geq0$, $\T^nq$ is a submersion. Furthermore, for every morphism $f\colon M'\to\T^nM$, the tangent pullback $q'\colon E'\to M'$ of $\T^nq$ along $f$ is again a submersion.
\end{definition}

By design, tangent display submersions form a tangent display system, and therefore, they form a \textbf{tangent fibration} (\cite[Proposition~3.11]{cruttwell:tangent-display-maps}). We shall not explain the details of this important structure, since it is not of primary importance for the story of this paper. Instead, we refer to~\cite[Section~5]{cockett:differential-bundles}.

\begin{proposition}
\label{proposition:submersions-tangent-display-system}
Tangent display submersions in a tangent category form a tangent display system $\Sbm(\X,\TT)$. Furthermore, the codomain functor $\Pi\colon\Sbm(\X,\TT)\to(\X,\TT)$ is a tangent fibration.
\end{proposition}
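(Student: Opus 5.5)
The plan is to verify the axioms of a tangent display system for the class $\Sbm(\X,\TT)$ directly from Definition~\ref{definition:display-submersion}, and then to deduce the fibration statement from \cite[Proposition~3.11]{cruttwell:tangent-display-maps}. That result asserts that every tangent display system yields a tangent fibration via the codomain functor. So the substantive work is the first claim, and we recall that a tangent display system is a class $\mathscr{D}$ of maps satisfying two conditions. First, tangent pullbacks of maps in $\mathscr{D}$ along arbitrary morphisms exist and lie in $\mathscr{D}$. Second, $\mathscr{D}$ is closed under $\T$ (and possibly isomorphisms).

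The first step is to check that every tangent display submersion $q$ lies inside $\Dsply(\X,\TT)$. This holds by definition, so all the pullbacks we need exist and are tangent pullbacks (Theorem~\ref{theorem:maximal-tangent-display-system}).

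The second step is closure under $\T$. Let $q$ be a tangent display submersion. Then $\T q$ is tangent display, and $\T^n(\T q)=\T^{n+1}q$ is a submersion for every $n$. For the pullback condition, let $f\colon M'\to\T^n\T M=\T^{n+1}M$. The tangent pullback of $\T^n(\T q)$ along $f$ is the tangent pullback of $\T^{n+1}q$ along $f$, and this is a submersion by hypothesis on $q$.

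The third step, which I expect to be the main obstacle, is stability under pullback. Let $q'\colon E'\to M'$ be the tangent pullback of $q$ along $g\colon M'\to M$. Then $q'$ is tangent display. Since the pullback is tangent, $\T^nq'$ is the tangent pullback of $\T^nq$ along $\T^ng$, which is a submersion by the hypothesis with $f=\T^ng$. Now take $f\colon N\to\T^nM'$ and let $q''$ be the tangent pullback of $\T^nq'$ along $f$. By pasting pullbacks, using Lemma~\ref{lemma:property-tangent-pullbacks} and the fact that $\T^n$ preserves the pullback square defining $q'$, the map $q''$ is also the tangent pullback of $\T^nq$ along $f\,\T^ng$. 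It is therefore a submersion.

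The care needed here is twofold. First, we must use tangent pullbacks throughout, so that the pasted square is preserved by all $\T^m$. Second, the notion of submersion must be invariant under isomorphism of pullback choices, because the vertical and Finsler bundles are only defined up to unique isomorphism (Remarks~\ref{remark:vertical-bundle-choice} and~\ref{remark:finler-bundle-choice}). Being a pushout in $\DB(\X,\TT;E)$ is clearly invariant under such isomorphisms, so any choice of pullback works. Closure under isomorphisms, if it is part of the definition, follows in the same way.

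Finally, with the display system established, \cite[Proposition~3.11]{cruttwell:tangent-display-maps} gives that the codomain functor $\Pi\colon\Sbm(\X,\TT)\to(\X,\TT)$ is a tangent fibration. The cartesian lifts are the tangent pullbacks just shown to remain in $\Sbm(\X,\TT)$.
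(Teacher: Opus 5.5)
Your proposal is correct and follows the paper's proof. Both check closure under $\T$ and under tangent pullbacks directly from Definition~\ref{definition:display-submersion}, then obtain the fibration from \cite[Proposition~3.11]{cruttwell:tangent-display-maps}, and your pullback step is more complete than the paper's, which only treats pullbacks of $q'$ itself, whereas your pasting argument also handles pullbacks of $\T^nq'$ along any $f\colon N\to\T^nM'$ by identifying them with tangent pullbacks of $\T^nq$ along $f\,\T^ng$.
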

\begin{proof}
Consider a tangent display submersion $q\colon E\to M$. By definition, $\T q$ is again a submersion. Furthermore, for any $n\geq0$, $\T^n\T q=\T^{n+1}q$ is a submersion and for every $f\colon M'\to\T^{n+1}M$, the tangent pullback of $\T q$ along $f$ is again a submersion, since $q$ is a tangent display submersion. This proves that $\T q$ is in fact a tangent display submersion. Now, consider a morphism $f\colon M'\to M$ and the tangent pullback $q'\colon E'\to M'$ of $q$ along $f$. By assumption, $q'$ is a submersion. Furthermore, since $\T^nq'$ is the tangent pullback of $\T^nq'$ along $\T^nf$, and since $\T^nq'$ is a tangent display submersion, $\T^nq'$ is also a submersion. Finally, consider a morphism $g\colon M''\to M'$ and let $q''\colon E''\to M''$ be the tangent pullback of $q'$ along $g$. However, $q''$ is also the tangent pullback of $q$ along $gf$. Thus, $q''$ must be a submersion. This proves that $q'$ is a tangent display submersion and therefore, that $\Sbm(\X,\TT)$ is, in fact, a tangent display system. Finally, by invoking~\cite[Proposition~3.11]{cruttwell:tangent-display-maps}, which establishes that every tangent display system defines in fact a tangent fibration, we conclude that the codomain functor $\Pi\colon\Sbm(\X,\TT)\to(\X,\TT)$ is in fact a tangent fibration.
\end{proof}


\section{Ehresmann connections}
\label{section:ehresmann-connections}
In this section, we introduce the main character of this paper: Ehresmann connections in the context of tangent categories. An Ehresmann connection consists of two distinct components: a vertical connection and a horizontal connection, which together form a splitting of the tangent bundle $\T E$ of the total space $E$ of a submersion $q\colon E\to M$. A vertical connection is a map that projects a tangent vector $v\in\T_yE$ of $E$ down to a vertical component $v_\V$. A horizontal connection is a map that sends a pair formed by a point $y$ of $E$ and a tangent vector $u$ of $\T_xM$ where $x=q(y)$ to a tangent vector $v_\H\in\T_yE$ of $E$ at $y$.
\par We start by introducing these two notions separately; we then bring them together to define Ehresmann connections.

\subsection{Vertical connections}
\label{subsection:vertical-connection}
A vertical connection on a tangent display map $q\colon E\to M$ consists of a linear morphism $\R\colon\T E\to\V q$ from the tangent bundle of the total space $E$ of $q$ to the vertical bundle of $q$ which restricts to the identity on vertical vectors. The kernel of a vertical connection defines a sub-bundle of $\T E$ in direct sum with the vertical sub-bundle which encodes the horizontal bundle of the connection. We may begin by introducing this definition in tangent categories.

\begin{definition}
\label{definition:vertical-connection}
A \textbf{vertical connection} on a tangent display map $q\colon E\to M$ consists of a morphism $\R\colon\T E\to\V q$ subject to the following conditions:
\begin{description}
\item[VC.1] The map $\R$ is a retraction of $\iota_q$:
\begin{equation*}
\begin{tikzcd}
{\V q} & {\T E} \\
& {\V q}
\arrow["{\iota_q}", from=1-1, to=1-2]
\arrow[equals, from=1-1, to=2-2]
\arrow["\R", from=1-2, to=2-2]
\end{tikzcd}
\end{equation*}

\item[VC.2] The morphism $\R\colon\p_E\to\q^\V$ is a linear morphism of differential bundles over $E$:
\begin{equation*}
\begin{tikzcd}
{\T E} & {\V q} \\
E & E
\arrow["\R", from=1-1, to=1-2]
\arrow["{p_E}"', from=1-1, to=2-1]
\arrow["{q^\V}", from=1-2, to=2-2]
\arrow[equals, from=2-1, to=2-2]
\end{tikzcd}\quad
\begin{tikzcd}
{\T^2E} & {\T\V q} \\
{\T E} & {\V q}
\arrow["{\T\R}", from=1-1, to=1-2]
\arrow["{l_E}", from=2-1, to=1-1]
\arrow["\R"', from=2-1, to=2-2]
\arrow["{l_q^\V}"', from=2-2, to=1-2]
\end{tikzcd}
\end{equation*}
\end{description}
\end{definition}

\textbf{[VC.1]} requires that $\R$ be a projection to the vertical sub-bundle of $\T E$ which sends every vertical tangent vector to itself and \textbf{[VC.2]} requires that such a projection must be linear.
\par We denote by $\VC(\X,\TT)$ the tangent category of pairs $(q,\R)$ formed by a tangent display map $q$ of $(\X,\TT)$ and a vertical connection $\R$ on $q$, and commutative squares $(f,g)\colon q\to q'$ for morphisms, that is, $qf=gq'$, compatible with the vertical connections as follows:
\begin{equation*}
\begin{tikzcd}
{\T E} & {\T E'} \\
{\V q} & {\V q'}
\arrow["{\T g}", from=1-1, to=1-2]
\arrow["\R"', from=1-1, to=2-1]
\arrow["{\R'}", from=1-2, to=2-2]
\arrow["{\V g}"', from=2-1, to=2-2]
\end{tikzcd}
\end{equation*}
The tangent bundle functor of $\VC(\X,\TT)$ sends each $(q,\R)$ to $(\T q,\R_\T)$, where
\begin{align*}
&\R_\T\colon\T^2E\xrightarrow{c_E}\T^2E\xrightarrow{\T\R}\T\V q\xrightarrow{\gamma_q}\V\T q
\end{align*}
and $\gamma_q\colon\T\V q\to\V\T q$ is the strong distributive law of the tangent morphism $(-)^\V$ of Lemma~\ref{lemma:functoriality-V}.

\begin{lemma}
\label{lemma:vertical-connections}
If $\R\colon\T E\to\V q$ is a vertical connection on a tangent display map $q\colon E\to M$, then:
\begin{enumerate}
\item The morphism $(p_M,\R)\colon\T q\to\T^\V q$ is a bundle morphism:
\begin{equation*}
\begin{tikzcd}
{\T E} & {\V q} \\
{\T M} & M
\arrow["\R", from=1-1, to=1-2]
\arrow["{\T q}"', from=1-1, to=2-1]
\arrow["{\T^\V q}", from=1-2, to=2-2]
\arrow["{p_M}"', from=2-1, to=2-2]
\end{tikzcd}
\end{equation*}

\item The morphism $(q,\R)\colon\p_E\to\T^\V q$ is a bundle morphism:
\begin{equation*}
\begin{tikzcd}
{\T E} & {\V q} \\
E & M
\arrow["\R", from=1-1, to=1-2]
\arrow["{p_E}"', from=1-1, to=2-1]
\arrow["{\T^\V q}", from=1-2, to=2-2]
\arrow["q"', from=2-1, to=2-2]
\end{tikzcd}
\end{equation*}
\end{enumerate}
\end{lemma}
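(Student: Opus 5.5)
We need to show $\R\,\T^\V q = \T q\, p_M$ (part 1) and $\R\,\T^\V q = p_E\, q$ (part 2). Throughout, composition is diagrammatic.

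We begin with part 2, since it follows directly from the first square of \textbf{[VC.2]}. That square says $\R\, q^\V = p_E$, where $q^\V = \iota_q p_E$ is the projection of the vertical bundle. So it suffices to show that $\T^\V q = q^\V q$ as maps $\V q\to M$. From the defining pullback~\eqref{equation:vertical-bundle-pullback} we have $\T^\V q\, z_M = \iota_q\,\T q$. Postcomposing with $p_M$ and using $z_Mp_M=\id_M$ gives
\begin{align*}
\T^\V q = \iota_q\,\T q\,p_M = \iota_q\,p_E\,q = q^\V q,
\end{align*}
where the middle equality is naturality of the projection, $\T q\,p_M = p_E\,q$. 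Hence
\begin{align*}
\R\,\T^\V q = \R\,q^\V q = p_E\,q,
\end{align*}
which is the commutativity of the second square.

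Part 1 then follows from part 2 and naturality of $p$ once more:
\begin{align*}
\R\,\T^\V q = p_E\,q = \T q\,p_M.
\end{align*}

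We do not expect any real obstacle. The only point needing care is the identity $\T^\V q = q^\V q$, which is where the pullback square and the section property $z_Mp_M=\id_M$ are used. Neither \textbf{[VC.1]} nor the linearity square of \textbf{[VC.2]} is needed; only the fact that $\R$ is a morphism over $E$ enters the argument.
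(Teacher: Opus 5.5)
Your proof is correct and follows the paper's argument. The paper also rewrites $\T^\V q$ as $\iota_q p_E q = q^\V q$, applies the first square of \textbf{[VC.2]} to get $p_E q$, and uses naturality of $p$ to reach $\T q\, p_M$. The one difference is that you explicitly derive $\T^\V q=\iota_q p_E q$ from the pullback square and $z_Mp_M=\id_M$, whereas the paper uses this identity here without comment.
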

\begin{proof}
To prove (i), we compute:
\begin{align*}
\R \T^\V q&=~\R\iota_qp_Eq                  \Tag{\T^\V q=\iota_qp_Eq}\\
&=~\R q^\V q                            \Tag{q^\V=\iota_qp_E}\\
&=~p_Eq                                   \Tag{\textbf{[VC.2]}}\\
&=~\T qp_M                                \Tag{p_Eq=\T qp_M}
\end{align*}
However, $\R p^\V=p$, by \textbf{[VC.2]}, thus:
\begin{align*}
&\R\T^\V q=\R \T^\V q=p_Eq
\end{align*}
To prove (ii), notice that, by the naturality of $p$, $\R\T^\V q=\T qp_M=p_Eq$.
\end{proof}

Thanks to~\textbf{[VC.1]}, a vertical connection $\R\colon\T E\to\V q$ defines an idempotent $\phi_\R\=\R\iota_q\colon\T E\to\T E$ on the tangent bundle, called the vertical connection form of $\R$. The next definition formalizes this construction.

\begin{definition}
\label{definition:vertical-connection-form}
A \textbf{vertical connection form} of a tangent display map $q\colon E\to M$ is an endomorphism $\phi\colon\T E\to\T E$ of $\T E$ subject to the following conditions:
\begin{description}
\item[VCF.1] The following diagram commutes:
\begin{equation*}
\begin{tikzcd}
{\V q} & {\T E} \\
& {\T E}
\arrow["{\iota_q}", from=1-1, to=1-2]
\arrow["{\iota_q}"', from=1-1, to=2-2]
\arrow["\phi", from=1-2, to=2-2]
\end{tikzcd}
\end{equation*}

\item[VCF.2] The morphism $\phi\colon\p_E\to\p_E$ is a linear endomorphism of differential bundles;

\item[VCF.3] The following diagram commutes:
\begin{equation*}
\begin{tikzcd}
{\T E} && {\T E} \\
E & M & {\T M}
\arrow["\phi", from=1-1, to=1-3]
\arrow["{p_E}"', from=1-1, to=2-1]
\arrow["{\T q}", from=1-3, to=2-3]
\arrow["q"', from=2-1, to=2-2]
\arrow["{z_M}"', from=2-2, to=2-3]
\end{tikzcd}
\end{equation*}
\end{description}
\end{definition}

The next theorem shows that the information of a vertical connection is entirely retained in its vertical connection form.

\begin{theorem}
\label{theorem:vertical-connection-form}
There is a bijective correspondence between vertical connections and vertical connection forms on a given tangent display map $q\colon E\to M$. Concretely, if $\R\colon\T E\to\V q$ is a vertical connection on $q$, the morphism
\begin{align*}
&\phi_\R\colon\T E\xrightarrow{\R}\V q\xrightarrow{\iota_q}\T E
\end{align*}
is a vertical connection form on $q$. Conversely, if $\phi\colon\T E\to\T E$ is vertical connection form on a tangent display map $q$, the unique morphism $\R_\phi\colon\T E\to\V q$ of $\DB(\X,\TT;E)$ rendering the following diagram
\begin{equation*}
\begin{tikzcd}
{\T E} && \\
& {\V q} & {\T E} \\
& E & {\F q}
\arrow["{\R_\phi}", dashed, from=1-1, to=2-2]
\arrow["\phi", curve={height=-24pt}, from=1-1, to=2-3]
\arrow["{p_E}"', curve={height=24pt}, from=1-1, to=3-2]
\arrow["{\iota_q}", from=2-2, to=2-3]
\arrow["{q^\V}"', from=2-2, to=3-2]
\arrow["\lrcorner"{anchor=center, pos=0.125}, draw=none, from=2-2, to=3-3]
\arrow["{\pi_q}", from=2-3, to=3-3]
\arrow["{z_q^\F}"', from=3-2, to=3-3]
\end{tikzcd}
\end{equation*}
commutative, is a vertical connection on $q$.
\end{theorem}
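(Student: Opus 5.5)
The proof has three parts: show that $\phi_\R$ is a vertical connection form, show that $\R_\phi$ is a vertical connection, and check that the two constructions are mutually inverse.

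\emph{From $\R$ to $\phi_\R$.} Set $\phi_\R\=\R\iota_q$.
For \textbf{[VCF.1]}, compose with $\iota_q$ and use \textbf{[VC.1]}: $\iota_q\R\iota_q=\iota_q$.
For \textbf{[VCF.2]}, argue as follows. By \textbf{[VC.2]}, $\R$ is linear from $\p_E$ to $\q^\V$. The inclusion $\iota_q\colon\q^\V\to\p_E$ is also a linear morphism over $E$, since $\q^\V$ is obtained through the Cartesian tangent morphism $\DB(\Pi,\iota)$, whose lift is $l^\V_q$ with $l^\V_q\T\iota_q=\iota_ql_E$. Linear morphisms compose, so $\phi_\R$ is linear.
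For \textbf{[VCF.3]}, write $\phi_\R\T q=\R\iota_q\T q=\R\T^\V qz_M$. Lemma~\ref{lemma:vertical-connections}(ii) rewrites this as $p_Eqz_M$.

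\emph{From $\phi$ to $\R_\phi$.} First we need $\phi$ to factor through the kernel. By Lemma~\ref{lemma:vertical-bundle-kernel}, the square $\iota_q\pi_q=q^\V z^\F_q$ is a pullback in $\DB(\X,\TT;E)$. So it suffices that $\phi$ and $p_E$ form a cone of linear morphisms over $E$, that is, $\phi\pi_q=p_Ez^\F_q$.
\begin{itemize}
\item Linearity of $\phi$ is \textbf{[VCF.2]}, and $p_E\colon\p_E\to\0_E$ is trivially linear.
\item To check $\phi\pi_q=p_Ez^\F_q$, use the universal property of $\F q$ as a pullback in $\X$ and compare both components.
\item On the $E$-component, $\phi\pi_qq^\F=\phi p_E=p_E$ by \textbf{[VCF.2]}, which agrees with $p_Ez^\F_qq^\F=p_E$.
\item On the $\T M$-component, $\phi\pi_q\T^\F q=\phi\T q=p_Eqz_M$ by \textbf{[VCF.3]}, which agrees with $p_Ez^\F_q\T^\F q=p_Eqz_M$.
\end{itemize}
This yields a unique linear $\R_\phi$ with $\R_\phi\iota_q=\phi$ and $\R_\phi q^\V=p_E$. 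In particular \textbf{[VC.2]} holds automatically, since $\R_\phi$ is a morphism in $\DB(\X,\TT;E)$.
For \textbf{[VC.1]}, note that $\iota_q$ is monic (Lemma~\ref{lemma:iota-tangent-monic}). It therefore suffices to compute $\iota_q\R_\phi\iota_q=\iota_q\phi=\iota_q$, using \textbf{[VCF.1]}.

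\emph{Bijectivity.} The round trip from forms is immediate: $\phi_{\R_\phi}=\R_\phi\iota_q=\phi$. For the round trip from connections, $\R_{\phi_\R}$ and $\R$ both satisfy $(-)\iota_q=\R\iota_q$ and $(-)q^\V=p_E$ (by \textbf{[VC.2]}), so monicity of $\iota_q$ gives $\R_{\phi_\R}=\R$.

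The main obstacle is the bookkeeping in the second step. One must verify $\phi\pi_q=p_Ez^\F_q$ via the pullback $\F q$, and confirm that the kernel property from Lemma~\ref{lemma:vertical-bundle-kernel} really yields a factorization in $\DB(\X,\TT;E)$ rather than merely in $\X$. The linearity of $p_E\colon\p_E\to\0_E$ and of $z^\F_q$ must also be checked routinely. Everything else reduces to monicity of $\iota_q$.
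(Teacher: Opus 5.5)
Your proposal is correct and follows the paper's proof essentially step for step. You use the same verification of \textbf{[VCF.1]}--\textbf{[VCF.3]}, the same factorization through the kernel pullback of Lemma~\ref{lemma:vertical-bundle-kernel} after checking $\phi\pi_q=p_Ez_q^\F$ componentwise on $\F q$, and the same appeal to monicity of $\iota_q$ for \textbf{[VC.1]} and bijectivity. The only cosmetic difference is that you check the two components directly from the defining equations $z_q^\F q^\F=\id_E$ and $z_q^\F\T^\F q=qz_M$, whereas the paper routes through $z_E\pi_q=z_q^\F$ and naturality of $z$.
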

\begin{proof}
For starters, suppose that $\R$ is a vertical connection on a tangent display map $q\colon E\to M$ and consider the morphism $\phi_\R\=\R\iota_q$. Since $\R$ is a retract of $\iota_q$, that is, $\iota_q\R=\id_{\V q}$, we immediately prove that $\iota_q\phi_\R=\iota_q$, that is, \textbf{[VCF.1]} holds. \textbf{[VCF.2]} is also immediate, since, by \textbf{[VC.2]}, $\R$ is a linear morphism of differential bundles and so is $\iota_q$, since, by construction, $\iota_ql_E=l_q^\V\T\iota_q$. However, linear morphisms of differential bundles are stable under composition \cite[Prop. 2.7(i)]{cockett:connections}; thus, $\phi_\R$ is a linear morphism of differential bundles. Let us prove \textbf{[VCF.3]}:
\begin{align*}
\phi_\R\T q&=~\R\iota_q\T q\\
&=~\R\T^\V qz_M                   \Tag{\iota_q\T q=\T^\V qz_M}\\
&=~p_Eqz_M                      \Tag{\text{Lemma}~\ref{lemma:vertical-connections}.(b)}
\end{align*}
Conversely, consider a vertical connection form $\phi\colon\T E\to\T E$ of $q\colon E\to M$ and let us prove that $\R_\phi$ is a vertical connection. First, let us prove that $\R_\phi$ is well-defined. For this, we will begin by proving that $\phi\pi_q=p_Ez_q^\F$, by using the universal property that defines the Finsler bundle. Consider the following:
\begin{align*}
\phi\pi_qq^\F&=~\phi p_E                        \Tag{\pi_qq^\F=p_E}\\
&=~p_E                                          \Tag{\textbf{[VCF.2]}}\\
&=~p_Ez_Ep_E                                    \Tag{z_Ep_E=\id_E}\\
&=~p_Ez_E\pi_qq^\F                              \Tag{\pi_qq^\F=p_E}\\
&=~p_Ez_E\pi_qq^\F                              \Tag{z_E\pi_q=z_q^\F}
\end{align*}
Moreover, by using \textbf{[VCF.3]}, we can also compute the following:
\begin{align*}
\phi\pi_q\T^\F q&=~\phi\T q                    \Tag{\pi_q\T^\F q=\T q}\\
&=~p_Eqz_M                                      \Tag{\textbf{[VCF.3]}}\\
&=~p_Ez_E\T q                                   \Tag{\text{Naturality of }z}\\
&=~p_Ez_E\pi_q\T^\F q                          \Tag{\T q=\pi_q\T^\F q}\\
&=~p_Ez_q^\F\T^\F q                            \Tag{z_q^\F=z_E\pi_q}
\end{align*}
Thus, $\phi\pi_q=p_Ez_q^\F$. By Lemma~\ref{lemma:vertical-bundle-kernel}, the square diagram is a tangent pullback in $\DB(\X,\TT;E)$, and by \textbf{[VCF.2]}, $\phi$ is linear. Therefore, $\R_\phi$ is well-defined. Moreover, $\R_\phi$ is automatically linear, since the morphisms of $\DB(\X,\TT;E)$ are linear, thus, \textbf{[VC.2]} holds. Moreover, by using that $\R_\phi\iota_q=\phi$ and \textbf{[VCF.1]}, we compute $\iota_q\R_\phi\iota_q=\iota_q\phi=\iota_q$. However, since $\iota_q$ is tangent monic by Lemma~\ref{lemma:iota-tangent-monic}, this implies that $\iota_q\R_\phi=\id_{\V q}$, that is, \textbf{[VC.1]} holds. Finally, we want to show that this correspondence is bijective. Consider a vertical connection $\R$ on $q$. Thus, the vertical connection $\R_{\phi_\R}$ satisfies the equation $\R_{\phi_\R}\iota_q=\phi_\R=\R\iota_q$. However, since $\iota_q$ is monic, this implies that $\R_{\phi_\R}=\R$. Conversely, consider a vertical connection form $\phi$ on $q$. Thus, the morphism $\phi_{\R_\phi}$ is equal to $\R_\phi\iota_q$. However, $\R_\phi\iota_q$, by construction, is equal to $\phi$. Thus, $\phi_{\R_\phi}=\phi$.
\end{proof}

One of the crucial properties of tangent display maps is stability under pullbacks. The next result shows that this also holds for vertical connections.

\begin{proposition}
\label{proposition:pullback-vertical-connections}
If $q\colon E\to M$ is a tangent display map, $\R\colon\T E\to\V q$ a vertical connection on $q$, and $f\colon M'\to M$ is a morphism, the tangent display map $q'\colon E'\to M'$, pullback of $q$ along $f$
\begin{equation*}
\begin{tikzcd}
{E'} & E \\
{M'} & M
\arrow["g", from=1-1, to=1-2]
\arrow["{q'}"', from=1-1, to=2-1]
\arrow["\lrcorner"{anchor=center, pos=0.125}, draw=none, from=1-1, to=2-2]
\arrow["q", from=1-2, to=2-2]
\arrow["f"', from=2-1, to=2-2]
\end{tikzcd}
\end{equation*}
comes with a vertical connection $\R'\colon\T E'\to\V q'$, defined as the unique morphism rendering the following diagram commutative:
\begin{equation*}
\begin{tikzcd}
{\T E'} && {\T E} & {\V q} \\
& {\V q'} & {\T E'} & {\T E} \\
{E'} & {M'} & {\T M'} & {\T M}
\arrow["{\T g}", from=1-1, to=1-3]
\arrow["{\R'}", dashed, from=1-1, to=2-2]
\arrow["{p_{E'}}"', from=1-1, to=3-1]
\arrow["\R", from=1-3, to=1-4]
\arrow["{\iota_q}", from=1-4, to=2-4]
\arrow["{\iota_{q'}}", from=2-2, to=2-3]
\arrow["{\T^\V q'}"', from=2-2, to=3-2]
\arrow["\lrcorner"{anchor=center, pos=0.125}, draw=none, from=2-2, to=3-3]
\arrow["{\T g}", from=2-3, to=2-4]
\arrow["{\T q'}", from=2-3, to=3-3]
\arrow["\lrcorner"{anchor=center, pos=0.125}, draw=none, from=2-3, to=3-4]
\arrow["{\T q}", from=2-4, to=3-4]
\arrow["{q'}"', from=3-1, to=3-2]
\arrow["{z_M}"', from=3-2, to=3-3]
\arrow["{\T f}"', from=3-3, to=3-4]
\end{tikzcd}
\end{equation*}
\end{proposition}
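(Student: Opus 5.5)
We need to show three things: $\R'$ is well-defined, it satisfies \textbf{[VC.1]}, and it satisfies \textbf{[VC.2]}. The diagram defines $\R'$ in two stages. First, the morphism $\T g\,\R\,\iota_q\colon\T E'\to\T E$ factors through $\T E'$ via the right-hand square. This square is a (tangent) pullback, being $\T$ applied to the tangent pullback of $q$ along $f$. Second, the resulting map factors through $\V q'$ via the left-hand pullback square defining the vertical bundle.

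For the first factorisation we need a map $\T E'\to\T M'$ whose composite with $\T f$ equals $\T g\,\R\,\iota_q\T q$. By Lemma~\ref{lemma:vertical-connections} (or \textbf{[VCF.3]}), $\R\iota_q\T q=p_Eqz_M$. Then
\[
\T g\,p_E\,q\,z_M=p_{E'}\,g\,q\,z_M=p_{E'}\,q'\,f\,z_M=p_{E'}\,q'\,z_{M'}\,\T f ,
\]
using naturality of $p$ and $z$. So the candidate map is $p_{E'}q'z_{M'}$, and we obtain a unique $\psi\colon\T E'\to\T E'$ with
\[
\psi\,\T g=\T g\,\R\,\iota_q,\qquad \psi\,\T q'=p_{E'}q'z_{M'} .
\]
The second equation is exactly what is needed to factor $\psi$ through the vertical-bundle pullback along $z_{M'}$, with $\T^\V q'$-component $p_{E'}q'$. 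This gives $\R'$ with $\R'\iota_{q'}=\psi$. In the language of Theorem~\ref{theorem:vertical-connection-form}, $\psi=\phi_{\R'}$, so it may be cleaner to verify directly that $\psi$ is a vertical connection form on $q'$ and then invoke that theorem to obtain \textbf{[VC.1]} and \textbf{[VC.2]} at once. Condition \textbf{[VCF.3]} for $\psi$ is the second equation above.

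\textbf{[VCF.1].} We must show $\iota_{q'}\psi=\iota_{q'}$. Both sides are maps into the pullback $\T E'$, so it suffices to compare components. Postcomposing with $\T g$, the key fact is that $\iota_{q'}\T g=\V_f g\,\iota_q$, which is the functoriality of $(-)^\V$ from Lemma~\ref{lemma:functoriality-V}. This gives
\[
\iota_{q'}\T g\,\R\,\iota_q=\V_f g\,\iota_q\,\R\,\iota_q=\V_f g\,\iota_q=\iota_{q'}\T g ,
\]
by \textbf{[VC.1]} for $\R$. Postcomposing with $\T q'$, we get $\iota_{q'}p_{E'}q'z_{M'}=\T^\V q'\,z_{M'}=\iota_{q'}\T q'$, since $\iota_{q'}p_{E'}q'=\T^\V q'$; this last identity holds by Lemma~\ref{lemma:vertical-connections}-type reasoning, namely $\iota p\,q=\iota\,\T q\,p_M=\T^\V q'\,z\,p=\T^\V q'$.

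\textbf{[VCF.2].} We need $\psi p_{E'}=p_{E'}$ and $l_{E'}\T\psi=\psi\,l_{E'}$. Both are checked by postcomposing with the jointly monic pair $\T g,\T q'$, respectively its $\T$-image $\T^2g,\T^2q'$; the latter pair is jointly monic because the pullback is tangent. The $\T g$-components reduce to linearity of $\phi_\R$ together with naturality of $l$ and $p$. The $\T q'$-components reduce to the identity $l_{E'}\T p_{E'}\T q'\T z_{M'}=p_{E'}q'z_{M'}l_{M'}$, which holds by naturality together with $z_Ml_M=z_M\T z_M$.

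The main obstacle is this linearity bookkeeping, which requires tangent pullbacks (preservation by $\T$) rather than ordinary pullbacks. Everything else is routine naturality, so the proof then closes by applying Theorem~\ref{theorem:vertical-connection-form}.
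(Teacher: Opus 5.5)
Your proposal is correct and is essentially the paper's proof: the same two-stage factorization through $\T E'$ and $\V q'$, and the same componentwise checks against the pullback projections. Your \textbf{[VCF.1]} computation via $\iota_{q'}\T g=\V_fg\,\iota_q$ and your $\T^2g$-component of linearity are literally the paper's computations. The differences are that you also make the well-definedness of the factorization explicit, and that you verify the axioms on $\psi=\R'\iota_{q'}$ and let Theorem~\ref{theorem:vertical-connection-form} (i.e.\ monicity of $\iota_{q'}$) transfer them to $\R'$, where the paper checks \textbf{[VC.1]}--\textbf{[VC.2]} on $\R'$ directly.

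There are two small slips. First, $\psi p_{E'}=p_{E'}$ is a map into $E'$, so it must be checked against $g,q'$ rather than $\T g,\T q'$; by naturality of $p$ this reduces to the $\T g$- and $\T q'$-components of $\psi$. Second, your $\T^2q'$-component identity needs the tangent axiom $l_{E'}\T p_{E'}=p_{E'}z_{E'}$, not just naturality.
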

\begin{proof}
For starters, we prove that $\R'$ satisfies \textbf{[VC.1]}. Let us compute the following equalities:
\begin{align*}
\iota_{q'}\R'\iota_{q'}\T g&=~\iota_{q'}\T g\R\iota_q               \Tag{\text{Definition of }\R'}\\
&=~\V_fg\iota_q\R\iota_q                                            \Tag{\V_fg\iota_q=\iota_{q'}\T g}\\
&=~\V_fg\iota_q                                                     \Tag{\textbf{[VC.1]}}\\
&=~\iota_{q'}\T g                                                   \Tag{\V_fg\iota_q=\iota_{q'}\T g}
\end{align*}
Furthermore:
\begin{align*}
\iota_{q'}\R'\T^\V q'&=~\iota_{q'}\T q'p_M                     \Tag{\text{Lemma}~\ref{lemma:vertical-connections}.(a)}\\
&=~\T^\V q'z_Mp_M                                              \Tag{\iota_{q'}\T q'=\T^\V q'z_M}\\
&=~\T^\V q'                                                    \Tag{z_Mp_M=\id_M}
\end{align*}
Thus, by the universal property of the two pullback diagrams, we conclude that $\iota_q'\R'=\id_{\V q'}$. Next, let us prove \textbf{[VC.2]}. First, let us prove that $\R'q'^\V=p_{E'}$:
\begin{align*}
\R'q'^\V g&=~\R'\iota_{q'}p_{E'}g                               \Tag{q'^\V=\iota_{q'}p_{E'}}\\
&=~\R'\iota_{q'}\T gp_E                                         \Tag{\text{Naturality of }p}\\
&=~\T g\R\iota_qp_E                                             \Tag{\text{Definition of }\R'}\\
&=~\T gp_E                                                      \Tag{\textbf{[VC.1]}}\\
&=~p_{E'}g                                                      \Tag{\text{Naturality of }p}
\end{align*}
Moreover, we can also write
\begin{align*}
\R'q'^\V q'=\R'\T^\V q'=p_{E'}q'
\end{align*}
where we used that $q'^\V q'=\T^\V q'$ and again \textbf{[VC.1]}. Thus, by the universal property of the pullback, $\R'q'^\V=p_{E'}$. Finally, let us prove linearity:
\begin{align*}
\R'l_{q'}^\V\T\iota_{q'}\T^2g&=~\R'\iota_{q'}\T gl_E        \Tag{\text{Naturality of $l$ and linearity of }\iota_{q'}}\\
&=~\T g\R\iota_ql_E                                         \Tag{\text{Definition of }\R'}\\
&=~\T gl_E\T\R\T\iota_q                                     \Tag{\textbf{[VC.2]}}\\
&=~l_E\T^2g\T\R\T\iota_q                                    \Tag{\text{Naturality of $l$ and linearity of }\iota_{q'}}\\
&=~l_E\T\R'\T\iota_{q'}\T^2g                                \Tag{\text{Definition of }\R'}
\end{align*}
Moreover, we also compute
\begin{align*}
&\R'l_{q'}^\V\T\T^\V q'=\R'\T^\V q'z_{M'}=p_Eq'z_M=\T q'p_Mz_M=\T q'l_Mp_{\T M}=l_{E'}p_{E'}\T q'
\end{align*}
where we used the naturality of $l$, of $p$, that $lp_\T=pz$, and \textbf{[VC.2]}. Thus, by the universal property of the two tangent pullbacks, we conclude that $\R'l_{q'}^\V=l_{E'}\T\R'$, thus, that \textbf{[VC.2]} holds for $\R'$.
\end{proof}

Thanks to stability under pullbacks, vertical connections can be organized into a tangent fibration.

\begin{proposition}
\label{proposition:fibration-vertical-connections}
The codomain functor $\Pi\colon\VC(\X,\TT)\to(\X,\TT)$ which sends a vertical connection $(q,\R)$ to the codomain $M$ of $q\colon E\to M$ is a tangent fibration.
\end{proposition}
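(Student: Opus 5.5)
We prove that $\Pi\colon\VC(\X,\TT)\to(\X,\TT)$ is a tangent fibration in two stages. First we show that it is a fibration, and then that its cartesian morphisms are preserved by the tangent bundle functor of $\VC(\X,\TT)$. This is the notion of tangent fibration used in Proposition~\ref{proposition:submersions-tangent-display-system} and~\cite[Section~5]{cockett:differential-bundles}. The overall strategy is to lift everything from the tangent fibration of tangent display maps $\Dsply(\X,\TT)\to(\X,\TT)$, which exists by Theorem~\ref{theorem:maximal-tangent-display-system} and~\cite[Proposition~3.11]{cruttwell:tangent-display-maps}. We then check that the vertical connection data is transported along the cartesian lifts.

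\textbf{Step 1: cartesian lifts.} Given $(q,\R)$ and $f\colon M'\to M$, take the tangent pullback $q'$ of $q$ along $f$ with top map $g\colon E'\to E$. Equip $q'$ with the vertical connection $\R'$ of Proposition~\ref{proposition:pullback-vertical-connections}. By the defining equation $\R'\iota_{q'}\T g=\T g\R\iota_q$, together with $\V_fg\iota_q=\iota_{q'}\T g$ and the monicity of $\iota_q$, we get $\R'\V_fg=\T g\R$. Hence $(f,g)\colon(q',\R')\to(q,\R)$ is a morphism of $\VC(\X,\TT)$.

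For cartesianness, let $(h,k)\colon(q'',\R'')\to(q,\R)$ be a morphism with $h=uf$. The pullback gives a unique $k'\colon E''\to E'$ with $k'g=k$ and $k'q'=q''u$. It remains to check that $(u,k')$ is compatible with the connections, that is, $\R''\V_uk'=\T k'\R'$. Both sides land in $\V q'$, and $\V q'$ is a tangent pullback of $\T q'$ along $z_{M'}$, so it suffices to compose with $\iota_{q'}$ and then with $\T g$ and $\T q'$. After composing with $\iota_{q'}\T g$, both sides reduce to $\T k\R\iota_q$, using functoriality of $\V$ (Lemma~\ref{lemma:functoriality-V}) and the compatibility of $(h,k)$. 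After composing with $\T q'$, both sides reduce to $p$ followed by zero, by Lemma~\ref{lemma:vertical-connections}.

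\textbf{Step 2: the tangent functor preserves cartesian morphisms.} Apply $\T$ to the lift $(f,g)$ from Step 1. The resulting square $(\T f,\T g)\colon(\T q',\R'_\T)\to(\T q,\R_\T)$ is a tangent pullback, since $q$ is tangent display. So it is enough to show that $\R'_\T$ coincides with the connection that Proposition~\ref{proposition:pullback-vertical-connections} induces on $\T q'$ from $\R_\T$ along $\T f$. By the uniqueness in that proposition, this reduces to the identity
\[\R'_\T\,\iota_{\T q'}\,\T^2 g=\T^2g\,\R_\T\,\iota_{\T q}.\]
Unfolding $\R_\T=c_E\T\R\gamma_q$, this follows from the following facts:
\begin{itemize}
\item naturality of $c$;
\item the equation $\gamma_q\iota_{\T q}=\T\iota_q c_E$, up to the direction conventions of $\gamma$ fixed in Lemma~\ref{lemma:functoriality-V}, together with $c_Ec_E=\id$;
\item $\T$ applied to the defining equation of $\R'$.
\end{itemize}

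I expect this last verification to be the main obstacle. It requires careful bookkeeping of the distributive law $\gamma$ and its interaction with $\V_fg$, namely naturality of $\gamma$ in morphisms of $\Dsply(\X,\TT)$. The first thing I would try is to reduce every equation to the level of $\T^2E$ using the tangent-monic maps $\T\iota_q$ and $\iota_{\T q}$ (Lemma~\ref{lemma:iota-tangent-monic}), where everything becomes naturality of $c$.
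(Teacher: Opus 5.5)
Your proposal is correct and takes essentially the same route as the paper, whose proof just appeals to the stability of tangent display maps and of vertical connections under tangent pullbacks (Proposition~\ref{proposition:pullback-vertical-connections}) and under the tangent bundle functor, via \cite[Proposition~3.11]{cruttwell:tangent-display-maps}; your Steps~1 and~2 spell out the cartesianness and preservation checks that this appeal leaves implicit. The Step~2 identity is only a short computation using naturality of $c$, the equation $\gamma_q\iota_{\T q}=\T\iota_q c_E$ (for both $q$ and $q'$), and $\T$ applied to the defining equation of $\R'$, with no naturality of $\gamma$ needed; for completeness you should also record the routine fact that $\Pi$ is a strict tangent morphism.
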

\begin{proof}
This is a direct consequence of the stability under the tangent bundle functor and along tangent pullbacks of tangent display maps and of vertical connections (see~\cite[Proposition~3.11]{cruttwell:tangent-display-maps}).
\end{proof}

In general, tangent display maps are not stable under retration, however, they become stable when the idempotents of the tangent category split~\cite[Corollary~2.25]{cruttwell:tangent-display-maps}. Concretely, this means that, if the following diagram
\begin{equation}
\label{equation:section-retrations-tangent-display}
\begin{tikzcd}
E & {E'} & E \\
M & {M'} & M
\arrow["{r_E}", from=1-1, to=1-2]
\arrow["q"', from=1-1, to=2-1]
\arrow["{s_E}", from=1-2, to=1-3]
\arrow["{q'}", from=1-2, to=2-2]
\arrow["q", from=1-3, to=2-3]
\arrow["{s_M}"', from=2-1, to=2-2]
\arrow["{r_M}"', from=2-2, to=2-3]
\end{tikzcd}
\end{equation}
commutes and $(s_E,r_E)$ and $(s_M,r_M)$ are section-retraction pairs, that is, $s_Er_E=\id_{E'}$ and $s_Mr_M=\id_{M'}$, then, if $q$ is a tangent display map, so is $q'$. The next result shows that, under the condition of tangent display maps to be stable under retration, also vertical connections are too.

\begin{proposition}
\label{proposition:vertical-connections-retraction}
In a tangent category whose tangent display maps are closed under retraction, vertical connections are also closed under retraction. Explicitly, if $\R$ is a vertical connection on a tangent display map $q\colon E\to M$ and $(s_E,r_E)$ and $(s_M,r_M)$ are section-retraction pairs as in Equation~\eqref{equation:section-retrations-tangent-display}, then the tangent display map $q'\=s_Eqr_M\colon E'\to M'$ comes with a vertical connection $\R'\colon\T E'\to\V q'$ defined as follows:
\begin{align*}
&\R'\colon\T E'\xrightarrow{\T s_E}\T E\xrightarrow{\R}\V q\xrightarrow{\V_{r_M}r_E}\V q'
\end{align*}
\end{proposition}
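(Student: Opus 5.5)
The plan is to verify \textbf{[VC.1]} and \textbf{[VC.2]} for $\R'$ directly. The only tools needed are the functoriality of $(-)^\V$ from Lemma~\ref{lemma:functoriality-V} and the fact that linear morphisms compose. First, $q'$ is tangent display by the closure-under-retraction hypothesis, so $\V q'$ and $\iota_{q'}$ exist.

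From the commuting diagram~\eqref{equation:section-retrations-tangent-display} I read off two morphisms of bundles in the sense of Lemma~\ref{lemma:functoriality-V}.
\begin{itemize}
\item The left square gives a bundle morphism from $q$ to $q'$ with top component $r_E$. The paper writes its image $\V_{r_M}r_E$; I take it to be the vertical map associated with the left square.
\item The right square gives a bundle morphism $q'\to q$ with top component $s_E$. Its vertical map, denoted $\V s_E$, is characterised by $\V s_E\,\iota_q=\iota_{q'}\T s_E$ together with the corresponding equation on $\T^\V$.
\end{itemize}
Composing the two squares along $q'$, the top row is $s_Er_E$, which is an identity. The bottom row is also the identity on the base of $q'$, by the section–retraction identities. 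I will state this carefully, since the typing of $s$ and $r$ has to be tracked in diagrammatic order.

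Functoriality of $(-)^\V$ then gives $\V s_E\,\V r_E=\V(\mathrm{id})=\id_{\V q'}$. This is the one place where the base component matters: $\V_fg$ is determined by both $\iota$ and $\T^\V$, so I must know the base composite is the identity, not just the total-space composite. I would check it directly with the two pullback equations defining $\V_fg$, rather than rely on the lemma's naturality alone.

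For \textbf{[VC.1]} I compute, using the defining equation $\V s_E\,\iota_q=\iota_{q'}\T s_E$ and then \textbf{[VC.1]} for $\R$:
\begin{align*}
\iota_{q'}\R'=\iota_{q'}\T s_E\,\R\,\V r_E=\V s_E\,\iota_q\R\,\V r_E=\V s_E\,\V r_E=\id_{\V q'}.
\end{align*}

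For \textbf{[VC.2]} I view $\R'$ as a composite of three linear morphisms of differential bundles:
\begin{itemize}
\item $(s_E,\T s_E)\colon\p_{E'}\to\p_E$, which is linear by naturality of $l$;
\item $\R\colon\p_E\to\q^\V$, which is linear over $\id_E$ by \textbf{[VC.2]} for $\R$;
\item $(r_E,\V r_E)\colon\q^\V\to\q'^\V$, which is linear by Lemma~\ref{lemma:functoriality-V}.
\end{itemize}
Linear morphisms are closed under composition \cite[Prop. 2.7(i)]{cockett:connections}, so $\R'$ is linear. Its base component is $s_Er_E=\id_{E'}$, which gives both $\R'q'^\V=p_{E'}$ and the lift compatibility $\R'l^\V_{q'}=l_{E'}\T\R'$.

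The main obstacle is the bookkeeping: identifying exactly which pairs of maps form bundle morphisms between $q$ and $q'$, and confirming that the composite of the two squares has identity base component, so that functoriality really yields $\id_{\V q'}$. Everything else is formal.
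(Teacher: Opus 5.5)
Your proposal is correct and matches the paper's proof. \textbf{[VC.1]} follows from $\iota_{q'}\T s_E=\V_{s_M}s_E\,\iota_q$, \textbf{[VC.1]} for $\R$, and functoriality of $(-)^\V$. \textbf{[VC.2]} follows because $\R'$ is a composite of linear morphisms; the paper checks $\R'q'^{\V}=p_{E'}$ by direct computation, which your observation that the base component is $s_Er_E=\id_{E'}$ gives at once.

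Your worry about the base component when invoking functoriality is harmless but not needed. Since $\iota_{q'}$ is monic, the equation $\V_{s_M}s_E\,\V_{r_M}r_E\,\iota_{q'}=\iota_{q'}\T s_E\T r_E=\iota_{q'}$ already forces $\V_{s_M}s_E\,\V_{r_M}r_E=\id_{\V q'}$.
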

\begin{proof}
To show that \textbf{[VC.1]} holds for $\R'$, let us compute the following:
\begin{align*}
&\iota_{q'}\R'=\iota_{q'}\T s_E\R\V_{r_M}r_E=\V_{s_M}s_E\iota_q\R\V_{r_M}r_E=\V_{s_M}s_E\V_{r_M}r_E=\id_{\V q'}
\end{align*}
To prove \textbf{[VC.2]}, let us start by computing the following:
\begin{align*}
&\R'q'^\V=\T s_E\R\V_{r_M}r_E\iota_{q'}p_{E'}=\T s_E\R\T r_Ep_{E'}=\T s_E\R\iota_qp_Er_E=\T s_Ep_Er_E=p_{E'}s_Er_E=p_{E'}
\end{align*}
where we used that $\R\iota_qp_E=\R q^\V=p_E$. This shows that $\R'$ is a bundle morphism. To prove that $\R'$ is linear, notice that, $\R'$ is composition of linear morphisms, since $\T s_E$, $\R$, and $\V_{r_M}r_E$ are all linear. Thus, $\R'$ is also linear and \textbf{[VC.2]} holds.
\end{proof}

\subsection{Horizontal connections}
\label{subsection:horizontal-connection}
Generally, a vertical connection is insufficient for specifying the horizontal bundle of a connection. In fact, in the absence of negatives as in the general case of a tangent category, the kernel of $\R$ may fail to be in direct sum with the vertical bundle. For this reason, an Ehresmann connection comprises a second piece of information: a horizontal connection. In this section, we introduce this concept in tangent categories and study some of its properties.

\begin{definition}
\label{definition:horizontal-connection}
A \textbf{horizontal connection} on a tangent display map $q\colon E\to M$ consists of a morphism $\H\colon\F q\to\T E$ satisfying the following conditions:
\begin{description}
\item[HC.1] The map $\H$ is a section of the horizontal descent $\pi_q$:
\begin{equation*}
\begin{tikzcd}
{\F q} & {\T E} \\
& {\F q}
\arrow["{\H}", from=1-1, to=1-2]
\arrow[equals, from=1-1, to=2-2]
\arrow["{\pi_q}", from=1-2, to=2-2]
\end{tikzcd}
\end{equation*}

\item[HC.2] The morphism $\H\colon q^\F\to\p_E$ is a linear morphism of differential bundles over $E$:
\begin{equation*}
\begin{tikzcd}
{\F q} & {\T E} \\
E & E
\arrow["\H", from=1-1, to=1-2]
\arrow["{q^\F}"', from=1-1, to=2-1]
\arrow["{p_E}", from=1-2, to=2-2]
\arrow[equals, from=2-1, to=2-2]
\end{tikzcd}\quad
\begin{tikzcd}
{\T\F q} & {\T^2E} \\
{\F q} & {\T E}
\arrow["{\T\H}", from=1-1, to=1-2]
\arrow["{l_q^\F}", from=2-1, to=1-1]
\arrow["\H"', from=2-1, to=2-2]
\arrow["{l_E}"', from=2-2, to=1-2]
\end{tikzcd}
\end{equation*}
\end{description}
\end{definition}

\textbf{[HC.1]} requires that $\H$ is an inclusion of the Finsler bundle $\q^\F\colon\F q\to E$ into the tangent bundle of $E$; \textbf{[HC.2]} requires that this inclusion is linear.
\par We denote by $\HC(\X,\TT)$ the tangent category of pairs $(q,\H)$ formed by a tangent display map $q$ of $(\X,\TT)$ and a horizontal connection $\H$ on $q$, and commutative squares $(f,g)\colon q\to q'$ for morphisms, that is, $qf=gq'$, compatible with the horizontal connections as follows:
\begin{equation*}
\begin{tikzcd}
{\F q} & {\F q'} \\
{\T E} & {\T E'}
\arrow["{\F g}", from=1-1, to=1-2]
\arrow["\H", from=2-1, to=1-1]
\arrow["{\T g}"', from=2-1, to=2-2]
\arrow["{\H'}"', from=2-2, to=1-2]
\end{tikzcd}
\end{equation*}
The tangent bundle functor of $\HC(\X,\TT)$ sends each $(q,\H)$ to $(\T q,\H_\T)$, where
\begin{align*}
&\H_\T\colon\F\T q\xrightarrow{\gamma^\F_q}\T\F q\xrightarrow{\T\H}\T^2E\xrightarrow{c}\T^2E
\end{align*}
and $\gamma^\F_q\colon\T\F q\to\F\T q$ is the strong distributive law of the tangent morphism $(-)^\F$ of Lemma~\ref{lemma:functoriality-F}.

\begin{lemma}
\label{lemma:horizontal-connection}
If \ $\H\colon\F q\to\T E$ is a horizontal connection on a tangent display map $q\colon E\to M$, then the morphism $\H\colon\T^\F q\to\T q$ is a bundle morphism:
\begin{equation*}
\begin{tikzcd}
{\F q} & {\T E} \\
{\T M} & {\T M}
\arrow["\H", from=1-1, to=1-2]
\arrow["{\T^\F q}"', from=1-1, to=2-1]
\arrow["{\T q}", from=1-2, to=2-2]
\arrow[equals, from=2-1, to=2-2]
\end{tikzcd}
\end{equation*}
\end{lemma}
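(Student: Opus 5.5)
The claim is that $\H\T q=\T^\F q$, and this follows directly from \textbf{[HC.1]} together with the definition of the horizontal descent. The plan is to factor $\T q$ through $\pi_q$ and then cancel the section.

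By Definition~\ref{definition:horizontal-descent}, $\pi_q\colon\T E\to\F q$ is the unique morphism with $\pi_q q^\F=p_E$ and $\pi_q\T^\F q=\T q$. In diagrammatic order, we therefore compute
\begin{align*}
\H\T q&=~\H\pi_q\T^\F q                 \Tag{\T q=\pi_q\T^\F q}\\
&=~\T^\F q                              \Tag{\textbf{[HC.1]}:\ \H\pi_q=\id_{\F q}}
\end{align*}
This is exactly the commutativity of the displayed square, whose bottom edge is the identity on $\T M$.

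I expect no real obstacle. The only point to keep straight is the diagrammatic composition convention: \textbf{[HC.1]} says $\H$ followed by $\pi_q$ is the identity, written $\H\pi_q=\id_{\F q}$, and this is the form used in the second step. The linearity axiom \textbf{[HC.2]} is not needed.

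If one also wants the square to be a morphism of bundles over the correct bases, the first part of \textbf{[HC.2]} gives $\H p_E=q^\F$. This fits with $\T q\,p_M=p_E q$ and $\T^\F q\,p_M=q^\F q$, so the square is compatible with the projections down to $M$. That check is not part of the stated claim.
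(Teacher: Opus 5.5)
Your proof is correct and matches the paper's own argument: you rewrite $\T q$ as $\pi_q\T^\F q$ using the defining property of the horizontal descent, then cancel $\H\pi_q=\id_{\F q}$ by \textbf{[HC.1]}. Your side remark on compatibility with the projections is also correct, but, as you note, it is not needed for the statement.
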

\begin{proof}
Recall that $\pi_q\T^\F q=\T q$, thus, we compute:
\begin{align*}
\H\T q&=~\H\pi_q\T^\F q                                    \Tag{\T q=\pi_q\T^\F q}\\
&=~\T^\F q                                                 \Tag{\textbf{[HC.1]}}
\end{align*}
This concludes the proof.
\end{proof}

Thanks to~\textbf{[HC.1]}, a horizontal connection $\H\colon\F q\to\T E$ is naturally associated with a linear idempotent $\psi_\H\=\pi_q\H\colon\T E\to\T E$ on the tangent bundle, called the horizontal connection form of $\H$.  Just as with the vertical connection form, we formally introduce this concept.

\begin{definition}
\label{definition:horizontal-connection-form}
A \textbf{horizontal connection form} of a tangent display map $q\colon E\to M$ consists of an endomorphism $\psi\colon\T E\to\T E$, subject to the following conditions:
\begin{description}
\item[HCF.1] The following diagram commutes:
\begin{equation*}
\begin{tikzcd}
{\T E} & {\T E} \\
& {\F q}
\arrow["\psi", from=1-1, to=1-2]
\arrow["{\pi_q}"', from=1-1, to=2-2]
\arrow["{\pi_q}", from=1-2, to=2-2]
\end{tikzcd}
\end{equation*}

\item[HCF.2] The morphism $\psi\colon\p_E\to\p_E$ is a linear endomorphism of differential bundles;

\item[HCF.3] The following diagram commutes:
\begin{equation*}
\begin{tikzcd}
{\V q} & {\T E} \\
E & {\T E}
\arrow["{\iota_q}", from=1-1, to=1-2]
\arrow["{q^\V}"', from=1-1, to=2-1]
\arrow["\psi", from=1-2, to=2-2]
\arrow["{z_E}"', from=2-1, to=2-2]
\end{tikzcd}
\end{equation*}
\end{description}
\end{definition}

The next theorem proves that  if the bundle map is a submersion, the information of a horizontal connection is entirely retained in its horizontal connection form.  In particular, to reconstruct a horizontal connection from a horizontal connection form, we make use of the universal property of the pushout diagram of Definition~\ref{definition:submersion}.

\begin{theorem}
\label{theorem:horizontal-connection-form}
There is a bijective correspondence between horizontal connections and horizontal connection forms on a submersion $q\colon E\to M$. Concretely, if $\H\colon\F q\to\T E$ is a horizontal connection on a submersion $q\colon E\to M$, the morphism
\begin{align*}
&\psi_\H\colon\T E\xrightarrow{\pi_q}\F q\xrightarrow{\T E}\T E
\end{align*}
is a horizontal connection form of $q$. Conversely, if $\psi\colon\T E\to\T E$ is a horizontal connection form on a submersion $q\colon E\to M$, the unique morphism $\H_\psi\colon\F q\to\T E$ of $\DB(\X,\TT;E)$ rendering the following diagram
\begin{equation*}
\begin{tikzcd}
{\V q} & {\T E} \\
E & {\F q} \\
&& {\T E}
\arrow["{\iota_q}", from=1-1, to=1-2]
\arrow["{q^\V}"', from=1-1, to=2-1]
\arrow["{\pi_q}", from=1-2, to=2-2]
\arrow["\psi", curve={height=-18pt}, from=1-2, to=3-3]
\arrow["{z_q^\F}"', from=2-1, to=2-2]
\arrow["{z_E}"', curve={height=18pt}, from=2-1, to=3-3]
\arrow["\lrcorner"{anchor=center, pos=0.125, rotate=180}, draw=none, from=2-2, to=1-1]
\arrow["{\H_\psi}", dashed, from=2-2, to=3-3]
\end{tikzcd}
\end{equation*}
commutative is a horizontal connection on $q$.
\end{theorem}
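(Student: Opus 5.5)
The argument mirrors the proof of Theorem~\ref{theorem:vertical-connection-form}, with the kernel (pullback) property of Lemma~\ref{lemma:vertical-bundle-kernel} replaced by the cokernel (pushout) property of Definition~\ref{definition:submersion}. Throughout, $\psi_\H$ means $\pi_q\H$ (the displayed ``$\T E$'' over the second arrow should read $\H$).

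\textbf{From connection to form.} Let $\H$ be a horizontal connection and set $\psi_\H\=\pi_q\H$.
\begin{itemize}
\item \textbf{[HCF.1]} follows from \textbf{[HC.1]}: $\psi_\H\pi_q=\pi_q\H\pi_q=\pi_q$.
\item \textbf{[HCF.2]} holds because $\pi_q$ is linear (Lemma~\ref{lemma:horizontal-descent-linear}), $\H$ is linear by \textbf{[HC.2]}, and linear morphisms compose.
\item For \textbf{[HCF.3]}, use the commuting square $\iota_q\pi_q=q^\V z_q^\F$ of Lemma~\ref{lemma:vertical-bundle-kernel}. This gives $\iota_q\psi_\H=q^\V z_q^\F\H$. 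It then suffices to show $z_q^\F\H=z_E$. This holds because $\H$ is a linear bundle morphism over $E$, so it preserves zeros: linear morphisms of differential bundles are additive. Alternatively, one can check it via naturality of $z$ and the formula $z_q^\F=z_E\pi_q$ used in the earlier proof.
\end{itemize}

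\textbf{From form to connection.} Let $\psi$ be a horizontal connection form. The pair $(\psi,z_E)$ is a cocone on the span $\p_E\xleftarrow{\iota_q}\q^\V\xrightarrow{q^\V}\0_E$ in $\DB(\X,\TT;E)$, and this needs three checks.
\begin{itemize}
\item $\psi$ is a morphism of $\DB(\X,\TT;E)$ by \textbf{[HCF.2]}.
\item $z_E\colon\0_E\to\p_E$ is a linear morphism, since it is the zero morphism of the semi-additive category $\DB(\X,\TT;E)$.
\item The cocone condition $\iota_q\psi=q^\V z_E$ is exactly \textbf{[HCF.3]}.
\end{itemize}
Since $q$ is a submersion, the square is a pushout in $\DB(\X,\TT;E)$. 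So there is a unique linear $\H_\psi\colon\F q\to\T E$ over $E$ with $\pi_q\H_\psi=\psi$ and $z_q^\F\H_\psi=z_E$. Linearity and being a bundle map over $E$ are automatic, so \textbf{[HC.2]} holds.

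For \textbf{[HC.1]}, compute $\pi_q\H_\psi\pi_q=\psi\pi_q=\pi_q$ by \textbf{[HCF.1]}. Then cancel $\pi_q$ on the left, using that $\pi_q$ is an epimorphism in $\DB(\X,\TT;E)$; this is the dual of the monicity argument used for $\iota_q$. To justify it, note that $\H_\psi\pi_q$ and $\id_{\F q}$ both satisfy the pushout conditions for the cocone $(\pi_q,z_q^\F)$:
\begin{itemize}
\item $\pi_q\H_\psi\pi_q=\pi_q$;
\item $z_q^\F\H_\psi\pi_q=z_E\pi_q=z_q^\F$.
\end{itemize}
By uniqueness, $\H_\psi\pi_q=\id_{\F q}$.

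\textbf{Bijectivity.}
\begin{itemize}
\item $\psi_{\H_\psi}=\pi_q\H_\psi=\psi$ by construction.
\item For the other direction, $\H_{\psi_\H}$ and $\H$ both satisfy $\pi_q(-)=\psi_\H$, and both send $z_q^\F$ to $z_E$ by the zero-preservation shown above. Uniqueness in the pushout gives $\H_{\psi_\H}=\H$; equivalently, $\pi_q$ is epi.
\end{itemize}

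\textbf{Main obstacle.} The delicate step is working entirely inside $\DB(\X,\TT;E)$, since the pushout exists only there (Remark~\ref{remark:pushout-in-DB}). Every map fed into the universal property, in particular $z_E$ and $\psi$, must be a linear bundle morphism over $E$. The zero-preservation $z_q^\F\H=z_E$ for an arbitrary linear $\H$ also needs care; I would derive it from the known fact that linear morphisms of differential bundles preserve zero and addition.
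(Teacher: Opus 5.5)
Your proposal is correct and follows the paper's proof step for step. In the forward direction you get \textbf{[HCF.1]}--\textbf{[HCF.3]} from \textbf{[HC.1]}, from composition of linear maps, and from zero-preservation $z_q^\F\H=z_E$. In the converse you obtain $\H_\psi$ from the pushout of Definition~\ref{definition:submersion}, get \textbf{[HC.1]} from uniqueness against the cocone $(\pi_q,z_q^\F)$, and get bijectivity from $\pi_q$ being epi in $\DB(\X,\TT;E)$. Your reading of the typo as $\H$ is also right.

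The only loose end is your ``alternative'' route to $z_q^\F\H=z_E$ via $z_q^\F=z_E\pi_q$. It merely reduces the claim to $z_E\psi_\H=z_E$, so it still rests on linear maps preserving zero, which is exactly the fact the paper invokes.
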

\begin{proof}
For starters, assume that $\H$ is a horizontal connection and let us prove that $\psi_\H$ is in fact a horizontal connection form. For this, by \textbf{[HC.1]}, $\psi_\H\pi_q=\pi_q\H\pi_q=\pi_q$, thus, \textbf{[HCF.1]} holds. To prove \textbf{[HCF.2]}, notice that $\psi_\H$ is composition of two linear morphisms, $\H$ and $\pi_q$, thus, it is also linear. Finally, we compute the following:
\begin{align*}
\iota_q\psi_\H&=~\iota_q\pi_q\H         \Tag{\text{Definition of }\psi_\H}\\
&=~q^\V z_q^\F\H                        \Tag{\iota_q\pi_q=q^\V z_q^\F,\text{Lemma}~\ref{lemma:vertical-bundle-kernel}}\\
&=~q^\V z_E                              \Tag{\H\text{ is linear, thus it preserves the zero}}
\end{align*}
Therefore, \textbf{[HCF.3]} holds. Conversely, assume that $\psi$ is a horizontal connection form and let us prove that $\H_\psi$ is a horizontal connection. For starters, notice that by \textbf{[HCF.3]} and thanks to the linearity of $\psi$ guaranteed by \textbf{[HCF.2]}, $\H_\psi$ is well-defined. Moreover, $\H_\psi$ satisfies \textbf{[HC.2]} since the morphisms of $\DB(\X,\TT;E)$ are linear. To prove \textbf{[HC.1]}, notice that $\pi_q\H_\psi\pi_q=\psi\pi_q=\pi_q$, where we used \textbf{[HCF.1]}. Moreover, $z_q^\F\H_\psi\pi_q=z_E\pi_q=z_q^\F$, thus, by the universal property of the pushout diagram, $\H_\psi\pi_q=\id_{\F q}$, that is, \textbf{[HC.1]} holds. Finally, let us prove that these two operations are inverse to each other. Start with a horizontal connection $\H$. Thus, $\H_{\psi_\H}$ is the unique morphism such that $\pi_q\H_{\psi_\H}=\psi_\H=\pi_q\H$ and since $\pi_q$ is epi, $\H$ must coincides with $\H_{\psi_\H}$. Now, let us consider a horizontal connection form $\psi$. Thus, $\psi_{\H_\psi}=\pi_q\H_\psi$, which, by definition, is just $\psi$.
\end{proof}

Similar to vertical connections, horizontal connections are also stable under pullbacks. To prove this result we first need a technical lemma.

\begin{lemma}
\label{lemma:pullback-horizontal-connections}
Consider a tangent display map $q\colon E\to M$ and the tangent pullback diagram of $q$ along a morphism $f\colon M'\to M$:
\begin{equation*}
\begin{tikzcd}
{E'} & E \\
{M'} & M
\arrow["g", from=1-1, to=1-2]
\arrow["{q'}"', from=1-1, to=2-1]
\arrow["\lrcorner"{anchor=center, pos=0.125}, draw=none, from=1-1, to=2-2]
\arrow["q", from=1-2, to=2-2]
\arrow["f"', from=2-1, to=2-2]
\end{tikzcd}
\end{equation*}
Thus, the following diagram commutes, and it is also a tangent pullback diagram:
\begin{equation*}
\begin{tikzcd}
{\F q'} & {\F q} \\
{\T M'} & {\T M}
\arrow["{\F_fg}", from=1-1, to=1-2]
\arrow["{\T^\F q}"', from=1-1, to=2-1]
\arrow["\lrcorner"{anchor=center, pos=0.125}, draw=none, from=1-1, to=2-2]
\arrow["{\T^\F q}", from=1-2, to=2-2]
\arrow["{\T f}"', from=2-1, to=2-2]
\end{tikzcd}
\end{equation*}
\end{lemma}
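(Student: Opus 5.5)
The plan is to use the pasting property for tangent pullbacks, Lemma~\ref{lemma:property-tangent-pullbacks}, applied to a suitably arranged composite of squares. Recall that $\F_fg$ is defined in Lemma~\ref{lemma:functoriality-F} as the unique map satisfying $\F_fg\,\T^\F q=\T^\F q'\,\T f$ and $\F_fg\,q^\F=q'^\F g$. The first of these equations is exactly the commutativity of the square in the statement, so commutativity needs no further work.

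For the pullback property, I will consider the commutative diagram with two rows:
\begin{align*}
&\F q'\xrightarrow{\F_fg}\F q\xrightarrow{q^\F}E, &&\T M'\xrightarrow{\T f}\T M\xrightarrow{p_M}M.
\end{align*}
The vertical maps are $\T^\F q'$, $\T^\F q$, and $q$. The right-hand square is the defining tangent pullback~\eqref{equation:finsler-bundle-pullback} of the Finsler bundle of $q$.

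By the first part of Lemma~\ref{lemma:property-tangent-pullbacks}, it then suffices to show that the outer rectangle is a tangent pullback. To see this, I will rewrite the outer rectangle using $\F_fg\,q^\F=q'^\F g$ on top and naturality $\T f\,p_M=p_{M'}f$ on the bottom. This exhibits the same outer rectangle as the horizontal pasting of two squares. The first is the Finsler pullback of $q'$, with $\F q'\to E'$ over $\T M'\to M'$. The second is the given tangent pullback of $q$ along $f$, with $E'\to E$ over $M'\to M$. Both are tangent pullbacks, and the composite of two tangent pullbacks is a tangent pullback, since $\T^m$ preserves each one and ordinary pullback pasting applies levelwise. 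So the outer rectangle is a tangent pullback, and the left square, which is the square of the statement, is a tangent pullback as required.

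The only point needing care is that the pasting lemma concerns a specific outer rectangle, with specific composite legs. I therefore have to check that the two factorizations of the outer rectangle have literally the same composite edges. The top edges agree: $\F_fg\,q^\F=q'^\F g$. The bottom edges agree: $\T f\,p_M=p_{M'}f$. The vertical edges $\T^\F q'$ and $q$ are shared.

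I also need $\F q'$ to exist as a tangent pullback. This holds because $q'$ is tangent display by Theorem~\ref{theorem:maximal-tangent-display-system}, being a tangent pullback of the tangent display map $q$. No real obstacle is expected beyond this bookkeeping.
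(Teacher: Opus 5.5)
Your proof is correct and follows essentially the same route as the paper. Commutativity comes from the defining equation of $\F_fg$. You then paste the square with the Finsler pullback of $q$, and recognize the outer rectangle as the Finsler pullback of $q'$ followed by the pullback of $q$ along $f$, so that Lemma~\ref{lemma:property-tangent-pullbacks} yields the claim. Your explicit checks that the composite edges agree ($\F_fg\,q^\F=q'^\F g$ and $\T f\,p_M=p_{M'}f$) and that $\F q'$ exists because $q'$ is tangent display fill in bookkeeping the paper leaves implicit.
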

\begin{proof}
The diagram commutes since, by definition of $\F_fg$, $\F_fg\T^\F q=\T^\F q\T f$. To prove that the diagram is also a tangent pullback, consider the following diagram:
\begin{equation*}
\begin{tikzcd}
{\F q'} & {\F q} & E \\
{\T M'} & {\T M} & M
\arrow["{\F_fg}", from=1-1, to=1-2]
\arrow["{\T^\F q}"', from=1-1, to=2-1]
\arrow["{q^\F}", from=1-2, to=1-3]
\arrow["{\T^\F q}", from=1-2, to=2-2]
\arrow["\lrcorner"{anchor=center, pos=0.125}, draw=none, from=1-2, to=2-3]
\arrow["q", from=1-3, to=2-3]
\arrow["{\T f}"', from=2-1, to=2-2]
\arrow["{p_M}"', from=2-2, to=2-3]
\end{tikzcd}
\end{equation*}
The right square is a tangent pullback by definition of the Finsler bundle. Notice that the outer diagram can be rewritten as follows:
\begin{equation*}
\begin{tikzcd}
{\F q'} & {E'} & E \\
{\T M'} & {M'} & M
\arrow["{q'^\F}", from=1-1, to=1-2]
\arrow["{\T^\F q}"', from=1-1, to=2-1]
\arrow["\lrcorner"{anchor=center, pos=0.125}, draw=none, from=1-1, to=2-2]
\arrow["g", from=1-2, to=1-3]
\arrow["{q'}", from=1-2, to=2-2]
\arrow["\lrcorner"{anchor=center, pos=0.125}, draw=none, from=1-2, to=2-3]
\arrow["q", from=1-3, to=2-3]
\arrow["{p_{M'}}"', from=2-1, to=2-2]
\arrow["f"', from=2-2, to=2-3]
\end{tikzcd}
\end{equation*}
However, this diagram is two tangent pullback diagrams stacked together. Thus, the outer rectangle of the previous diagram is also a tangent pullback, and by Lemma~\ref{lemma:property-tangent-pullbacks}, so is the left one.
\end{proof}

\begin{proposition}
\label{proposition:pullback-horizontal-connections}
If $q\colon E\to M$ is a tangent display map, $\H\colon\F q\to\T E$ a horizontal connection on $q$, and $f\colon M'\to M$ is a morphism, the tangent display map $q'\colon E'\to M'$, pullback of $q$ along $f$
\begin{equation*}
\begin{tikzcd}
{E'} & E \\
{M'} & M
\arrow["g", from=1-1, to=1-2]
\arrow["{q'}"', from=1-1, to=2-1]
\arrow["\lrcorner"{anchor=center, pos=0.125}, draw=none, from=1-1, to=2-2]
\arrow["q", from=1-2, to=2-2]
\arrow["f"', from=2-1, to=2-2]
\end{tikzcd}
\end{equation*}
comes with a horizontal connection $\H'\colon\F q'\to\T E'$, defined as the unique morphism rendering the following diagram
\begin{equation*}
\begin{tikzcd}
{\F q'} && {\F q} \\
& {\T E'} & {\T E} \\
& {\T M'} & {\T M}
\arrow["{\F_fg}", from=1-1, to=1-3]
\arrow["{\H'}", dashed, from=1-1, to=2-2]
\arrow["{\T^\F q}"', curve={height=18pt}, from=1-1, to=3-2]
\arrow["\H", from=1-3, to=2-3]
\arrow["{\T g}", from=2-2, to=2-3]
\arrow["{\T q'}"', from=2-2, to=3-2]
\arrow["\lrcorner"{anchor=center, pos=0.125}, draw=none, from=2-2, to=3-3]
\arrow["{\T q}", from=2-3, to=3-3]
\arrow["{\T f}"', from=3-2, to=3-3]
\end{tikzcd}
\end{equation*}
commutative.
\end{proposition}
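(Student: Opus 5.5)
We first check that $\H'$ is well defined. Since $q$ is tangent display, the square with $\T g$, $\T q'$, $\T q$, $\T f$ is a tangent pullback, being the image under $\T$ of the pullback of $q$ along $f$. So we must verify $\F_fg\,\H\,\T q=\T^\F q'\,\T f$. By Lemma~\ref{lemma:horizontal-connection}, $\H\T q=\T^\F q$. By the definition of $\F_fg$ (see Lemma~\ref{lemma:pullback-horizontal-connections}), $\F_fg\,\T^\F q=\T^\F q'\,\T f$. Hence $\H'$ exists and is unique, with $\H'\T g=\F_fg\H$ and $\H'\T q'=\T^\F q'$.

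For \textbf{[HC.1]}, we want $\H'\pi_{q'}=\id_{\F q'}$. We test this against the tangent pullback of Lemma~\ref{lemma:pullback-horizontal-connections}, whose legs are $\F_fg$ and $\T^\F q'$. For the second leg, $\H'\pi_{q'}\T^\F q'=\H'\T q'=\T^\F q'$. For the first leg, we need $\pi_{q'}\F_fg=\T g\,\pi_q$, i.e.\ naturality of the horizontal descent with respect to the bundle morphism $(f,g)$. We check this on the Finsler pullback of $q$. Composing with $q^\F$ gives $p_{E'}g=\T g\,p_E$. Composing with $\T^\F q$ gives $\T q'\T f=\T g\T q$. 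Then $\H'\pi_{q'}\F_fg=\H'\T g\,\pi_q=\F_fg\H\pi_q=\F_fg$ by \textbf{[HC.1]} for $\H$, and uniqueness gives \textbf{[HC.1]} for $\H'$.

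For \textbf{[HC.2]}, first $\H'p_{E'}=q'^\F$. We test this against the pullback $E'$ with legs $g$ and $q'$. On the $g$ leg, $\H'p_{E'}g=\H'\T g\,p_E=\F_fg\H p_E=\F_fg\,q^\F=q'^\F g$. On the $q'$ leg, $\H'p_{E'}q'=\H'\T q'p_{M'}=\T^\F q'p_{M'}=q'^\F q'$.

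Linearity, $\H' l_{E'}=l^\F_{q'}\T\H'$, is checked against the tangent pullback $\T^2E'$, the image under $\T^2$ of the pullback defining $E'$, with legs $\T^2g$ and $\T^2q'$. On the first leg, we use naturality of $l$, the linearity of $\F_fg$ from Lemma~\ref{lemma:functoriality-F}, and \textbf{[HC.2]} for $\H$: both sides reduce to $\F_fg\,l^\F_q\T\H$. On the second leg, $\H'l_{E'}\T^2q'=\H'\T q' l_{M'}=\T^\F q' l_{M'}$. The other side becomes $l^\F_{q'}\T(\T^\F q')$, which equals $\T^\F q'l_{M'}$ by the definition of the vertical lift of the Finsler bundle. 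The latter is the tangent-pullback lift of $\T p_M$, and by construction $l^\F_{q'}\T\T^\F q'=\T^\F q' l_{M'}$.

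The main obstacle is this last step: the bookkeeping of the linearity identity, and using the fact that $\F_fg$ is linear together with the explicit description of $l^\F$. Everything else is routine universal-property checking, mirroring Proposition~\ref{proposition:pullback-vertical-connections}.
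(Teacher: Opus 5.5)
Your proof is correct and follows essentially the same route as the paper's. You check \textbf{[HC.1]} against the tangent pullback of Lemma~\ref{lemma:pullback-horizontal-connections}, and linearity against $\T^2E'$ with legs $\T^2g$ and $\T^2q'$, exactly as the paper does. You also spell out two facts the paper uses silently: that $\H'$ is well defined, and the naturality identity $\pi_{q'}\F_fg=\T g\,\pi_q$. The one real difference is small: you obtain $\H'p_{E'}=q'^\F$ by testing against the pullback $E'$, whereas the paper gets it in one line from \textbf{[HC.1]} as $\H'p_{E'}=\H'\pi_{q'}q'^\F=q'^\F$.
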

\begin{proof}
To begin, let us prove that $\H'$ satisfies \textbf{[HC.1]}. Let us compute the following:
\begin{align*}
\H'\pi_{q'}\F_fg&=~\H'\T g\pi_q         \Tag{\pi_{q'}\F_fg=\T g\pi_q}\\
&=~\F_fg\H\pi_q                         \Tag{\text{Definition of }\H'}\\
&=~\F_fg                                \Tag{\textbf{[HC.1]}}
\end{align*}
Moreover, we can also compute:
\begin{align*}
\H'\pi_{q'}\T^\F q=\H'\T q'=\T^\F q
\end{align*}
Thus, by the universal property of the tangent pullback diagram of Lemma~\ref{lemma:pullback-horizontal-connections}, $\H'\pi_{q'}$ must be $\id_{\F q'}$, that is, $\H'$ satisfies \textbf{[HC.1]}. To prove \textbf{[HC.2]}, let us start by showing that $\H'p_{E'}=\H'\pi_{q'}q'^\F=q'^\F$, where we used that $p_{E'}=\pi_{q'}q'^\F$. Finally, let us prove linearity by using again the universal property of the tangent pullback diagram of Lemma~\ref{lemma:pullback-horizontal-connections}. We start by computing the following:
\begin{align*}
\H'l_{E'}\T^2g&=~\H'\T gl_E                         \Tag{\text{Naturality of }l}\\
&=~\F_fg\H l_E                                      \Tag{\text{Definition of }\H}\\
&=~\F_fgl_q^\F\T\H                                  \Tag{\textbf{[HC.2]}}\\
&=~l_{q'}^\F\T\F_fg\T\H                             \Tag{\text{Linearity of }\F_fg}\\
&=~l_{q'}\T\H'\T^2g                                 \Tag{\text{Definition of }\H}
\end{align*}
Moreover, we compute:
\begin{align*}
\H'l_{E'}\T^2q'&=~\H'\T q'l_{M'}                    \Tag{\text{Naturality of }l}\\
&=~\T^\F q l_{M'}                                  \Tag{\text{Definition of }\H'}\\
&=~l_{q'}^\F\T\T^\F q                              \Tag{\text{Linearity of }\T^\F q}\\
&=~l_{q'}^\F\T\H'\T^2q'                             \Tag{\text{Definition of }\H'}
\end{align*}
Thus, we conclude that $\H'l_{E'}=l_{q'}^\F\T\H'$, that is, \textbf{[HC.2]} holds.
\end{proof}

Thanks to stability under pullbacks, horizontal connections form a tangent fibration.

\begin{proposition}
\label{proposition:fibration-horizontal-connections}
The codomain functor $\Pi\colon\HC(\X,\TT)\to(\X,\TT)$ which sends a horizontal connection $(q,\H)$ to the codomain $M$ of $q\colon E\to M$ is a tangent fibration.
\end{proposition}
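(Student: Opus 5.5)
The argument mirrors Proposition~\ref{proposition:fibration-vertical-connections}: we invoke \cite[Proposition~3.11]{cruttwell:tangent-display-maps}, which turns a tangent display system together with structure that is stable under tangent pullbacks and under the tangent bundle functor into a tangent fibration. Two ingredients are needed. First, $\HC(\X,\TT)$ must be a tangent category and the codomain functor $\Pi$ a strict tangent morphism. Second, every object $(q,\H)$ and every $f\colon M'\to M$ must admit a cartesian lift that is compatible with $\T$.

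For the lifts, take the tangent pullback $q'$ of $q$ along $f$. Proposition~\ref{proposition:pullback-horizontal-connections} equips $q'$ with the horizontal connection $\H'$, and by its defining diagram $(f,g)\colon(q',\H')\to(q,\H)$ is a morphism of $\HC(\X,\TT)$, since $\H'\T g=\F_fg\H$. To prove cartesianness, let $(h,k)\colon(q'',\H'')\to(q,\H)$ be a morphism whose base $h$ factors as $h=uf$. The pullback property of $q'$ gives a unique $k'$ with $k'g=k$ and $k'q'=q''u$. It remains to show $\H''\T k'=\F_u k'\H'$. Here we use that $\T E'$ is the tangent pullback of $\T E$ and $\T M'$. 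Postcomposing both sides with $\T g$ gives $\F_hk\,\H$ in each case, by functoriality of $\F$ (Lemma~\ref{lemma:functoriality-F}) and the compatibility of $(h,k)$. Postcomposing both sides with $\T q'$ gives $\T^\F q''\T u$ in each case, by Lemma~\ref{lemma:horizontal-connection}. Uniqueness then holds because the underlying square is already unique.

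For the tangent structure, $\H_\T=\gamma^\F_q\T\H c_E$ has to be a horizontal connection on $\T q$. \textbf{[HC.1]} comes down to checking $\T\H\,c_E\,\pi_{\T q}=\T\pi_q(\gamma^\F_q)^{-1}$, i.e.\ that the flip relates $\pi_{\T q}$ and $\T\pi_q$ through $\gamma^\F_q$. This can be verified on the two pullback projections of $\F\T q$, using the naturality of $c$ and $c_Mp_{\T M}=\T p_M$. Linearity \textbf{[HC.2]} follows because $\gamma^\F_q$ and $c_E$ are linear isomorphisms between the appropriate differential bundles, $c$ swapping $p_{\T E}$ and $\T p_E$, and $\T\H$ is linear for the $\T$-image bundles. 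The structural natural transformations lift to $\HC(\X,\TT)$ because they are morphisms of tangent display maps and commute with $\gamma^\F$, as recorded in Lemma~\ref{lemma:functoriality-F}. Finally, the pullback lifts must be preserved by $\T$: the connection $\T$ of the pulled-back connection has to agree with the pullback of $\H_\T$ along $\T f$. This follows by the same two-projection argument, using that $\T$ preserves the tangent pullback.

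The main obstacle is the bookkeeping for $\H_\T$, in particular \textbf{[HC.1]}, since the flip must be tracked through the identification $\F\T q\cong\T\F q$. I would handle it entirely through the universal property of $\F\T q$ and leave the remaining compatibilities to Lemma~\ref{lemma:functoriality-F} and \cite[Proposition~3.11]{cruttwell:tangent-display-maps}.
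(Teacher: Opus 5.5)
Your proposal is correct and takes the same route as the paper. The paper simply invokes \cite[Proposition~3.11]{cruttwell:tangent-display-maps}, relying on the stability of horizontal connections under tangent pullbacks (Proposition~\ref{proposition:pullback-horizontal-connections}) and under $\T$, whereas you also spell out what it leaves implicit: cartesianness of the lifts, that $\H_\T$ is a horizontal connection, and preservation of cartesian maps by $\T$. One small slip: your displayed identity for \textbf{[HC.1]} is ill-typed, since its left side has domain $\T\F q$ and its right side $\T^2E$. The identity you mean, which your two-projection check does establish, is $c_E\pi_{\T q}=\T\pi_q(\gamma^\F_q)^{-1}$, and it gives $\H_\T\pi_{\T q}=\gamma^\F_q\T(\H\pi_q)(\gamma^\F_q)^{-1}=\id_{\F\T q}$.
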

\begin{proof}
This is a direct consequence of the stability under the tangent bundle functor and along tangent pullbacks of tangent display maps and of horizontal connections (see~\cite[Proposition~3.11]{cruttwell:tangent-display-maps}).
\end{proof}

As with vertical connections,  horizontal connections are stable under retraction.

\begin{proposition}
\label{proposition:horizontal-connections-retraction}
Whenenever tangent display maps are closed under retraction, horizontal connections are also closed under retraction. Explicitly, if $\H$ is a horizontal connection on a tangent display map $q\colon E\to M$ and $(s_E,r_E)$ and $(s_M,r_M)$ are section-retraction pairs as in Equation~\eqref{equation:section-retrations-tangent-display}, then the tangent display map $q'\=s_Eqr_M\colon E'\to M$ comes with a horizontal connection $\H'\colon\F q'\to\T E'$ defined as follows:
\begin{align*}
&\H'\colon\F q'\xrightarrow{\F_{s_M}s_E}\F q\xrightarrow{\H}\T E\xrightarrow{\T r_E}\T E'
\end{align*}
\end{proposition}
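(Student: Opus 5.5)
Following the proof of Proposition~\ref{proposition:vertical-connections-retraction}, I would verify \textbf{[HC.1]} and \textbf{[HC.2]} for $\H'=\F_{s_M}s_E\,\H\,\T r_E$ directly, using only functoriality of $(-)^\F$ (Lemma~\ref{lemma:functoriality-F}) and naturality of $\pi$. First, the morphisms in play have to be typed correctly. The diagram~\eqref{equation:section-retrations-tangent-display} gives two commutative squares: $(s_M,s_E)\colon q'\to q$ (i.e.\ $s_Eq=q's_M$) and $(r_M,r_E)\colon q\to q'$ (i.e.\ $r_Eq'=qr_M$). Hence Lemma~\ref{lemma:functoriality-F} provides $\F_{s_M}s_E\colon\F q'\to\F q$ and $\F_{r_M}r_E\colon\F q\to\F q'$. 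These are linear morphisms of differential bundles lying over $s_E$ and $r_E$, and by functoriality $\F_{s_M}s_E\,\F_{r_M}r_E=\F_{\id}\id=\id_{\F q'}$.

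The key auxiliary fact is naturality of the horizontal descent: for any morphism of bundles $(f,g)\colon q\to q''$ one has $\pi_q\,\F_fg=\T g\,\pi_{q''}$. I would check this against the pullback defining $\F q''$. Composing with $q''^\F$ gives $p_E g$ on both sides, using $\pi_{q''}q''^\F=p$ and naturality of $p$. Composing with $\T^\F q''$ gives $\T q\,\T f$ on both sides, using $\T g\,\T q''=\T q\,\T f$. This identity is presumably also what underlies the step $\pi_{q'}\F_fg=\T g\pi_q$ in the proof of Proposition~\ref{proposition:pullback-horizontal-connections}.

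With naturality in hand, \textbf{[HC.1]} reduces to a computation:
\begin{equation*}
\H'\pi_{q'}=\F_{s_M}s_E\,\H\,\T r_E\,\pi_{q'}=\F_{s_M}s_E\,\H\,\pi_q\,\F_{r_M}r_E=\F_{s_M}s_E\,\F_{r_M}r_E=\id_{\F q'},
\end{equation*}
where the second equality is naturality of $\pi$ for $(r_M,r_E)$ and the third is \textbf{[HC.1]} for $\H$.

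For \textbf{[HC.2]} I would argue exactly as in the vertical case. $\F_{s_M}s_E$ is linear by Lemma~\ref{lemma:functoriality-F}, $\H$ is linear by \textbf{[HC.2]}, and $\T r_E$ is linear because of naturality of $l$. Composites of linear morphisms are linear, so $\H'$ is linear. It remains to show that $\H'$ lies over the identity of $E'$, which is the following computation:
\begin{equation*}
\H'p_{E'}=\F_{s_M}s_E\,\H\,p_E\,r_E=\F_{s_M}s_E\,q^\F r_E=q'^\F s_E r_E=q'^\F.
\end{equation*}
Here the first step is naturality of $p$, the second is \textbf{[HC.2]} for $\H$, and the third uses $\F_{s_M}s_E\,q^\F=q'^\F s_E$, which holds by definition of $\F_{s_M}s_E$.

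I expect the main obstacle to be the bookkeeping rather than any genuine difficulty: the direction conventions of $(s,r)$ in the statement, and the implicit functoriality and naturality claims about $\F_fg$ and $\pi_q$ that the paper leaves to the reader. The one nontrivial input is the naturality square $\pi_q\F_fg=\T g\pi_{q''}$, so I would prove it carefully first and then use it freely in the steps above.
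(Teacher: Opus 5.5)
Your proposal is correct and follows the argument the paper intends: the paper only says the proof mirrors Proposition~\ref{proposition:vertical-connections-retraction}, and your verification does exactly that, with naturality of the horizontal descent, $\pi_q\F_fg=\T g\,\pi_{q''}$, and functoriality of $\F$ playing the roles that naturality of $\iota_q$ and functoriality of $\V$ play in the vertical case. Your checks of \textbf{[HC.1]} and \textbf{[HC.2]} fill in the details the paper leaves to the reader. Your reading of the direction conventions in~\eqref{equation:section-retrations-tangent-display} is the one consistent with $s_Er_E=\id_{E'}$ and $s_Mr_M=\id_{M'}$; the bottom-row labels of that diagram are swapped in the paper.
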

\begin{proof}
The proof is fairly similar to that of Proposition~\ref{proposition:vertical-connections-retraction}; therefore, we leave it to the reader to complete the details.
\end{proof}

\subsection{Ehresmann connections}
\label{subsection:ehresmann-connection}
A vertical connection together with a horizontal connection, subject to two compatibility conditions specify an Ehresmann connection. In this section, we introduce Ehresmann connections in tangent categories.

\begin{definition}
\label{definition:ehresmann-connection}
An \textbf{Ehresmann connection} of a tangent display map $q\colon E\to M$ is a pair $(\R,\H)$ consisting of a vertical connection $\R\colon\T E\to\V q$ of $q$ and a horizontal connection $\H\colon\T E\to\F q$ of $q$ subject to the following conditions:
\begin{description}
\item[EC.1] The following diagram commutes:
\begin{equation*}
\begin{tikzcd}
{\F q} & {\T E} \\
E & {\V q}
\arrow["\H", from=1-1, to=1-2]
\arrow["{q^\F}"', from=1-1, to=2-1]
\arrow["\R", from=1-2, to=2-2]
\arrow["{z_q^\V}"', from=2-1, to=2-2]
\end{tikzcd}
\end{equation*}

\item[EC.2] The following diagram commutes:
\begin{equation*}
\begin{tikzcd}
{\T E} & {\T_2E} \\
& {\T E}
\arrow["{\<\R\iota_q,\pi_q\H\>}", from=1-1, to=1-2]
\arrow[equals, from=1-1, to=2-2]
\arrow["{s_E}", from=1-2, to=2-2]
\end{tikzcd}
\end{equation*}
\end{description}
\end{definition}

Axiom \textbf{[EC.1]} requires that the image of the horizontal connection $\H$ is in the kernel of the vertical connection $\R$; \textbf{[EC.2]} requires that every tangent vector of $E$ is a sum of a vertical vector and a horizontal vector. Together, \textbf{[EC.1]} and \textbf{[EC.2]} establish that the horizontal and the vertical connections are in direct sum with each other.
\par We denote by $\C(\X,\TT)$ the tangent category of triples $(q;\R,\H)$ formed by a tangent display map $q\colon E\to M$ and by an Ehresmann connection $(\R,\H)$ of $q$. A morphism of $\C(\X,\TT)$ consists of a bundle morphism $(f,g)\colon\q\to\q'$ which is at the same time a morphism of vertical connections and a morphism of horizontal connections. The tangent bundle functor sends a triple $(q;\R,\H)$ to $(\T q;\R_\T,\H_\T)$, where $\R_\T$ and $\H_\T$ are defined as in $\VC(\X,\TT)$ and $\HC(\X,\TT)$, respectively.
\par Vertical and horizontal connections are in bijective correspondence with vertical and horizontal connection forms. Similarly, an Ehresmann connection is equivalent to an Ehresmann connection form. In the next definition, we make this concept precise.

\begin{definition}
\label{definition:ehresmann-connection-form}
An \textbf{Ehresmann connection form} on a tangent display map $q\colon E\to M$ is a pair $(\phi,\psi)$ consisting of a vertical connection form $\phi\colon\T E\to\T E$ and a horizontal connection form $\psi\colon\T E\to\T E$ on $q$, subject to the following conditions:
\begin{description}
\item[ECF.1] The following diagram commutes:
\begin{equation*}
\begin{tikzcd}
{\T E} & {\T E} \\
E & {\T E}
\arrow["\psi", from=1-1, to=1-2]
\arrow["{p_E}"', from=1-1, to=2-1]
\arrow["\phi", from=1-2, to=2-2]
\arrow["{z_E}"', from=2-1, to=2-2]
\end{tikzcd}
\end{equation*}

\item[ECF.2] The following diagram commutes:
\begin{equation*}
\begin{tikzcd}
{\T E} & {\T_2E} \\
& {\T E}
\arrow["{\<\phi,\psi\>}", from=1-1, to=1-2]
\arrow[equals, from=1-1, to=2-2]
\arrow["{s_E}", from=1-2, to=2-2]
\end{tikzcd}
\end{equation*}
\end{description}
\end{definition}

The correspondence between vertical connections and vertical connection forms and and one between horizontal connections and horizontal connection forms extend to a correspondence between Ehresmann connections and Ehresmann connection forms.

\begin{theorem}
\label{theorem:ehresmann-connection-form}
There is a bijective correspondence between Ehresmann connections and Ehresmann connection forms of a submersion $q\colon E\to M$. Explicitly, if $(\R,\H)\colon\F q\to\T E$ is a connection on $q\colon E\to M$, the morphisms $\phi_\R$ and $\psi_\H$ of Theorems~\ref{theorem:vertical-connection-form} and~\ref{theorem:horizontal-connection-form} form a connection form $(\phi_\R,\psi_\H)$ of $q$. Conversely, if $(\phi,\psi)$ is an Ehresmann connection form on $q\colon E\to M$, the morphisms $\R_\phi$ and $\H_\psi$ of Theorems~\ref{theorem:vertical-connection-form} and~\ref{theorem:horizontal-connection-form} define an Ehresmann connection $(\R_\phi,\H_\psi)$ of $q$.
\end{theorem}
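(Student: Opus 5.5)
Since Theorems~\ref{theorem:vertical-connection-form} and~\ref{theorem:horizontal-connection-form} already give mutually inverse bijections $\R\mapsto\phi_\R$, $\phi\mapsto\R_\phi$ and $\H\mapsto\psi_\H$, $\psi\mapsto\H_\psi$ (the latter using that $q$ is a submersion), the plan is to show that these bijections carry the axioms \textbf{[EC.1]}--\textbf{[EC.2]} exactly onto \textbf{[ECF.1]}--\textbf{[ECF.2]}. Bijectivity of the pair correspondence then follows for free, since it is a restriction of a product of bijections.

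First translate \textbf{[EC.2]} and \textbf{[ECF.2]}. Because $\phi_\R=\R\iota_q$ and $\psi_\H=\pi_q\H$, the map $\<\R\iota_q,\pi_q\H\>$ in \textbf{[EC.2]} is literally $\<\phi_\R,\psi_\H\>$. Hence \textbf{[EC.2]} for $(\R,\H)$ is verbatim \textbf{[ECF.2]} for $(\phi_\R,\psi_\H)$. Conversely, given a form $(\phi,\psi)$, the identities $\R_\phi\iota_q=\phi$ and $\pi_q\H_\psi=\psi$ hold by construction, so \textbf{[ECF.2]} yields \textbf{[EC.2]} for $(\R_\phi,\H_\psi)$. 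The pairing into $\T_2E$ is well defined in both directions because all four maps commute with $p_E$, by linearity.

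Next, \textbf{[EC.1]}$\Rightarrow$\textbf{[ECF.1]}. Compute
\[
\psi_\H\phi_\R=\pi_q\H\R\iota_q=\pi_q q^\F z_q^\V\iota_q=p_E z_E,
\]
using \textbf{[EC.1]}, then $\pi_qq^\F=p_E$, then $z_q^\V\iota_q=z_E$, which is the definition of $z_q^\V$.

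For the converse, \textbf{[ECF.1]}$\Rightarrow$\textbf{[EC.1]}, the step to check is that $\H_\psi\R_\phi=q^\F z_q^\V$. Since $\iota_q$ is monic (Lemma~\ref{lemma:iota-tangent-monic}), it suffices to compare after postcomposing with $\iota_q$. The left side becomes $\H_\psi\phi$ and the right side becomes $q^\F z_E$. Since $\pi_q$ is epi (it is a cokernel by Theorem~\ref{theorem:submersion}), it then suffices to precompose with $\pi_q$. The left side gives $\pi_q\H_\psi\phi=\psi\phi=p_Ez_E$ by \textbf{[ECF.1]}. The right side gives $\pi_q q^\F z_E=p_Ez_E$. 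So the two agree.

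The main point of care is this last step. The cancellation of $\pi_q$ uses that $\pi_q$ is an epimorphism in the underlying category, not merely in $\DB(\X,\TT;E)$. I would handle this by noting that $\H_\psi\R_\phi\iota_q$ and $q^\F z_E$ are both linear morphisms $\q^\F\to\p_E$, so their equality can be tested via the pushout's universal property in $\DB(\X,\TT;E)$. Concretely, both agree after $\pi_q$ as shown above. After $z_q^\F$, both become $z_E$ by linearity: the left side yields $z_E\phi=z_E$ because $\phi$ preserves zero. Uniqueness in the pushout then gives the equality, and the same cokernel argument justifies any other epi cancellations.
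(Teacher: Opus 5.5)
Your proposal is correct and takes the same approach as the paper. The paper's proof only asserts that, under the bijections of Theorems~\ref{theorem:vertical-connection-form} and~\ref{theorem:horizontal-connection-form}, \textbf{[EC.1]} and \textbf{[EC.2]} correspond to \textbf{[ECF.1]} and \textbf{[ECF.2]}; you supply the omitted computations, and your converse to \textbf{[EC.1]} handles cancellation correctly. You cancel $\iota_q$ as a monic, and you cancel $\pi_q$ through the uniqueness clause of the pushout in $\DB(\X,\TT;E)$ after checking both maps are linear, which is needed because $\pi_q$ is a priori only an epimorphism in $\DB(\X,\TT;E)$.
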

\begin{proof}
It is not hard to see that, in the correspondences of Theorems~\ref{theorem:vertical-connection-form} and~\ref{theorem:horizontal-connection-form}, axioms\textbf{[EC.1]} and \textbf{[EC.2]} correspond precisely to \textbf{[ECF.1]} and \textbf{[ECF.2]}, respectively.
\end{proof}

From the stability under pullbacks of vertical and horizontal connections, it follows that Ehresmann connections are also stable under pullbacks.

\begin{proposition}
\label{proposition:pullback-ehresmann-connections}
Ehresmann connections are closed under pullbacks. Concretely, given a tangent display map $q\colon E\to M$ and a morphism $f\colon M'\to M$, the tangent display map $q'\colon E'\to M'$ pullback of $q$ along $f$ carries an Ehresmann connection $(\R',\H')$ provided that $q$ does, where $\R'$ and $\H'$ are the vertical and horizontal connections of Propositions~\ref{proposition:pullback-vertical-connections} and~\ref{lemma:pullback-horizontal-connections}, respectively.
\end{proposition}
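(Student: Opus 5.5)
By Propositions~\ref{proposition:pullback-vertical-connections} and~\ref{proposition:pullback-horizontal-connections}, $\R'$ is a vertical connection and $\H'$ a horizontal connection on $q'$. So it only remains to verify the two compatibility axioms \textbf{[EC.1]} and \textbf{[EC.2]} for the pair $(\R',\H')$. The strategy throughout is to exploit the universal properties of the tangent pullbacks defining the target objects. An equation landing in $\V q'$ is checked after postcomposing with the jointly monic pair $\iota_{q'}\T g$ (or with $\iota_{q'}$ alone, which is monic by Lemma~\ref{lemma:iota-tangent-monic}). An equation landing in $\T E'$ is checked after postcomposing with $\T g$ and $\T q'$, since $\T E'$ is the tangent pullback of $\T E$ along $\T f$.

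For \textbf{[EC.1]}, I will show $\H'\R'=q'^\F z_{q'}^\V$ by postcomposing with $\iota_{q'}$, i.e.\ comparing $\H'\R'\iota_{q'}$ with $q'^\F z_{q'}^\V\iota_{q'}=q'^\F z_{E'}$; both live in $\T E'$. Postcomposing with $\T g$, the defining equation of $\R'$ gives $\R'\iota_{q'}\T g=\T g\R\iota_q$. Then the defining equation of $\H'$ gives $\H'\T g=\F_fg\H$. Together these yield $\F_fg\H\R\iota_q$, and \textbf{[EC.1]} for $(\R,\H)$ turns this into $\F_fg\,q^\F z_q^\V\iota_q=q'^\F g z_E=q'^\F z_{E'}\T g$, using $\F_fg\,q^\F=q'^\F g$ and naturality of $z$. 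Postcomposing with $\T q'$ instead, both sides reduce to $q'^\F q'z_{M'}$. For the left side this uses that $\R'\iota_{q'}\T q'=\R'\T^\V q'z_{M'}$ together with Lemma~\ref{lemma:vertical-connections}, and the fact that $\H'$ preserves the zero, being linear.

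For \textbf{[EC.2]}, I need $\<\R'\iota_{q'},\pi_{q'}\H'\>s_{E'}=\id_{\T E'}$. Postcomposing with $\T g$ and using naturality of $s$, the left side becomes $\<\R'\iota_{q'}\T g,\pi_{q'}\H'\T g\>s_E$. By the defining equations and $\pi_{q'}\F_fg=\T g\pi_q$ this equals $\T g\<\R\iota_q,\pi_q\H\>s_E=\T g$, by \textbf{[EC.2]} for $(\R,\H)$. Postcomposing with $\T q'$ gives $\<\R'\iota_{q'}\T q',\pi_{q'}\H'\T q'\>s_{M'}$. Here $\R'\iota_{q'}\T q'=p_{E'}q'z_{M'}$ by \textbf{[VCF.3]} for $\phi_{\R'}$, and $\pi_{q'}\H'\T q'=\pi_{q'}\T^\F q'=\T q'$ by Lemma~\ref{lemma:horizontal-connection}. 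Adding a zero vector then gives $\T q'$.

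The main obstacle I expect is bookkeeping rather than any conceptual difficulty. One has to make sure the pairing maps into $\T_2E'$ are well defined, which follows from \textbf{[VC.2]} and \textbf{[HC.2]} since both components are bundle maps over $E'$. One also has to verify that $\T(g)$ and $\T(q')$ are jointly monic on $\T E'$. That uses that the square $(g,q',q,f)$ is a tangent pullback, which holds because $q$ is tangent display.
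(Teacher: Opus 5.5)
Your proof is correct. The paper leaves this verification to the reader, and your argument is exactly the intended routine check: you take $\R'$ and $\H'$ from Propositions~\ref{proposition:pullback-vertical-connections} and~\ref{proposition:pullback-horizontal-connections}, then verify \textbf{[EC.1]} and \textbf{[EC.2]} against the jointly monic pair $\T g$, $\T q'$ using $\R'\iota_{q'}\T g=\T g\R\iota_q$, $\H'\T g=\F_fg\H$ and $\pi_{q'}\F_fg=\T g\pi_q$. One small mislabel: on the $\T q'$-component of \textbf{[EC.1]}, the fact you actually need is that $\H'$ is a bundle map over $E'$, that is $\H'p_{E'}=q'^\F$ from \textbf{[HC.2]} (or equivalently $\H'\T q'=\T^\F q'$), not that $\H'$ preserves the zero.
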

\begin{proof}
We leave it to the reader to complete this proof.
\end{proof}

Thanks to stability under pullbacks, Ehresmann connections form a tangent fibration.

\begin{proposition}
\label{proposition:fibration-ehresmann-connections}
The codomain functor $\Pi\colon\C(\X,\TT)\to(\X,\TT)$ which sends a Ehresmann connection $(q,\R,\H)$ to the codomain $M$ of $q\colon E\to M$ is a tangent fibration.
\end{proposition}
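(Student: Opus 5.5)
The strategy is the same one used for Propositions~\ref{proposition:fibration-vertical-connections} and~\ref{proposition:fibration-horizontal-connections}: reduce to~\cite[Proposition~3.11]{cruttwell:tangent-display-maps}, or rather to its proof pattern. That result produces a tangent fibration from a class of display maps with structure, provided two things hold. First, the structure is stable under tangent pullbacks along arbitrary morphisms. Second, it is stable under the tangent bundle functor, compatibly with the chosen cleavage. So the plan is to check these two stability properties for $\C(\X,\TT)$, and then to verify that the candidate cartesian lifts are cartesian in $\C(\X,\TT)$ and are preserved by $\T$.

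Pullback stability is Proposition~\ref{proposition:pullback-ehresmann-connections}. Given $(q;\R,\H)$ and $f\colon M'\to M$, the pullback $q'$ carries $(\R',\H')$ built in Propositions~\ref{proposition:pullback-vertical-connections} and~\ref{proposition:pullback-horizontal-connections}. If I need to fill in its proof, I will check \textbf{[EC.1]} and \textbf{[EC.2]} for $(\R',\H')$ by postcomposing with the jointly monic pairs coming from the tangent pullbacks $\V q'\to\T E'\times\dots$ and $\T E'=\T E\times_{\T M}\T M'$, and then reducing to the same axioms for $(\R,\H)$. By construction, $\R'$ and $\H'$ satisfy $\R'\V_fg=\T g\R$ and $\F_fg\H=\H'\T g$. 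Hence $(f,g)\colon(q';\R',\H')\to(q;\R,\H)$ is a morphism of $\C(\X,\TT)$.

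Cartesianness is the next step. Take a morphism $(h,k)\colon(q'';\R'',\H'')\to(q;\R,\H)$ whose base $h$ factors as $h=uf$. The unique factorization $k=vg$ with $v\colon E''\to E'$ exists because $E'$ is a pullback in $\X$. The remaining task is to show that $(u,v)$ respects the connections. For the vertical part, I postcompose with $\V_fg$. This map is monic, as one sees from the tangent pullback square of Lemma~\ref{lemma:pullback-horizontal-connections} and its vertical analogue, using that $\iota$ is tangent monic (Lemma~\ref{lemma:iota-tangent-monic}). For the horizontal part, I postcompose with the jointly monic pair $(\T g,\T q')$. In both cases the required equations reduce to the corresponding ones for $(h,k)$.

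For the tangent structure, I will show that $\T$ on $\C(\X,\TT)$, given by $(q;\R,\H)\mapsto(\T q;\R_\T,\H_\T)$, is well defined; that is, $(\R_\T,\H_\T)$ again satisfies \textbf{[EC.1]} and \textbf{[EC.2]}. This follows from naturality of $c$, $c$ being an involution compatible with $s$ and $z$ (it is a linear isomorphism $\T\p\cong\p_\T$), and the fact that the distributive laws $\gamma^\V_q$ and $\gamma^\F_q$ of Lemmas~\ref{lemma:functoriality-V} and~\ref{lemma:functoriality-F} are isomorphisms. The structural transformations $p,z,s,l,c$ lift to morphisms of $\C(\X,\TT)$ by the same computations as for $\VC$ and $\HC$. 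Finally, $\T$ preserves the cartesian squares, since they are tangent pullbacks and $\gamma^\V$ and $\gamma^\F$ intertwine the pulled-back connections.

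I expect the main obstacle to be the verification that $(\R_\T,\H_\T)$ satisfies \textbf{[EC.2]}. That requires commuting $c_E$ past the sum $s_E$ and past $\T\iota_q$ and $\T\pi_q$, and keeping track of the two different additive structures on $\T^2E$. I would handle it using the identities $c\,\T s=s_\T$ (with the appropriate pairing) together with the defining equations of $\gamma^\V$ and $\gamma^\F$.
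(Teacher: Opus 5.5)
Your overall strategy is the paper's. The paper's proof only remarks that the result follows from stability of tangent display maps and of Ehresmann connections under tangent pullbacks (Proposition~\ref{proposition:pullback-ehresmann-connections}) and under the tangent bundle functor, as in \cite[Proposition~3.11]{cruttwell:tangent-display-maps}. Your checks of \textbf{[EC.1]} and \textbf{[EC.2]} for $(\R',\H')$ and for $(\R_\T,\H_\T)$ fill in what the paper leaves implicit, and the computations you sketch work. This includes the one you flag as the main obstacle.

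There is, however, one false step in your cartesianness argument: $\V_fg\colon\V q'\to\V q$ is not monic in general. The vertical analogue of Lemma~\ref{lemma:pullback-horizontal-connections} says the square formed by $\V_fg$, $\T^\V q'$, $\T^\V q$ and $f$ is a tangent pullback. So $\V_fg$ is a pullback of $f$, and it is monic only in special cases such as $f$ monic. Tangent monicity of $\iota$ does not help, since $\V_fg\iota_q=\iota_{q'}\T g$ and $\T g$ need not be monic. For instance, for $q\colon E\to\ast$ and $f\colon\Real\to\ast$ in smooth manifolds, $\V_fg$ is the projection $\T E\times\Real\to\T E$.

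What that pullback square does give is that the pair $(\V_fg,\T^\V q')$ is jointly monic, and the extra component is harmless. Both $\T v\R'\T^\V q'$ and $\R''\V_uv\T^\V q'$ equal $\T q''p_{M''}u$. This follows from the first item of Lemma~\ref{lemma:vertical-connections}, together with $vq'=q''u$ and $\V_uv\T^\V q'=\T^\V q''u$.

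The same joint-monicity argument also simplifies your last step. Any morphism of $\C(\X,\TT)$ whose underlying square is a tangent pullback forces the domain connection to be the pulled-back one, and is therefore cartesian. So preservation of cartesian arrows by $\T$ follows directly from $\T$ preserving those squares, without tracking how $\gamma^\V$ and $\gamma^\F$ intertwine the connections.
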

\begin{proof}
This is a direct consequence of the stability under the tangent bundle functor and along tangent pullbacks of tangent display maps and of Ehresmann connections.
\end{proof}

\subsection{Ehresmann connections and the fundamental short exact sequence}
\label{subsection:ehresmann-ses}
We may now connect our definition of an Ehresmann connection with one of the standard definitions in differential geometry. We will prove that an Ehresmann connection is precisely a splitting of the fundamental short exact sequence of a submersion, which implies that an Ehresmann connection is a splitting of the tangent bundle into a direct sum of the vertical bundle and a horizontal bundle. In a semi-additive category, a \textbf{splitting} of a short (not necessarily exact) sequence
\begin{align*}
&0\to A\xrightarrow{\iota}B\xrightarrow{\pi}C\to 0
\end{align*}
consists of two morphisms $\sigma\colon C\to B$ and $\rho\colon B\to A$ such that, $\sigma$ is a section of $\pi\colon B\to C$, that is, $\sigma\pi=\id_C$ and $\rho$ is a retraction of $\iota\colon A\to B$, that is, $\iota\rho=\id_A$. Furthermore, $\sigma$ and $\rho$ must be in direct sum with each other, that is, the following diagrams must commute
\begin{equation*}
\begin{tikzcd}
C & B \\
0 & A
\arrow["\rho", from=1-1, to=1-2]
\arrow[from=1-1, to=2-1]
\arrow["\sigma", from=1-2, to=2-2]
\arrow[from=2-1, to=2-2]
\end{tikzcd}\quad
\begin{tikzcd}
B & {B\oplus B} \\
0 & B
\arrow["{\<\rho\iota,\pi\sigma\>}", from=1-1, to=1-2]
\arrow[from=1-1, to=2-1]
\arrow["{+}", from=1-2, to=2-2]
\arrow[from=2-1, to=2-2]
\end{tikzcd}
\end{equation*}
where $+$ denotes the unique morphism $[\id_B,\id_B]$ induced by the universal property of the biproduct $B\oplus B$.

\begin{theorem}
\label{theorem:splitting-of-ses}
Given a tangent display map $q\colon E\to M$, the following statements are equivalent:
\begin{enumerate}
\item $(\R,\H)$ is an Ehresmann connection on $q$;

\item $(\R,\H)$ is a splitting of the fundamental short exact sequence of $q$.
\end{enumerate}
\end{theorem}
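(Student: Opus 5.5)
The proof is mostly a matter of unfolding definitions in the semi-additive category $\DB(\X,\TT;E)$. In that category the biproduct $\T E\oplus\T E$ is the fibred product $\T_2E$ over $E$, the codiagonal $+$ is the sum $s_E$, and the zero object is $\0_E$. For the sequence $\0_E\to\q^\V\xrightarrow{\iota_q}\p_E\xrightarrow{\pi_q}\q^\F\to\0_E$ I will take $\rho=\R$ and $\sigma=\H$. I read the splitting axioms in the paper's diagrammatic order: $\sigma\pi$ means $\pi_q\H$ as a composite, and the direct-sum condition involves $\<\R\iota_q,\pi_q\H\>$ followed by $s_E$.

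The plan is to match the axioms pairwise.

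First, being a morphism of $\DB(\X,\TT;E)$ means being a linear bundle morphism over $E$. So $\R\colon\p_E\to\q^\V$ and $\H\colon\q^\F\to\p_E$ being arrows of $\DB(\X,\TT;E)$ is exactly \textbf{[VC.2]} and \textbf{[HC.2]}.

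Next, the section and retraction equations are literally \textbf{[VC.1]} and \textbf{[HC.1]}.

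The direct-sum condition $\<\R\iota_q,\pi_q\H\>s_E=\id_{\T E}$ is verbatim \textbf{[EC.2]}. One only has to check that the pairing into the biproduct in $\DB(\X,\TT;E)$ is the pairing into the pullback $\T_2E$. This holds because both components are bundle maps over $E$, by \textbf{[VC.2]} and \textbf{[HC.2]}, so they agree after $p_E$.

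The remaining condition says that $\H$ followed by $\R$ factors through the zero object. In $\DB(\X,\TT;E)$ the unique map $\q^\F\to\0_E$ is $q^\F$, and $\0_E\to\q^\V$ is $z_q^\V$. So the condition reads $\H\R=q^\F z_q^\V$, which is \textbf{[EC.1]}.

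Each axiom on one side thus corresponds to exactly one on the other, and both directions of the equivalence follow at once. I would also note that exactness of the sequence plays no role in the argument. It is only needed so that "the fundamental short exact sequence" makes sense, i.e.\ for $q$ a submersion (Theorem~\ref{theorem:submersion}); for a mere tangent display map, (ii) is read as a splitting of the short sequence $\0_E\to\q^\V\to\p_E\to\q^\F\to\0_E$, which is exactly what the proof uses.

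The main obstacle is to make sure the zero morphisms and the biproduct structure of $\DB(\X,\TT;E)$ are what I claimed. Concretely, the zero map $\q^\F\to\q^\V$ is $q^\F z^\V_q$, the addition of parallel linear maps is pointwise via $s_E$, and the biproduct $\p_E\oplus\p_E$ is $\T_2E\to E$ with codiagonal $s_E$. I would establish these by citing the semi-additive structure from~\cite{lucyshyn:connections}: biproducts of differential bundles over $E$ are fibre products over $E$, the codiagonal of $\p_E$ is its sum $s_E$, and the zero object is $\0_E$ with unique maps given by projections and zero sections. Once these identifications are fixed, the equivalence is a direct translation of axioms.
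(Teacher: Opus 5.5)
Your proposal is correct and is essentially the paper's own proof. Both directions come from matching \textbf{[VC.1]} and \textbf{[HC.1]} with the retraction and section equations, \textbf{[VC.2]} and \textbf{[HC.2]} with being morphisms of $\DB(\X,\TT;E)$, and \textbf{[EC.1]} and \textbf{[EC.2]} with the two direct-sum conditions $\H\R=q^\F z_q^\V$ and $\<\R\iota_q,\pi_q\H\>s_E=\id_{\T E}$; your explicit identification of the zero maps and of the biproduct $\T_2E$ with codiagonal $s_E$ just spells out what the paper leaves implicit.
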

\begin{proof}
Consider an Ehresmann connection $(\R,\H)$ on a submersion $q$. By \textbf{[VC.1]} and \textbf{[HC.1]}, $\R$ is a retraction of the inclusion $\iota_q\colon\V q\to\T E$ and $\H$ is a section of the horizontal descent $\pi_q\colon\T E\to\F q$. Moreover, by \textbf{[VC.2]} and \textbf{[HC.2]}, $\R$ and $\H$ are morphisms of $\DB(\X,\TT;E)$. Finally, by \textbf{[EC.1]} and \textbf{[EC.2]}, $\R$ and $\H$ are in direct sum with each other. Conversely, consider a splitting $(\R,\H)$ of the fundamental short sequence of $q$. Since $\R$ is a retraction of $\iota_q$, $\R$ satisfies \textbf{[VC.1]}. Moreover, $\R$ is linear, thus, $\R$ satisfies also \textbf{[VC.2]}. Similarly, since $\H$ is a linear section of $\pi_q$, $\H$ satisfies both \textbf{[HC.1]} and \textbf{[HC.2]}. Finally, by the orthogonality of $\R$ and $\H$, the pair $(\R,\H)$ satisfies \textbf{[EC.1]} and \textbf{[EC.2]}. Thus, $(\R,\H)$ is an Ehresmann connection on $q$.
\end{proof}

Thanks to Theorem~\ref{theorem:splitting-of-ses}, we can now make precise an important point. Altough the fundamental short exact sequence of Definition~\ref{definition:fundamental-ses} is a well-defined sequence for every tangent display map, only for submersions is it exact. However, if a tangent display map $q$ has an Ehresmann connection, $q$ becomes a submersion, automatically.

\begin{corollary}
\label{corollary:connecton-implies-submersion}
If a tangent display map $q$ admits an Ehresmann connection, then $q$ is a submersion.
\end{corollary}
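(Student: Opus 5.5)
We need to show that the square of Definition~\ref{definition:submersion} is a pushout in $\DB(\X,\TT;E)$. Equivalently, by Theorem~\ref{theorem:submersion}, we show that $\pi_q$ is the cokernel of $\iota_q$. The idea is that a split sequence in a semi-additive category which is already a kernel sequence is automatically exact; the connection supplies exactly the splitting.

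Fix an Ehresmann connection $(\R,\H)$ on $q$. Let $f\colon\p_E\to\mathbf{k}$ be a linear morphism in $\DB(\X,\TT;E)$ with $\iota_q f = q^\V z_k$, i.e.\ $f$ vanishes on $\V q$. This is the cocone condition: for a morphism out of $\0_E$ in $\DB(\X,\TT;E)$, the only map is the zero $z_k$.

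\textbf{Candidate and factorization.} The candidate factorization is $g\=\H f\colon\F q\to K$, which is linear as a composite of linear maps. To check $\pi_q g=f$, we use \textbf{[EC.2]}:
\begin{align*}
f=\<\R\iota_q,\pi_q\H\>s_E f=\<\R\iota_q f,\pi_q\H f\>s_k,
\end{align*}
where linearity of $f$ lets it commute with the sums (a linear morphism between differential bundles preserves the additive structure, \cite{cockett:connections}). Now $\R\iota_q f=\R q^\V z_k=p_E z_k$ by \textbf{[VC.2]}, which is the zero, so the sum collapses by the unit law to $\pi_q\H f=\pi_q g$. The commutation with the zero leg, $z_q^\F g = z_k$, holds because $\H$ and $f$ preserve zeros.

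\textbf{Uniqueness.} Uniqueness is immediate, since $\pi_q$ is split epi by \textbf{[HC.1]}: if $\pi_q g'=f$ then $g'=\H\pi_q g'=\H f$. Together with Lemma~\ref{lemma:vertical-bundle-kernel}, which gives the pullback half, the square is a pushout in $\DB(\X,\TT;E)$, so $q$ is a submersion.

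\textbf{Main obstacle.} The only delicate step is the bookkeeping that linear morphisms in $\DB(\X,\TT;E)$ preserve $s$ and $z$, so that $f$ distributes over $s_E$ into $s_k$. In this setting that is standard: linear bundle morphisms are additive. Alternatively, one could phrase the whole argument in the semi-additive category $\DB(\X,\TT;E)$, where the splitting gives $\id=\R\iota_q+\pi_q\H$ and the proof is the usual splitting-lemma argument.
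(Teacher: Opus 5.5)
Your proposal is correct and follows essentially the same argument as the paper. The paper also uses the orthogonality $\rho\iota+\pi\sigma=\id$ (here \textbf{[EC.2]}) together with additivity of the cocone map to factor it through $\pi_q$ via the section $\H$, gets uniqueness from $\pi_q$ being split epi, and cites Lemma~\ref{lemma:vertical-bundle-kernel} for the pullback half; the only difference is that the paper phrases this for an arbitrary split sequence in a semi-additive category and then specializes, while you work directly in $\DB(\X,\TT;E)$.
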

\begin{proof}
This result is a consequence of a general phenomenon of short sequences. Suppose in fact that $0\to A\to B\to C\to 0$ is a sequence in a semi-additive category. If this sequence admits a splitting, the sequence is necessarily exact. To prove this, let us call the left morphism of the sequence $\iota\colon A\to B$ and the right morphism, $\pi\colon B\to C$. If the sequence admits a splitting $\rho$ and $\sigma$, then, $\iota$ and $\pi$ become respectively a slit mono and a split epi morphism. Therefore, the sequence becomes exact in $A$ and $C$. To prove that it is also exact in $B$, one needs to show that the diagram
\begin{equation*}
\begin{tikzcd}
A & B \\
0 & C
\arrow["\iota", from=1-1, to=1-2]
\arrow[from=1-1, to=2-1]
\arrow["\pi", from=1-2, to=2-2]
\arrow[from=2-1, to=2-2]
\end{tikzcd}
\end{equation*}
is both a pullback and a pushout. We prove that the diagram is a pushout since by Lemma~\ref{lemma:vertical-bundle-kernel}, we already know that the one of the fundamental short sequence of a tangent display map is always a pullback. However, for the general case, the proof can be done dually. Consider two morphisms $\alpha\colon 0\to X$ and $\beta\colon B\to X$ subject to the condition that $\iota\beta=0\alpha$, where $0\colon A\to 0$. However, since $0$ is a zero object, there is a unique map $0_{A,X}\colon A\to X$ that factors through $0$, the zero map. By the orthogonality condition of the splitting, $\rho\iota+\pi\sigma=\id_B$. Therefore, we compute:
\begin{align*}
&\beta=(\rho\iota+\pi\sigma)\beta=\rho\iota\beta+\pi\sigma\beta=0_{B,X}+\pi\sigma\beta=\pi\sigma\beta
\end{align*}
where we used the additivity of $\beta$. Therefore, $\sigma\beta\colon C\to X$ satisfies the following equations. $\pi\sigma\beta=\beta$, $0_{0,C}\sigma\beta=0_{0,X}$, the latter being necessary from the universal property of the object $0$. Now, take any other map $\gamma\colon C\to X$, satisfying the same equations. Thus, $\pi\gamma=\beta=\pi\sigma\beta$. However, since $\pi$ is split epi, this implies that $\gamma=\sigma\beta$. This proves that the diagram is a pushout. By applying this general argument to the fundamental short sequence of a tangent display map and using Theorems~\ref{theorem:submersion} and~\ref{theorem:splitting-of-ses}, we conclude that a tangent display map equipped with an Ehresmann connection is necessarily a submersion.
\end{proof}

A splitting of the fundamental short exact sequence provides a decomposition of the tangent bundle $\p_E\colon\T E\to E$ of $E$ into a vertical component and a horizontal component, given by the Finsler bundle.

\begin{corollary}
\label{corollary:decomposition-connection}
An Ehresmann connection $(\R,\H)$ on a tangent display map $q\colon E\to M$ makes the diagram
\begin{equation*}
\begin{tikzcd}
{\q^\V} && {\q^\F} \\
& {\p_E} \\
{\q^\V} && {\q^\F}
\arrow["\H", from=1-3, to=2-2]
\arrow["\R", from=2-2, to=1-1]
\arrow["{\pi_q}", from=2-2, to=3-3]
\arrow["{\iota_q}", from=3-1, to=2-2]
\end{tikzcd}
\end{equation*}
into a biproduct diagram in the category $\DB(\X,\TT;E)$. In particular, it induces a linear isomorphism
\begin{align*}
&\p_E\cong\q^\V\oplus_E\q^\F
\end{align*}
which preserves the projections (or inclusions) of the biproduct, where $\oplus_E$ denotes a biproduct in $\DB(\X,\TT;E)$. Conversely, a linear isomorphism between $\p_E\cong\q^\V\oplus_E\q^\F$ which preserves the projections (or inclusions) of the biproduct induces an Ehresmann connection on $q$.
\end{corollary}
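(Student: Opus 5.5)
The plan is to reduce the statement to the general fact that, in a semi-additive category, a splitting of a short exact sequence is the same thing as a biproduct decomposition of the middle term. We then apply it in $\DB(\X,\TT;E)$, which has finite biproducts. By Theorem~\ref{theorem:splitting-of-ses}, an Ehresmann connection $(\R,\H)$ is precisely a splitting of the fundamental sequence $\0_E\to\q^\V\xrightarrow{\iota_q}\p_E\xrightarrow{\pi_q}\q^\F\to\0_E$. By Corollary~\ref{corollary:connecton-implies-submersion}, $q$ is then a submersion, so the sequence is short exact (Theorem~\ref{theorem:submersion}). All four maps $\iota_q,\R,\H,\pi_q$ are linear morphisms over $E$: this is \textbf{[VC.2]}, \textbf{[HC.2]}, Lemma~\ref{lemma:horizontal-descent-linear}, and linearity of $\iota_q$. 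So the whole diagram lives in $\DB(\X,\TT;E)$.

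For the forward direction, I will verify the standard biproduct identities in the paper's diagrammatic order:
\begin{align*}
\iota_q\R&=\id_{\V q}, & \H\pi_q&=\id_{\F q}, & \iota_q\pi_q&=0, & \H\R&=0, & \R\iota_q+\pi_q\H&=\id_{\T E}.
\end{align*}
The first two are \textbf{[VC.1]} and \textbf{[HC.1]}. The third is Lemma~\ref{lemma:vertical-bundle-kernel}: $\iota_q\pi_q=q^\V z^\F_q$, which is the zero map in $\DB(\X,\TT;E)$. The fourth is \textbf{[EC.1]}, since $q^\F z^\V_q$ is the zero map $\q^\F\to\q^\V$. The last is \textbf{[EC.2]}, once one observes that the sum of two parallel linear maps $f,g\colon\p_E\to\p_E$ in the semi-additive structure of $\DB(\X,\TT;E)$ is $\<f,g\>s_E$.

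I expect this observation to be the main obstacle: identifying the enrichment coming from biproducts in $\DB(\X,\TT;E)$ with the fibrewise addition given by $s_E$. I would argue it from the fact, cited from \cite{lucyshyn:connections}, that the biproduct of differential bundles over $E$ is the fibred product $\q\times_E\q'$, with codiagonal induced by the sum morphism. With these identities in hand, the general lemma is routine: given $f\colon X\to\V q$ and $g\colon X\to\F q$, the map $f\iota_q+g\H$ is the unique map whose composites with $\R$ and $\pi_q$ are $f$ and $g$. Uniqueness follows from $\id=\R\iota_q+\pi_q\H$. Dually, $\p_E$ is a coproduct with injections $\iota_q$ and $\H$. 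The induced isomorphism $\p_E\cong\q^\V\oplus_E\q^\F$ is $\<\R,\pi_q\>$, with inverse $[\iota_q,\H]$, and by construction it respects projections and inclusions.

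For the converse, suppose given a linear isomorphism $\theta\colon\p_E\cong\q^\V\oplus_E\q^\F$ compatible with the projections, meaning that its composite with the second projection is $\pi_q$ and it restricts along $\iota_q$ to the first inclusion. Define $\R$ as $\theta$ followed by the first projection, and $\H$ as the second inclusion followed by $\theta^{-1}$. The biproduct identities transported along $\theta$ give exactly the five equations above. Reading them backwards yields \textbf{[VC.1]}, \textbf{[VC.2]}, \textbf{[HC.1]}, \textbf{[HC.2]}, \textbf{[EC.1]} and \textbf{[EC.2]}, and hence an Ehresmann connection. The case of compatibility with inclusions is dual.
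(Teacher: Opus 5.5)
Your proposal is correct and is precisely the standard splitting-lemma argument that the paper invokes and leaves to the reader. Your identification of the hom-monoid addition in $\DB(\X,\TT;E)$ with $\<f,g\>s_E$ (the codiagonal of the fibred-product biproduct is the sum) is exactly the detail needed to read \textbf{[EC.2]} as the biproduct equation $\R\iota_q+\pi_q\H=\id_{\T E}$.

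The only caveat is your closing remark that the inclusion case is dual. It is harmless here, because your converse hypothesis already requires both that $\theta$ followed by the second projection is $\pi_q$ and that $\iota_q\theta$ is the first inclusion. However, the two compatibilities are not genuinely dual. Compatibility with $\pi_q$ alone would suffice, since $\iota_q$ is always a kernel of $\pi_q$ (Lemma~\ref{lemma:vertical-bundle-kernel}). Compatibility with $\iota_q$ alone need not suffice, since $\pi_q$ is a cokernel of $\iota_q$ only when $q$ is already known to be a submersion.
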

\begin{proof}
This is a standard result of short exact sequences that extends readily to the context of the fundamental short exact sequence of submersions. We leave it to the reader to check the details of the proof. 
\end{proof}

In a semi-additive category, giving a splitting of a short exact sequence requires specifying two pieces of information: a section $\sigma$ and a retraction $\rho$. Thanks to the splitting lemma, this becomes redundant in an \textbf{additive category}, which is a semi-additive category enriched over the category of Abelian groups. The semi-additive category $\DB(\X,\TT;E)$ of differential bundles over $E$ (\cite[Proposition~4.1]{lucyshyn:connections}\footnote{Notice that the author uses \emph{additive} to mean semi-additive.}) becomes additive when the base tangent category $(\X,\TT)$ admits negatives. This is a folklore result, but for completeness, we report it here.

\begin{lemma}
\label{lemma:additivity-differential-bundles}
In a tangent category with negatives, the category $\DB(\X,\TT;E)$ of differential bundles over an object $E$ and linear morphisms becomes an additive category.
\end{lemma}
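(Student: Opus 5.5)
Since $\DB(\X,\TT;E)$ is already semi-additive (\cite[Proposition~4.1]{lucyshyn:connections}), it is enriched over commutative monoids: the sum of two linear morphisms $f,g\colon\q\to\q'$ over $E$ is $\<f,g\>s_{q'}$, with unit the zero map $q z_{q'}$. To make it additive, I will show that every hom-monoid is an abelian group. It suffices to give each differential bundle $\q\colon E\to M$ over $E$ a negation $n_q\colon E\to E$ that is a linear endomorphism of $\q$ and satisfies $\<\id,n_q\>s_q=qz_q$. Then $-f\=fn_{q'}$ is an additive inverse of $f$, since $\<f,fn_{q'}\>s_{q'}=f\<\id,n_{q'}\>s_{q'}=fq'z_{q'}$, and $fq'=q$ because $f$ is a bundle map over $E$.

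I will build $n_q$ from the negation $n_E\colon\T E\to\T E$ of the base tangent category via the universal property of the vertical lift, as in~\cite{cockett:differential-bundles}. Axiom [3] of Definition~\ref{definition:differential-bundle} identifies $E_2$, via $\xi_q$, with the pullback of $\T q$ along $z_M$. Consider $l_q n_E\colon E\to\T E$. By naturality of $n$ and $l_q\T q=qz_M$, we get $l_q n_E\T q=l_q\T q\,n_M=qz_Mn_M=qz_M$, because $z_Mn_M=z_M$. Hence $l_qn_E$ factors through $\xi_q$ as $\<\id_E,n_q\>$ for a unique $n_q$; equivalently, $n_q$ is the unique map with $l_q n_E=\<\id_E,n_q\>\xi_q$. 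I am deliberately taking the second component as the negation, since $\xi_q(y,v)$ is "$v$ based at $y$" only after an appropriate convention; if it is the other way round, I will adjust which projection is used. An alternative that avoids this bookkeeping is to use the retraction $\T E\to E$ supplied by the pullback, which sends $l_q n_E$ to a candidate for $n_q$.

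Next I verify the required identities, using that $\xi_q$ is monic (it is a pullback of the section $z_M$) and comparing after postcomposition with $\xi_q$ or with the pullback projections. The inverse law $\<\id,n_q\>s_q=qz_q$ should follow from $\<\id,n_E\>s_E=p_Ez_E$ transported through $l_q$, using axiom [2], which makes $l_q$ additive into $\p_E$, and the fact that $l_q$ is monic because it is a section-like pullback leg. Linearity of $n_q$, namely $n_ql_q=l_q\T n_q$, will come from axiom [4] ($l_ql_E=l_q\T l_q$) together with the tangent-category identity $n_El_E=l_E\T n_E$ and the fact that the pullback of [3] is preserved by $\T$.

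The main obstacle is this last linearity check, which needs careful use of the tangent pullback property and the interaction of $l$, $c$ and $n$. If it becomes too cumbersome, I will instead cite the folklore result of~\cite{cockett:differential-bundles} that every differential bundle in a tangent category with negatives has a linear negation. Finally, I need to confirm that $-f$ is again linear, which holds because linear maps are closed under composition. Composition is bilinear with respect to this group structure, since biproducts force composition to distribute over sums in a semi-additive category. Therefore $\DB(\X,\TT;E)$ is additive.
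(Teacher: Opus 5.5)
Your strategy matches the paper's proof, which is only a sketch: it also induces a negation on each differential bundle from $n$ via the universal property of the vertical lift. Your reduction of additivity to giving each identity an additive inverse, with $-f\=fn_{q'}$, is fine.

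The construction of $n_q$, however, fails as written, and this is not just a choice of convention. The factorization of $l_qn_E$ through $\xi_q$ cannot have $\id_E$ in either slot. With $\xi_q=\<\pi_1l_q,\pi_2z_E\>\T s_q$ one gets $\xi_qp_E=\pi_2$. On the other hand, axiom [2] gives $l_qn_Ep_E=l_qp_E=qz_q$. So the base-point coordinate is the zero $qz_q$, and the factorization is $l_qn_E=\<n_q,qz_q\>\xi_q$. Your equation $l_qn_E=\<\id_E,n_q\>\xi_q$ has no solution: post-composing with $p_E$ gives $qz_q$ on the left and $\id_E$ or $n_q$ on the right. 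Swapping projections therefore does not repair it.

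Your fallback is also mis-stated. The pullback of axiom [3] does not supply a retraction $\T E\to E$. A retraction of $l_q$ defined on all of $\T E$ is exactly a vertical Koszul connection, which is extra structure. What the pullback gives is only the partial operation $\{-\}$, and the correct definition is $n_q\=\{l_qn_E\}$.

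Your later steps rely on an identity you never state: $n_ql_q=l_qn_E$. It holds because:
\begin{itemize}
\item $l_q=\<\id_E,qz_q\>\xi_q$, by unitality of $\T s_q$ together with $qz_qz_E=l_q\T q\T z_q$;
\item $n_qq=q$;
\end{itemize}
so $n_ql_q=\<n_q,qz_q\>\xi_q=l_qn_E$.

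With this identity your transport argument works: $\<\id,n_q\>s_ql_q=\<l_q,l_qn_E\>s_E=l_qp_Ez_E=qz_qz_E=qz_ql_q$, and $l_q$ is monic.

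Your route to linearity also goes through. One has $l_q\T n_q\T l_q=l_ql_E\T n_E=l_qn_El_E=n_ql_ql_E=n_ql_q\T l_q$, and $\T l_q$ is monic because $\T$ preserves the pullback of [3].

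A shorter route uses axiom [1] instead: $l_q$ is additive from $\q$ to $\T\q$. Hence $n_ql_q$ and $l_q\T n_q$ are both inverses of $l_q$ in the additive bundle $\T\q$, since both sums equal $qz_M\T z_q$. By uniqueness of inverses they coincide, with no need for [4] or for $n_El_E=l_E\T n_E$.
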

\begin{proof}
When the tangent structure admits negatives, using the universal property of the vertical lift of differential bundles, one can induce a negation morphism from the tangent bundle of the total space. Using this construction, one can see that the semi-additive category of differential bundles and linear morphisms over a fixed base object becomes additive.
\end{proof}

Thus, thanks to the exactness of the fundamental short exact sequence of a submersion, in the presence of negatives, giving an Ehresmann connection is equivalent to providing either a vertical or a horizontal connection.

\begin{theorem}
\label{theorem:vertical-horizontal-connections-duality}
In a tangent category with negatives, for each vertical connection $\R$ of a submersion $q$, there exists exactly one horizontal connection $\H_\R$ of $q$ making $(\R,\H_\R)$ into an Ehresmann connection of $q$. Moreover, for each horizontal connection $\H$ of $q$, there exists exactly one vertical connection $\R_\H$ rendering the pair $(\R_\H,\H)$ an Ehresmann connection on $q$.
\end{theorem}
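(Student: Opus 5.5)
The plan is to reduce the statement to the splitting lemma for short exact sequences in an additive category, applied to the fundamental short exact sequence of $q$. By Theorem~\ref{theorem:submersion}, since $q$ is a submersion, the sequence $\0_E\to\q^\V\xrightarrow{\iota_q}\p_E\xrightarrow{\pi_q}\q^\F\to\0_E$ is short exact in $\DB(\X,\TT;E)$. By Lemma~\ref{lemma:additivity-differential-bundles}, this category is additive when $(\X,\TT)$ has negatives. By Theorem~\ref{theorem:splitting-of-ses}, Ehresmann connections on $q$ are exactly splittings $(\R,\H)$ of this sequence. Vertical connections are exactly linear retractions of $\iota_q$ (\textbf{[VC.1]}, \textbf{[VC.2]}), and horizontal connections are exactly linear sections of $\pi_q$ (\textbf{[HC.1]}, \textbf{[HC.2]}). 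So the claim becomes purely additive-categorical: in an additive category, each retraction $\rho$ of $\iota$ extends uniquely to a splitting, and so does each section $\sigma$ of $\pi$.

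For the vertical case, I take a retraction $\rho=\R$ of $\iota=\iota_q$ and consider $\id_B-\rho\iota$, which uses the negation. Then $\iota(\id_B-\rho\iota)=\iota-\iota=0$. Since $\pi$ is the cokernel of $\iota$ (Theorem~\ref{theorem:submersion}, using the pushout), there is a unique $\sigma\colon C\to B$ with $\pi\sigma=\id_B-\rho\iota$. This gives \textbf{[EC.2]} directly, since $\rho\iota+\pi\sigma=\id_B$. For $\sigma\pi=\id_C$, I compute $\pi\sigma\pi=\pi-\rho\iota\pi=\pi$, because $\iota\pi=0$ from the kernel square; since $\pi$ is epi, $\sigma\pi=\id_C$. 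For \textbf{[EC.1]}, I compute $\pi\sigma\rho=\rho-\rho\iota\rho=\rho-\rho=0$, and then $\sigma\rho=0$ by epi-ness of $\pi$. In tangent-categorical terms, the zero map $\F q\to\V q$ is $q^\F z^\V_q$, which is exactly \textbf{[EC.1]}. Uniqueness: if $\sigma'$ also completes $\rho$, then \textbf{[EC.2]} forces $\pi\sigma'=\id_B-\rho\iota=\pi\sigma$, so $\sigma'=\sigma$.

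The horizontal case is dual. Given $\sigma=\H$, the map $\id_B-\pi\sigma$ satisfies $(\id_B-\pi\sigma)\pi=\pi-\pi=0$. By Lemma~\ref{lemma:vertical-bundle-kernel}, $\iota_q$ is the kernel of $\pi_q$ in $\DB(\X,\TT;E)$, so $\id_B-\pi\sigma$ factors uniquely as $\rho\iota$. The same computations, now using that $\iota$ is monic (Lemma~\ref{lemma:iota-tangent-monic}), give $\iota\rho=\id_A$ and $\sigma\rho=0$. Uniqueness again follows from \textbf{[EC.2]} together with monicity of $\iota_q$.

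The main obstacle is bookkeeping rather than mathematics. I have to make sure the morphisms produced by the universal properties are linear morphisms over $E$ and not just maps in $\X$. This holds because the kernel and cokernel statements are made in $\DB(\X,\TT;E)$, and $\id-\rho\iota$ is linear because linear maps form an abelian group under $s_E$ and the negation $n_E$. I also have to translate the abstract identities $0$ and $+$ back into the concrete forms of \textbf{[EC.1]} and \textbf{[EC.2]}, namely $q^\F z^\V_q$ and $\<\R\iota_q,\pi_q\H\>s_E$.
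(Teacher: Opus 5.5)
Your proposal is correct and follows essentially the same route as the paper. In both, additivity of $\DB(\X,\TT;E)$ under negatives is the starting point; $\H_\R$ is obtained by factoring $\id_{\T E}-\R\iota_q$ (the paper's $\<\R\iota_qn_E,\id_{\T E}\>s_E$) through the cokernel $\pi_q$, and dually $\R_\H$ by factoring $\id_{\T E}-\pi_q\H$ through the kernel $\iota_q$. The remaining axioms and uniqueness then come from these universal properties, with your epi/mono arguments being a slightly cleaner packaging of the paper's repeated appeals to the pushout and pullback.
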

\begin{proof}
By the previous lemma, in the presence of negatives, the category $\DB(\X,\TT;E)$ is additive. We can then employ the splitting lemma. Explicitly, suppose that $\R$ is a vertical connection. Since $q$ is a submersion, we can use the universal property of the pushout diagram of Definition~\ref{definition:submersion} and construct a unique morphism $\H_\R$ rendering the following diagram
\begin{equation*}
\begin{tikzcd}
{\V q} & {\T E} & {\T_2E} \\
E & {\F q} \\
&& {\T E}
\arrow["{\iota_q}", from=1-1, to=1-2]
\arrow["{q^\F}"', from=1-1, to=2-1]
\arrow["{\<\R\iota_qn_E,\id_{\T E}\>}", from=1-2, to=1-3]
\arrow["{\pi_q}", from=1-2, to=2-2]
\arrow["{s_E}", from=1-3, to=3-3]
\arrow["{z_q^\F}"', from=2-1, to=2-2]
\arrow["{z_E}"', curve={height=18pt}, from=2-1, to=3-3]
\arrow["\lrcorner"{anchor=center, pos=0.125, rotate=180}, draw=none, from=2-2, to=1-1]
\arrow["{\H_\R}", dashed, from=2-2, to=3-3]
\end{tikzcd}
\end{equation*}
commutative in $\DB(\X,\TT;E)$, where we used negatives $n_E\colon\T E\to\T E$. From $\pi_q\H_\R=\<\R\iota_qn_E,\id_{\T E}\>s_E$. We immediately deduce that $\<\R\iota_q,\H\pi_q\>s_E=p_Ez_E$, that is, \textbf{[EC.2]}. Using the universal property of the pushout diagram for a submersion, and by the following computation
\begin{align*}
\<\R\iota_qn_E,\id_{\T E}\>s_E\R&=~\<\R\iota_q\R n_{\V q},\R\>s_q^\V       \Tag{\text{Linearity of }\R}\\
&=~\<\R n_{\V q},\R\>s_q^\V                                                \Tag{\textbf{[VC.1]}}\\
&=~p_Ez_q^\F                                                          \Tag{\text{Unitality}}
\end{align*}
we deduce that $\H_\R\R=q^\F z_q^\F$, that is, \textbf{[EC.1]}. \textbf{[HC.2]} is also automatic, since $\H_\R$ is already a morphism of $\DB(\X,\TT;E)$, so, in particular, linear. Finally, to prove \textbf{[HC.1]}, we use the universal property of the pushout diagram again, together with the following computation:
\begin{align*}
\<\R\iota_qn_E,\id_{\T E}\>s_E\pi_q&=~\<\R\iota_q\pi_qn_{\F q},\pi_q\>s_{\F q}      \Tag{\text{Linearity of }\pi_q}\\
&=~\<p_Ez_q^\F n_{\F q},\pi_q\>s_{\F q}                                             \Tag{\iota_q\pi_q=q^\V z_q^\F}\\
&=~p_Ez_q^\F                                                                        \Tag{\text{Unitality}}
\end{align*}
Thus, $(\R,\H_\R)$ is in fact an Ehresmann connection on $q$. Dually, if $\H$ is a horizontal connection, we construct a vertical connection $\R_\H$ as the unique morphism rendering the following diagram:
\begin{equation*}
\begin{tikzcd}
{\T E} && {\T_2E} \\
& {\V q} & {\T E} \\
& E & {\F q}
\arrow["{\<\id_{\T E},\pi_q\H n_E\>}", from=1-1, to=1-3]
\arrow["{\R_\H}", dashed, from=1-1, to=2-2]
\arrow["{p_E}"', curve={height=18pt}, from=1-1, to=3-2]
\arrow["{s_E}", from=1-3, to=2-3]
\arrow["{\iota_q}", from=2-2, to=2-3]
\arrow["{q^\F}"', from=2-2, to=3-2]
\arrow["\lrcorner"{anchor=center, pos=0.125}, draw=none, from=2-2, to=3-3]
\arrow["{\pi_q}", from=2-3, to=3-3]
\arrow["{z_q^\F}"', from=3-2, to=3-3]
\end{tikzcd}
\end{equation*}
Using dual arguments, one can easily show that $\R_\H$ is in fact a vertical connection and that $(\R_\H,\H)$ is in fact an Ehresmann connection. Finally, the uniqueness of $\H_\R$ and $\R_\H$ comes from the universal property of the pullback and pushout diagram.
\end{proof}

From the stability under retraction of vertical and horizontal connections, it also follows that Ehresmann connections are also stable under retraction.

\begin{theorem}
\label{theorem:ehresmann-connections-retraction}
In a tangent category whose tangent display maps are closed under retraction, Ehresmann connections are also closed under retraction.
\end{theorem}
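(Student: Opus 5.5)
We take the retract data of Equation~\eqref{equation:section-retrations-tangent-display}: section--retraction pairs $(s_E,r_E)$, $(s_M,r_M)$ with $s_Eq=q's_M$ and $r_Eq'=qr_M$, and an Ehresmann connection $(\R,\H)$ on $q$. Set $\R'\=\T s_E\R\V_{r_M}r_E$ as in Proposition~\ref{proposition:vertical-connections-retraction}, and $\H'\=\F_{s_M}s_E\H\T r_E$ as in Proposition~\ref{proposition:horizontal-connections-retraction}. Those propositions already show that $\R'$ and $\H'$ are a vertical and a horizontal connection on $q'$. So it only remains to check \textbf{[EC.1]} and \textbf{[EC.2]} for $(\R',\H')$.

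The tools are naturality identities for the comparison maps of Lemmas~\ref{lemma:functoriality-V} and~\ref{lemma:functoriality-F}:
\begin{align*}
\V_{s_M}s_E\iota_q=\iota_{q'}\T s_E,\quad \T r_E\pi_{q'}=\pi_q\F_{r_M}r_E,\quad \F_{s_M}s_E q^\F=q'^\F s_E,\quad z_q^\V\V_{r_M}r_E=r_Ez_{q'}^\V .
\end{align*}
We also need functoriality, so that $\V_{s_M}s_E\V_{r_M}r_E=\id$ and $\F_{s_M}s_E\F_{r_M}r_E=\id$. Each identity is checked by postcomposing with the two pullback projections defining $\V q'$ or $\F q'$.

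For \textbf{[EC.1]}, compute
\begin{align*}
\H'\R'=\F_{s_M}s_E\H\T r_E\T s_E\R\V_{r_M}r_E .
\end{align*}
This is where the main subtlety lies: $\T r_E\T s_E$ is not the identity, only $\T s_E\T r_E=\id$ holds, so $\H\R$ cannot be contracted directly. The first thing to try is to avoid ever needing $r_Es_E$ inside. Equivalently, work with the connection forms: by Theorem~\ref{theorem:ehresmann-connection-form}, the pair $(\phi',\psi')=(\R'\iota_{q'},\pi_{q'}\H')$ rewrites as $\phi'=\T s_E\phi\T r_E$ and $\psi'=\T s_E\psi\T r_E$. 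For example,
\begin{align*}
\pi_{q'}\F_{s_M}s_E=\T s_E\pi_q
\end{align*}
follows by checking against the projections, and similarly $\V_{r_M}r_E\iota_{q'}=\iota_q\T r_E$.

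With these forms, \textbf{[ECF.2]} is immediate. Since $\T s_E$ and $\T r_E$ are additive (naturality of $s$),
\begin{align*}
\<\phi',\psi'\>s_{E'}=\T s_E\<\phi,\psi\>s_E\T r_E=\T s_E\T r_E=\id .
\end{align*}
For \textbf{[ECF.1]},
\begin{align*}
\psi'\phi'=\T s_E\psi\T r_E\T s_E\phi\T r_E .
\end{align*}
Here we need $\T r_E\T s_E$ to interact well, so we expect to use that $e\=r_Es_E$ is an idempotent of $E$ lying over the idempotent $r_Ms_M$ of $M$, with $eq=q(r_Ms_M)$. The plan is to show that $\T e$ commutes with $\psi$ and $\phi$ modulo the connection. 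If that fails in general, we fall back on checking \textbf{[EC.1]} directly: show $\H\T r_E\T s_E\R$ factors through zero, using that $\T r_E\T s_E=\T e$ preserves verticality, since $\T e\T q=\T q\T(r_Ms_M)$. Hence $\iota_q\T e$ is vertical, and $\R$ applied to a horizontal vector moved by $\T e$ is still controlled by \textbf{[EC.1]} for $q$.

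This commutation step is the expected obstacle. If it cannot be derived in general, the statement should be read with the hypothesis that the retraction data is a morphism of connections, which makes $\T e$ commute with $\phi$ and $\psi$ and closes the argument. With that resolved, Theorem~\ref{theorem:ehresmann-connection-form} converts the forms $(\phi',\psi')$ back into the Ehresmann connection $(\R',\H')$.
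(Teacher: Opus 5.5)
\textbf{There is a genuine gap at \textbf{[EC.1]}. In the stated generality it cannot be closed, because the statement is false without negatives.}

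Your reduction is sound as far as it goes. By Propositions~\ref{proposition:vertical-connections-retraction} and~\ref{proposition:horizontal-connections-retraction}, only \textbf{[EC.1]} and \textbf{[EC.2]} remain. The forms are $\phi'=\T s_E\phi\T r_E$ and $\psi'=\T s_E\psi\T r_E$, and \textbf{[EC.2]} follows from naturality of the sum exactly as you wrote.

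Your fallback for \textbf{[EC.1]} does not work. $\T e$ preserves vertical vectors, but it does not preserve horizontal ones, so \textbf{[EC.1]} for $q$ gives no control over $\R$ applied to $\T e$ of a horizontal vector. What the data do give is the following. Since $\psi'$ is idempotent (by \textbf{[HC.1]} for $\H'$), \textbf{[EC.2]} yields
\begin{align*}
\psi'=\psi'\<\phi',\psi'\>s_{E'}=\<\psi'\phi',\psi'\>s_{E'},
\end{align*}
that is, $\psi'\phi'+\psi'=\psi'$. With negatives you can cancel and get $\psi'\phi'=p_{E'}z_{E'}$. Then $\H'\R'\iota_{q'}=\H'\psi'\phi'={q'}^\F z_{E'}={q'}^\F z^\V_{q'}\iota_{q'}$, and \textbf{[EC.1]} follows because $\iota_{q'}$ is monic. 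So with negatives the theorem holds, and \textbf{[EC.1]} is in fact implied by the other axioms.

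Here is a counterexample without negatives.
\begin{enumerate}
\item Take join-semilattices with bottom and monoid maps. Having biproducts, this is a Cartesian differential category in which every map is linear, so $\T A=A\times A$ and $\T f=f\times f$. All pullbacks exist and are preserved by $\T$, so every map is tangent display and the retraction hypothesis holds trivially.
\item Identify $\V q=E\times\ker q$ and $\F q=E\times M$. Linearity forces $\R(x,v)=(x,\rho(v))$ and $\H(x,u)=(x,h(u))$ for monoid maps $\rho\colon E\to\ker q$ and $h\colon M\to E$.
\item The axioms then read: $\rho$ fixes $\ker q$; $q(h(u))=u$; \textbf{[EC.1]} is $\rho(h(u))=0$; and \textbf{[EC.2]} is $\rho(v)\vee h(q(v))=v$.
\item Let $B=\{0,1\}$, $E=B\times B$, $M=B$, $q(k,m)=m$, $\rho(k,m)=(k,0)$ and $h(m)=(0,m)$. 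This is an Ehresmann connection.
\item Let $e$ be the idempotent of $E$ sending $(0,1)$ to $(1,1)$ and fixing the other elements. It is a monoid map over $M$.
\item $e$ splits through the chain $E'=\{0<a<b\}$ with $a=(1,0)$ and $b=(1,1)$. Here $q'(a)=0$, $q'(b)=1$ and $s_M=r_M=\id_B$.
\item The retracted connection has $h'(1)=r_E(0,1)=b$ and $\rho'(b)=r_E\rho(1,1)=a\neq0$. So \textbf{[EC.1]} fails for $(\R',\H')$.
\item In fact $q'$ carries no Ehresmann connection at all. \textbf{[EC.1]} would force $\rho'(b)=0$, while additivity forces $\rho'(b)=\rho'(a\vee b)=a\vee\rho'(b)\geq a$.
\end{enumerate}

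Your instinct that an extra hypothesis is needed is therefore correct. The one you propose repairs the argument: if $\T e$ commutes with $\phi$ (for instance, if $e$ is an endomorphism of $(q;\R,\H)$), then $\psi'\phi'=\T s_E\psi\phi\T e\T r_E=p_{E'}z_{E'}$. Assuming negatives works too.

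The paper gives no proof of this theorem; it is left to the reader. The defect therefore lies in the statement itself, not only in your write-up.
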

\begin{proof}
We leave it to the reader to complete this proof.
\end{proof}

\subsection{Full connections}
\label{subsection:full-connection}
In~\cite{lucyshyn:connections}, Lucyshyn-Right presented an alternative approach to define an Koszul connection in tangent categories. In this section, we extend this point of view to Ehresmann connections. We introduce the notions of \textit{full vertical} and \textit{full horizontal connections} and prove an equivalence between full connections and Ehresmann connections. We begin by introducing the notion of a full vertical connection.

\begin{definition}
\label{definition:full-vertical-connection}
A \textbf{full vertical connection} on a tangent display map $q\colon E\to M$ consists of a vertical connection $\R\colon\T E\to\V q$ on $q$ subject to the following condition:
\begin{description}
\item[FVC] The following diagram
\begin{equation*}
\begin{tikzcd}
{\T E} & {\V q} \\
{\T M} & M
\arrow["\R", from=1-1, to=1-2]
\arrow["{\T q}"', from=1-1, to=2-1]
\arrow["\lrcorner"{anchor=center, pos=0.125}, draw=none, from=1-1, to=2-2]
\arrow["{\T^\V q}", from=1-2, to=2-2]
\arrow["{p_M}"', from=2-1, to=2-2]
\end{tikzcd}
\end{equation*}
is a tangent pullback in the base tangent category $(\X,\TT)$.
\end{description}
\end{definition}

A full vertical connection suffices to define an Ehresmann connection.

\begin{proposition}
\label{proposition:full-vertical-connection}
If $\R\colon\T E\to\V q$ is a full vertical connection on a tangent display map $q\colon E\to M$, there exists a unique horizontal connection $\H_\R\colon\F q\to\T E$ on $q$ that completes $\R$ into an Ehresmann connection $(\R,\H_\R)$ on $q$.
\end{proposition}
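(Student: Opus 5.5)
The idea is to use the tangent pullback of \textbf{[FVC]}, which says that $\T E$ is the fibre product $\T M\times_M\V q$ via the legs $\T q$ and $\R$. I will define $\H_\R$ through this universal property and check every axiom by testing against those two legs.

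\textbf{Construction.} Let $\H_\R\colon\F q\to\T E$ be the unique morphism such that
\begin{align*}
&\H_\R\R=q^\F z_q^\V &&\H_\R\T q=\T^\F q.
\end{align*}
The first equation is forced by \textbf{[EC.1]}. The second is forced by \textbf{[HC.1]} together with $\T q=\pi_q\T^\F q$. The pair is compatible, because $q^\F z_q^\V\T^\V q=q^\F q=\T^\F q\,p_M$. Uniqueness of the completion then comes for free. If $\H'$ is any horizontal connection with $(\R,\H')$ Ehresmann, then $\H'\R=q^\F z_q^\V$ by \textbf{[EC.1]}, and $\H'\T q=\H'\pi_q\T^\F q=\T^\F q$ by \textbf{[HC.1]}. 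Hence $\H'=\H_\R$ by the universal property of the \textbf{[FVC]} pullback. \textbf{[EC.1]} itself holds by construction.

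\textbf{[HC.1].} Since $\pi_q$ is defined by the Finsler pullback, it suffices to check $\H_\R\pi_q$ against $q^\F$ and $\T^\F q$. For the first leg I write $p_E=\R q^\V$ (from \textbf{[VC.2]}), giving
\[
\H_\R\pi_qq^\F=\H_\R p_E=\H_\R\R q^\V=q^\F z_q^\V q^\V=q^\F.
\]
For the second leg, $\H_\R\pi_q\T^\F q=\H_\R\T q=\T^\F q$. This also gives the bundle-morphism part of \textbf{[HC.2]}, namely $\H_\R p_E=q^\F$.

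\textbf{Linearity in \textbf{[HC.2]}.} This is the step I expect to be the main obstacle. Here I use that the \textbf{[FVC]} square is a \emph{tangent} pullback, so its image under $\T$ is again a pullback, with legs $\T\R$ and $\T^2q$. I therefore compare $\H_\R l_E$ and $l_q^\F\T\H_\R$ after composing with each leg.
\begin{itemize}
\item On the $\T\R$ leg, \textbf{[VC.2]} gives $\H_\R l_E\T\R=\H_\R\R l_q^\V=q^\F z_q^\V l_q^\V$. On the other side, naturality and the definition of $l_q^\F$ give $l_q^\F\T q^\F\T z_q^\V=q^\F z_E\T z_q^\V$. These agree because the zero of the vertical bundle is linear, i.e.\ $z_q^\V l_q^\V=z_E\T z_q^\V$.
\item On the $\T^2q$ leg, naturality of $l$ gives $\H_\R l_E\T^2q=\H_\R\T q\,l_M=\T^\F q\,l_M$. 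This equals $l_q^\F\T\T^\F q$ by the defining diagram of $l_q^\F$.
\end{itemize}

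\textbf{[EC.2].} I again test $\<\R\iota_q,\pi_q\H_\R\>s_E$ against $\R$ and $\T q$.
\begin{itemize}
\item Since $\R$ is linear and therefore additive, composing with $\R$ gives
\[
\<\R\iota_q\R,\pi_q\H_\R\R\>s_q^\V=\<\R,p_Ez_q^\V\>s_q^\V=\R,
\]
using \textbf{[VC.1]} and unitality of the sum.
\item Since $\T q$ is additive, composing with $\T q$ gives
\[
\<\R\iota_q\T q,\pi_q\T^\F q\>s_M=\<p_Eqz_M,\T q\>s_M=\T q,
\]
using \textbf{[VCF.3]} for $\phi_\R$ and unitality.
\end{itemize}
Hence the composite is $\id_{\T E}$, which completes the proof.

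The delicate bookkeeping throughout is matching the structure maps of $\q^\V$ and $\q^\F$ (their zeros and lifts) with the corresponding naturality squares.
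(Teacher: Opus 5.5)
Your proof is correct and follows the paper's argument essentially step by step. You use the same definition of $\H_\R$ via the \textbf{[FVC]} pullback, the same test legs $\T\R$ and $\T^2q$ of the tangent pullback for linearity, the same legs $\R$ and $\T q$ for \textbf{[EC.2]}, and the Finsler pullback for \textbf{[HC.1]}. The one difference is that you spell out uniqueness explicitly through \textbf{[EC.1]} and \textbf{[HC.1]}, where the paper only says it follows from the universal property of \textbf{[FVC]}.
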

\begin{proof}
Consider a full vertical connection $\R\colon\T E\to\V q$ on $q$. Using \textbf{[FVC]}, we construct a (necessarily unique) morphism $\H_\R\colon\F q\to\T E$ which renders the following diagram
\begin{equation*}
\begin{tikzcd}
{\F q} && E \\
& {\T E} & {\V q} \\
{\T M} & {\T M} & M
\arrow["{q^\F }", from=1-1, to=1-3]
\arrow["{\H_\R}", dashed, from=1-1, to=2-2]
\arrow["{\T^\F q}"', from=1-1, to=3-1]
\arrow["{z_q^\V}", from=1-3, to=2-3]
\arrow["\R", from=2-2, to=2-3]
\arrow["{\T q}"', from=2-2, to=3-2]
\arrow["\lrcorner"{anchor=center, pos=0.125}, draw=none, from=2-2, to=3-3]
\arrow["{\T^\V q}", from=2-3, to=3-3]
\arrow[equals, from=3-1, to=3-2]
\arrow["{p_M}"', from=3-2, to=3-3]
\end{tikzcd}
\end{equation*}
commutative. To prove the existence of such a morphism $\H_\R$, we first compute:
\begin{align*}
&\T^\V q=\T^\V qz_Mp_M=\iota_q\T qp_M=\iota_qp_Eq
\end{align*}
It follows that $q^\F z_q^\V\T^\V q=\T^\F qp_M$:
\begin{align*}
q^\F z_q^\V\T^\V q&=~q^\F z_q^\V\iota_qp_Eq                      \Tag{\T^\V q=\iota_qp_Eq}\\
&=~q^\F z_Ep_Eq                                                  \Tag{z_q^\V\iota_q=z_E}\\
&=~q^\F q                                                    \Tag{z_Ep_E=\id_E}\\
&=~\T^\F qp_M                                                    \Tag{q^\F q=\T^\F qp_M}
\end{align*}
Now, we shall prove that $\H_\R\colon\F q\to\T E$ is, in fact, a horizontal connection on $q$. To begin, we show that $\H_\R p_E=q^\F $ by computing the following:
\begin{align*}
\H_\R p_E&=~\H_\R\R\iota_qp_E                                   \Tag{p_E=\R\iota_qp_E}\\
&=~q^\F z_q^\V\iota_qp_E                                      \Tag{\H_\R\R=q^\F z_q^\V}\\
&=~q^\F z_Ep_E                                                  \Tag{z_q^\V\iota_q=z_E}\\
&=~q^\F                                                     \Tag{z_Ep_E=\id_E}
\end{align*}
Next, we prove that $\H_\R$ is linear, that is, $\H_\R l_E=l_q^\F\T\H_\R$. First, we compute the following:
\begin{align*}
\H_\R l_E\T\R&=~\H_\R\R l_q^\V                          \Tag{l_E\T\R=\R l_q^\V}\\
&=~q^\F z_q^\V l_q^\V                                 \Tag{\H_\R\R=q^\F z_q^\V}\\
&=~q^\F z_E\T z_q^\V                                      \Tag{z_q^\V l_q^\V=z\T z_q^\V}
\end{align*}
However, we can also compute:
\begin{align*}
l_q^\F\T\H_\R\T\R&=~l_q^\F\T q^\F \T z_q^\V               \Tag{\H_\R\R=q^\F z_q^\V}\\
&=~q^\F z_E\T z_q^\V                                        \Tag{l_q^\F\T q^\F =q^\F z_E}
\end{align*}
Furthermore:
\begin{align*}
\H_\R l_E\T^2q&=~\H_\R\T ql_M                                         \Tag{l_E\T^2q=\T ql_M}\\
&=~\T^\F ql_M                                                   \Tag{\H_\R\T q=\T^\F q}
\end{align*}
and moreover:
\begin{align*}
l_q^\F\T\H_\R\T^2q&=~l_q^\F\T\T^\F q                         \Tag{\H_\R\T q=\T^\F q}\\
&=~\T^\F ql_M                                                   \Tag{l_q^\F\T\T^\F q=\T^\F ql_M}
\end{align*}
Therefore, by invoking \textbf{[FVC]}, we conclude that $\H_\R l_E=l_q^\F\T\H_\R$, that is, that $\H_\R\colon\q^\F\to\p_E$ is a linear morphism. Finally, we prove that $\H_\R\pi_q=\id_{\F q}$. We compute:
\begin{align*}
\H_\R\pi_q\T^\F q&=~\H_\R\T q                                       \Tag{\pi_q\T^\F q=\T q}\\
&=~\T^\F q                                                    \Tag{\H_\R\T q=\T^\F q}
\end{align*}
and also:
\begin{align*}
\H_\R\pi_qq^\F &=~\H_\R p_E                                         \Tag{\pi_qq^\F =p_E}\\
&=~q^\F                                                     \Tag{\H_\R p_E=q^\F }
\end{align*}
Therefore, by the universal property of the Finsler bundle $\F q$, $\H_\R\pi_q$ must coincide with the identity on $\F q$. So far, we have shown that $\H_\R\colon\F q\to\T E$ is a horizontal connection on $q$. Furthermore, $\H_\R$ satisfies \textbf{[EC.1]} by construction. It is left to show that $(\R,\H_\R)$ verifies also \textbf{[EC.2]}. Let us compute the following:
\begin{align*}
\<\R\iota_q,\pi_q\H_\R\>s_E\R&=~\<\R\iota_q\R,\pi_q\H_\R\R\>s_q^\V           \Tag{\R\text{ linear thus }s_E\R=(\R\times_E\R)s_q^\V}\\
&=~\<\R,\pi_qq^\F z_q^\V\>s_q^\V                                   \Tag{\iota_q\R=\id_{\V q},\H_\R\R=q^\F z_q^\V}\\
&=~\R\<\id,\T^\V qz_q^\V\>s_q^\V                                     \Tag{\pi_qq^\F =p_E=\R \T^\V q}\\
&=~\R                                                               \Tag{\text{unitality}}
\end{align*}
Moreover:
\begin{align*}
\<\R\iota_q,\pi_q\H_\R\>s_E\T q&=~\<\R\iota_q\T q,\pi_q\H_\R\T q\>s_M         \Tag{s_E\T q=\T_2qs_M}\\
&=~\<\R\T^\V qz_M,\pi_q\T^\F q\>s_M                                         \Tag{\iota_q\T q=\T^\V qz_M,\H_\R\T q=\T^\F q}\\
&=~\T q\<p_Mz_M,\id\>s_M                                                  \Tag{\R\T^\V q=\T qp,\pi_q\T^\F q=\T q}\\
&=~\T q                                                             \Tag{\text{unitality}}
\end{align*}
Therefore, by \textbf{[FVC]}, $\<\R\iota_q,\pi_q\H_\R\>s_E=\id_{\T E}$, thus $(\R,\H_\R)$ is an Ehresmann connection. Uniqueness of $\H_\R$ follows from the universal property of the pullback diagram \textbf{[FVC]}.
\end{proof}

A full vertical connection is a vertical connection which satisfies \textbf{[FVC]}. Proposition~\ref{proposition:full-vertical-connection} shows that every vertical connection subject to this extra condition can be completed uniquely into an Ehresmann connection. Similarly, starting from a horizontal connection $\H$, one would like to construct a vertical connection $\R_\H$ that completes $\H$. The next definition introduces the extra assumption on a horizontal connection required to construct $\R_\H$.

\begin{definition}
\label{definition:full-horizontal-connection}
A \textbf{full horizontal connection} on a tangent display map $q\colon E\to M$ is a horizontal connection $\H\colon\F q\to\T E$ on $q$ subject to the following condition:
\begin{description}
\item[FHC] The following diagram
\begin{equation*}
\begin{tikzcd}
{\0_E} & {\q^\V} \\
{q^\F} & {\p_E}
\arrow["{z_q^\V}", from=1-1, to=1-2]
\arrow["{z_q^\F}"', from=1-1, to=2-1]
\arrow["{\iota_q}", from=1-2, to=2-2]
\arrow["\H"', from=2-1, to=2-2]
\arrow["\lrcorner"{anchor=center, pos=0.125, rotate=180}, draw=none, from=2-2, to=1-1]
\end{tikzcd}
\end{equation*}
is a pushout diagram in the category $\DB(\X,\TT;E)$.
\end{description}
\end{definition}

As we proved for full vertical connections, a full horizontal connection is sufficient to define an Ehresmann connection.

\begin{proposition}
\label{proposition:full-horizontal-connection}
If $\H\colon\T E\to\V q$ is a full horizontal connection on a submersion $q\colon E\to M$, there exists a unique vertical connection $\R_\H\colon\T E\to\V q$ on $q$ that completes $\H$ into an Ehresmann connection $(\R_\H,\H)$ on $q$.
\end{proposition}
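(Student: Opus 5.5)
We construct $\R_\H$ from the universal property of the pushout \textbf{[FHC]} in $\DB(\X,\TT;E)$, dually to Proposition~\ref{proposition:full-vertical-connection}. To apply it, we need two linear morphisms out of the legs of the square, agreeing on $\0_E$:
\begin{itemize}
\item out of $\q^\V$, the identity $\id_{\V q}$;
\item out of $\q^\F$, the zero map $q^\F z_q^\V\colon\F q\to\V q$.
\end{itemize}
Both are morphisms of $\DB(\X,\TT;E)$. The identity is trivially linear. For the zero map, $q^\F$ and $z_q^\V$ are structural maps of differential bundles, so $q^\F z_q^\V$ is the zero morphism $\q^\F\to\q^\V$ of the semi-additive category. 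The compatibility condition $z_q^\V\id_{\V q}=z_q^\F q^\F z_q^\V$ holds because $z_q^\F q^\F=\id_E$. The pushout therefore yields a unique linear $\R_\H\colon\T E\to\V q$ over $E$ with
\begin{align*}
&\iota_q\R_\H=\id_{\V q},&&\H\R_\H=q^\F z_q^\V.
\end{align*}
The first equation is \textbf{[VC.1]}, the second is \textbf{[EC.1]}, and \textbf{[VC.2]} holds because $\R_\H$ lives in $\DB(\X,\TT;E)$.

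It remains to verify \textbf{[EC.2]}, namely $e\=\<\R_\H\iota_q,\pi_q\H\>s_E=\id_{\T E}$. Here $e$ is a sum of two linear endomorphisms of $\p_E$, hence linear. By the uniqueness half of the pushout \textbf{[FHC]}, it suffices to check that $e$ agrees with $\id_{\T E}$ after precomposition with $\iota_q$ and with $\H$. These checks use linearity of $s_E$ against $\iota_q$ and $\H$, in the form $\iota_q(\alpha+\beta)=\iota_q\alpha+\iota_q\beta$.

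For precomposition with $\iota_q$, we get $\iota_q e=\iota_q\R_\H\iota_q+\iota_q\pi_q\H$. The first term is $\iota_q$. The second is $q^\V z_q^\F\H=q^\V z_E$ by Lemma~\ref{lemma:vertical-bundle-kernel} and because $\H$ preserves zero. By unitality, $\iota_q e=\iota_q$.

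For precomposition with $\H$, we get $\H e=\H\R_\H\iota_q+\H\pi_q\H$. The first term is $q^\F z_q^\V\iota_q=q^\F z_E$. The second is $\H$ by \textbf{[HC.1]}. Again by unitality, $\H e=\H$.

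Uniqueness of $\R_\H$ is a separate step. Suppose $\R'$ is any vertical connection such that $(\R',\H)$ is Ehresmann. Then $\iota_q\R'=\id_{\V q}$ by \textbf{[VC.1]} and $\H\R'=q^\F z_q^\V$ by \textbf{[EC.1]}, and $\R'$ is linear by \textbf{[VC.2]}. So $\R'$ is a morphism of $\DB(\X,\TT;E)$ satisfying the same two equations as $\R_\H$, and the pushout forces $\R'=\R_\H$.

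The main obstacle is the bookkeeping in $\DB(\X,\TT;E)$. First, the sum $\<\R_\H\iota_q,\pi_q\H\>s_E$ must be identified with the biproduct sum of the semi-additive structure (\cite{lucyshyn:connections}), so that precomposition distributes over it. Second, the zero maps $q^\V z_E$, $q^\F z_q^\V$ and so on must be identified with the categorical zero morphisms. Once this is in place, every verification reduces to the additive argument sketched above, mirroring the proof of Corollary~\ref{corollary:connecton-implies-submersion}. Note that the submersion hypothesis is not actually needed: \textbf{[FHC]} alone provides the universal property.
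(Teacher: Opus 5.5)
Your proof is correct and matches the paper's argument: $\R_\H$ is induced by \textbf{[FHC]} from $\id_{\V q}$ and $q^\F z_q^\V$, and \textbf{[EC.2]} follows by checking that $\<\R_\H\iota_q,\pi_q\H\>s_E$ agrees with $\id_{\T E}$ after precomposition with $\iota_q$ and with $\H$, then invoking the uniqueness clause of \textbf{[FHC]}. Two of your additions go slightly beyond the paper and are both correct: you check that $\<\R_\H\iota_q,\pi_q\H\>s_E$ is linear, which is needed to apply that uniqueness clause in $\DB(\X,\TT;E)$, and you observe that the submersion hypothesis is never used, consistent with Corollary~\ref{corollary:connecton-implies-submersion}.
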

\begin{proof}
Consider a full horizontal connection $\H\colon\F q\to\T E$ on $q$. We construct a (necessarily unique) morphism $\R_\H\colon\T E\to\V q$ which renders the following diagram
\begin{equation*}
\begin{tikzcd}
{\0_E} & {\q^\V} \\
{\q^\F} & {\p_E} \\
{\0_E} && {q^\V}
\arrow["{z^\V_q}", from=1-1, to=1-2]
\arrow["{z^\F_q}"', from=1-1, to=2-1]
\arrow["{\iota_q}", from=1-2, to=2-2]
\arrow[curve={height=-12pt}, equals, from=1-2, to=3-3]
\arrow["\H"', from=2-1, to=2-2]
\arrow["{q^\F }"', from=2-1, to=3-1]
\arrow["\lrcorner"{anchor=center, pos=0.125, rotate=180}, draw=none, from=2-2, to=1-1]
\arrow["{\R_\H}", dashed, from=2-2, to=3-3]
\arrow["{z^\V_q}"', from=3-1, to=3-3]
\end{tikzcd}
\end{equation*}
commutative. To prove the existence of such a morphism, notice that, since $\0_E$ is a zero object in $\DB(\X,\TT;E)$, that is, it is both initial and terminal, there must exist a unique morphism $\0_E\to\0_E$. Thus, $z_q^\F q^\F$ coincides with the identity on $\0_E$. Now, we shall prove that $\R_\H$ is in fact a vertical connection on $q$. Since the pushout diagram is in the category $\DB(\X,\TT;E)$ of differential bundles over $E$ and linear morphisms, $\R_\H\colon\p_E\to\q^\V$ is automatically a linear morphism, thus, $\R_\H$ satisfies \textbf{[VC.2]}. Furthermore, by construction, $\iota_q\R_\H=\id_{\V q}$, thus, $\R_\H$ verifies \textbf{[VC.1]} as well. Therefore, $\R_\H$ is a vertical connection on $q$. Finally, we prove that $(\R_\H,\H)$ is an Ehresmann connection. To this end, it is only left to prove \textbf{[EC.2]}, since \textbf{[EC.1]} holds by construction. Let us compute the following:
\begin{align*}
\H\<\R_\H\iota_q,\pi_q\H\>s&=~\<q^\F z_q^\V\iota_q,\H\>s_E                 \Tag{\H\R_\H=q^\F z_q^\V,\H\pi_q=\id_{\F q}}\\
&=~\H\<p_Ez_E,\id\>s_E                                                        \Tag{q^\F =\H p_E,z_q^\V\iota_q=z_E}\\
&=~\H                                                                   \Tag{\text{unitality}}
\end{align*}
Moreover:
\begin{align*}
\iota_q\<\R_\H\iota_q,\pi_q\H\>s_E&=~\<\iota_q,\iota_qp_Ez_q^\F\H\>s_E     \Tag{\iota_q\R_\H=\id,\iota_q\pi_q=\iota_qp_Ez_q^\F}\\
&=~\iota_q\<\id,p_Ez_E\>s_E                                               \Tag{\H\text{ linear thus }z_q^\F\H=z}\\
&=~\iota_q                                                          \Tag{\text{unitality}}
\end{align*}
By invoking \textbf{[FHC]}, we conclude that $\<\R_\H\iota_q,\pi_q\H\>s_E=\id_{\T E}$, that is, $(\R_\H,\H)$ satisfies \textbf{[EC.2]} and it forms a Ehresmann connection. Uniqueness of $\R_\H$ follows from the universal property of the pushout diagram \textbf{[FHC]}.
\end{proof}

\begin{remark}
\label{remark:submersions-instead-of-tangent-display}
In Proposition~\ref{proposition:full-vertical-connection}, the map $q$ is assumed to be only tangent display. This is not sufficient for Proposition~\ref{proposition:full-horizontal-connection}, in which $q$ is in fact assumed to be a submersion, since we made use of the pushout of Equation~\eqref{equation:submersion-diagram} to construct $\R_\H$.
\end{remark}

Propositions~\ref{proposition:full-vertical-connection} and~\ref{proposition:full-horizontal-connection} show that a full vertical connection or a full horizontal connection suffices to construct an Ehresmann connection. The next result shows that the converse is also true, that is, the vertical and the horizontal components of an Ehresmann connection are always full.

\begin{proposition}
\label{proposition:full-connection}
If $(\R,\H)$ is an Ehresmann connection on a tangent display map $q\colon E\to M$, then $\R$ is a full vertical connection and $\H$ is a full horizontal connection.
\end{proposition}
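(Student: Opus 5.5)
We prove the two claims separately, in each case producing the required mediating map explicitly from the splitting $\R\iota_q+\pi_q\H=\id_{\T E}$ given by \textbf{[EC.2]}.

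\emph{\textbf{[FVC]} for $\R$.} The square commutes by Lemma~\ref{lemma:vertical-connections}(a). Take $a\colon X\to\T E$ and $b\colon X\to\V q$ with $a\T q\,p_M=b\,\T^\V q$. Write $\<b,a\>\colon X\to\V q\times_M\T M$, and note that this pullback is precisely $\V q\times_E\F q$. Indeed, $b\,q^\V$ and $a\T q$ determine a map $X\to\F q$, namely $\<a\T q,\, b q^\V\>$, which lands in $\F q$ because $bq^\V q=b\T^\V q=a\T qp_M$. We then take as candidate mediating map
\begin{align*}
&h\=\<b\iota_q,\ \<a\T q,bq^\V\>\H\>s_E\colon X\to\T E.
\end{align*}
This is well defined because both summands project under $p_E$ to $bq^\V$.

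Next we check the two legs.
\begin{itemize}
\item Using linearity of $\R$, \textbf{[VC.1]} and \textbf{[EC.1]}, we get $h\R=\<b,\ast\>s^\V_q=b$, since the horizontal part is sent to the zero of $\V q$.
\item Using additivity of $\T q$, $\iota_q\T q=\T^\V q z_M$ and Lemma~\ref{lemma:horizontal-connection}, we get $h\T q=a\T q$.
\end{itemize}

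For uniqueness, suppose $h'$ satisfies the same two equations. By \textbf{[EC.2]}, $h'=\<h'\R\iota_q,h'\pi_q\H\>s_E$. Here $h'\R=b$, and $h'\pi_q$ is determined by $h'\T q$ and $h'p_E$. Moreover $h'p_E=h'\R q^\V=bq^\V$, so $h'=h$. Tangent pullback-ness follows in the same way after applying $\T^n$. Indeed $\T^n\R$, $\T^n\H$, etc.\ still satisfy the corresponding equations, since $\T^n$ preserves the tangent pullbacks defining $\V q$, $\F q$ and $\T_2E$. Alternatively, we can observe that the mediating map is built from structure preserved by $\T^n$.

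\emph{\textbf{[FHC]} for $\H$.} This is a pushout in the semi-additive category $\DB(\X,\TT;E)$. Corollary~\ref{corollary:decomposition-connection} says $\p_E$ is the biproduct of $\q^\V$ and $\q^\F$, with injections $\iota_q$ and $\H$. Since $\0_E$ is the zero object, a pushout of $\q^\V\leftarrow\0_E\to\q^\F$ is exactly a coproduct, which is a biproduct. So the square is a pushout, with mediating map $\R\alpha+\pi_q\beta$ for a cocone $(\alpha,\beta)$. Uniqueness follows by precomposing with $\id=\R\iota_q+\pi_q\H$.

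\emph{Main obstacle.} The delicate step is the tangent (not just ordinary) pullback for \textbf{[FVC]}. There I will argue that $\T^n$ of the whole configuration is again a split short sequence with biproduct decomposition, so the same construction of $h$ applies verbatim.
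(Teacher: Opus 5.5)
Your proposal is correct. For the ordinary pullback in \textbf{[FVC]} and for \textbf{[FHC]} it is the paper's argument. Your $h=\<b\iota_q,\<a\T q,bq^\V\>\H\>s_E$ is the paper's $\<g\iota_q,\tilde f\H\>s_E$, and you check it and prove uniqueness the same way, via \textbf{[EC.2]} and the universal property of $\F q$. Your coproduct map $\R\alpha+\pi_q\beta$ is exactly the paper's $\<\R g,\pi_qf\>s_{q'}$.

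There is one slip. A cone over the \textbf{[FVC]} square is an \emph{arbitrary} $f\colon X\to\T M$ together with $b\colon X\to\V q$ satisfying $fp_M=b\T^\V q$. It is not a leg of the special form $a\T q$ with $a\colon X\to\T E$. Your notation even switches between the two, since $\<b,a\>$ treats $a$ as landing in $\T M$. Nothing you do uses that factorization, so just replace $a\T q$ by $f$ and $\<a\T q,bq^\V\>$ by $\<f,bq^\V\>$ throughout.

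The genuine difference is the upgrade to a tangent pullback.
\begin{itemize}
\item The paper uses the tangent bundle functor $\T^\C$ of $\C(\X,\TT)$ to obtain an Ehresmann connection on $\T^nq$. It applies the $n=0$ result to that connection and transports back along the flip isomorphisms $\gamma_n$.
\item You instead rerun the argument with $\T^n$ applied to all the data. This is sound. The argument uses only equations, which any functor preserves, and the universal properties of $\F q$, $\T_2E$, $\T_2M$ and $\V_2q$ (the last for additivity of $\T^n\R$). All of these pullbacks are preserved by $\T^n$, so the ``split short sequence'' framing in your final paragraph is not needed.
\end{itemize}
Your route is more elementary and self-contained. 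It does not rely on $\T^\C$ preserving Ehresmann connections, which the paper asserts but does not check in detail, nor on the $\gamma_n$ bookkeeping. The paper's route is more conceptual: it reuses the base case instead of re-verifying it at every level.
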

\begin{proof}
Consider an Ehresmann connection $(\R,\H)$. We will prove that $\R$ is full by induction. First, we show that $\R$ verifies \textbf{[FVC]}. Consider two morphisms $f\colon X\to\T M$ and $g\colon X\to\V q$ making the following diagram
\begin{equation*}
\begin{tikzcd}
X \\
& {\T E} & {\V q} \\
& {\T M} & M
\arrow["g", curve={height=-12pt}, from=1-1, to=2-3]
\arrow["f"', curve={height=12pt}, from=1-1, to=3-2]
\arrow["\R", from=2-2, to=2-3]
\arrow["{\T q}"', from=2-2, to=3-2]
\arrow["{\T^\V q}", from=2-3, to=3-3]
\arrow["{p_M}"', from=3-2, to=3-3]
\end{tikzcd}
\end{equation*}
commutative. We may denote by $\tilde f\colon X\to\F q$ the unique morphism rendering the following diagram
\begin{equation*}
\begin{tikzcd}
X && {\V q} \\
& {\F q} & E \\
& {\T M} & M
\arrow["g", from=1-1, to=1-3]
\arrow["{\tilde f}", dashed, from=1-1, to=2-2]
\arrow["f"', curve={height=12pt}, from=1-1, to=3-2]
\arrow["{q^\V}", from=1-3, to=2-3]
\arrow["{\T^\V q}", shift left, curve={height=-24pt}, from=1-3, to=3-3]
\arrow["{q^\F}", from=2-2, to=2-3]
\arrow["{\T^\F q}"', from=2-2, to=3-2]
\arrow["\lrcorner"{anchor=center, pos=0.125}, draw=none, from=2-2, to=3-3]
\arrow["q", from=2-3, to=3-3]
\arrow["{p_M}"', from=3-2, to=3-3]
\end{tikzcd}
\end{equation*}
commutative. Now, we define another morphism as follows:
\begin{align*}
&h\colon X\xrightarrow{\<g\iota_q,\tilde f\H\>}\T_2E\xrightarrow{s_E}\T E
\end{align*}
We want to prove that $h$ is the unique morphism satisfying the following equations:
\begin{equation*}
h\R=g\qquad
h\T q=f
\end{equation*}
Let us start by showing that $h$ is unique. Consider a morphism $h'$ satisfying the same equations, namely, $h'\R=g$ and $h'\T q=f$. Thus:
\begin{align*}
h'\pi_q\T^\F q&=~h'\T q                                   \Tag{\pi_q\T^\F q=\T q}\\
&=~f                                                    \Tag{h'\T q=f}
\end{align*}
Moreover:
\begin{align*}
h'\pi_qq^\F &=~h'\R\iota_qp_E                             \Tag{\pi_qq^\F =p_E=\R\iota_qp_E}\\
&=~g\iota_qp_E                                            \Tag{h'\R=g}
\end{align*}
Thus, by the universal property of the Finsler bundle, $h'\pi_q=\tilde f$. Thus:
\begin{align*}
h&=~\<g\iota_q,\tilde f\H\>s_E                           \Tag{h=\<g\iota_q,\tilde f\H\>s_E}\\
&=~h'\<\R\iota_q,\pi_q\H\>s_E                             \Tag{g=h'\R,\tilde f=h'\pi_q}\\
&=~h'                                                   \Tag{\textbf{[EC.2]}}
\end{align*}
This proves the uniqueness of $h$. Now, we prove that $h$ satisfies the desired equations:
\begin{align*}
h\T q&=~\<g\iota_q,\tilde f\H\>s_E\T q                        \Tag{h=\<g\iota_q,\tilde f\H\>s_E}\\
&=~\<g\iota_q\T q,\tilde f\H\T q\>s_M                         \Tag{s_E\T q=\T_2qs_M}\\
&=~\<g\T^\V qz_M,\tilde f\T^\F q\>s_M                               \Tag{\iota_q\T q=\T^\V qz_M,\H\T q=\T^\F q}\\
&=~f\<p_Mz_M,\id\>s_M                                              \Tag{g\T^\V q=fp_M,\tilde f\T^\F q=f}\\
&=~f                                                        \Tag{\text{unitality}}
\end{align*}
Furthermore:
\begin{align*}
h\R&=~\<g\iota_q,\tilde f\H\>s_E\R                            \Tag{h=\<g\iota_q,\tilde f\H\>s_E}\\
&=~\<g\iota_q\R,\tilde f\H\R\>s_q^\V                      \Tag{\R\text{ linear thus }s_E\R=(\R\times_E\R)s_q^\V}\\
&=~\<g,\tilde fq^\F z_q^\V\>s_q^\V                      \Tag{\iota_q\R=\id_{\V q},\H\R=q^\F z_q^\V}\\
&=~g\<\id,q^\V z_q^\V\>s_q^\V                         \Tag{\tilde fq^\F =gq^\V}\\
&=~g                                                        \Tag{\text{unitality}}
\end{align*}
This proves that the diagram of \textbf{[FVC]} is a pullback. To prove that this is a tangent pullback, consider an integer $n>0$.  If $(\R,\H)$ is an Ehresmann connection on $q$, $\T^\C(\R,\H)$ is an Ehresmann connection on $\T q$, where $\T^\C$ denotes the tangent bundle functor of the tangent category $\C(\X,\TT)$ of Ehresmann connections of $(\X,\TT)$. Thus, by induction, ${\T^\C}^n(\R,\H)$ is an Ehresmann connection on $\T^nq$. The vertical connection part of this Ehresmann connection can be written as $\gamma_n\T^n\R$, where $\gamma_n$ is the isomorphism:
\begin{align*}
&\gamma_n=c_{\T^n}\T c_{\T^{n-1}}\.{\dots}\.\T^kc_{\T^{n-k}}\.{\dots}\.\T^nc
\end{align*}
We have already proven that the vertical connection $\R$ of an Ehresmann connection $(\R,\H)$ renders the diagram of \textbf{[FVC]} into a pullback. Applying the same argument for ${\T^\C}^n(\R,\H)$, we conclude that the following diagram
\begin{equation*}
\begin{tikzcd}
{\T^{n+1}E} & {\V\T^nq} \\
{\T^{n+1}M} & {\T^nM}
\arrow["{\gamma_n\T^n\R}", from=1-1, to=1-2]
\arrow["{\T^{n+1}q}"', from=1-1, to=2-1]
\arrow["\lrcorner"{anchor=center, pos=0.125}, draw=none, from=1-1, to=2-2]
\arrow["{\iota_{\T^nq}p_{\T^n E}\T^nq}", from=1-2, to=2-2]
\arrow["{p_{\T^n M}}"', from=2-1, to=2-2]
\end{tikzcd}
\end{equation*}
is also a pullback. However, by definition of $\gamma_n$ and by the properties of the canonical flip, $p_{\T^n}=\gamma_n\T^np$. Furthermore, using the invertibility of $\gamma_n$ and $\gamma_{n-1}$, the following diagram
\begin{equation*}
\begin{tikzcd}
{\T^{n+1}E} & {\T^{n+1}E} & {\V\T^nq} & {\T^n\V q} \\
{\T^{n+1}M} & {\T^{n+1}M} & {\T^nM} & {\T^nM}
\arrow["{\gamma_n}", from=1-1, to=1-2]
\arrow["{\T^{n+1}q}"', from=1-1, to=2-1]
\arrow["{\T^n\R}", from=1-2, to=1-3]
\arrow["{\T^{n+1}q}"', from=1-2, to=2-2]
\arrow[from=1-3, to=1-4]
\arrow["{\iota_{\T^nq}p_{\T^n E}\T^nq}", from=1-3, to=2-3]
\arrow["{\T^n(\iota_qp_Eq)}", from=1-4, to=2-4]
\arrow["{\gamma_n}"', from=2-1, to=2-2]
\arrow["{\T^np_M}"', from=2-2, to=2-3]
\arrow["{\gamma_{n-1}}"', from=2-3, to=2-4]
\end{tikzcd}
\end{equation*}
is also a pullback. Therefore, the central square diagram must also be a pullback, that is, $\R$ satisfies \textbf{[FVC]}. The final step is to prove that $\H$ is a full horizontal connection, that is, $\H$ satisfies \textbf{[FHC]}. Consider a differential bundle $\q'\colon X\to E$ over $E$ and two linear morphisms of differential bundles $f\colon\q^\F\to\q'$ and $g\colon\q^\V\to\q'$, making the following diagram
\begin{equation*}
\begin{tikzcd}
{\0_E} & {q^\V} \\
{q^\F} & {\p_E} \\
&& {\q'}
\arrow["{z_q^\V}", from=1-1, to=1-2]
\arrow["{z_q^\F}"', from=1-1, to=2-1]
\arrow["{\iota_q}", from=1-2, to=2-2]
\arrow["g", curve={height=-18pt}, from=1-2, to=3-3]
\arrow["\H"', from=2-1, to=2-2]
\arrow["f"', curve={height=12pt}, from=2-1, to=3-3]
\end{tikzcd}
\end{equation*}
commutative. We may define the following morphism:
\begin{align*}
h\colon&\T E\xrightarrow{\<\R g,\pi_qf\>}X_2\xrightarrow{s_{q'}}X
\end{align*}
We leave it to the reader to prove that $h$ is a linear morphism of differential bundles. We shall prove that $h$ is the unique morphism satisfying $\H h=f$ and $\iota_qh=g$. We begin by proving uniqueness. Consider a morphism $h'\colon\T E\to X$ which satisfies the same equations, that is, $\H h'=f$ and $\iota_qh'=g$. We compute:
\begin{align*}
h&=~\<\R g,\pi_qf\>s_{q'}                               \Tag{h=\<\R g,\pi_qf\>s_{q'}}\\
&=~\<\R\iota_qh',\pi_q\H h'\>s_{q'}                     \Tag{g=\iota_qh',f=\H h'}\\
&=~\<\R\iota_q,\pi_q\H\>sh'                             \Tag{h'\text{ linear thus }(h'\times_Eh')s_{q'}=sh'}\\
&=~h'                                                   \Tag{\textbf{[EC.2]}}
\end{align*}
Let us now prove that $h$ satisfies the desired equations:
\begin{align*}
\H h&=~\H\<\R g,\pi_qf\>s_{q'}                         \Tag{h=\<\R g,\pi_qf\>s_{q'}}\\
&=~\<q^\F z_q^\V g,f\>s_{q'}                           \Tag{\H\R=q^\F z_q^\V,\H\pi_q=\id}\\
&=~\<q^\F z_q^\F,\id\>(f\times_Ef)s_{q'}              \Tag{z_q^\V g=z_q^\F f}\\
&=~\<q^\F z_q^\F,\id\>s_{\F q}f                       \Tag{f\text{ linear thus }(f\times_Ef)s_{q'}=s_{\F q}f}\\
&=~f                                                    \Tag{\text{unitality}}
\end{align*}
Moreover:
\begin{align*}
\iota_qh&=~\iota_q\<\R g,\pi_qf\>s_{q'}                 \Tag{h=\<\R g,\pi_qf\>s_{q'}}\\
&=~\<g,q^\V z_q^\F f\>s_{q'}                        \Tag{\iota_q\R=\id_{\V q},\iota_q\pi_q=q^\V z_q^\F}\\
&=~\<g,q^\V z_q^\V g\>s_{q'}                        \Tag{z_q^\F f=z\pi_qf=z_q^\V g}\\
&=~\<\id,q^\V z_q^\V\>s_q^\V g                    \Tag{g\text{ linear thus }(g\times_Eg)s_{q'}=s_q^\V g}\\
&=~g                                                    \Tag{\text{unitality}}
\end{align*}
This concludes the proof.
\end{proof}

We previosuly showed that full vertical and horizontal connections can always be completed uniquely into an Ehresmann connection. We also proved that the vertical and horizontal components of a Ehresmann connections are always full. This allows us to put all the pieces together into the following result:

\begin{theorem}
\label{theorem:full-connections}
Given a tangent display map $q\colon E\to M$, the following are equivalent:
\begin{enumerate}
\item $\R\colon\T E\to\V q$ is a full vertical connection on $q$ and $q$ is a submersion;

\item $\R$ is the vertical component of an Ehresmann connection $(\R,\H)$ on $q$.
\end{enumerate}
Furthermore, the following are also equivalent:
\begin{enumerate}
\item $\H\colon\F q\to\T W$ is a full horizontal connection on $q$ and $q$ is a submersion;

\item $\H$ is the horizontal component of an Ehresmann connection $(\R,\H)$ on $q$.
\end{enumerate}
\end{theorem}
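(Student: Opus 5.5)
The plan is to assemble Theorem~\ref{theorem:full-connections} from the three propositions just proved, together with Corollary~\ref{corollary:connecton-implies-submersion}. Each half of the statement is a two-way implication, and each direction is essentially one of these results.

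For the vertical half, first assume (1): $\R$ is a full vertical connection and $q$ is a submersion. Proposition~\ref{proposition:full-vertical-connection} builds the unique horizontal connection $\H_\R$, and $(\R,\H_\R)$ is an Ehresmann connection. That proposition only needs $q$ to be tangent display, so the submersion hypothesis is not used here. This gives (2). Conversely, assume $\R$ is the vertical component of an Ehresmann connection $(\R,\H)$. Proposition~\ref{proposition:full-connection} shows that $\R$ satisfies \textbf{[FVC]}, so it is a full vertical connection. Corollary~\ref{corollary:connecton-implies-submersion} shows that $q$ is a submersion, because the existence of an Ehresmann connection splits the fundamental sequence, and a split sequence is exact. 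This gives (1).

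For the horizontal half, first assume (1): $\H$ is a full horizontal connection and $q$ is a submersion. Proposition~\ref{proposition:full-horizontal-connection} produces $\R_\H$ with $(\R_\H,\H)$ an Ehresmann connection. Here the submersion hypothesis is genuinely needed, as Remark~\ref{remark:submersions-instead-of-tangent-display} points out. Conversely, if $\H$ is the horizontal component of an Ehresmann connection, the second part of Proposition~\ref{proposition:full-connection} gives \textbf{[FHC]}, and Corollary~\ref{corollary:connecton-implies-submersion} again gives that $q$ is a submersion.

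The argument is mainly bookkeeping; the substantive content was proved earlier. The one point I would check is that the tangent-pullback part of \textbf{[FVC]} in Proposition~\ref{proposition:full-connection} really is available. That proof goes by induction using the tangent structure on $\C(\X,\TT)$, and I take it as established. I would also note that in the vertical half the submersion condition in (1) is redundant for (1)$\Rightarrow$(2). It is included so that the two statements are symmetric and so that (2)$\Rightarrow$(1) reads as a genuine characterization.
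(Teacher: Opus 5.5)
Your proof is correct and follows the paper's approach: the theorem is assembled from Propositions~\ref{proposition:full-vertical-connection}, \ref{proposition:full-horizontal-connection} and~\ref{proposition:full-connection}, with Corollary~\ref{corollary:connecton-implies-submersion} supplying the submersion clause in the directions (2)$\Rightarrow$(1), a step the paper leaves implicit. One side remark is inaccurate, though it does not affect validity: the proof of Proposition~\ref{proposition:full-horizontal-connection} builds $\R_\H$ and checks \textbf{[EC.2]} using only the pushout \textbf{[FHC]}, never the submersion pushout, so by the same corollary the submersion clause is redundant in the horizontal half too, not ``genuinely needed'' as you (following Remark~\ref{remark:submersions-instead-of-tangent-display}) say.
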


\subsection{Abstract connections}
\label{subsection:abstract-connection}
In the previous section, we showed that full vertical connections, full horizontal connections, and Ehresmann connections are in fact equivalent concepts. In this section, we introduce another new perspective: abstract connections.

\par Our approach so far was to introduce the concepts of vertical and Finsler bundle of a tangent display map, and later used them to define vertical and horizontal connections. Notice that both the vertical and the Finsler bundle are only specified up to a unique isomorphism, since they are constructed using the universal properties of tangent pullbacks. Therefore, choosing a vertical connection or a horizontal connection on a tangent display map $q$ implicitly requires making a choice of either the vertical or the Finsler bundle of $q$.

\par We then showed that a vertical connection defines an idempotent, the \emph{vertical connection form}, and that a horizontal connection is naturally associated with another idempotent, the \emph{horizontal connection form}. The vertical connection form $\phi_\R$ of a vertical connection $\R$ splits on the vertical bundle, and the horizontal connection form $\phi_\H$ of a horizontal connection $\H$ splits on the Finsler bundle.

In this section, we introduce the notions of \emph{abstract} vertical connections and \emph{abstract} horizontal connections. An abstract vertical connection on a \emph{map} $q\colon E\to M$ consists of a linear idempotent on the tangent bundle of $E$, satisfying an extra universal property. Similarly, an abstract horizontal connection on $q$ consists of another linear idempotent of $\p_E$, satisfying another universal property. Crucially, no mention of the vertical or the Finsler bundle is made in defining these concepts, nor is the map $q$ required to be tangent display. A \emph{splitting} of an abstract vertical connection gives both a choice of the vertical bundle of $q$ and a vertical connection on it. A \emph{splitting} of an abstract horizontal connection gives both a choice of the Finsler bundle of $q$ and a horizontal connection on it.

\par The universal properties satisfied by an abstract vertical connection and an abstract horizontal connection directly link to the axioms \textbf{[FVC]} and \textbf{[FHC]} of full connections. To begin, we define abstract vertical connections.

\begin{definition}
\label{definition:abstract-vertical-connection}
An \textbf{abstract vertical connection} on a map $q\colon E\to M$ consists of a morphism $\phi\colon\T E\to\T E$ subject to the following conditions:
\begin{description}
\item[AVC.1] $\phi$ is an idempotent, that is, $\phi\phi=\phi$;

\item[AVC.2] $\phi\colon\p_E\to\p_E$ is a linear morphism of differential bundles, that is, $\phi p_E=p_E$ and $\phi l_E=l_E\T\phi$;

\item[AVC.3] The following diagram
\begin{equation*}
\begin{tikzcd}
{\T E} && {\T E} \\
{\T M} & M & {\T M}
\arrow["\phi", from=1-1, to=1-3]
\arrow["{\T q}"', from=1-1, to=2-1]
\arrow["\lrcorner"{anchor=center, pos=0.125}, draw=none, from=1-1, to=2-3]
\arrow["{\T q}", from=1-3, to=2-3]
\arrow["{p_M}"', from=2-1, to=2-2]
\arrow["{z_M}"', from=2-2, to=2-3]
\end{tikzcd}
\end{equation*}
commutes and is a tangent pullback diagram in the base tangent category $(\X,\TT)$.
\end{description}
\end{definition}

An idempotent $e\colon A\to A$ in a category $\X$ \emph{splits} when there exists an object $B$ of $\X$ together with a section-retraction pair $(s,r)\colon B\to A$, that is, $s\colon B\to A$ and $r\colon A\to B$ satisfying $sr=\id_B$, such that $e=rs$. If $(B,s,r)$ and $(B',s',r')$ are two splittings of an idempotent $e$, there exists a unique isomorphism $\varphi\colon B\to B'$ satisfying $r\varphi=r'$.

We aim to show a correspondence between \emph{linear} splittings of an abstract vertical connection and a choice of the vertical bundle together with a full vertical connection on it. First, let us define the notion of linear splittings.

\begin{definition}
\label{definition:linear-splittings}
A \textbf{linear idempotent} in a tangent category $(\X,\TT)$ on an object $M$ of $(\X,\TT)$ consists of a morphism $\phi\colon\T E\to\T E$, compatible with the tangent bundles, that is, $\phi p_E=p_E$, which is an idempotent in the category $\DB(\X,\TT;M)$ of differential bundles over $M$. A \textbf{linear splitting} of a linear idempotent $\phi$ consists of a splitting of $\phi$ in $\DB(\X,\TT;M)$. A linear idempotent \textbf{splits} provided there exists at least one linear splitting of it.
\end{definition}

\begin{definition}
\label{definition:split-abstract-vertical-connection}
An abstract vertical connection $\phi\colon\T E\to\T E$ on a map $q\colon E\to M$\textbf{splits} provided the linear idempotent $\phi$ of $E$ splits.
\end{definition}

\begin{lemma}
\label{lemma:split-pullback}
Let $\X$ be a category equipped with an endofunctor $\T\colon\X\to\X$. Consider the following commutative diagram
\begin{equation*}
\begin{tikzcd}
{A'} & {B'} & {A'} \\
A & B & A
\arrow["{r'}", from=1-1, to=1-2]
\arrow["f"', from=1-1, to=2-1]
\arrow["{s'}", from=1-2, to=1-3]
\arrow["g"', from=1-2, to=2-2]
\arrow["f", from=1-3, to=2-3]
\arrow["r"', from=2-1, to=2-2]
\arrow["s"', from=2-2, to=2-3]
\end{tikzcd}
\end{equation*}
of $\X$, where $(s,r)\colon A\to B$ and $(s',r')\colon A\to B'$ are two section-retraction pairs. Then, the outer square is a $\T$-pullback if and only if the left and the right squares are both $\T$-pullbacks.
\end{lemma}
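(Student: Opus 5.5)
The plan is to reduce everything to the pasting behaviour of pullbacks (Lemma~\ref{lemma:property-tangent-pullbacks}) together with one direct retract argument. Since $\T$ is a functor, each $\T^n$ sends the diagram of the statement to a diagram of the same shape: $(\T^ns,\T^nr)$ and $(\T^ns',\T^nr')$ are again section-retraction pairs, with $\T^ns\,\T^nr=\id$ and $\T^ns'\,\T^nr'=\id$. It therefore suffices to prove the three implications below for plain pullbacks in an arbitrary such diagram, and then apply them to $\T^n$ of the diagram for every $n$.

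\emph{Left and right pullbacks imply the outer one.} This is pasting: the outer square is the horizontal composite of the left and right squares, and a composite of two pullback squares is a pullback.

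\emph{Outer pullback implies left pullback.} I would argue directly. Let $a\colon X\to B'$ and $b\colon X\to A$ satisfy $ag=br$. The pair $(as',b)$ is a cone over the outer cospan, since
\begin{align*}
as'f=ags=brs.
\end{align*}
The outer pullback gives a unique $u\colon X\to A'$ with $ur's'=as'$ and $uf=b$. Then, using $s'r'=\id_{B'}$,
\begin{align*}
ur'=ur's'r'=as'r'=a.
\end{align*}
For uniqueness, any $v$ with $vr'=a$ and $vf=b$ satisfies $vr's'=as'$. Hence $v$ is another mediating map for the same outer cone, so $v=u$.

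\emph{Left pullback implies right pullback.} Paste the right square followed by the left square, $B'\xrightarrow{s'}A'\xrightarrow{r'}B'$ over $B\xrightarrow{s}A\xrightarrow{r}B$. Because $s'r'=\id_{B'}$ and $sr=\id_B$, the composite rectangle is the identity square on $g$, which is trivially a pullback. By the first part of Lemma~\ref{lemma:property-tangent-pullbacks}, read at the level of ordinary pullbacks, if the second square (the left one) and the outer rectangle are pullbacks, then so is the first square (the right one).

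Combining these gives the claim: the outer square is a pullback exactly when both the left and right squares are. Applying this for every $n$ to $\T^n$ of the diagram yields the $\T$-pullback statement. The step I expect to be the main obstacle is the implication from the outer square to the left square. It is the only one not obtained by formal pasting, and the key point there is constructing the correct cone $(as',b)$ for the outer square. The rest of that implication, and the other two, are bookkeeping with the identities $sr=\id$ and $s'r'=\id$.
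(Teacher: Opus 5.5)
Your proof is correct. The paper gives no proof; it only calls this a standard categorical argument left to the reader, and your write-up supplies exactly that argument: pasting for the backward direction, the direct retract argument via the cone $(as',b)$ for outer $\Rightarrow$ left, and pasting the right square with the left one into the identity square on $g$ for left $\Rightarrow$ right, all applied to $\T^n$ of the diagram for each $n$. As a small bonus, your argument also shows that the left square alone being a ($\T$-)pullback already forces both the right square and the outer square to be ones.
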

\begin{proof}
This is a standard categorical argument, and left to the reader.  
\end{proof}

\begin{lemma}
\label{lemma:vertical-bundle-from-split-abstract-connections}
If $\phi\colon\T E\to\T E$ is a split abstract vertical connection on a map $q\colon E\to M$ and $(\q_\phi^\V,\iota_\phi,\R_\phi)$ is a splitting of $\phi$, then the differential bundle $\q_\phi^\V\colon\V_\phi q\to E$ is a choice of the vertical bundle of $q$.
\end{lemma}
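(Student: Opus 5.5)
The goal is to show that, for a splitting $(\q_\phi^\V,\iota_\phi,\R_\phi)$ of $\phi$ with $\iota_\phi\R_\phi=\id_{\V_\phi q}$ and $\R_\phi\iota_\phi=\phi$, the square
\begin{equation*}
\begin{tikzcd}
{\V_\phi q} & {\T E} \\
M & {\T M}
\arrow["{\iota_\phi}", from=1-1, to=1-2]
\arrow["{\iota_\phi p_Eq}"', from=1-1, to=2-1]
\arrow["{\T q}", from=1-2, to=2-2]
\arrow["{z_M}"', from=2-1, to=2-2]
\end{tikzcd}
\end{equation*}
is a tangent pullback. This identifies $\V_\phi q$ with the vertical bundle in the sense of Remark~\ref{remark:vertical-bundle-0-carrability}. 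We then check that the induced differential bundle structure agrees with the one carried by $\q_\phi^\V$ as a linear retract of $\p_E$.

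First we check commutativity. Axiom \textbf{[AVC.3]} gives $\phi\T q=\T q p_Mz_M$, and \textbf{[AVC.2]} with naturality gives $\T qp_M=p_Eq$. Hence
\begin{align*}
\iota_\phi\T q=\iota_\phi\R_\phi\iota_\phi\T q=\iota_\phi\phi\T q=\iota_\phi p_Eqz_M.
\end{align*}

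The main step applies Lemma~\ref{lemma:split-pullback} (with $\T$ replaced by every iterate $\T^m$). Consider the two-row diagram whose top row is
\begin{equation*}
\T E\xrightarrow{\R_\phi}\V_\phi q\xrightarrow{\iota_\phi}\T E
\end{equation*}
and whose bottom row is
\begin{equation*}
\T M\xrightarrow{p_M}M\xrightarrow{z_M}\T M,
\end{equation*}
with vertical maps $\T q$, $\iota_\phi p_Eq$ and $\T q$. Both rows are section-retraction pairs, since $\iota_\phi\R_\phi=\id$ and $z_Mp_M=\id_M$, and the composites are $\phi$ and $p_Mz_M$. The right square commutes by the computation above. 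For the left square we need $\R_\phi\iota_\phi p_Eq=\T qp_M$. This holds because $\R_\phi\iota_\phi p_E=\phi p_E=p_E$, and then naturality of $p$ gives $p_Eq=\T qp_M$. The outer square is exactly the tangent pullback of \textbf{[AVC.3]}. Lemma~\ref{lemma:split-pullback} therefore shows that both inner squares are tangent pullbacks. The right one is the desired vertical-bundle square, and as a by-product the left one is \textbf{[FVC]} for $\R_\phi$.

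The lemma is stated for $\T$-pullbacks, meaning preserved by a single endofunctor. To get preservation by every $\T^m$, I would apply it once for each iterate, since $\T^m$ preserves section-retraction pairs. I expect this to be a routine but genuine point to verify, along with the standard proof of Lemma~\ref{lemma:split-pullback} itself.

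Finally, we compare structures. The pullback exhibits $\V_\phi q$ as a vertical bundle via some isomorphism $\V_\phi q\cong\V q$ commuting with the inclusions into $\T E$. I expect the main obstacle to be showing that this isomorphism is linear, so that the differential bundle $\q_\phi^\V$ (defined from the linear splitting in $\DB(\X,\TT;E)$) coincides with the vertical bundle structure of Definition~\ref{definition:vertical-bundle}.

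The plan is to use that both inclusions are tangent monic. For $\iota_\phi$ this holds because it is a split mono and $\T^n$ preserves split monos; for the vertical bundle it is Lemma~\ref{lemma:iota-tangent-monic}. Both structures are characterised by their compatibility with $l_E$, $z_E$ and $s_E$ through the inclusion: $\iota_\phi$ is linear by the linearity of the splitting, and the vertical bundle structure is defined by exactly these equations. Cancelling the tangent monic inclusions then forces the projection, zero, sum and lift to agree.
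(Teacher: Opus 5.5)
Your proposal is correct and follows the paper's proof step by step. You use the same two-row diagram (rows $\R_\phi,\iota_\phi$ and $p_M,z_M$), the same commutativity checks, Lemma~\ref{lemma:split-pullback} to extract the vertical-bundle square (the right square, which is the one actually needed, and applying the lemma to each iterate $\T^m$ correctly gives tangent pullbacks), and then cancellation of the tangent monic $\iota_\phi$ against linearity to identify the zero, sum and lift. The only minor difference is the projection: no cancellation is needed there, because $\iota_\phi$ being a morphism over $E$ already gives $q_\phi^\V=\iota_\phi p_E$, whereas the paper obtains this by cancelling the split epi $\R_\phi$.
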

\begin{proof}
Consider a split abstract vertical connection $\phi$ on $q$ and a linear splitting $(\q_\phi^\V,\iota_\phi,\R_\phi)$ of $\phi$. Moreover, consider the following diagram:
\begin{equation}
\label{equation:splitting-vertical-connections}
\begin{tikzcd}
{\T E} & {\V_\phi q} & {\T E} \\
{\T M} & M & {\T M}
\arrow["{\R_\phi}", from=1-1, to=1-2]
\arrow["\phi", shift left, curve={height=-24pt}, from=1-1, to=1-3]
\arrow["{\T q}"', from=1-1, to=2-1]
\arrow["{\iota_\phi}", from=1-2, to=1-3]
\arrow["{\iota_\phi p_Eq}"', from=1-2, to=2-2]
\arrow["{\T q}", from=1-3, to=2-3]
\arrow["{p_M}"', from=2-1, to=2-2]
\arrow["{z_M}"', from=2-2, to=2-3]
\end{tikzcd}
\end{equation}
First, we prove that this diagram commutes. By \textbf{[AVC.3]}, the outer square commutes. Moreover, we compute:
\begin{align*}
\T qp_M&=~p_Eq                                                  \Tag{\T qp_M=p_Eq}\\
&=~\phi p_Eq                                                      \Tag{p_E=\phi p_E}\\
&=~\R_\phi\iota_\phi p_Eq                                         \Tag{\phi=\R_\phi\iota_\phi}
\end{align*}
To prove that the right square commutes, we compute:
\begin{align*}
&\R_\phi\iota_\phi\T q=\T qp_Mz_M=\R_\phi\iota_\phi p_Eqz_M
\end{align*}
Since $\iota_\phi\R_\phi=\id_{\V_\phi q}$ thus, by applying $\iota_\phi$ on both sides we obtain that, $\iota_\phi\T q=\iota_\phi p_Eqz_M$, that is, the right square commutes.

Since by \textbf{[AVC.3]}, the outer square is a tangent pullback, by Lemma~\ref{lemma:split-pullback}, so are the two inner square diagrams. In particular, the left square diagram is a tangent pullback. Therefore, by using the same construction as in Section~\ref{subsection:vertical-bundle}, we can equip the map $\iota_\phi p_E\colon\V_\phi q\to E$ with the structure of a differential bundle. Our goal is to show that this differential bundle, that represents a choice of the vertical bundle for $q$, coincides with the differential bundle $\q^\V_\phi\colon\V_\phi q\to E$, that comes directly from the linear splitting of $\phi$. Notice that $\iota_\phi\colon\q^\V\to\p_E$ is a linear tangent monic (that is $\T^n\iota_\phi$ is monic, since $\iota_\phi$ is a section).

We denote by $\q^\V\colon\V q=\V_\phi q\to E$ this differential bundle to distinguish it from the other one, which is denoted by $\q_\phi^\V$, instead. For starters, since $\R_\phi$ is a morphism of bundles, the underlying projection $q_\phi^\V$ of $\q^\V_\phi$ must satisfies $\R_\phi q^\V_\phi=p_E$. However, $p_E=\phi p_E=\R_\phi\iota_\phi p_E$, therefore, $\R_\phi q^\V_\phi=\R_\phi\iota_\phi p_E$, which implies that $q_\phi^\V=\iota_\phi p_E$, since $\R_\phi$ is epi being a retraction. Next, let $z_\phi^\V$, $s_\phi^\V$, and $l_\phi^\V$ denote the structural morphisms of $\q_\phi^\V$, respectively and let $z_q^\V$, $s_q^\V$, and $l_q^\V$ denote the ones of $\q^\V$, respectively. Since $\iota_\phi$ is linear, it satisfies the following equations:
\begin{align*}
&z_\phi^\V\iota_\phi=z_E=z_q^\V\iota_\phi   &&s_\phi^\V\iota_\phi=(\iota_\phi\times_E\iota_\phi)s_E=s_q^\V\iota_\phi    &&l_\phi^\V\T\iota_q=\iota_ql_E=l_q^\V\T\iota_q
\end{align*}
However, since $\iota_\phi$ is tangent monic, these equations imply that the differential structures of $\q_\phi^\V$ and of $\q^\V$ must coincide.
\end{proof}

\begin{theorem}
\label{theorem:abstract-vertical-connection}
For a tangent display map $q\colon E\to M$ equipped with a morphism $\phi\colon\T E\to\T E$, the following are equivalent:
\begin{enumerate}
\item $\phi$ is a split abstract vertical connection of $q$;

\item There is a full vertical connection $\R$ of $q$ whose connection form is $\phi$;

\item There is a vertical connection $\R$ of $q$ which is the vertical component of an Ehresmann connection $(\R,\H)$ of $q$ and whose connection form is $\phi$.
\end{enumerate}
\end{theorem}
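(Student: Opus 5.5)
I will prove the cycle (1)$\Rightarrow$(2)$\Rightarrow$(3)$\Rightarrow$(1). The equivalence (2)$\Leftrightarrow$(3) is essentially already available, so the main work lies in translating between splittings of $\phi$ and the chosen vertical bundle $\q^\V$.

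For (1)$\Rightarrow$(2), I take a linear splitting $(\q^\V_\phi,\iota_\phi,\R_\phi)$ of $\phi$. By Lemma~\ref{lemma:vertical-bundle-from-split-abstract-connections}, $\q^\V_\phi$ is a choice of the vertical bundle of $q$. More precisely, the right square of diagram~\eqref{equation:splitting-vertical-connections} is a tangent pullback of $\T q$ along $z_M$, and its differential structure agrees with the one of Section~\ref{subsection:vertical-bundle}. Hence there is a unique isomorphism $\theta\colon\V_\phi q\to\V q$ with $\theta\iota_q=\iota_\phi$ and $\theta\T^\V q=\iota_\phi p_Eq$, and $\theta$ is linear because both structures are induced by the same tangent pullback. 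I then set $\R\=\R_\phi\theta$ and check the axioms.
\begin{itemize}
\item \textbf{[VC.1]}: $\iota_q\R=\iota_q\R_\phi\theta=\theta^{-1}\iota_\phi\R_\phi\theta=\id_{\V q}$.
\item \textbf{[VC.2]}: $\R$ is a composite of linear morphisms.
\item Connection form: $\R\iota_q=\R_\phi\theta\iota_q=\R_\phi\iota_\phi=\phi$.
\item \textbf{[FVC]}: the left square of~\eqref{equation:splitting-vertical-connections} is a tangent pullback by Lemma~\ref{lemma:split-pullback}, and composing it with the isomorphism $\theta$ over $M$ keeps it a tangent pullback, which is exactly \textbf{[FVC]} for $\R$.
\end{itemize}

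For (2)$\Rightarrow$(3), I invoke Proposition~\ref{proposition:full-vertical-connection}: a full vertical connection $\R$ on a tangent display map extends uniquely to an Ehresmann connection $(\R,\H_\R)$, and its form is unchanged.

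For (3)$\Rightarrow$(1), let $(\R,\H)$ be an Ehresmann connection with $\phi=\R\iota_q$. The splitting of $\phi$ is $(\q^\V,\iota_q,\R)$ in $\DB(\X,\TT;E)$. This is valid because $\iota_q$ and $\R$ are linear and $\iota_q\R=\id$ by \textbf{[VC.1]}, and it gives \textbf{[AVC.1]} and \textbf{[AVC.2]} at once. For \textbf{[AVC.3]}, commutativity follows from \textbf{[VCF.3]} together with $p_Eq=\T qp_M$. The tangent-pullback property comes from Proposition~\ref{proposition:full-connection}: $\R$ is full, so the square \textbf{[FVC]} is a tangent pullback, and the vertical bundle square~\eqref{equation:vertical-bundle-pullback} is a tangent pullback by definition. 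Pasting these two squares through the section-retraction pair $(\iota_q,\R)$ and using the ``if'' direction of Lemma~\ref{lemma:split-pullback}, the outer square of \textbf{[AVC.3]} is a tangent pullback.

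The step I expect to need the most care is (1)$\Rightarrow$(2), specifically making precise that the comparison iso $\theta$ is linear and that transporting $\R_\phi$ along it preserves both the connection form and \textbf{[FVC]}. I would handle this by relying on the final part of the proof of Lemma~\ref{lemma:vertical-bundle-from-split-abstract-connections}: there the differential structure on $\V_\phi q$ is shown to coincide with the pullback-induced one, and the tangent monicity of $\iota_q$ and $\iota_\phi$ reduces every remaining equation to a check after composing with these monics.
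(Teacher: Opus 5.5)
Your proof is correct and follows essentially the paper's route. For (1)$\Rightarrow$(2) you apply Lemma~\ref{lemma:split-pullback} to diagram~\eqref{equation:splitting-vertical-connections}, and for the converse you paste the \textbf{[FVC]} square with the vertical-bundle pullback. Propositions~\ref{proposition:full-vertical-connection} and~\ref{proposition:full-connection} (i.e.\ Theorem~\ref{theorem:full-connections}) give the link with Ehresmann connections.

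The only difference is minor but welcome. The paper identifies $\V_\phi q$ with $\V q$ implicitly and checks $\R_\phi$ via Theorem~\ref{theorem:vertical-connection-form}. You instead transport $\R_\phi$ along the explicit comparison isomorphism $\theta$, which makes that identification precise.
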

\begin{proof}
The equivalence between $[2]$ and $[3]$ was already proven by Theorem~\ref{theorem:full-connections}. It is only left to prove the equivalence between $[1]$ and $[2]$. We begin by considering a split abstract vertical connection $\phi\colon\T E\to\T E$ on a map $q\colon E\to M$, with  section-retraction pair $(\iota_\phi,\R_\phi)\colon\q_\phi^\V\to\p_E$. By Lemma~\ref{lemma:vertical-bundle-from-split-abstract-connections}, the differential bundle $\q_\phi^\V\colon\V_\phi q\to E$ is a choice of the vertical bundle of $q$. Consider the diagram of Equation~\eqref{equation:splitting-vertical-connections}. By \textbf{[AVC.3]}, the outer square is a tangent pullback and $(\iota_\phi,\R_\phi)$ and $(z_M,p_M)$ are section-retraction pairs. Therefore, by Lemma~\ref{lemma:split-pullback}, both the left and the right squares are tangent pullbacks.

Furthermore, since the left square of the above diagram is a tangent pullback, $\R_\phi$ satisfies \textbf{[FVC]}. Thus, to prove that $\R_\phi$ is a full vertical connection, it is only left to prove that $\R_\phi$ is, in fact, a vertical connection. To this end, we harness the correspondence of Theorem~\ref{theorem:vertical-connection-form} between vertical connections and vertical connection forms and show that $\phi$ is, in fact, a vertical connection form. \textbf{[VCF.1]} is a direct consequence of $\iota_\phi$ being the section in the splitting of $\phi$, that is, $\iota_\phi\phi=\iota_\phi\R\iota_\phi=\iota_\phi$; \textbf{[VCF.2]} corresponds to \textbf{[AVC.2]} and \textbf{[AVC.3]} implies \textbf{[VCF.3]}. Thus, $\phi$ is a vertical connection form and therefore, $\R_\phi$ is a full vertical connection, as expected.

Conversely, suppose that $\R\colon\T E\to\V q$ is a full vertical connection on $q$, then, by Theorem~\ref{theorem:vertical-connection-form}, the associated vertical connection form $\phi_\phi\=\R\iota_q$ is a linear idempotent $\phi_\R\colon\p_E\to\p_E$ of $E$. Furthermore, the following diagram
\begin{equation*}
\begin{tikzcd}
{\T E} & {\V q} & {\T E} \\
{\T M} & M & {\T M}
\arrow["\R", from=1-1, to=1-2]
\arrow["{\phi_\R}", shift left, curve={height=-24pt}, from=1-1, to=1-3]
\arrow["{\T q}"', from=1-1, to=2-1]
\arrow["{\iota_q}", from=1-2, to=1-3]
\arrow["{\T^\V q}"', from=1-2, to=2-2]
\arrow["{\T q}", from=1-3, to=2-3]
\arrow["{p_M}"', from=2-1, to=2-2]
\arrow["{z_M}"', from=2-2, to=2-3]
\end{tikzcd}
\end{equation*}
commutes and, by \textbf{[FVC]} and Equation~\eqref{equation:vertical-bundle-pullback} is the composition of two tangent pullback diagrams. Therefore, $\phi_\R$ satisfies both \textbf{[AVC.1]} and \textbf{[AVC.2]}.
\end{proof}

\begin{remark}
\label{remark:abstract-connections-vs-ehresmann}
Theorem~\ref{theorem:abstract-vertical-connection} shows a correspondence between split abstract vertical connections, full vertical connections, and Ehresmann connections. However, this correspondence only works when the base map $q\colon E\to M$ is a tangent display map. In fact, in order to construct the horizontal component $\H$ of the associated Ehresmann connection, we make use of the existence of the Finsler bundle of $q$. A splitting of an abstract vertical connection on an arbitrary map $q$ equips $q$ with a notion of vertical bundle, which is implicit in a full vertical connection. However, it does not provide a notion for the Finsler bundle. It might accidentally happen that $q$ does have a vertical bundle, but it does not admit a Finsler bundle. In that case, the correspondence between split abstract vertical connections and full vertical connections still exists, but full connections might fail to be completed into an Ehresmann connection. In that sense, both full and abstract vertical connections are a generalization of Ehresmann connections.
\end{remark}

Full vertical connections on maps form a tangent category $\FVC(\X,\TT)$, whose objects are pairs $(q,\R)$ formed by a map $q\colon E\to M$ which admits a vertical bundle, that is, the tangent pullback of Equation~\eqref{equation:vertical-bundle-pullback} exists, and a full vertical connection $\R$ on $q$. The morphisms and the tangent structure are defined as in $\VC(\X,\TT)$. Similarly, abstract vertical connections on maps form also a tangent category $\AVC(\X,\TT)$, whose objects are pairs $(q,\phi)$ formed by a map $q\colon E\to M$ together with an abstract vertical connection $\phi$ on $q$. The morphisms are morphisms of bundles that commute with the abstract vertical connections in the obvious way, and the tangent bundle functor sends a pair $(q,\phi)$ to $(\T q,c_E\T\phi c_E)$. Split abstract vertical connections form a tangent sub-category of $\AVC(\X,\TT)$ denoted by $\sAVC(\X,\TT)$.

\begin{proposition}
\label{proposition:equivalence-abstract-full-vertical-connections}
There is an equivalence of tangent categories:
\begin{align*}
&\FVC(\X,\TT)\simeq\sAVC(\X,\TT)
\end{align*}
\end{proposition}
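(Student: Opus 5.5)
We will construct two tangent functors $\Phi\colon\FVC(\X,\TT)\to\sAVC(\X,\TT)$ and $\Psi\colon\sAVC(\X,\TT)\to\FVC(\X,\TT)$, both acting as the identity on the underlying bundle morphisms $(f,g)$, and show they are mutually inverse up to natural isomorphism.

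\textbf{The functor $\Phi$.} On objects, set $\Phi(q,\R)\=(q,\phi_\R)$ with $\phi_\R=\R\iota_q$. The converse direction of the proof of Theorem~\ref{theorem:abstract-vertical-connection} shows that $\phi_\R$ is an abstract vertical connection, and it is split by the linear pair $(\iota_q,\R)$. That argument only uses \textbf{[FVC]}, the tangent pullback~\eqref{equation:vertical-bundle-pullback}, and the correspondence of Theorem~\ref{theorem:vertical-connection-form}. Hence it works for any map admitting a vertical bundle, not just for tangent display maps.

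On morphisms, $\Phi$ is the identity. We must check that a morphism $(f,g)$ satisfying $\T g\R'=\R\V_fg$ also satisfies $\phi_\R\T g=\T g\phi_{\R'}$. This follows by postcomposing with $\iota_{q'}$ and using $\V_fg\iota_{q'}=\iota_q\T g$.

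\textbf{The functor $\Psi$.} For each split $(q,\phi)$ we choose a linear splitting $(\iota_\phi,\R_\phi)$. By Lemma~\ref{lemma:vertical-bundle-from-split-abstract-connections}, this gives a choice of vertical bundle of $q$, so $q$ admits the vertical bundle. By the first half of the proof of Theorem~\ref{theorem:abstract-vertical-connection}, $\R_\phi$ is a full vertical connection; again that half never used the Finsler bundle, only \textbf{[AVC.3]} and Lemma~\ref{lemma:split-pullback}.

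Since $\FVC$ uses a fixed choice $\V q$ of the vertical bundle, we transport $\R_\phi$ along the unique isomorphism $\V_\phi q\cong\V q$ compatible with the inclusions into $\T E$. Uniqueness of splittings up to isomorphism guarantees this is well defined. On morphisms, the compatibility $\T g\R'=\R\V_fg$ is recovered from $\phi\T g=\T g\phi'$ by cancelling the monic $\iota_{q'}$.

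\textbf{Equivalence.} The composite $\Psi\Phi$ is the identity, since a vertical connection is determined by its form (Theorem~\ref{theorem:vertical-connection-form}). The composite $\Phi\Psi$ sends $\phi$ to $\R_\phi\iota_\phi=\phi$. So on objects both composites are identities, and we in fact obtain an isomorphism of categories once the vertical-bundle choices are fixed.

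\textbf{Main obstacle: compatibility with the tangent structures.} We must verify that $\Phi$ sends $\R_\T=c_E\T\R\gamma_q$ to $c_E\T\phi_\R c_E$, that is,
\[
c_E\T\R\gamma_q\iota_{\T q}=c_E\T\R\T\iota_q c_E .
\]
This reduces to the identity $\gamma_q\iota_{\T q}=\T\iota_q c_E$. That identity follows from the defining equation $\gamma^\V_q\T\iota_q=\iota_{\T q}c_E$ of Lemma~\ref{lemma:functoriality-V}, together with $\gamma_q$ being the inverse of $\gamma^\V_q$ and $c_E$ being an involution.

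The remaining work is to check that $\Phi$ commutes strictly with the structural transformations $p,z,s,l,c$ on the two arrow-type tangent categories. Since the underlying bundle data is untouched and both structures are inherited from the arrow tangent category, this is routine. It makes $\Phi$ a strict tangent morphism, and $\Psi$ inherits a strong tangent structure as its inverse.
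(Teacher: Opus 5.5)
Your proposal is correct and follows essentially the same route as the paper: one functor sends $\R$ to its connection form $\phi_\R=\R\iota_q$, the other chooses a linear splitting of $\phi$, and the two are mutually inverse because splittings (equivalently, choices of the vertical bundle) are unique up to unique isomorphism. You are more careful than the paper in two places. Transporting $\R_\phi$ along $\V_\phi q\cong\V q$ makes the functors strictly inverse, giving an isomorphism of categories rather than just an equivalence. You also verify the compatibility of the tangent bundle functors explicitly via $\gamma_q\iota_{\T q}=\T\iota_q c_E$, which the paper only asserts.
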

\begin{proof}
Theorem~\ref{theorem:abstract-vertical-connection} gives a correspondence between full vertical connections and split abstract vertical connections. Using this correspondence we may define two functors, one that sends a full vertical connection $(q,\R)$ to the split abstract vertical connection $(q,\phi_\R)$, $\phi_\R$ being the associated vertical connection form of $\R$, and the second, that sends a split abstract vertical connection $(q,\phi)$ with splitting $(\V_\phi q,\iota_\phi,\R_\phi)$ to $(q,\R_\phi)$. Starting from a full vertical connection $(q,\R)$, one constructs the associated abstract vertical connection $(q,\phi_\R)$. By choosing a splitting of $\phi_\R$, we define a full vertical connection $\R_\phi$. However, since splittings are unique up to a unique isomorphism, there must be a (necessarily unique) linear isomorphism $\varphi\colon\V_\phi q\to\V q$ such that $\R_\phi\varphi=\R$. Conversely, given a split abstract vertical connection $(q,\phi)$, by choosing a splitting of $\phi$, one obtains a full vertical connection $\R_\phi$. Thus, the associated connection form defines a split abstract vertical connection which is precisely equal to $\phi$. This proves the existence of an equivalence of categories. It is not hard to see that the tangent structures of $\FVC(\X,\TT)$ and of $\sAVC(\X,\TT)$ correspond via this equivalence.
\end{proof}

We may now define an abstract horizontal connection.

\begin{definition}
\label{definition:abstract-horizontal-connection}
An \textbf{abstract horizontal connection} on a tangent display map $q\colon E\to M$ consists of a morphism $\psi\colon\T E\to\T E$ subject to the following conditions:
\begin{description}
\item[AHC.1] $\psi$ is an idempotent, that is, $\psi\psi=\psi$;

\item[AHC.2] $\psi\colon\p_E\to\p_E$ is a linear morphism of differential bundles, that is, $\psi p_E=p_E$ and $\psi l_E=l_E\T\psi$;

\item[AHC.3] The following diagram
\begin{equation*}
\begin{tikzcd}
{\q^\V} & {\0_E} & {\q^\V} \\
{\p_E} && {\p_E}
\arrow["{q^\V}", from=1-1, to=1-2]
\arrow["{\iota_q}"', from=1-1, to=2-1]
\arrow["{z_q^\V}", from=1-2, to=1-3]
\arrow["{\iota_q}", from=1-3, to=2-3]
\arrow["\psi"', from=2-1, to=2-3]
\arrow["\lrcorner"{anchor=center, pos=0.125, rotate=180}, draw=none, from=2-3, to=1-2]
\end{tikzcd}
\end{equation*}
is a pushout diagram in $\DB(\X,\TT;E)$.
\end{description}
An abstract horizontal connection $\psi\colon\T E\to\T E$ on a tangent display map $q\colon E\to M$ \textbf{splits} provided that the linear idempotent $\psi$ of $E$ splits.
\end{definition}

\begin{remark}
\label{remark:abstract-horizontal-connection}
In defining an abstract horizontal connection, we have already assumed the base map $q$ to be tangent display, in contrast with Definition~\ref{definition:abstract-vertical-connection} in which $q$ was not required to satisfy this condition. The reason for this choice lies in axiom \textbf{[AHC.3]}, in which we explicitly make use of the vertical bundle of $q$. One might only assume the existence of the vertical bundle instead of asking $q$ to be tangent display (see Remark~\ref{remark:vertical-bundle-0-carrability}). However, for simplicity, we have decided to assume that $q$ satisfies the full display condition.
\end{remark}

We would like show that splittings of abstract horizontal connections correspond to a \emph{choice} of the Finsler bundle of $q$ together with a full horizontal connection on $q$. However, in order to fully prove this correspondence, we need to require $q$ to be a submersion. The reason for this assumption lies in the fact that the pushout diagram
\begin{equation*}
\begin{tikzcd}
{\q^\V} & {\p_E} \\
{\p_E} & {\F_\psi q}
\arrow["{q^\V}", from=1-1, to=1-2]
\arrow["{\iota_q}"', from=1-1, to=2-1]
\arrow["{z_\psi^\F}", from=1-2, to=2-2]
\arrow["{\pi_\psi}"', from=2-1, to=2-2]
\arrow["\lrcorner"{anchor=center, pos=0.125, rotate=180}, draw=none, from=2-2, to=1-1]
\end{tikzcd}
\end{equation*}
defines the Finsler bundle $\F_\psi q\to E$ of $q$ only when $q$ is a submersion. In fact, there is no reason to believe that, in general, $\F_\psi q\to E$ would be the Finsler bundle of $q$ when $q$ fails to be a submersion. We start with a technical lemma, which is the dual of Lemma~\ref{lemma:split-pullback}.

\begin{lemma}
\label{lemma:split-pushout}
Let $\X$ be a category. Consider the following commutative diagram
\begin{equation*}
\begin{tikzcd}
{A'} & {B'} & {A'} \\
A & B & A
\arrow["{r'}", from=1-1, to=1-2]
\arrow["f"', from=1-1, to=2-1]
\arrow["{s'}", from=1-2, to=1-3]
\arrow["g"', from=1-2, to=2-2]
\arrow["f", from=1-3, to=2-3]
\arrow["r"', from=2-1, to=2-2]
\arrow["s"', from=2-2, to=2-3]
\end{tikzcd}
\end{equation*}
of $\X$, where $(s,r)\colon A\to B$ and $(s',r')\colon A\to B'$ are two section-retraction pairs. Then, the outer square is a pushout if and only if the left and the right squares are both pushouts.
\end{lemma}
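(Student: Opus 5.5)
The statement is formally dual to Lemma~\ref{lemma:split-pullback}, so I would prove it directly by a cocone-chasing argument rather than invoking duality. Throughout I use the paper's diagrammatic composition, so $r'\colon A'\to B'$, $s'\colon B'\to A'$, $r\colon A\to B$, $s\colon B\to A$ with $s'r'=\id_{B'}$ and $sr=\id_B$. Write $e\=rs\colon A\to A$ and $e'\=r's'\colon A'\to A'$ for the induced idempotents. The outer square is then the square with horizontal arrows $e'$ and $e$ and both vertical arrows $f$, and it is the horizontal pasting of the left square ($f r=r' g$) and the right square ($g s=s' f$).

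The direction ``left and right pushouts $\Rightarrow$ outer pushout'' is the ordinary pasting lemma for pushouts, with no use of the splittings.

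For the converse, assume the outer square is a pushout. I would treat the right square first. Take a cocone on its span, i.e.\ $x\colon A'\to X$ and $y\colon B\to X$ with $s'x=gy$. I turn it into a cocone on the outer span by pairing $x$ with $ry\colon A\to X$. The cocone equation is $e'x=fry$, and it holds because $e'x=r's'x=r'gy=fry$. The outer pushout then gives a unique $k\colon A\to X$ with $fk=x$ and $ek=ry$.

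I claim $k$ is the required factorization for the right square. It satisfies $fk=x$, and it satisfies $sk=y$ because
\begin{align*}
sk=srsk=sek=sry=y.
\end{align*}
For uniqueness, suppose $k'$ also satisfies $fk'=x$ and $sk'=y$. Then $ek'=rsk'=ry$, so $k'$ satisfies the same equations as $k$ for the outer square, and hence $k'=k$.

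The left square is handled the same way, and this is the step needing the most care, since there is no general cancellation lemma for pushouts that yields the left square from the outer and right ones. Take $x\colon B'\to X$ and $y\colon A\to X$ with $r'x=fy$. I pair $s'x\colon A'\to X$ with $y$ to get a cocone on the outer span, after checking the equation relating $e'(s'x)$ and $fy$ using $s'r'=\id_{B'}$. I would then take the induced map out of $A$ and precompose it with $s$, or use $s$ to transport it along the retraction, to obtain a candidate $B\to X$. The equalities with $x$ and $y$ and the uniqueness should then follow exactly as for the right square, using $sr=\id_B$ and $s'r'=\id_{B'}$ to cancel the idempotents.

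I expect the main obstacle to be arranging this left-square cocone so that the idempotent sits on the correct side of the span. If the direct construction becomes awkward, my fallback is to note that, once the right square is known to be a pushout, the left square is a retract of the outer square in the arrow category, since $(r',r)$ and $(s',s)$ exhibit it as such. Pushout squares are closed under retracts of this kind, which follows by the same cancellation argument, and this would give the left square as well.
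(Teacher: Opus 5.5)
Your proof is correct, and it is essentially what the paper intends. The paper's proof of Lemma~\ref{lemma:split-pushout} only says to dualize Lemma~\ref{lemma:split-pullback}, whose own proof is left to the reader, so your direct cocone chase is that dual argument written out.

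There is one slip in the left-square step. $s'x$ is not a composite, since $s'\colon B'\to A'$ cannot be followed by $x\colon B'\to X$. The map $A'\to X$ you want is $r'x$, which satisfies $e'(r'x)=r's'r'x=r'x=fy$.

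With that correction the argument goes through:
\begin{itemize}
\item The outer pushout gives a unique $k\colon A\to X$ with $fk=r'x$ and $ek=y$.
\item $h=sk$ is the required factorization, since $gh=gsk=s'fk=s'r'x=x$ and $rh=rsk=ek=y$.
\item For uniqueness, suppose $gh'=x$ and $rh'=y$. Then $rh'$ satisfies $f(rh')=r'gh'=r'x$ and $e(rh')=rsrh'=y$. Hence $rh'=k$, and so $h'=srh'=sk=h$.
\end{itemize}

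Your fallback is also sound, and stronger than you present it.
\begin{itemize}
\item The maps $(\id_{A'},s',\id_A,s)$ and $(\id_{A'},r',\id_A,r)$ exhibit the left square as a retract of the outer square. This uses only the commutativity of the two small squares, not the pushout property of the right one.
\item Symmetrically, $(s',\id_{A'},s,\id_A)$ and $(r',\id_{A'},r,\id_A)$ exhibit the right square as a retract of the outer square.
\item A retract of a colimiting cocone is colimiting: factor through the outer pushout, then precompose with the section component.
\end{itemize}
This observation therefore settles both squares at once, and makes the separate chase for the right square unnecessary.
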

\begin{proof}
Use the dual argument of Lemma~\ref{lemma:split-pullback}.
\end{proof}

\begin{lemma}
\label{lemma:finsler-bundle-from-split-abstract-connections}
If $\psi\colon\T E\to\T E$ is a split abstract horizontal connection on a submersion $q\colon E\to M$ and $(\q^\F_\psi,\H_\psi,\pi_\psi)$ is a splitting of $\psi$, then the differential bundle $\q_\psi^\F\colon\F_\psi q\to E$ is a choice of the Finsler bundle of $q$.
\end{lemma}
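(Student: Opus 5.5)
We argue dually to Lemma~\ref{lemma:vertical-bundle-from-split-abstract-connections}, using Lemma~\ref{lemma:split-pushout} in place of Lemma~\ref{lemma:split-pullback} and the submersion pushout of Definition~\ref{definition:submersion} in place of the vertical-bundle pullback. Fix a linear splitting $(\q^\F_\psi,\H_\psi,\pi_\psi)$ of $\psi$, so that $\H_\psi\colon\F_\psi q\to\T E$ and $\pi_\psi\colon\T E\to\F_\psi q$ are linear, $\H_\psi\pi_\psi=\id_{\F_\psi q}$ and $\psi=\pi_\psi\H_\psi$. In $\DB(\X,\TT;E)$ we consider the diagram analogous to Equation~\eqref{equation:splitting-vertical-connections}:
\begin{equation*}
\begin{tikzcd}
{\q^\V} & {\0_E} & {\q^\V} \\
{\p_E} & {\q^\F_\psi} & {\p_E}
\arrow["{q^\V}", from=1-1, to=1-2]
\arrow["{\iota_q}"', from=1-1, to=2-1]
\arrow["{z_q^\V}", from=1-2, to=1-3]
\arrow["{z^\F_\psi}", from=1-2, to=2-2]
\arrow["{\iota_q}", from=1-3, to=2-3]
\arrow["{\pi_\psi}"', from=2-1, to=2-2]
\arrow["{\H_\psi}"', from=2-2, to=2-3]
\end{tikzcd}
\end{equation*}
where $z^\F_\psi$ is the zero of $\q^\F_\psi$. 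The top row is a section-retraction pair ($z_q^\V q^\V=\id$ up to the zero object), as is the bottom row. The outer square is precisely \textbf{[AHC.3]}.

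We first check that both inner squares commute. For the left one, we need $\iota_q\pi_\psi=q^\V z^\F_\psi$. From \textbf{[AHC.3]} we have $\iota_q\psi=q^\V z_q^\V\iota_q=q^\V z_E$. Postcomposing with $\pi_\psi$ and using $\H_\psi\pi_\psi=\id$ gives $\iota_q\pi_\psi=q^\V z_E\pi_\psi=q^\V z^\F_\psi$, the last step by linearity of $\pi_\psi$, which preserves zeros. For the right one, $z^\F_\psi\H_\psi=z_E=z_q^\V\iota_q$, since $\H_\psi$ is linear. Lemma~\ref{lemma:split-pushout} then shows that the left square is a pushout in $\DB(\X,\TT;E)$.

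Since $q$ is a submersion, the square of Definition~\ref{definition:submersion} is also a pushout of the same span $\0_E\leftarrow\q^\V\rightarrow\p_E$. By uniqueness of pushouts, there is a unique linear isomorphism $\varphi\colon\q^\F_\psi\to\q^\F$ with $\pi_\psi\varphi=\pi_q$ and $z^\F_\psi\varphi=z^\F_q$.

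It remains to turn this abstract isomorphism into the statement that $\F_\psi q$ is a choice of the tangent pullback of Equation~\eqref{equation:finsler-bundle-pullback}, with the induced differential structure. Set $q^\F_\psi:=\H_\psi p_E$ (this is forced, as in the vertical case, because $\pi_\psi$ is epi and $\psi p_E=p_E$) and $\T^\F_\psi q:=\H_\psi\T q$. The square $(\F_\psi q;\,q^\F_\psi,\T^\F_\psi q)$ is then isomorphic via $\varphi$ to the Finsler pullback square. We check $\varphi q^\F=q^\F_\psi$ and $\varphi\T^\F q=\T^\F_\psi q$ by precomposing with the epi $\pi_\psi$ and using $\pi_q q^\F=p_E$ and $\pi_q\T^\F q=\T q$. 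Hence the square is a tangent pullback, because isomorphic copies of tangent pullbacks are tangent pullbacks.

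Finally, the differential bundle structure induced on $\F_\psi q$ by this pullback agrees with that of $\q^\F_\psi$. The map $\varphi$ is a linear isomorphism, so it transports structure, and the transported structure is exactly the one obtained from the isomorphic pullback square.

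The main obstacle is this last step: the pushout property lives in $\DB(\X,\TT;E)$, while the Finsler bundle is characterised by a tangent pullback in $\X$. The submersion hypothesis is exactly what bridges the two, and we must track carefully that $\varphi$ intertwines the projections $\T^\F q$. This is the step we would verify by precomposing with the epimorphisms $\pi_q$ and $\pi_\psi$.
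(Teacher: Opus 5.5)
Your proof is correct and is essentially the paper's argument. You use the same diagram, built from \textbf{[AHC.3]} and the two section--retraction pairs, then Lemma~\ref{lemma:split-pushout} to make the left square a pushout in $\DB(\X,\TT;E)$, and then the submersion pushout of Definition~\ref{definition:submersion} to identify $\q^\F_\psi$ with $\q^\F$ up to a unique linear isomorphism $\varphi$; you also check that the inner squares commute, which the paper leaves implicit.

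Your final transfer to the tangent-pullback description goes beyond what the paper writes, and it contains one unstated step. Precomposing with $\pi_\psi$ alone leaves you needing $\psi\T q=\T q$, which you never justify. The simplest fix is to note $\varphi=\H_\psi\pi_\psi\varphi=\H_\psi\pi_q$, which gives $\varphi\T^\F q=\H_\psi\pi_q\T^\F q=\H_\psi\T q$ directly.
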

\begin{proof}
Consider a split abstract horizontal connection $\psi$ on $q$ with a linear splitting $(\q^\F_\psi,\H_\psi,\pi_\psi)$. Since the splitting is in the category of differential bundles, $\q_\psi^\F$ is already a differential bundles. We want to prove that $\q_\psi^\F$ coincides with the Finsler bundle of $q$. Consider the following diagram in $\DB(\X,\TT;E)$:
\begin{equation}
\label{equation:splitting-horizontal-connection}
\begin{tikzcd}
{\q^\V} & {\p_E} & {\q^\V} \\
{\p_E} & {\q_\psi^\F} & {\p_E}
\arrow["{q^\V}", from=1-1, to=1-2]
\arrow["{\iota_q}"', from=1-1, to=2-1]
\arrow["{z_q^\V}", from=1-2, to=1-3]
\arrow["{z_q^\V\iota_q\pi_\psi}", from=1-2, to=2-2]
\arrow["{\iota_q}", from=1-3, to=2-3]
\arrow["{\pi_\psi}"', from=2-1, to=2-2]
\arrow["\psi"', shift right, curve={height=24pt}, from=2-1, to=2-3]
\arrow["{\H_\psi}"', from=2-2, to=2-3]
\end{tikzcd}
\end{equation}
By \textbf{[AHC]}, the outer diagram is a pushout. Therefore, thanks to Lemma~\ref{lemma:split-pushout}, the inner square diagrams are also pushout diagrams. In particular, the left square diagram is a pushout in $\DB(\X,\TT;E)$. However, since $q$ is a submersion, the pushout of $q^\V$ along $\iota_q$ defines the Finsler bundle up to a unique isomorphism. Thus, $\q_\psi^\F$ represents a choice of the Finsler bundle of $q$.
\end{proof}

\begin{theorem}
\label{theorem:abstract-horizontal-connection}
Given a submersion $q\colon E\to M$ equipped with a morphism $\psi\colon\T E\to\T E$, the following are equivalent:
\begin{enumerate}
\item $\psi$ is a split abstract horizontal connection on $q$;

\item There is a full horizontal connection $\H$ whose horizontal form is $\psi$;

\item There is a horizontal connection $\H$ which is the horizontal component of an Ehresmann connection $(\R,\H)$ on $q$ and whose connection form is $\psi$.
\end{enumerate}
\end{theorem}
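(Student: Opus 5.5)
The plan mirrors the proof of Theorem~\ref{theorem:abstract-vertical-connection}, with pullbacks replaced by pushouts in $\DB(\X,\TT;E)$. Since $q$ is a submersion, Theorem~\ref{theorem:full-connections} already gives the equivalence between $[2]$ and $[3]$: a full horizontal connection on a submersion is exactly the horizontal component of an Ehresmann connection. Its form is $\psi_\H=\pi_q\H$ in both cases. So it only remains to prove $[1]\Leftrightarrow[2]$.

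For $[1]\Rightarrow[2]$, fix a linear splitting $(\q^\F_\psi,\H_\psi,\pi_\psi)$ of $\psi$, with $\pi_\psi\H_\psi=\id$ and $\psi=\pi_\psi\H_\psi$. By Lemma~\ref{lemma:finsler-bundle-from-split-abstract-connections}, $\q^\F_\psi$ is a choice of the Finsler bundle. Concretely, the left square of Diagram~\eqref{equation:splitting-horizontal-connection} is a pushout, so it is uniquely isomorphic to the pushout of Definition~\ref{definition:submersion}. Under this isomorphism $\pi_\psi$ corresponds to the horizontal descent $\pi_q$, and $z_q^\V\iota_q\pi_\psi$ corresponds to $z_q^\F$.

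Next, $\H_\psi$ is a horizontal connection. \textbf{[HC.1]} is $\H_\psi\pi_\psi=\id$, because $\pi_\psi$ is the retraction, and \textbf{[HC.2]} holds because the splitting lives in $\DB(\X,\TT;E)$. Equivalently, one checks that $\psi$ is a horizontal connection form and invokes Theorem~\ref{theorem:horizontal-connection-form}: \textbf{[HCF.1]} is $\psi\pi_\psi=\pi_\psi\H_\psi\pi_\psi=\pi_\psi$, \textbf{[HCF.2]} is \textbf{[AHC.2]}, and \textbf{[HCF.3]} is the commutativity in \textbf{[AHC.3]}.

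Fullness comes from the right square of Diagram~\eqref{equation:splitting-horizontal-connection}, which Lemma~\ref{lemma:split-pushout} shows is a pushout. Its left edge is $\q^\V\to\0_E\to\q^\F_\psi$, which factors through the zero object. The square therefore becomes $\0_E\to\q^\V$, $\0_E\to\q^\F_\psi$ with legs $\iota_q$ and $\H_\psi$. This is exactly the pushout square \textbf{[FHC]}, after identifying the morphism $\0_E\to\q^\F_\psi$ with the zero $z^\F$ (true since $\0_E$ is initial). Finally, the connection form of $\H_\psi$ is $\pi_\psi\H_\psi=\psi$.

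For $[2]\Rightarrow[1]$, let $\H$ be a full horizontal connection with form $\psi=\pi_q\H$. By Theorem~\ref{theorem:horizontal-connection-form}, $\psi$ is a linear idempotent: $\psi\psi=\pi_q\H\pi_q\H=\psi$ by \textbf{[HC.1]}, and linearity is \textbf{[HCF.2]}. The triple $(\q^\F,\H,\pi_q)$ is a linear splitting, so $\psi$ splits.

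It remains to prove \textbf{[AHC.3]}. Place the submersion pushout (Definition~\ref{definition:submersion}) beside the \textbf{[FHC]} pushout. The composite rectangle is $\q^\V\to\0_E\to\q^\F\to\p_E$ over $\iota_q$ followed by $\pi_q\H=\psi$. Pasting two pushouts gives a pushout, and the top composite factors through $\0_E$ and equals $q^\V z_q^\V$. So the outer square is exactly the diagram of \textbf{[AHC.3]}.

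The main obstacle is bookkeeping. The squares in \textbf{[AHC.3]} and \textbf{[FHC]} are written with zero morphisms through $\0_E$. One must check that the morphisms $\0_E\to-$ appearing in these squares are the canonical zeros, and that the composite $z_q^\V\iota_q\pi_\psi$ factors through $\0_E$. Once this is settled, Lemma~\ref{lemma:split-pushout} and pasting of pushouts do all the real work. One must also confirm that Lemma~\ref{lemma:split-pushout} applies even though the left vertical maps differ ($\iota_q$ against a zero map); the proof of that lemma only uses the section-retraction pairs, so I expect it to apply.
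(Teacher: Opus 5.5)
Your proposal is correct and follows essentially the same route as the paper. The paper also uses Lemma~\ref{lemma:finsler-bundle-from-split-abstract-connections} and Lemma~\ref{lemma:split-pushout} to get \textbf{[FHC]} for $\H_\psi$, the form correspondence of Theorem~\ref{theorem:horizontal-connection-form} to show $\H_\psi$ is a horizontal connection, and Theorem~\ref{theorem:full-connections} for $[2]\Leftrightarrow[3]$; your pasting of the submersion pushout with the \textbf{[FHC]} pushout is a correct way to supply the \textbf{[AHC.3]} check that the paper calls immediate.

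Two small corrections. First, the opening condition should read $\H_\psi\pi_\psi=\id$, as you use later, not $\pi_\psi\H_\psi=\id$. Second, your worry about Lemma~\ref{lemma:split-pushout} is unfounded: both outer vertical edges of Diagram~\eqref{equation:splitting-horizontal-connection} are $\iota_q$, which is exactly the shape the lemma requires.
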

\begin{proof}
The equivalence between $[2]$ and $[3]$ was already proven by Theorem~\ref{theorem:full-connections}. It is only left to prove the equivalence between $[1]$ and $[2]$. To begin, consider an abstract horizontal connection $\psi\colon\T E\to\T E$ on the submersion $q$. Furthermore, let us assume that $\psi$ splits, that is, there is a section-retraction pair $(\H_\psi,\pi_\psi)\colon\q_\psi^\F\to\p_E$. By Lemma~\ref{lemma:finsler-bundle-from-split-abstract-connections}, $\q_\psi^\F$ defines a choice of the Finsler bundle of $q$. Now, consider again the diagram of Equation~\eqref{equation:splitting-horizontal-connection}. By Lemma~\ref{lemma:split-pushout}, the right square diagram is a pushout in $\DB(\X,\TT;E)$, therefore, $\H_\psi$ satisfies \textbf{[FHC]}. In order to prove that $\H_\psi$ is full horizontal connection, it is only left to show that $\H_\psi$ is a horizontal connection. To this end, we leverage the correspondence of Theorem~\ref{theorem:horizontal-connection-form} between horizontal connections and horizontal connection forms of submersions and show that $\psi$ is in fact a horizontal connection form. \textbf{[HCF.1]} corresponds to \textbf{[AHC.1]}, since $\psi\pi_\psi=\pi_\psi\H_\psi\pi_\psi=\pi_\psi$; finally, \textbf{[HCF.2]} corresponds to \textbf{[AHC.2]}. Thus, $\psi$ is a horizontal connection form and therefore, $\H_\psi$ is a full horizontal connection.

Conversely, using Theorem~\ref{theorem:horizontal-connection-form}, it is immediate to see that the associated horizontal connection form of a full horizontal connection satisfies the axioms of a split abstract horizontal connection.
\end{proof}

We end this section by collecting in the next theorem all the equivalent forms of an Ehresmann connection that we explored.

\begin{theorem}
\label{theorem:all-equivalent-forms-of-connections}
For a tangent display map $q\colon E\to M$ equipped with two maps $\R\colon\T E\to\V q$ and $\H\colon\F q\to\T E$, the following are equivalent:
\begin{enumerate}
\item $(\R,\H)$ is an Ehresmann connection on $q$;

\item $q$ is a submersion and $(\phi,\psi)$ is an Ehresmann connection form on $q$; moreover, $\phi=\R\iota_q$ and $\psi=\pi_q\H$;

\item $q$ is a submersion and $\R$ is a full vertical connection on $q$;

\item $q$ is a submersion and $\H$ is a full horizontal connection on $q$;

\item $q$ is a submersion and $\phi$ is a split abstract vertical connection on $q$; moreover, $\phi=\R\iota_q$;

\item $q$ is a submersion and $\psi$ is a split astract horizontal connection on $q$, moreover, $\psi=\pi_q\H$.
\end{enumerate}
\end{theorem}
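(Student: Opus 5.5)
The plan is to use condition [1] as the hub and prove each remaining condition equivalent to it, citing the earlier results. Throughout, whenever [1] holds, Corollary~\ref{corollary:connecton-implies-submersion} shows that $q$ is a submersion. The submersion clause in [2]--[6] is therefore automatic in the direction out of [1], and it is available as a hypothesis in the direction into [1]. This matters because Theorems~\ref{theorem:horizontal-connection-form}, \ref{theorem:ehresmann-connection-form} and~\ref{theorem:abstract-horizontal-connection}, and Proposition~\ref{proposition:full-horizontal-connection}, all require a submersion.

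For [1]$\Leftrightarrow$[2] I will invoke Theorem~\ref{theorem:ehresmann-connection-form} with $\phi=\phi_\R=\R\iota_q$ and $\psi=\psi_\H=\pi_q\H$. Going from [2] to [1], the bijections of Theorems~\ref{theorem:vertical-connection-form} and~\ref{theorem:horizontal-connection-form} recover $\R$ and $\H$ from their forms. The first is recovered because $\iota_q$ is monic; the second because $\pi_q$ is epi, being the cokernel of $\iota_q$ by Theorem~\ref{theorem:submersion}. Hence $(\R_\phi,\H_\psi)=(\R,\H)$.

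For [1]$\Leftrightarrow$[3] and [1]$\Leftrightarrow$[4] I will use Theorem~\ref{theorem:full-connections}, which rests on Propositions~\ref{proposition:full-vertical-connection}, \ref{proposition:full-horizontal-connection} and~\ref{proposition:full-connection}. The forward directions are immediate, since both components of an Ehresmann connection are full. The backward directions need care, because [3] fixes only $\R$ while the statement fixes the pair $(\R,\H)$. A full $\R$ produces a unique $\H_\R$ completing it, so [3]$\Rightarrow$[1] holds only when $\H=\H_\R$. I will therefore read [3] and [4] as saying that $\R$, respectively $\H$, is full and that the given partner is its unique completion. This matches the paper's "collecting" intent, with uniqueness taken from Propositions~\ref{proposition:full-vertical-connection} and~\ref{proposition:full-horizontal-connection}. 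I will state this interpretation explicitly in the proof.

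For [3]$\Leftrightarrow$[5] I will apply Theorem~\ref{theorem:abstract-vertical-connection} to $\phi=\R\iota_q$. A full $\R$ gives a split abstract vertical connection $\phi_\R$, split by $(\iota_q,\R)$. Conversely, by Lemma~\ref{lemma:vertical-bundle-from-split-abstract-connections}, a splitting of $\phi$ is a choice of the vertical bundle. The splitting $(\iota_q,\R)$ realizes $\R$ as a full vertical connection on the chosen $\V q$, and the uniqueness of splittings up to unique isomorphism identifies the two choices. The argument for [4]$\Leftrightarrow$[6] is identical, using Theorem~\ref{theorem:abstract-horizontal-connection}, Lemma~\ref{lemma:finsler-bundle-from-split-abstract-connections}, and the splitting $(\H,\pi_q)$ of $\psi=\pi_q\H$.

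I expect the main obstacle to be this bookkeeping rather than any computation. Conditions [3]--[6] constrain only one component, so I must make the completion-uniqueness reading explicit, and I must check that the splittings $(\iota_q,\R)$ and $(\H,\pi_q)$ are genuinely linear splittings. That linearity follows from [VC.2], [HC.2], and the linearity of $\iota_q$ and $\pi_q$ (Lemma~\ref{lemma:horizontal-descent-linear}).
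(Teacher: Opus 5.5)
Your proposal is correct and is essentially the paper's own argument: the paper gives no separate proof; it simply collects Corollary~\ref{corollary:connecton-implies-submersion} with Theorems~\ref{theorem:ehresmann-connection-form}, \ref{theorem:full-connections}, \ref{theorem:abstract-vertical-connection} and~\ref{theorem:abstract-horizontal-connection}, and your remark that [3]--[6] constrain only one component (so must be read as ``the given partner is the unique completion'') rightly repairs an imprecision in the statement. One small fix: in the directions [2]$\Rightarrow$[1], [5]$\Rightarrow$[3] and [6]$\Rightarrow$[4] you do not yet know that $\H$ or $\R$ is linear, so the cokernel property (which only makes $\pi_q$ epi against linear maps) does not suffice, and $(\iota_q,\R)$, $(\H,\pi_q)$ cannot be presupposed to be splittings; instead compare with the connection $\R'$ or $\H'$ supplied by the earlier theorem, using $\R'\iota_q=\phi=\R\iota_q$ with $\iota_q$ monic, and $\pi_q\H'=\psi=\pi_q\H$ with $\pi_q$ split epi in $\X$ because $\H'\pi_q=\id_{\F q}$.
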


\subsection{Linear vs Koszul connections}
\label{subsection:vertical-linear-connection}
In~\cite{cockett:connections}, Cockett and the first author introduced and studied connections on differential bundles in the context of tangent categories. In this section, we compare their work with our notion of Ehresmann connection. To distinguish the two, we will refer to their notion as a \textbf{Koszul connection}. To begin, we recall their definitions, starting from the notion of a vertical Koszul connection.

\begin{definition}[{\cite[Definition~3.2]{cockett:connections}}]
\label{definition:vertical-koszul-connection}
A \textbf{vertical Koszul connection} on a differential bundle $\q$ consists of a morphism $\K\colon\T E\to E$, where $E$ is the total space of $\q$, subject to the following conditions:
\begin{description}
\item[VKC.1] $\K$ is a retract of the vertical lift $l_q\colon E\to\T E$, that is, $l_q\K=\id_E$;

\item[VKC.2] $(p_M,\K)\colon\T\q\to\q$ is a linear morphism:
\begin{equation*}
\begin{tikzcd}
{\T E} & E \\
{\T M} & M
\arrow["\K", from=1-1, to=1-2]
\arrow["{\T q}"', from=1-1, to=2-1]
\arrow["q", from=1-2, to=2-2]
\arrow["{p_M}"', from=2-1, to=2-2]
\end{tikzcd}\quad
\begin{tikzcd}
{\T^2E} & {\T E} \\
{\T^2E} \\
{\T E} & E
\arrow["{\T\K}", from=1-1, to=1-2]
\arrow["{c_E}", from=2-1, to=1-1]
\arrow["{l_q}", from=3-1, to=2-1]
\arrow["\K"', from=3-1, to=3-2]
\arrow["{l_q}"', from=3-2, to=1-2]
\end{tikzcd}
\end{equation*}

\item[VKC.3] $(q,\K)\colon\p_E\to\q$ is a linear morphism:
\begin{equation*}
\begin{tikzcd}
{\T E} & E \\
E & M
\arrow["\K", from=1-1, to=1-2]
\arrow["{p_E}"', from=1-1, to=2-1]
\arrow["q", from=1-2, to=2-2]
\arrow["q"', from=2-1, to=2-2]
\end{tikzcd}\quad
\begin{tikzcd}
{\T^2E} & {\T E} \\
{\T E} & E
\arrow["{\T\K}", from=1-1, to=1-2]
\arrow["{l_E}", from=2-1, to=1-1]
\arrow["\K"', from=2-1, to=2-2]
\arrow["{l_q}"', from=2-2, to=1-2]
\end{tikzcd}
\end{equation*}
\end{description}
\end{definition}

Next, we recall the notion of a horizontal Koszul connection. To this end, note that, for a differential bundle $\q\colon E\to M$, not only is the Finsler bundle $\q^\F\colon\F q\to E$ a differential bundle, but also the projection $\T^\F q\colon\F q\to\T M$ onto $\T M$ carries the structure of a differential bundle, since $\T^\F q$ is a tangent pullback of $\q$ and that differential bundles are stable under tangent pullbacks (\cite[Lemma~2.7]{cockett:differential-bundles}). Furthermore, $\T q$ also carries the structure of a differential bundle whose projection, zero, and sum morphisms are the image of the projection, zero, and sum morphisms of $q$ along $\T$ and the vertical lift is defined as follows:
\begin{align*}
&l_{\T q}\colon\T E\xrightarrow{\T l_q}\T^2E\xrightarrow{c_E}\T^2E
\end{align*}
We may denote the differential bundles over $\T^\F q$ and $\T q$, respectively as $\T^\F\q$ and $\T\q$.

\begin{definition}[{\cite[Definition~4.5]{cockett:connections}}]
\label{definition:horizontal-koszul-connection}
A \textbf{horizontal Koszul connection} on a differential bundle $\q$ consists of a morphism $\H\colon\F q\to\T E$, satisfying the following conditions:
\begin{description}
\item[HKC.1] $\H$ is a section of the horizontal descent $\pi_q\colon\T E\to\F q$;

\item[HKC.2] $(\id_E,\H)\colon\q^\F\to\p_E$ is a linear morphism of differential bundles:
\begin{equation*}
\begin{tikzcd}
{\F q} & {\T E} \\
E & E
\arrow["\H", from=1-1, to=1-2]
\arrow["{q^\F}"', from=1-1, to=2-1]
\arrow["{p_E}", from=1-2, to=2-2]
\arrow[equals, from=2-1, to=2-2]
\end{tikzcd}\quad
\begin{tikzcd}
{\T\F q} & {\T^2E} \\
{\F q} & {\T E}
\arrow["{\T\H}", from=1-1, to=1-2]
\arrow["{l^\F_q}", from=2-1, to=1-1]
\arrow["\H"', from=2-1, to=2-2]
\arrow["{l_E}"', from=2-2, to=1-2]
\end{tikzcd}
\end{equation*}

\item[HKC.3] $(\id_{\T M},\H)\colon\T^\F\q\to\T\q$ is a linear morphism of differential bundles:
\begin{equation*}
\begin{tikzcd}
{\F q} & {\T E} \\
{\T M} & {\T M}
\arrow["\H", from=1-1, to=1-2]
\arrow["{q^\F}"', from=1-1, to=2-1]
\arrow["{\T q}", from=1-2, to=2-2]
\arrow[equals, from=2-1, to=2-2]
\end{tikzcd}\quad
\begin{tikzcd}
{\T\F q} & {\T^2E} \\
{\F q} & {\T E}
\arrow["{\T\H}", from=1-1, to=1-2]
\arrow["{l_{\T^\F q}}", from=2-1, to=1-1]
\arrow["\H"', from=2-1, to=2-2]
\arrow["{\T l_qc_E}"', from=2-2, to=1-2]
\end{tikzcd}
\end{equation*}
where $l_{\T^\F q}$ denotes the vertical lift of $\T^\F\q\colon\F q\to\T M$.
\end{description}
\end{definition}

Finally, we can recall the definition of a Koszul connection.

\begin{definition}[{\cite[Definition~5.2]{cockett:connections}}]
\label{definition:koszul-connection}
A \textbf{Koszul connection} on a differential bundle $\q\colon E\to M$ consists of a pair $(\K,\H)$ formed by a vertical Koszul connection $\K$ of $\q$ together with a horizontal Koszul connection $\H$ of $\q$, subject to the following conditions:
\begin{description}
\item[KC.1] The following diagram commutes:
\begin{equation*}
\begin{tikzcd}
{\F q} && {\T E} \\
E & M & E
\arrow["\H", from=1-1, to=1-3]
\arrow["{q^\F}"', from=1-1, to=2-1]
\arrow["\K", from=1-3, to=2-3]
\arrow["q"', from=2-1, to=2-2]
\arrow["{z_q}"', from=2-2, to=2-3]
\end{tikzcd}
\end{equation*}

\item[KC.2] The following diagram commutes
\begin{equation*}
\begin{tikzcd}
{\T E} & & {\T_2E} \\
& & {\T E}
\arrow["{\<\<\K,p_E\>\xi_q,\pi_q\H\>}", from=1-1, to=1-3]
\arrow[equals, from=1-1, to=2-3]
\arrow["{s_E}", from=1-3, to=2-3]
\end{tikzcd}
\end{equation*}
where $\xi_q\=(l_q,z_E)\T s_q\colon E_2\to\T E$
\end{description}
\end{definition}

The universal property of the vertical lift of differential bundles requires that the vertical bundle of a differential bundle is necessarily the trivial bundle $\pi_1\colon E_2\to E$ (see Example~\ref{example:vertical-bundle-differential-bundles}). Using this property of differential bundles, Cockett and the first author in~\cite[Section~3.35]{cockett:connections}, realized that a vertical Koszul connection is equivalent to a so-called \emph{Finsler connection}, that is, a map $\R\colon\T E\to E_2$, which is a retraction of the map $\xi_q$ and that satisfies some linearity conditions. We now recontextualize this result within the general theory of Ehresmann connections. We begin by defining the notions of linear vertical, horizontal, and Ehresmann connections. To this end, note that, using again the stability of differential bundles under the tangent bundle functor and tangent pullbacks, for a differential bundle $\q$, also the map $\T^\V q\colon\V q\to M$ carries the structure of a differential bundle that we may denote by $\T^\V\q$.

\begin{definition}
\label{definition:vertical-linear-connection}
A \textbf{linear vertical connection} on a differential bundle $\q\colon E\to M$ consists of a vertical connection $\R\colon\T E\to\V q$ on $q$ as per Definition~\ref{definition:vertical-connection} subject to the following condition:
\begin{description}
\item[VLC] The bundle morphism $(p_M,\R)\colon\T\q\to\T^\V\q$ of Lemma~\ref{lemma:vertical-connections} is linear
\begin{equation*}
\begin{tikzcd}
{\T^2E} & {\T\V q} \\
{\T E} & {\V q}
\arrow["{\T\R}", from=1-1, to=1-2]
\arrow["{\T l_qc_E}", from=2-1, to=1-1]
\arrow["\R"', from=2-1, to=2-2]
\arrow["{l_{\T^\V\q}}"', from=2-2, to=1-2]
\end{tikzcd}
\end{equation*}
where $l_{\T^\V\q}$ denotes the vertical lift of $\T^\V\q\colon\V q\to M$.
\end{description}
\end{definition}

\begin{definition}
\label{definition:horizontal-linear-connection}
A \textbf{linear horizontal connection} on a differential bundle $\q\colon E\to M$ consists of a horizontal connection $\H\colon\F q\to\T E$ on $q$ as per Definition~\ref{definition:horizontal-connection}, subject to the following condition:
\begin{description}
\item[HLC] The bundle morphism $(\id_{\T M},\H)\colon\T^\F\q\to\T\q$ of Lemma~\ref{lemma:horizontal-connection} is linear
\begin{equation*}
\begin{tikzcd}
{\T\F q} & {\T^2E} \\
{\F q} & {\T E}
\arrow["{\T\H}", from=1-1, to=1-2]
\arrow["{l_{\T^\F q}}", from=2-1, to=1-1]
\arrow["\H"', from=2-1, to=2-2]
\arrow["{\T l_qc_E}"', from=2-2, to=1-2]
\end{tikzcd}
\end{equation*}
where $l_{\T^\F q}$ denotes the vertical lift of $\T^\F\q$.
\end{description}
\end{definition}

\begin{definition}
\label{definition:ehresmann-linear-connection}
A \textbf{linear Ehresmann connection} on a differential bundle $\q\colon E\to M$ consists of an Ehresmann connection $(\R,\H)$ of the underlying tangent display map $q$, satisfying the further conditions:
\begin{description}
\item[VLC] The vertical connection $\R$ is linear;

\item[HLC] The horizontal connection $\H$ is linear.
\end{description}
\end{definition}

Thanks to the triviality of the vertical bundle of differential bundles (Example~\ref{example:vertical-bundle-differential-bundles}), linear vertical connections become equivalent to vertical Koszul connections, as already proved by Cockett and Cruttwell. We recall here this result. For starters, recall that, by the universal property of the vertical lift $l_q$ of a differential bundle $\q$, for each morphism $f\colon N\to\T E$ satisfying $f\T q=f\T qp_Mz_M$, there exists a unique morphism $\tilde f\colon N\to E_2$ such that $\tilde f\xi_q=f$ and $\tilde f\pi_1q=f\T qp_M$. This defines a morphism $\{f\}\colon N\to E$ as $\tilde f\pi_1$. Details about this operation $\{-\}$ can be found in~\cite{cockett:differential-bundles}.

\begin{proposition}[{\cite[Theorem~3.40]{cockett:connections}}]
\label{proposition:vertical-linear-connections}
Consider a differential bundle $\q\colon E\to M$ and a morphism $\R\colon\T E\to\V q$. The following are equivalent:
\begin{enumerate}
\item $\R$ is a linear vertical connection;

\item The unique morphism
\begin{align*}
&\K_\R\=\{\phi_\R\}_q\colon\T E\to E
\end{align*}
induced by the universal property of the vertical lift $l_q$ of $\q$, is a vertical Koszul connection, where $\phi_\R\=\R\iota_q$.
\end{enumerate}
\end{proposition}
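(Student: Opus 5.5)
The plan is to pass everything through the identification $\V q\cong E_2$ of Example~\ref{example:vertical-bundle-differential-bundles}. Since $\T^\V q\colon\V q\to M$ is the tangent pullback of $\T q$ along $z_M$, and by the universal property of the vertical lift $E_2$ with $\xi_q$ is also such a pullback, there is a unique isomorphism $\theta\colon\V q\to E_2$ with $\theta\xi_q=\iota_q$ and $\theta\pi_1 q=\T^\V q$. Under $\theta$, the map $\K_\R=\{\phi_\R\}_q$ is just $\R\theta\pi_2$, up to the convention for which factor carries the vertical component. Indeed, $\phi_\R\T q=p_Eqz_M$ by \textbf{[VCF.3]}, so $\{\phi_\R\}$ is defined, and the factorization $\widetilde{\phi_\R}=\R\theta$ satisfies $\R\theta\xi_q=\R\iota_q=\phi_\R$. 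Conversely, from $\K$ we define $\R_\K\=\<p_E,\K\>\theta^{-1}$, where $\<p_E,\K\>\colon\T E\to E_2$ is well defined because $\K q=p_Eq$ by \textbf{[VKC.3]}. The two assignments are mutually inverse: one direction uses $\R\theta\pi_1=\R q^\V=p_E$ from \textbf{[VC.2]}, the other uses uniqueness in the universal property.

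Each axiom then has to be matched with its counterpart.

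\textbf{[VC.1]} versus \textbf{[VKC.1]}: the composite $l_q=\<z_q\dots\>$ embeds into $E_2$ as $\<qz_q,\id\>$, and $\<qz_q,\id\>\xi_q=l_q$ by unitality of $s_q$. Hence $\iota_q\R=\id$ restricted along this map gives $l_q\K=\id_E$. Conversely, knowing $l_q\K=\id$ and $p_E$-compatibility, we can recover $\iota_q\R=\id$ by checking on the generic element $\xi_q$, using the additivity of $\K$ from \textbf{[VKC.3]} and $\xi_q=\<l_q,z_E\>\T s_q$.

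\textbf{[VC.2]} versus \textbf{[VKC.3]}: $\R$ is linear as a map $\p_E\to\q^\V$, and under $\theta$ the vertical lift $l^\V_q$ becomes the lift of the pulled-back bundle $\pi_1\colon E_2\to E$. That lift is $l_q$ on the second component and $z$ on the first, so its linearity reduces exactly to $\K l_q=l_E\T\K$.

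\textbf{[VLC]} versus \textbf{[VKC.2]}: the lift $l_{\T^\V\q}$ transported along $\theta$ is $l_q\times l_q$ on $E_2$ over $M$. Linearity with respect to $\T l_qc_E$ therefore becomes, componentwise, the condition $\T l_q c_E\T\K=\K l_q$. Here the second diagram of \textbf{[VKC.2]} is read with the composite through $l_q$, and the first-component condition is automatic from the naturality of $p$. The bundle-morphism condition of \textbf{[VKC.2]} is Lemma~\ref{lemma:vertical-connections}(i) composed with $\pi_2 q=\pi_1q$.

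The main obstacle is the bookkeeping of transporting the structure maps $l^\V_q$ and $l_{\T^\V\q}$ across $\theta$. These are defined via tangent pullbacks and $\T\iota_q$, so to identify them with the componentwise lifts on $E_2$ I would postcompose with the tangent monic $\T\xi_q$ (Lemma~\ref{lemma:iota-tangent-monic}) and compare using $\xi_q l_E$. This requires the compatibility axiom [4] of Definition~\ref{definition:differential-bundle}, $l_q l_E=l_q\T l_q$, together with the flip identities $l c=l$ and $\T l\,c\,\dots$ for the $\T\q$ lift. This step is where I would expect most of the work. Everything else is the calculation already carried out in \cite[Theorem~3.40]{cockett:connections} for Finsler connections, to which I would appeal once the $\theta$-identification is established.
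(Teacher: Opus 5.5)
Your route is the one the paper takes. The paper gives no argument beyond citing \cite[Theorem~3.40]{cockett:connections} through the identification $\V q\cong E_2$ of Example~\ref{example:vertical-bundle-differential-bundles}. Your $\theta$-transport is a correct unpacking of that citation: the identification of $l_q^\V$ with $\<\pi_1z_E,\pi_2l_q\>$ and of $l_{\T^\V\q}$ with $l_q\times l_q$ both check out after composing with the monic $\T\xi_q$, using the standard $z,l,c$ identities and axiom [4]. The genuine problem is the direction [2]$\Rightarrow$[1].

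\textbf{The gap in [2]$\Rightarrow$[1].} Use your convention, with $\pi_1$ the base point and $\pi_2$ the vertical component.
\begin{itemize}
\item For an arbitrary $\R\colon\T E\to\V q$, the map $\{\phi_\R\}$ is always defined, because $\iota_q\T q=\T^\V qz_M$. No appeal to \textbf{[VCF.3]} is needed, and in this direction none is available.
\item One has $\widetilde{\phi_\R}=\R\theta$ and $\K_\R=\R\theta\pi_2$. So $\K_\R$ discards the other component $\R\theta\pi_1=\R q^\V$ entirely.
\item Your ``mutually inverse'' step $\R=\<p_E,\K_\R\>\theta^{-1}$ uses $\R q^\V=p_E$ ``from \textbf{[VC.2]}''. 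In this direction that is part of what must be proved.
\end{itemize}
It also cannot be extracted from [2]. Take any vertical Koszul connection $\K$ and set $\R':=\<\K,\K\>\theta^{-1}$. Then $\K_{\R'}=\K$, so [2] holds. But $\R'q^\V=\K$, and $\K=p_E$ would give $\id_E=l_q\K=l_qp_E=qz_q$, which forces $q$ to be invertible. So the equivalence as literally stated fails.

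What your argument does prove, and what Theorem~\ref{theorem:koszul-linear-connection} actually uses, is this: $\R\mapsto\K_\R$ and $\K\mapsto\<p_E,\K\>\theta^{-1}$ are inverse bijections between linear vertical connections and vertical Koszul connections. Equivalently, the stated equivalence holds for those $\R$ with $\R q^\V=p_E$. You should state and prove it in that form.

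\textbf{A smaller point.} When recovering \textbf{[VC.1]} from \textbf{[VKC.1]}, the sum in $\xi_q=\<\pi_1z_E,\pi_2l_q\>\T s_q$ is the addition of $\T\q$ over $\T M$. The additivity you need is therefore $\T s_q\K=(\K\times\K)s_q$, which comes from \textbf{[VKC.2]} (linearity of $(p_M,\K)\colon\T\q\to\q$). \textbf{[VKC.3]} only supplies $z_E\K=qz_q$.

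So the axioms do not match pairwise as your list suggests. For example, take $E=M\times V$ and $\K(x,v,\dot x,\dot v)=(x,A(x,v)\dot x+B(x,v)\dot v)$. Then \textbf{[VKC.1]} and \textbf{[VKC.3]} only force $B(x,0)=I$, while \textbf{[VC.1]} needs $B(x,v)=I$ for all $v$, which is exactly what \textbf{[VKC.2]} adds. Since all three axioms are available in that direction, the bijection itself is unaffected.
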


Using this result, we can prove an equivalence between Koszul connections and linear Ehresmann connections.

\begin{theorem}
\label{theorem:koszul-linear-connection}
Consider a differential bundle $\q\colon E\to M$ and two morphisms $\R\colon\T E\to\V q$ and $\H\colon\F q\to\T E$. The following are equivalent:
\begin{enumerate}
\item $(\R,\H)$ is a linear Ehresmann connection on $\q$;

\item $(\K_\R,\H)$ is a Koszul connection on $\q$, where $\K_\R\colon\T E\to E$ is defined as in Proposition~\ref{proposition:vertical-linear-connections}.
\end{enumerate}
\end{theorem}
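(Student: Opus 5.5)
The plan is to reduce the statement to Proposition~\ref{proposition:vertical-linear-connections} plus a direct comparison of the remaining axioms, component by component.

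First, the vertical parts. By Proposition~\ref{proposition:vertical-linear-connections}, $\R$ is a linear vertical connection if and only if $\K_\R=\{\phi_\R\}_q$ is a vertical Koszul connection. Second, the horizontal parts. Comparing Definitions~\ref{definition:horizontal-connection},~\ref{definition:horizontal-linear-connection} and~\ref{definition:horizontal-koszul-connection}, \textbf{[HKC.1]} and \textbf{[HKC.2]} are literally \textbf{[HC.1]} and \textbf{[HC.2]}. Likewise \textbf{[HKC.3]} is \textbf{[HLC]} together with the bundle-morphism condition $\H\T q=\T^\F q$, which Lemma~\ref{lemma:horizontal-connection} shows is automatic. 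So $\H$ is a linear horizontal connection if and only if it is a horizontal Koszul connection, and it remains only to match \textbf{[EC.1]}/\textbf{[EC.2]} with \textbf{[KC.1]}/\textbf{[KC.2]}.

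The key tool is the trivialization of the vertical bundle from Example~\ref{example:vertical-bundle-differential-bundles}: $\V q\cong E_2$ with $\iota_q=\xi_q$ and $q^\V=\pi_1$. Under this identification $\R$ corresponds to the morphism $\tilde\phi_\R\colon\T E\to E_2$ with $\tilde\phi_\R\xi_q=\phi_\R$. Unwinding the definition of $\{-\}$, I expect $\tilde\phi_\R=\<p_E,\K_\R\>$ up to the ordering convention of $E_2$. One component should be $p_E$, since $\R q^\V=p_E$ by \textbf{[VC.2]}; the other is $\{\phi_\R\}=\K_\R$ by definition. Hence $\phi_\R=\R\iota_q=\<\K_\R,p_E\>\xi_q$, with the ordering matched to the one in \textbf{[KC.2]}. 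Substituting this into \textbf{[EC.2]}, $\<\R\iota_q,\pi_q\H\>s_E=\id_{\T E}$, gives exactly \textbf{[KC.2]}, and the converse is the same substitution.

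For \textbf{[EC.1]} versus \textbf{[KC.1]}, I would use the fact that $\iota_q=\xi_q$ is monic (Lemma~\ref{lemma:iota-tangent-monic}). \textbf{[EC.1]} says $\H\R=q^\F z_q^\V$. Under $\V q\cong E_2$, with $z_q^\V$ corresponding to $\<\id_E,qz_q\>$, this becomes $\H\<p_E,\K_\R\>=q^\F\<\id,qz_q\>$. The first components agree because $\H p_E=q^\F$, so the equation reduces to $\H\K_\R=q^\Fqz_q$, which is \textbf{[KC.1]}. Conversely, \textbf{[KC.1]} together with $\H p_E=q^\F$ yields the pairing equality by the universal property of the pullback $E_2$.

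The main obstacle I anticipate is pinning down precisely that $\{\phi_\R\}$ and the projection $\R q^\V=p_E$ assemble into the $E_2$-component of $\R$ under the canonical iso. This requires care about which factor of $E_2$ the lift $l_q$ versus the zero section $z_E$ occupies in $\xi_q=\<l_q,z_E\>\T s_q$, and about the convention in the definition of $\{-\}$. I would first verify, using the pullback of Definition~\ref{definition:differential-bundle}(3), that $\<\K,p_E\>\xi_q=\R\iota_q$ by checking both sides after postcomposing with $\T q$ and using uniqueness. Everything else is then bookkeeping.
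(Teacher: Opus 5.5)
Your proposal is correct and follows the paper's proof. The paper likewise reduces the vertical part to Proposition~\ref{proposition:vertical-linear-connections} and matches the horizontal and orthogonality axioms directly. You supply the details it omits: that \textbf{[HKC.3]} is \textbf{[HLC]} plus the automatic condition of Lemma~\ref{lemma:horizontal-connection}, and that \textbf{[EC.1]}/\textbf{[EC.2]} become \textbf{[KC.1]}/\textbf{[KC.2]} under $\V q\cong E_2$.

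The identity $\phi_\R=\<\K_\R,p_E\>\xi_q$ that you flag as the main obstacle is immediate from the definition of $\{-\}$, so no separate check via $\T q$ is needed. The factorization $\tilde\phi_\R$ of $\phi_\R$ through $\xi_q$ has vector component $\K_\R$ by definition, and its base-point component is $\tilde\phi_\R\xi_q p_E=\phi_\R p_E=p_E$.
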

\begin{proof}
This is a direct consequence of Proposition~\ref{proposition:vertical-linear-connections} since horizontal Koszul connections are exactly horizontal (Ehresmann) connections and the orthogonality conditions \textbf{[KC.1]} and \textbf{[KC.2]}, correspond directly to \textbf{[LC.1]} and \textbf{[LC.2]},
\end{proof}

Now that we have clarified the relationship between Ehresmann connections and Koszul connections, we want to give a new perspective on Koszul connections that make use of the categorical nature of the structures involved.  As already mentioned, differential bundles and linear morphisms form a tangent category denoted by $\DB(\X,\TT)$, whose tangent structure obtained by lifting the one of $(\X,\TT)$.  We would like to show that Koszul connections are precisely Erhesmann connections in this category. To this end, we begin with a few technical results.  

We have mentioned in Example~\ref{example:db-trivial-tangent-bundle}, that every object $M$ of a tangent category admits a trivial differential bundle, denoted by $\0_M$. If $\q\colon E\to M$ is a differential bundle in $(\X,\TT)$, it is easy to see that the morphism $(\id_M,q)\colon\q\to\0_M$ becomes a linear morphism of differential bundles. We shall now prove that such a linear morphism admits sufficient pullbacks in $\DB(\X,\TT)$ provided $q$ is a tangent display map in the base tangent category. Unfortunately, in general, $(\id_M,q)$ fails to become a full tangent display map in $\DB(\X,\TT)$. However, the existing pullbacks suffice for our goals. 

\begin{lemma}
\label{lemma:lifting-differential-bundles}
For a given tangent category $(\X,\TT)$ and a differential bundle $\q\colon E\to M$ in $(\X,\TT)$, the linear morphism $(\id_M,q)\colon\q\to\0_M$ of differential bundles admits all tangent pullbacks along maps of type $(f,f)\colon\0_{M'}\to\0_M$ in $\DB(\X,\TT)$ provided that $q$ these tangent pullbacks exist in the base tangent category.
\end{lemma}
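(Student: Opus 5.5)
To prove the lemma, I will construct the pullback in $\DB(\X,\TT)$ explicitly from the pullback in the base and then check its universal property. Fix a morphism $f\colon M'\to M$ and let
\begin{equation*}
\begin{tikzcd}
{E'} & E \\
{M'} & M
\arrow["g", from=1-1, to=1-2]
\arrow["{q'}"', from=1-1, to=2-1]
\arrow["\lrcorner"{anchor=center, pos=0.125}, draw=none, from=1-1, to=2-2]
\arrow["q", from=1-2, to=2-2]
\arrow["f"', from=2-1, to=2-2]
\end{tikzcd}
\end{equation*}
be the tangent pullback of $q$ along $f$ in $(\X,\TT)$, which exists by assumption. By~\cite[Lemma~2.7]{cockett:differential-bundles}, $q'$ carries a differential bundle structure $\q'\colon E'\to M'$, the pullback of $\q$. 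Its zero, sum and lift are the unique maps induced by $(z_q,z_{M'})$, $(s_q,\dots)$ and $(l_q,z_{M'})$. These are well defined precisely because the iterated pullbacks $E'_n$ and $\T E'$ are again tangent pullbacks. By construction, $(f,g)\colon\q'\to\q$ is a linear morphism and $(\id_{M'},q')\colon\q'\to\0_{M'}$ is linear. I claim that the square with vertices $\q'$, $\q$, $\0_{M'}$, $\0_M$ is a tangent pullback in $\DB(\X,\TT)$.

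\emph{Universal property.} Take a differential bundle $\q''\colon X\to N$ and linear morphisms $(a,b)\colon\q''\to\q$ and $(c,d)\colon\q''\to\0_{M'}$ forming a commuting square. Commutativity of the base components gives $a=cf$, and the total components give $bq=df$. On the other hand, $(c,d)$ being a bundle morphism into $\0_{M'}$ forces $d=q''c$. The base pullback then produces a unique $h\colon X\to E'$ with $hg=b$ and $hq'=d$, and $(c,h)$ is the candidate. It remains to see that $(c,h)$ is linear, i.e.\ that $hl_{q'}=l_{q''}\T h$. I will check this by postcomposing both sides with the jointly monic pair $\T g,\T q'$; this pair is jointly monic because $\T$ preserves the pullback. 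Postcomposing with $\T g$ uses the linearity of $(a,b)$ and of $(f,g)$. Postcomposing with $\T q'$ uses the linearity of $(c,d)$ together with the fact that the lift of $\0_{M'}$ is $z_{M'}$. Uniqueness of $(c,h)$ is inherited from the base.

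\emph{Tangent preservation.} The tangent functor of $\DB(\X,\TT)$ acts as $\T$ on total and base objects, sending the square to the one with $\T\q'$, $\T\q$, $\T\0_{M'}=\0_{\T M'}$ and $\T\0_M$. Its underlying total square is $\T$ of a tangent pullback, hence again a tangent pullback in the base. So the same argument applies to every iterate $\T^m$, provided the lifts $\T l_q c_E$ of $\T\q$ agree with those induced by the pullback. That agreement follows from naturality of $c$.

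\emph{Main obstacle.} The genuinely delicate step is verifying that the induced structure on $\q'$ satisfies the universal property of the vertical lift, i.e.\ that $\q'$ really is a (display) differential bundle. I will not redo this, but cite~\cite[Lemma~2.7]{cockett:differential-bundles}. The remaining checks are equational and reduce to joint monicity of the tangent pullback projections.
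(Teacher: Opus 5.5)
Your proposal is correct and follows essentially the same argument as the paper. You build $\q'$ as the tangent pullback of $\q$ along $f$ via \cite[Lemma~2.7]{cockett:differential-bundles}, use $d=q''c$ to get the comparison map $h$ from the base pullback, verify $hl_{q'}=l_{q''}\T h$ by postcomposing with the jointly monic pair $\T g,\T q'$, inherit uniqueness from the base, and get tangent preservation by rerunning the argument on the $\T^n$-images of the square. The one slip is cosmetic: the zero and lift of $\q'$ are induced by the pairs $(fz_q,\id_{M'})$ and $(gl_q,q'z_{M'})$, not the pairs you wrote, but since you cite the lemma for this structure, nothing in your argument depends on it.
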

\begin{proof}
Consider a map of differential bundles $(f,f)\colon\0_{M'}\to\0_M$, where $f\colon M'\to M$ is a morphism of the base tangent category. Thanks to~\cite[Lemma~2.7]{cockett:differential-bundles}, we already know that differential bundles are stable under tangent pullbacks. Therefore, we may define $\q'\colon E'\to M'$ as the differential bundle in $(\X,\TT)$ obtained by pulling back $\q$ along $f$, as in the following diagram:
\begin{equation*}
\begin{tikzcd}
{E'} & E \\
{M'} & M
\arrow["g", from=1-1, to=1-2]
\arrow["{q'}"', from=1-1, to=2-1]
\arrow["\lrcorner"{anchor=center, pos=0.125}, draw=none, from=1-1, to=2-2]
\arrow["q", from=1-2, to=2-2]
\arrow["f"', from=2-1, to=2-2]
\end{tikzcd}
\end{equation*}
\cite[Lemma~2.7]{cockett:differential-bundles} also tells us that the morphism $(f,g)\colon\q'\to\q$ becomes a linear morphism of differential bundles and thus a map in $\DB(\X,\TT)$. We may use this fact to draw a diagram in the category of differential bundles, as follows:
\begin{equation*}
\begin{tikzcd}
{\q'} & \q \\
{\0_{M'}} & {\0_M}
\arrow["{(f,g)}", from=1-1, to=1-2]
\arrow["{(\id_{M'},q')}"', from=1-1, to=2-1]
\arrow["{(\id_M,q)}", from=1-2, to=2-2]
\arrow["{(f,f)}"', from=2-1, to=2-2]
\end{tikzcd}
\end{equation*}
Our goal is to prove that this diagram is in fact a tangent pullback. To this end, we start by considering two morphisms $(\alpha_\downarrow,\alpha^\uparrow)\colon\q''\to\0_{M'}$ and $(\beta_\downarrow,\beta^\uparrow)\colon\q''\to\q'$ of differential bundles, making the following diagram
\begin{equation*}
\begin{tikzcd}
{\q''} && \\
& {\q'} & \q \\
& {\0_{M'}} & {\0_M}
\arrow["{(\beta_\downarrow,\beta^\uparrow)}", curve={height=-18pt}, from=1-1, to=2-3]
\arrow["{(\alpha_\downarrow,\alpha^\uparrow)}"', curve={height=24pt}, from=1-1, to=3-2]
\arrow["{(f,g)}", from=2-2, to=2-3]
\arrow["{(\id_{M'},q')}"', from=2-2, to=3-2]
\arrow["{(\id_M,q)}", from=2-3, to=3-3]
\arrow["{(f,f)}"', from=3-2, to=3-3]
\end{tikzcd}
\end{equation*}
commutative. Using that $\0_{M'}$ is trivial, we have that $\alpha^\uparrow=q''\alpha_\downarrow$. Using the universal property of the tangent pullback that defines $\q'$, we may also define a morphism $\gamma\colon E''\to E'$ as follows:
\begin{equation*}
\begin{tikzcd}
{E''} && \\
& {E'} & E \\
& {M'} & M
\arrow["{\gamma^\uparrow}", dashed, from=1-1, to=2-2]
\arrow["{\beta^\uparrow}", curve={height=-18pt}, from=1-1, to=2-3]
\arrow["{\alpha^\uparrow}"', curve={height=18pt}, from=1-1, to=3-2]
\arrow["g", from=2-2, to=2-3]
\arrow["{q'}"', from=2-2, to=3-2]
\arrow["\lrcorner"{anchor=center, pos=0.125}, draw=none, from=2-2, to=3-3]
\arrow["q", from=2-3, to=3-3]
\arrow["f"', from=3-2, to=3-3]
\end{tikzcd}
\end{equation*}
However, since $\alpha^\uparrow=q''\alpha_\downarrow$, we compute that $\gamma^\uparrow q'=\alpha^\uparrow=q''\alpha_\downarrow$. Moreover, using that the pullback that defines $q'$ is also a tangent pullback, we can prove that $(\gamma_\downarrow,\gamma^\uparrow)\colon\q''\to\q'$ is linear, where $\gamma^\downarrow\=\alpha_\downarrow$. Using the linearity of $(\alpha_\downarrow,\alpha^\uparrow)$ and $(\beta_\downarrow,\beta^\uparrow)$, it is not hard to convince ourselves that the following two equations
\begin{align*}
&l_{q''}\T\gamma^\uparrow\T g=\beta^\uparrow l_q    &&l_{q''}\T\gamma^\uparrow\T q'=\alpha^\uparrow z_{M'}
\end{align*}
hold. However, also $(f,g)$ and $(\id_{M'},q')$ are linear, and therefore, for a similar argument, also the following two equations must hold:
\begin{align*}
&\gamma^\uparrow l_{q'}\T g=\beta^\uparrow l_q      &&\gamma^\uparrow l_{q'}\T q'=\alpha^\uparrow z_{M'}
\end{align*}
Therefore, we must conclude that $(\gamma_\downarrow,\gamma^\uparrow)$ is a linear morphism. Furthermore, $(\gamma_\downarrow,\gamma^\uparrow)$ makes the following diagram
\begin{equation*}
\begin{tikzcd}
{\q''} && \\
& {\q'} & \q \\
& {\0_{M'}} & {\0_M}
\arrow["{(\gamma_\downarrow,\gamma^\uparrow)}", dashed, from=1-1, to=2-2]
\arrow["{(\beta_\downarrow,\beta^\uparrow)}", curve={height=-18pt}, from=1-1, to=2-3]
\arrow["{(\alpha_\downarrow,\alpha^\uparrow)}"', curve={height=24pt}, from=1-1, to=3-2]
\arrow["{(f,g)}", from=2-2, to=2-3]
\arrow["{(\id_{M'},q')}"', from=2-2, to=3-2]
\arrow["{(\id_M,q)}", from=2-3, to=3-3]
\arrow["{(f,f)}"', from=3-2, to=3-3]
\end{tikzcd}
\end{equation*}
commutative and, by the universal property of the pullback that defines $q'$ it must also be the unique such map. This proves that $(\id_M,q)$ admits all the pullbacks along the maps of type $(f,f)\colon\0_{M'}\to\0_M$. To prove that these pullbacks are in fact tangent pullbacks, we can apply the same argument to $(\id_{\T^nM},\T^nq)$ and to $(\T^nf,\T^nf)\colon\0_{\T^nM'}\to\0_{\T^nM}$.
\end{proof}

Despite $(\id_M,q)\colon\q\to\0_M$ not fullfilling the requirements of being a tangent display map, it still admits enough pullbacks to admit both the constructions of the vertical and the Finsler bundles. In particular, it admits the vertical bundle and the Finsler bundle, which are the differential bundles
\begin{align*}
&(\id_M,q)^\V=(\id_E,\q^\V)\colon\T^\V q\to\0_E       &&(\id_M,q)^\F=(\id_{\T M},\q^\F)\colon\T^\F q\to\0_{\T M}
\end{align*}
respectively. Therefore, it makes sense to talk about Ehresmann connections on $(\id_M,q)$.  We can now give our characterization.  

\begin{theorem}
\label{theorem:linear-connections-as-connections-on-dbs}
Consider a display differential bundle $\q\colon E\to M$ of a tangent category $(\X,\TT)$. The following are equivalent.
\begin{enumerate}
\item $(\R,\H)$ is a linear Ehresmann connection on $\q$;

\item $(\K_\R,\H)$ is a Koszul connection on $\q$, where $\K_\R$ is defined as in Proposition~\ref{proposition:vertical-linear-connections};

\item $((p_M,\R),(\id_{\T M},\H))$ is an Ehresmann connection on $(\id_M,q)$ in $\DB(\X,\TT)$.
\end{enumerate}
Furthermore, if $((\R_\downarrow,\R^\uparrow),(\H_\downarrow,\H^\uparrow))$ is an Ehresmann connection on $(\id_M,q)$ in $\DB(\X,\TT)$, then $\R_\downarrow$ and $\H_\downarrow$ are necessarily $p_M$ and $\id_{\T M}$, respectively.
\end{theorem}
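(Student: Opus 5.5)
The equivalence between [1] and [2] is Theorem~\ref{theorem:koszul-linear-connection}, so the work is to show that [1] and [3] are equivalent and to prove the final rigidity claim. I would begin by making the data on the $\DB(\X,\TT)$ side explicit, using the description of the vertical and Finsler bundles of $(\id_M,q)$ given just before the statement. A vertical connection on $(\id_M,q)$ is a linear morphism $\T\q\to(\id_M,q)^\V$, that is, a pair $(\R_\downarrow,\R^\uparrow)$ with $\R^\uparrow\colon\T E\to\V q$ and $\R_\downarrow\colon\T M\to M$ forming a linear morphism of differential bundles $\T\q\to\T^\V\q$. A horizontal connection is a pair $(\H_\downarrow,\H^\uparrow)$ giving a linear morphism $\T^\F\q\to\T\q$.

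The key structural observation is that the forgetful functor $\DB(\X,\TT)\to\X$ sending a bundle to its total space, together with the functor sending it to its base, carries tangent pullbacks of the type constructed in Lemma~\ref{lemma:lifting-differential-bundles} to tangent pullbacks. It also carries $\iota$, $\pi$, the sum, and the lift of $\T\q$ componentwise to those of $\X$. Consequently, each of the axioms \textbf{[VC.1]}, \textbf{[VC.2]}, \textbf{[HC.1]}, \textbf{[HC.2]}, \textbf{[EC.1]}, \textbf{[EC.2]} for the pair in $\DB(\X,\TT)$ splits into a total-space component and a base component. In the vertical bundle $\0_{\T M}\to\0_M$ on the base, the relevant maps are $z_M$ and $p_M$. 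Therefore \textbf{[VC.1]} on the base reads $z_M\R_\downarrow=\id_M$, and \textbf{[VC.2]} forces $\R_\downarrow$ to be a morphism over $M$ of the trivial bundle, which gives $\R_\downarrow=p_M$. Likewise \textbf{[HC.1]} on the base reads $\H_\downarrow\id_{\T M}=\id_{\T M}$, because the base part of the horizontal descent is the identity of $\T M$; hence $\H_\downarrow=\id_{\T M}$. This settles the final assertion.

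With the base components pinned down, the total-space components of the axioms are exactly \textbf{[VC.1]}, \textbf{[VC.2]}, \textbf{[HC.1]}, \textbf{[HC.2]}, \textbf{[EC.1]}, \textbf{[EC.2]} for $(\R,\H)$ on $q$ in $\X$. The remaining requirement is that the pairs $(p_M,\R)$ and $(\id_{\T M},\H)$ be morphisms in $\DB(\X,\TT)$, meaning linear with respect to the differential bundle structures $\T\q$, $\T^\V\q$ and $\T^\F\q$. This is literally \textbf{[VLC]} and \textbf{[HLC]}, using Lemmas~\ref{lemma:vertical-connections} and~\ref{lemma:horizontal-connection} for the underlying bundle-morphism property. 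Hence [1] and [3] are equivalent.

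The main obstacle is the bookkeeping in the first step. I need to verify that the vertical lift, sum and zero of the $\DB$-tangent bundle $\T^\DB\q$ and of $(\id_M,q)^\V$, $(\id_M,q)^\F$ really are computed componentwise, with the lift of $\T q$ being $\T l_q c_E$. I also need that linearity of a morphism in $\DB(\X,\TT)$ viewed as a bundle morphism over another $\DB$-bundle reduces to the stated equations. I would check this first by writing the structure maps of the pullbacks from Lemma~\ref{lemma:lifting-differential-bundles} explicitly and appealing to the uniqueness in those tangent pullbacks. The rest is direct matching of axioms.
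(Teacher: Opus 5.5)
Your proposal is correct and follows the paper's proof. As there, [1]$\Leftrightarrow$[2] is Theorem~\ref{theorem:koszul-linear-connection}, and [1]$\Leftrightarrow$[3] comes from reading the axioms in $\DB(\X,\TT)$ componentwise through the pullbacks of Lemma~\ref{lemma:lifting-differential-bundles}: \textbf{[VLC]} and \textbf{[HLC]} encode membership in $\DB(\X,\TT)$, and the base components force $\R_\downarrow=p_M$ and $\H_\downarrow=\id_{\T M}$. Your derivation of $\R_\downarrow=p_M$ from the bundle-morphism part of \textbf{[VC.2]} (base component $\R_\downarrow\id_M=p_M$) is in fact the sound one: the paper cancels $z_M$ in $z_M\R_\downarrow=z_Mp_M$ citing that $z_M$ is monic, but in diagrammatic order that would need $z_M$ epic, and \textbf{[VC.1]} alone only makes $\R_\downarrow$ some retraction of $z_M$.
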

\begin{proof}
We have already proved the equivalence between $[1]$ and $[2]$ in Theorem~\ref{theorem:koszul-linear-connection}. It is only left to prove the equivalence between $[2]$ and $[3]$. Consider a linear Ehresmann connection $(\R,\H)$ on $\q$. For starters, notice that, by virtue of the axioms \textbf{[VLC]} and \textbf{[HLC]}, $(p_M,\R)\colon\T\q\to\T^\V\q$ and $(\id_{\T M},\H)\colon\T^\F\q\to\T\q$ become morphisms in $\DB(\X,\TT)$. Furthermore, thanks to Lemma~\ref{lemma:lifting-differential-bundles}, the vertical and the Finsler bundle of $(\id_M,q)$ are constructed directly using the pullbacks defining the vertical and the Finsler bundle of $\q$. Therefore, all the equational axioms of an Ehresmann connection on $(\id_M,q)$ follow directly from the ones satisfied by $(\R,\H)$. Therefore, $((p_M,\R),(\id_{\T M},\H))$ becomes in fact an Ehresmann connection on $(\id_M,q)$. Now, consider an Ehresmann connection $((\R_\downarrow,\R^\uparrow),(\H_\downarrow,\H^\uparrow))$ of $(\id_M,q)$ in $\DB(\X,\TT)$. However, since by \textbf{[VC.1]}, $(\R_\downarrow,\R^\uparrow)\colon p_{\q}^\DB\to(\id_M,q)^\V$ is required to be a retraction of the inclusion $\iota_{(\id_M,q)}=(z_M,\iota_q)$, $z_M\R_\downarrow=\id_M=z_Mp_M$, which implies that $\R_\downarrow=p_M$, since $z_M$ is monic. Furthermore, since by \textbf{[HC.1]} $(\H_\downarrow,\H^\uparrow)\colon(\id_M,q)^\F\to p_\q^\DB$ is required to be a section of the horizontal descent $\pi_{(\id_M,q)}=(\id_{\T M},\pi_q)$, it follows that $\H_\downarrow=\id_{\T M}$. \textbf{[VLC]} and \textbf{[HLC]} follow from $(p_M,\R^\uparrow)$ and $(\id_{\T M},\H^\uparrow)$ being linear morphisms, while all the other equational axioms follow from the other axioms satisfied by $((p_M,\R^\uparrow),(\id_{\T M},\H^\uparrow))$.
\end{proof}


\section{Vertical, Finsler, and horizontal vector fields}
\label{section:vector-fields}
In differential geometry, a \emph{vertical} vector field of a submersion $q\colon E\to M$ consists of a vector field $X\colon E\to\T E$ of $E$ such that, for each $y\in E$, the corresponding tangent vector $X_y\in\T_yE$ is vertical, that is, it belongs to the kernel of $\d_yq\colon\T_yE\to\T_{q(y)}M$. The vertical tangent vectors of $q$ form the vertical bundle of $q$, that we studied in Section~\ref{subsection:vertical-bundle}.
\par When an Ehresmann connection is provided, there is a prescribed way to \emph{split} the tangent bundle $\T E$ of the total space $E$ into vertical tangent vectors and \emph{horizontal} tangent vectors, which are those tangent vectors in the image of the horizontal connection $\H\colon\F q\to\T E$ or, equivalently, in the kernel of the vertical connection $\R\colon\T E\to\V q$.
\par While the vertical tangent vectors capture the degrees of freedom of the total space $E$ that do not change the base point of $M$, the \emph{horizontal} tangent vectors capture those degrees of freedom of $E$ that are \emph{parallel} to $M$, that is, that change the base point $M$ without changing the vertical degree of freedom of $E$. A horizontal vector field of $q$ consists of a vector field $Y\colon E\to\T E$ of $E$ such that, for each $y\in E$, the corresponding tangent vector $Y_y\in\T_yE$ is horizontal.
\par In this section, we introduce vertical and horizontal vector fields in tangent categories. These concepts will be important later when we discuss the curvature and parallel transport of an Ehresmann connection.

\subsection{Vertical vector fields}
\label{subsection:vertical-vector-fields}
A vector field on an object $M$ in a tangent category is a section of the tangent bundle~\cite{rosicky:tangent-cats,cockett:tangent-cats}, that is, a morphism $X\colon M\to\T M$ subject to the equation $Xp_M=\id_M$. As proved in~\cite{cockett:tangent-cats,cockett:jacobi}, in a tangent category with negatives, there is a well-defined notion of Lie bracket that generalize the usual notion of differential geometry. We  briefly recall this construction, since it will be important later.

As previously discussed for differential bundles, given a morphism $f\colon X\to\T^2M$ satisfying the equation $f\T p_M=fp_{\T M}p_Mz_M$, by the universal property of the vertical lift, there exists a unique morphism $\tilde f\colon N\to\T_2M$ satisfying the following equation. $\tilde f\xi_M=f$, where $\xi_M=(l_M\times_{z_M}z_{\T M})\T s_M$. We recommend the reader check~\cite{cockett:tangent-cats} for details. Since $\T_2M$ is the pullback of $p_M$ along itself, it comes with two projections, $\pi_1,\pi_2\colon\T_2M\to\T M$. The morphism $\tilde f\pi_1$ is denoted by $\{f\}\colon N\to\T M$.

Now, consider two maps $f,g\colon N\to\T M$ of $M$. If $fp_M=gp_M$, we can sum $f$ and $g$ together as follows:
\begin{align*}
&f+g\colon N\xrightarrow{\<X,Y\>}\T_2M\xrightarrow{s_M}\T M
\end{align*}
Using this notation, we may denote by $0$ the vector field $z_M$ and also by $-f$, the morphism $fn_M$, where $n_M$ denotes the negation $n_M\colon\T M\to\T M$.

Now, consider two vector fields $X,Y\colon M\to\T M$ of $M$. The Lie bracket of $X$ and $Y$ is the vector field $[X,Y]$, defined by the formula:
\begin{align*}
&[X,Y]\=\left\{X\T Y-Y\T Xc_M\right\}
\end{align*}

We shall now introduce the notion of vertical vector fields, which are those vector fields which pick out vertical tangent vectors of $q$.

\begin{definition}
\label{definition:vertical-vector-field}
A \textbf{vertical vector field} on a map $q\colon E\to M$ is of a vector field $X\colon E\to\T E$ on $E$ in $(\X,\TT)$ such that the following diagram commutes:
\begin{equation*}
\begin{tikzcd}
E & {\T E} \\
M & {\T M}
\arrow["X", from=1-1, to=1-2]
\arrow["q"', from=1-1, to=2-1]
\arrow["{\T q}", from=1-2, to=2-2]
\arrow["{z_M}"', from=2-1, to=2-2]
\end{tikzcd}
\end{equation*}
\end{definition}

Similarly to ordinary vector fields~\cite[Proposition~2.10]{cockett:differential-equations}, vertical vector fields form a tangent category. 

\begin{proposition}
\label{proposition:tangent-category-vertical-vector-fields}
There is a tangent category $\VVF(\X,\TT)$, whose objects are pairs $(q,X)$ formed by a tangent display map $q\colon E\to M$ and a vertical vector field $X$ on $q$. A morphism of \ $\VVF(\X,\TT)$ from $(q,X)$ to $(q',X')$ is a morphism $(f,g)\colon q\to q'$ of tangent display maps, that is, a pair of morphisms $f\colon M\to M'$ and $g\colon E\to E'$ that commute with $q$ and $q'$. Furthermore, a morphism $(f,g)$ is required to make the following diagram commute:
\begin{equation*}
\begin{tikzcd}
{\T E} & {\T E'} \\
E & {E'}
\arrow["{\T g}", from=1-1, to=1-2]
\arrow["X", from=2-1, to=1-1]
\arrow["g"', from=2-1, to=2-2]
\arrow["{X'}"', from=2-2, to=1-2]
\end{tikzcd}
\end{equation*}
The tangent bundle functor $\T^\VVF$ of $\VVF(\X,\TT)$ sends a pair $(q,X)$ to $(\T q,X_\T)$, where we define:
\begin{align*}
&X_\T\colon\T E\xrightarrow{\T X}\T^2E\xrightarrow{c_E}\T^2E
\end{align*}
Finally, the structural natural transformations of $\VVF(\X,\TT)$ are the same as the base category.
\end{proposition}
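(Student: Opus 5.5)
The plan is to mirror the standard proof that vector fields form a tangent category (\cite[Proposition~2.10]{cockett:differential-equations}), running it inside the arrow tangent category $\Dsply(\X,\TT)$. In other words, $\VVF(\X,\TT)$ is treated as a subcategory of the category of pairs $(q,X)$, where $q$ is a tangent display map and $X$ is a vector field on $\Pi(q)=E$. The proof then needs to establish three things:
\begin{itemize}
\item the assignment $\T^\VVF$ is well-defined on objects and morphisms;
\item the structural natural transformations of $\Dsply(\X,\TT)$, namely $(p_M,p_E)$, $(z_M,z_E)$, $(s_M,s_E)$, $(l_M,l_E)$ and $(c_M,c_E)$, are morphisms of $\VVF(\X,\TT)$;
\item the required tangent pullbacks exist in $\VVF(\X,\TT)$ and are preserved by $\T^\VVF$.
\end{itemize}
Once these hold, all axioms of a tangent structure are inherited from $\Dsply(\X,\TT)$, since the forgetful functor to $\Dsply(\X,\TT)$ is faithful.

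\textbf{Objects.} First I show that $X_\T=\T X c_E$ is a vector field on $\T E$ that is vertical for $\T q$.
\begin{itemize}
\item Section property: $\T X c_E p_{\T E}=\T X\T p_E=\T(Xp_E)=\id$.
\item Verticality: $\T X c_E\T^2 q=\T X\T^2 q\, c_M=\T(X\T q)c_M=\T(qz_M)c_M=\T q\,\T z_M c_M=\T q\, z_{\T M}$, using naturality of $c$ and the identity $\T z\, c=z_\T$.
\end{itemize}

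\textbf{Morphisms.} On morphisms, $(\T f,\T g)$ preserves the fields because $\T X c_E\T^2 g=\T(X\T g)c_{E'}=\T(gX')c_{E'}=\T g\,\T X' c_{E'}$. Functoriality is inherited from $\T$.

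\textbf{Structural transformations.} This is exactly the computation in the ordinary vector-field case, applied to the total-space component $E$; the base component imposes no extra condition, since morphisms only need to commute with $q$. For instance:
\begin{itemize}
\item $p_E$ is a morphism $(\T q,X_\T)\to(q,X)$ because $\T X c_E\T p_E=\T X p_{\T E}=p_E X$.
\item $z_E$ is a morphism $(q,X)\to(\T q,X_\T)$ by the analogous identity.
\item $l_E$ and $c_E$ are handled by the identities $l_\T c\,\T c=c\,\T l$ and the Yang--Baxter equation for $c$, exactly as in \cite{cockett:differential-equations}.
\item For the sum, $\T_2 q$ carries the field $\<\T\pi_1,\T\pi_2\>$ induced from $X_\T\times X_\T$; this object must first be shown to lie in $\VVF(\X,\TT)$.
\end{itemize}

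\textbf{Main obstacle: pullbacks.} The step I expect to be hardest is the existence of the tangent pullbacks $\T_n^\VVF$ and $\T^\VVF\T^\VVF$ as pullbacks in $\VVF(\X,\TT)$, together with the universality of the vertical lift.
\begin{itemize}
\item I will build $\T_2^\VVF(q,X)$ as $(\T_2 q,\tilde X)$, where $\tilde X\colon\T_2E\to\T\T_2E$ is the unique map, given by the tangent pullback property of $\T_2E$, with components $\pi_iX_\T$.
\item Verticality of $\tilde X$ for $\T_2 q$ is checked componentwise, using that $\T(\T_2 q)$ is the pullback $\T\T_2M$, which is preserved by $\T$.
\item The universal property in $\VVF(\X,\TT)$ then reduces to the universal property in $\Dsply(\X,\TT)$. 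The induced map automatically commutes with the vector fields, because both composites into $\T\T_2E$ agree after each projection $\T\pi_i$, and the $\T\pi_i$ are jointly monic.
\item The same argument handles preservation by $\T^\VVF$ and the universality of the lift pullback $(\xi,\T p)$. Each case is lifted from $\Dsply(\X,\TT)$ by this joint-monicity trick.
\end{itemize}
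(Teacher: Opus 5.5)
Your proposal is correct and is essentially the paper's argument: the paper simply defers to the proof of \cite[Proposition~2.10]{cockett:differential-equations}, and you carry out that same argument inside $\Dsply(\X,\TT)$, where the only genuinely new checks (verticality of $X_\T$ for $\T q$, and of the induced field on $\T_2 q$) are verified correctly. One small slip: the flip/lift identity you need is $l_{\T E}\,\T c_E\,c_{\T E}=c_E\,\T l_E$, since $(X_\T)_\T=\T^2X\,\T c_E\,c_{\T E}$; your version has $c$ and $\T c$ in the wrong order, and as written that identity is false.
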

\begin{proof}
Similar to the proof of ~\cite[Proposition~2.10]{cockett:differential-equations}.
\end{proof}

\par As already mentioned, for a given tangent category $(\X,\TT)$ and an object $M$ of $(\X,\TT)$, one can define a new tangent category, denoted $(\X,\TT)/M$ and called the slice tangent category on $M$. The objects of $(\X,\TT)/M$ are tangent display maps $q\colon E\to M$ with $M$ per codomain, and morphisms $f\colon q\to q'$ are morphisms of $\X$ that commute with $q$ and $q'$, that is, $fq'=q$. The tangent bundle functor $\T^\V $ sends an object $q\colon E\to M$ to $\T^\V q\colon\V q\to M$, that is, the pullback of $\T q$ along the zero $z_M$ of Equation~\eqref{equation:vertical-bundle-pullback}, and a morphism $f\colon q\to q'$ to the unique morphism $\T^\V f\colon\T^\V q\to\T^\V q'$ satisfying $\T^\V f\T^\V q'=\T^\V q$ and $\T^\V f\iota_{q'}=\iota_q\T f$. The projection $p^\V_q\colon\T^\V q\to q$ is given by $\iota_qp_E$, while the other structural natural transformations are constructed using the universal property of tangent pullbacks. We suggest the reader to see~\cite[Section~3.3]{cruttwell:tangent-display-maps} for details.

\par Our next goal is to show that one may view vertical vector fields as vector fields in the slice tangent category $(\X,\TT)/M$. To prove this, we will first establish several technical results.  We say that a functor $F\colon(\X,\TT)\to(\X',\TT')$ between two tangent categories \textbf{creates tangent limits} when, given a diagram $D\colon I\to(\X,\TT)$, if the diagram $F\o D\colon I\to\X'$ admits a tangent limit cone $\varphi_c\colon M\to FD(c)$ in $\X'$, there exists a cone $\psi_c\colon L\to D(c)$ of $D$ on $\X$ such that $FL=M$, $F\psi_c=\varphi_c$, and $\psi_c$ is a tangent limit cone for $D$.

\begin{lemma}
\label{lemma:U-VF-reflects-limits}
The forgetful functor $\U^\VF\colon\VF(\X,\TT)\to(\X,\TT)$ creates tangent limits.
\end{lemma}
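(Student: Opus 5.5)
We prove that $\U^\VF$ creates tangent limits by lifting the limit cone in $\X$ and equipping its vertex with a vector field built from the universal property of the limit. Recall that $\VF(\X,\TT)$ has as objects pairs $(N,X)$ with $X\colon N\to\T N$ a section of $p_N$, and as morphisms maps $f\colon N\to N'$ with $X\T f=fX'$. Its tangent bundle functor sends $(N,X)$ to $(\T N,X\T c_N)$, following the same pattern as $X_\T$ in Proposition~\ref{proposition:tangent-category-vertical-vector-fields}.

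Let $D\colon I\to\VF(\X,\TT)$ be a diagram with $D(c)=(N_c,X_c)$, and suppose $\U^\VF\o D$ has a tangent limit cone $\varphi_c\colon L\to N_c$ in $\X$. Because the limit is tangent, $\T\varphi_c\colon\T L\to\T N_c$ is a limit cone of $\T\o\U^\VF\o D$. The family $\varphi_cX_c\colon L\to\T N_c$ is a cone over this diagram: for $d\colon c\to c'$ in $I$ we have $\varphi_cX_c\T D(d)=\varphi_cD(d)X_{c'}=\varphi_{c'}X_{c'}$, since each $D(d)$ is a morphism of vector fields. We therefore obtain a unique $X_L\colon L\to\T L$ with $X_L\T\varphi_c=\varphi_cX_c$ for every $c$.

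Next we check that $X_L$ is a vector field. We compute $X_Lp_L\varphi_c=X_L\T\varphi_cp_{N_c}=\varphi_cX_cp_{N_c}=\varphi_c$, using naturality of $p$. Uniqueness into the limit then gives $X_Lp_L=\id_L$. By construction each $\varphi_c$ is a morphism $(L,X_L)\to(N_c,X_c)$, so we have a cone $\psi_c=\varphi_c$ in $\VF(\X,\TT)$ lying over the given one.

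It remains to show that $\psi$ is a limit cone and that it is tangent. For the universal property, take a cone $g_c\colon(A,Y)\to(N_c,X_c)$. It induces a unique $g\colon A\to L$ in $\X$, and we must check $Y\T g=gX_L$. This follows by postcomposing with the jointly monic family $\T\varphi_c$: both sides give $Y\T g_c=g_cX_c$. For tangentness, we note that $\U^\VF$ strictly commutes with the tangent functors, so $\T^m$ of the cone in $\VF(\X,\TT)$ has underlying cone $\T^m\varphi_c$, which is a limit in $\X$. Repeating the argument above with the vector fields $X_c\T c$ shows that $\T^m\psi_c$ is again a limit cone.

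The main obstacle is this last step: verifying that the vector field induced on $\T^mL$ by the lifted limit agrees with $\T^m$ applied to $(L,X_L)$. Equivalently, we need $X_L\T c$ to equal the map induced by the cone $\T\varphi_c X_c\T c$. This follows from naturality of $c$ together with $\T X_L\T^2\varphi_c=\T\varphi_c\T X_c$, using that $\T^2\varphi_c$ is jointly monic.
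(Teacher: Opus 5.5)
Your proof is correct and matches the paper's construction: the vector field on the limit is induced through the limit cone $\T\varphi_c$, $X_Lp_L=\id_L$ follows from naturality of $p$, and mediating maps are vector-field morphisms because $\T\varphi_c$ is jointly monic. The paper instead also proves that $\U^\VF$ is conservative and concludes via Kelly's criterion without checking tangentness, so your explicit verification that $\T^m$ of the lifted cone is again a limit fills a step the paper leaves implicit (one slip: in diagrammatic order the tangent vector field is $\T X\,c_N$, not $X\T c_N$).
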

\begin{proof}
First, we show that $\U^\VF$ is conservative, that is, $\U^\VF$ reflects isomorphisms. To this end, consider a morphism $f\colon(M,X)\to(M',X')$ of vector fields which is an isomorphism in the base tangent category, with inverse $g\colon M'\to M$. We want to show that $g$ is a morphism of vector fields $g\colon(M',X')\to(M,X)$:
\begin{align*}
gX&=~gX\T f\T g                      \Tag{fg=\id_M}\\
&=~gfX'\T g                          \Tag{X\T f=fX'}\\
&=~X'\T g                            \Tag{gf=\id_{M'}}
\end{align*}
The next step is to show that for each diagram $D\colon J\to\VF(\X,\TT)$, if $\U^\VF\o D$ admits a tangent limit cone, there exists a tangent limit cone of $D$ preserved by $\U^\VF$. Consider such a diagram together with a tangent limit cone $\varphi_c\colon L\to\U^\VF(D(c))$ of $\U^\VF\o D$ in $(\X,\TT)$. In particular, $\T\varphi_c\colon\T L\to\T(\U^\VF(D(c)))$ is a limit cone of $\T\o\U^\VF\o D$. Let $D(c)\=(M_c,X_c)$ be the vector field corresponding to $c\in I$. Define $Y\colon L\to\T L$ as the unique morphism making the following diagram
\begin{equation*}
\begin{tikzcd}
{\T L} & {\T(\U^\VF(D(c)))} \\
L & {\U^\VF(D(c))}
\arrow["{\T\varphi_c}", from=1-1, to=1-2]
\arrow["Y", dashed, from=2-1, to=1-1]
\arrow["{\varphi_c}"', from=2-1, to=2-2]
\arrow["{X_c}"', from=2-2, to=1-2]
\end{tikzcd}
\end{equation*}
commutative, where $X_c\colon\U^\VF(D(c))\to\T(\U^\VF(D(c)))$ since $X_c$ is a vector field. However, by the universality of the limit cone, $Yp$ is the unique morphism making the following diagram
\begin{equation*}
\begin{tikzcd}
L & {\U^\VF(D(c))} \\
{\T L} & {\T(\U^\VF(D(c)))} \\
L & {\U^\VF(D(c))}
\arrow["{\varphi_c}", from=1-1, to=1-2]
\arrow["{p_L}", from=2-1, to=1-1]
\arrow["{\T\varphi_c}", from=2-1, to=2-2]
\arrow["{p_{D(c)}}"', from=2-2, to=1-2]
\arrow["Y", from=3-1, to=2-1]
\arrow["{\varphi_c}"', from=3-1, to=3-2]
\arrow["{\id_{\U^\VF(D(c))}}"', shift right=5, curve={height=24pt}, from=3-2, to=1-2]
\arrow["{X_c}"', from=3-2, to=2-2]
\end{tikzcd}
\end{equation*}
commute. Thus, $Yp$ must be the identity, since $X_cp_{D(c)}=\id_{\U^\VF(D(c))}$, namely, $Y$ is a vector field on $L$. Furthermore, each $\varphi_c$ becomes a morphism of vector fields $\varphi_c\colon(L,Y)\to D(c)$. Let us prove that the cone $\varphi_c\colon(L,Y)\to D(c)$ is a limit cone in $\VF(\X,\TT)$ for $D$. To this end, consider another cone $\psi_c\colon(M,X)\to D(c)=(M_c,X_c)$. Since $\U^\VF$ preserves the cone $\varphi_c$, we obtain a unique morphism $\xi\colon M\to L$ in $(\X,\TT)$:
\begin{equation*}
\begin{tikzcd}
L & {\U^\VF(D(c))} \\
M
\arrow["{\varphi_c}", from=1-1, to=1-2]
\arrow["\xi", dashed, from=2-1, to=1-1]
\arrow["{\psi_c}"', from=2-1, to=1-2]
\end{tikzcd}
\end{equation*}
Let us prove that $\xi$ is a morphism of vector fields. Notice that $\xi Y\colon M\to\T L$ is the unique morphism making the following diagram commute:
\begin{equation*}
\begin{tikzcd}
{\T L} & {\T(\U^\VF(D(c)))} \\
L & {\U^\VF(D(c))} \\
M
\arrow["{\T\varphi_c}", from=1-1, to=1-2]
\arrow["Y", from=2-1, to=1-1]
\arrow["{\varphi_c}", from=2-1, to=2-2]
\arrow["{X_c}"', from=2-2, to=1-2]
\arrow["\xi", from=3-1, to=2-1]
\arrow["{\psi_c}"', from=3-1, to=2-2]
\end{tikzcd}
\end{equation*}
However, so does $X\T\xi$, since:
\begin{equation*}
\begin{tikzcd}
{\T L} & {\T(\U^\VF(D(c)))} \\
{\T M} & {\U^\VF(D(c))} \\
M
\arrow["{\T\varphi_c}", from=1-1, to=1-2]
\arrow["{\T\xi}", from=2-1, to=1-1]
\arrow["{\T\psi_c}"{description}, from=2-1, to=1-2]
\arrow["{X_c}"', from=2-2, to=1-2]
\arrow["X", from=3-1, to=2-1]
\arrow["{\psi_c}"', from=3-1, to=2-2]
\end{tikzcd}
\end{equation*}
Therefore, by the universality of $L$ in $(\X,\TT)$, $\xi Y$ must coincide with $X\T\xi$, namely, $\xi$ is a morphism of vector fields. Finally, since $\U^\VF$ is conservative and preserves the limit cone $\varphi_c\colon(L,Y)\to D(c)$, $\U^\VF$ creates tangent limits~\cite{kelly:conservative-functors}.
\end{proof}

The next proposition shows that Cartesian tangent morphisms which create tangent limits \textbf{reflect tangent display maps}, that is, if $q\colon E\to M$ is a map of $(\X,\TT)$ such that $Fq$ is tangent display in $(\X',\TT')$, then $q$ is a tangent display map in $(\X,\TT)$.

\begin{proposition}
\label{proposition:reflects-tangent-display-maps}
Cartesian tangent morphisms that create tangent limits reflect tangent display maps.
\end{proposition}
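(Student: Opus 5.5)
Let $(F,\alpha)\colon(\X,\TT)\to(\X',\TT')$ be a Cartesian tangent morphism whose underlying functor creates tangent limits, and let $q\colon E\to M$ be a morphism of $(\X,\TT)$ such that $Fq$ is a tangent display map of $(\X',\TT')$. I will fix $n\geq0$ and $f\colon N\to\T^nM$, and show that the pullback of $\T^nq$ along $f$ exists in $\X$ and is a tangent pullback. The idea is to transport the problem to $\X'$, solve it there using the display property of $Fq$, and bring it back through creation of tangent limits.

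The first step is to relate $F\T^nq$ to $\T'^nFq$. Iterating the distributive law gives $\alpha^{(n)}_X\colon F\T^nX\to\T'^nFX$, built from $F\T^{k}\alpha$ and $\alpha_{\T^k}$ in the usual way. Because $\alpha$ is Cartesian, each naturality square of $\alpha$ is a tangent pullback. Since $F$ preserves tangent pullbacks, the squares $F\T^k(\alpha\text{-square})$ are tangent pullbacks as well, and pasting them gives that
\begin{equation*}
\begin{tikzcd}
{F\T^nE} & {\T'^nFE} \\
{F\T^nM} & {\T'^nFM}
\arrow["{\alpha^{(n)}_E}", from=1-1, to=1-2]
\arrow["{F\T^nq}"', from=1-1, to=2-1]
\arrow["{\T'^nFq}", from=1-2, to=2-2]
\arrow["{\alpha^{(n)}_M}"', from=2-1, to=2-2]
\end{tikzcd}
\end{equation*}
is a tangent pullback. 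I expect this to go through by induction on $n$, using Lemma~\ref{lemma:property-tangent-pullbacks} together with its converse pasting statement.

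The second step builds the pullback in $\X'$. Apply $F$ to the cospan $N\xrightarrow{f}\T^nM\xleftarrow{\T^nq}\T^nE$. Since $Fq$ is tangent display, the tangent pullback $P'$ of $\T'^nFq$ along $Ff\,\alpha^{(n)}_M$ exists. Pasting it with the tangent pullback square from the first step shows that $P'$ is also a tangent pullback of $F\T^nq$ along $Ff$. This uses the standard fact that a pullback of a pullback is a pullback, and that this stays true for tangent pullbacks because each $\T'^m$ preserves both squares.

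The third step uses creation of tangent limits. For the diagram $D$ given by the cospan in $\X$, the diagram $F\o D$ has a tangent limit cone $P'$, so there is a cone $P$ over $D$ in $\X$ with $FP=P'$, and this cone is a tangent limit cone. It is therefore the required tangent pullback of $\T^nq$ along $f$. Since $n$ and $f$ were arbitrary, $q$ is tangent display.

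The main obstacle is the first step: keeping track of the iterated distributive law, and checking that Cartesianness together with preservation of tangent pullbacks really makes the pasted $n$-fold square a tangent pullback, which requires the $\T'^m$-images of every square involved. A smaller point to check is that ``creates tangent limits'' as defined yields tangentness in $\X$ and not only existence; the definition stipulates exactly this, so the argument should close.
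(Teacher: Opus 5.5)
Your argument is correct and is essentially the paper's proof: the iterated distributive-law square is a tangent pullback by Cartesianness, the required tangent pullback exists in $\X'$, and creation of tangent limits lifts it back to $\X$. The paper first deduces that $F\T^nq$ is itself tangent display, using stability of tangent display maps under tangent pullbacks, whereas you paste directly against the display property of $Fq$ at the composite $Ff\,\alpha^{(n)}_M$; both routes work.

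One small slip in your first step: the building blocks of $\alpha^{(n)}$ are ${\T'}^k\alpha_{\T^jX}$, not $F\T^k\alpha$, which is ill-typed. These squares are tangent pullbacks because ${\T'}^k$ preserves tangent pullbacks by definition, not because $F$ preserves them.
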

\begin{proof}
Consider a strong tangent morphism $(F,\alpha)\colon(\X,\TT)\to(\X',\TT')$ and a map $q\colon E\to M$ of $(\X,\TT)$ such that $Fq$ is tangent display in $(\X',\TT')$. For starters, we realize that, for each $n\geq 0$, since $(F,\alpha)$ is Cartesian, the following diagram
\begin{equation*}
\begin{tikzcd}
{F\T^nE} & {\T'F\T^{n-1}E} & \dots & {{\T'}^nFE} \\
{F\T^nM} & {\T'F\T^{n-1}M} & \dots & {{\T'}^nFM}
\arrow["{\alpha_{\T^{n-1}E}}", from=1-1, to=1-2]
\arrow["{F\T^nq}"', from=1-1, to=2-1]
\arrow["{\T'\alpha_{\T^{n-2}E}}", from=1-2, to=1-3]
\arrow["{\T'F\T^{n-1}q}", from=1-2, to=2-2]
\arrow["{{\T'}^{n-1}\alpha_E}", from=1-3, to=1-4]
\arrow["{{\T'}^nFq}", from=1-4, to=2-4]
\arrow["{\alpha_{\T^{n-1}M}}"', from=2-1, to=2-2]
\arrow["{\T'\alpha_{\T^{n-2}M}}"', from=2-2, to=2-3]
\arrow["{{\T'}^{n-1}\alpha_M}"', from=2-3, to=2-4]
\end{tikzcd}
\end{equation*}
is a tangent pullback. Since tangent display maps are stable under tangent pullbacks, this implies that $F\T^nq$ is also tangent display. Now, consider a morphism $f\colon N\to\T^nM$ of $(\X,\TT)$. Since $F\T^nq$ is tangent display, the tangent pullback of $Ff$ along $F\T^nq$ exists in $(\X',\TT')$. However, since $F$ creates tangent limits, the tangent pullback of $f$ along $\T^nq$ exists in $(\X,\TT)$. Therefore, $q$ is a tangent display map in $(\X,\TT)$.
\end{proof}

Unfortunately, in general, a tangent display map of $\VF(\X,\TT)$ fails to be a tangent display map of the base tangent category $(\X,\TT)$. However, thanks to Proposition~\ref{proposition:reflects-tangent-display-maps}, we now show that, if $q$ is a morphism of vector fields which is a tangent display map in $(\X,\TT)$, then $q$ is also a tangent display map in $\VF(\X,\TT)$.

\begin{corollary}
\label{corollary:U-reflects-tangent-display-maps}
The forgetful functor $\U^\VF\colon\VF(\X,\TT)\to(\X,\TT)$ reflects tangent display maps.
\end{corollary}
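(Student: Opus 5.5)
The plan is to apply Proposition~\ref{proposition:reflects-tangent-display-maps} to the forgetful functor $\U^\VF\colon\VF(\X,\TT)\to(\X,\TT)$. That proposition says a Cartesian tangent morphism which creates tangent limits reflects tangent display maps. Lemma~\ref{lemma:U-VF-reflects-limits} already shows that $\U^\VF$ creates tangent limits, so the only remaining task is to check that $\U^\VF$ is a Cartesian tangent morphism.

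First, $\U^\VF$ should be a strict tangent morphism with identity distributive law $\alpha_{(M,X)}=\id_{\T M}$. The tangent structure on $\VF(\X,\TT)$, as in \cite[Proposition~2.10]{cockett:differential-equations}, sends $(M,X)$ to $(\T M,\T X c_M)$. Its structural transformations are those of $(\X,\TT)$, so $\U^\VF\T^\VF=\T\U^\VF$ on the nose, and the compatibility axioms for a lax tangent morphism hold trivially.

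Cartesianness then has two parts.
\begin{enumerate}
\item The naturality squares of $\alpha$ must be tangent pullbacks. They are squares whose horizontal edges are identities, and any square of that shape is a pullback; it stays one after applying any $\T^m$.
\item $\U^\VF$ must preserve tangent pullbacks. Given a tangent pullback in $\VF(\X,\TT)$, I expect its image to be a pullback in $(\X,\TT)$ preserved by each $\T^m$.
\end{enumerate}

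Part (2) is the step I expect to be the main obstacle. There are two routes.
\begin{enumerate}
\item Build the pullback in $(\X,\TT)$ directly and lift it by Lemma~\ref{lemma:U-VF-reflects-limits}. This works whenever the underlying cospan already has a tangent pullback in $(\X,\TT)$, which is exactly the case needed in the proof of Proposition~\ref{proposition:reflects-tangent-display-maps}.
\item Note that the proof of Proposition~\ref{proposition:reflects-tangent-display-maps} actually uses only two things: that $F$ creates tangent limits, and that $F\T^nq$ is display. For $\U^\VF$ the latter is immediate, since $\U^\VF(\T^{\VF})^n q=\T^n q$ strictly.
\end{enumerate}
I would take the second route. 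Let $q\colon(E,Y)\to(M,X)$ be a morphism of vector fields such that $q$ is tangent display in $(\X,\TT)$, and let $f\colon(N,Z)\to(\T^nM,X_{\T^n})$ be any morphism in $\VF(\X,\TT)$. The tangent pullback of $\T^nq$ along $f$ exists in $(\X,\TT)$, because $q$ is display there. By Lemma~\ref{lemma:U-VF-reflects-limits}, this cone lifts uniquely to a tangent limit cone in $\VF(\X,\TT)$.

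Finally, I would check that the lifted pullback is preserved by $(\T^\VF)^m$. Since $\U^\VF$ commutes strictly with $\T$, the image of $(\T^\VF)^m$ applied to the lifted pullback is the image of the original pullback under $\T^m$, which is a pullback in $(\X,\TT)$. Because $\U^\VF$ creates tangent limits, and in particular is conservative and reflects limits, $(\T^\VF)^m$ of the lifted cone is again a pullback in $\VF(\X,\TT)$. Hence $q$ is tangent display in $\VF(\X,\TT)$.
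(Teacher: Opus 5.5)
Your proposal is correct and follows the paper's route. The paper's proof is just the one-line combination of Lemma~\ref{lemma:U-VF-reflects-limits} with Proposition~\ref{proposition:reflects-tangent-display-maps}, tacitly treating $\U^{\VF}$ as a Cartesian tangent morphism. You instead inline the proposition's argument so that only the strict equality $\U^{\VF}\T^{\VF}=\T\U^{\VF}$ is needed, which sidesteps the unverified preservation of arbitrary tangent pullbacks, and you add the check, implicit in the paper, that the lifted pullback is preserved by every $(\T^{\VF})^m$.
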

\begin{proof}
This is a direct application of Lemma~\ref{lemma:U-VF-reflects-limits} and Proposition~\ref{proposition:reflects-tangent-display-maps}.
\end{proof}

We can now prove that vertical vector fields can  be regarded as vector fields in the slice category (and are also equivalent to two other formulations).  

\begin{theorem}
\label{theorem:vertical-vector-fields}
Let $X\colon E\to\T E$ be a morphism in a tangent category $(\X,\TT)$ and $q\colon E\to M$ a tangent display map. The following are equivalent:
\begin{enumerate}
\item $X$ is a vertical vector field on $q$;

\item $X$ factors through $\iota_q$ by defining a vector field $\hat X\colon q\to\T^\V q$ in the slice tangent category $(\X,\TT)/M$ over $q$;

\item $X$ is a vector field and $q\colon(E,X)\to(M,z_M)$ is a morphism of vector fields;

\item $q\colon(E,X)\to(M,z_M)$ is an object of the slice tangent category $\VF(\X,\TT)/(M,z_M)$ of the tangent category of vector fields $\VF(\X,\TT)$ of $(\X,\TT)$.
\end{enumerate}
\end{theorem}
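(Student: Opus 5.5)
We will prove the cycle of equivalences [1]$\Leftrightarrow$[2], [1]$\Leftrightarrow$[3], and [3]$\Leftrightarrow$[4]. Throughout, we read "vertical vector field" in the sense of Definition~\ref{definition:vertical-vector-field}, so $X$ is a section of $p_E$ and $X\T q=qz_M$.

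\textbf{[1]$\Leftrightarrow$[3].} Assume [1]. Then $X$ is a vector field on $E$. The equation $X\T q=qz_M$ says exactly that $q$ commutes with the vector fields $X$ and $z_M$. Hence $q\colon(E,X)\to(M,z_M)$ is a morphism in $\VF(\X,\TT)$. Conversely, that morphism condition is literally the commutative square of Definition~\ref{definition:vertical-vector-field}. This step is pure unwinding of definitions.

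\textbf{[1]$\Leftrightarrow$[2].} Assume [1]. Since $X\T q=qz_M$, the universal property of the pullback~\eqref{equation:vertical-bundle-pullback} gives a unique $\hat X\colon E\to\V q$ with
\[
\hat X\iota_q=X,\qquad \hat X\T^\V q=q.
\]
The second equation says $\hat X$ is a morphism $q\to\T^\V q$ in the slice $(\X,\TT)/M$. The projection of the slice tangent structure is $p^\V_q=\iota_qp_E$, so
\[
\hat X p^\V_q=\hat X\iota_q p_E=Xp_E=\id_E .
\]
Thus $\hat X$ is a vector field on $q$ in the slice. Conversely, let $\hat X$ be a vector field in the slice and put $X\=\hat X\iota_q$. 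Then $Xp_E=\hat X p^\V_q=\id_E$. Moreover
\[
X\T q=\hat X\iota_q\T q=\hat X\T^\V q z_M=qz_M ,
\]
so $X$ is vertical. These two constructions are mutually inverse because $\iota_q$ is monic (Lemma~\ref{lemma:iota-tangent-monic}).

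\textbf{[3]$\Leftrightarrow$[4].} An object of $\VF(\X,\TT)/(M,z_M)$ is a tangent display map of $\VF(\X,\TT)$ with codomain $(M,z_M)$. So [4]$\Rightarrow$[3] is immediate, since such an object is in particular a morphism of vector fields. For [3]$\Rightarrow$[4], we must show that $q\colon(E,X)\to(M,z_M)$ is tangent display in $\VF(\X,\TT)$. By hypothesis $q$ is tangent display in $(\X,\TT)$. We then apply Corollary~\ref{corollary:U-reflects-tangent-display-maps}.

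The main obstacle is justifying that corollary. Proposition~\ref{proposition:reflects-tangent-display-maps} requires $\U^\VF$ to be a Cartesian tangent morphism creating tangent limits. Creation of tangent limits is Lemma~\ref{lemma:U-VF-reflects-limits}. For the Cartesian part, $\U^\VF$ is strict: its distributive law is the identity, since $\T^\VF(M,X)=(\T M,X\T c_M)$ has underlying object $\T M$. Naturality squares of identities are trivially tangent pullbacks. It also preserves tangent pullbacks, by the creation statement. We would spell out that check, noting that the corollary as stated transfers precisely the display property we need.
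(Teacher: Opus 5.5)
Your proposal is correct and follows essentially the same route as the paper: factorisation through the pullback~\eqref{equation:vertical-bundle-pullback} together with $p^\V_q=\iota_qp_E$ for [1]$\Leftrightarrow$[2], direct unwinding for [1]$\Leftrightarrow$[3], and Corollary~\ref{corollary:U-reflects-tangent-display-maps} for [3]$\Rightarrow$[4]. The paper merely arranges these steps as the cycle [1]$\Rightarrow$[2]$\Rightarrow$[3]$\Rightarrow$[4]$\Rightarrow$[1], and your extra justification of that corollary is unnecessary, since it can simply be cited. Moreover, ``creates tangent limits'' does not by itself give preservation of tangent pullbacks, but this is harmless, because the proof of Proposition~\ref{proposition:reflects-tangent-display-maps} only uses the (here trivial, identity) distributive law and creation of tangent limits.
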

\begin{proof}
For starters, let us show that [1] implies [2]. Consider a vertical vector field $X$ of a tangent display map $q^E_M$. Thus, $q$ is an object of $(\X,\TT)/M$. Moreover, since $X\T q=qz_M$, $X$ factors through $\iota_q$ as follows:
\begin{equation*}
\begin{tikzcd}
E \\
& {\V q} & {\T E} \\
& M & {\T M}
\arrow["{\hat X}", dashed, from=1-1, to=2-2]
\arrow["X", curve={height=-12pt}, from=1-1, to=2-3]
\arrow["q"', curve={height=12pt}, from=1-1, to=3-2]
\arrow["{\iota_q}", from=2-2, to=2-3]
\arrow["{\T^\V q}"', from=2-2, to=3-2]
\arrow["\lrcorner"{anchor=center, pos=0.125}, draw=none, from=2-2, to=3-3]
\arrow["{\T q}", from=2-3, to=3-3]
\arrow["{z_M}"', from=3-2, to=3-3]
\end{tikzcd}
\end{equation*}
Moreover, since $X$ is a vector field:
\begin{align*}
&\hat X p^\V_q=\hat X\iota_qp_E=Xp_E=\id_E
\end{align*}
Thus, $\hat X$ becomes a vector field on $q$ in $(\X,\TT)/M$.
\par Next, consider a vector field $\hat X\colon q\to\T^\V q$ on $q$ in $(\X,\TT)/M$; let $X$ be $\hat X\iota_q$. $X$ is clearly a vector field of $(\X,\TT)$, since $Xp_E=\hat X\iota_qp_E=\hat X p^\V_q=\id_E$.
Moreover, by assumption, $\hat X$ is a morphism in the slice tangent category, thus, $\hat X\T^\V q=q$. We shall compute:
\begin{align*}
&X\T q=\hat X\iota_q\T q=\hat X\T^\V qz_M=qz_M
\end{align*}
Therefore, $q\colon(E,X)\to(M,z_M)$ becomes a morphism of vector fields. To prove that $q$ defines an object in the slice tangent category $\VF(\X,\TT)/(M,z_M)$, we only need to prove that $q$ is in fact a tangent display map in $\VF(\X,\TT)$. However, by Corollary~\ref{corollary:U-reflects-tangent-display-maps}, the forgetful functor $\U^\VF\colon\VF(\X,\TT)\to(\X,\TT)$ reflects tangent display maps. Thus, since $q$ is tangent display in $(\X,\TT)$, it becomes a tangent display map in $\VF(\X,\TT)$.
\par Finally, let us assume [4]; namely that $q\colon(E,X)\to(M,z_M)$ is an object of $\VF(\X,\TT)/(M,z_M)$. In particular, $X$ is a vector field on $E$ in $(\X,\TT)$. Moreover, since $q$ is a morphism of vector fields, $X\T q=qz_M$, thus, $X$ is a vector field on $q$ in $(\X,\TT)$.
\end{proof}

Using the characterization provided by Theorem~\ref{theorem:vertical-vector-fields}, we shall now prove that, given a vertical connection, vertical vector fields are those vector fields $X\colon E\to\T E$ satisfying the equation $X=X\phi$.

\begin{corollary}
\label{corollary:vertical-vector-fields-connection}
Given a vertical connection on $q\colon E\to M$, a vector field $X\colon E\to\T E$ of $E$ is vertical if and only if $X=X\phi$, where $\phi$ is the vertical connection form.
\end{corollary}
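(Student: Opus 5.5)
We prove the two implications separately, using the factorisation through $\iota_q$ supplied by Theorem~\ref{theorem:vertical-vector-fields} together with the properties of the vertical connection form $\phi=\phi_\R=\R\iota_q$ from Theorem~\ref{theorem:vertical-connection-form}. Throughout, $\R\colon\T E\to\V q$ is the given vertical connection.

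\emph{Vertical implies fixed.} Suppose $X$ is a vertical vector field on $q$. By Theorem~\ref{theorem:vertical-vector-fields}, [1]$\Rightarrow$[2], there is a morphism $\hat X\colon E\to\V q$ with $X=\hat X\iota_q$. Then
\begin{align*}
X\phi=\hat X\iota_q\R\iota_q=\hat X\iota_q=X,
\end{align*}
where the middle equality is \textbf{[VC.1]}, $\iota_q\R=\id_{\V q}$. Equivalently, this is \textbf{[VCF.1]}, $\iota_q\phi=\iota_q$, precomposed with $\hat X$.

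\emph{Fixed implies vertical.} Suppose $X$ is a vector field with $X=X\phi$. We must check $X\T q=qz_M$. By \textbf{[VCF.3]}, $\phi\T q=p_Eqz_M$. Hence
\begin{align*}
X\T q=X\phi\T q=Xp_Eqz_M=qz_M,
\end{align*}
using $Xp_E=\id_E$ in the last step. Alternatively, one can note that $X\phi=(X\R)\iota_q$ factors through $\iota_q$, so $X\T q=X\R\T^\V qz_M$, and then use $\R\T^\V q=p_Eq$ from Lemma~\ref{lemma:vertical-connections}.

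Both directions are short equational checks, so there is no real obstacle. The only point requiring care is to cite the equation for $\phi\T q$ in the form actually stated in \textbf{[VCF.3]}. The proof of Theorem~\ref{theorem:vertical-connection-form} derives it through Lemma~\ref{lemma:vertical-connections}, and we use it exactly as stated there.
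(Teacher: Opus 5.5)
Your proof is correct. The vertical-implies-fixed direction is exactly the paper's argument: factor $X=\hat X\iota_q$ and use \textbf{[VCF.1]}. For the converse, the paper takes the route you list as your alternative: it writes $X=(X\R)\iota_q$, notes that $X\R$ is a section of $\q^\V$, and concludes that $X$ is vertical from the factorisation characterisation. Your main argument instead reads $X\T q=qz_M$ off directly from \textbf{[VCF.3]}, which is an equally valid and slightly more direct one-line check.
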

\begin{proof}
Suppose that a vector field $X\colon E\to\T E$ satisfies $X=X\phi$. By definition, $\phi=\R\iota_q$, where $\R$ is the vertical connection. Thus, $X=X\R\iota_q$. However, $X\R q^\V=Xp_E=\id_E$, since $X$ is a vector field and $\R\colon\q^\V\to\p_E$ is a bundle morphism. Thus, $X\R q^\V$ is a section of the vertical bundle and therefore, $X$ is necessarily vertical. Now, suppose that $X$ is a vertical vector field. Thus, $X=\hat X\iota_q$ for some section $\hat X$ of the vertical bundle. Thus, $X\phi=\hat X\iota_q\phi$. However, by \textbf{[VCF.1]}, $\iota_q\phi=\iota_q$. Thus, $X\phi=\hat X\iota_q=X$.
\end{proof}

\subsection{Finsler and horizontal vector fields}
\label{subsection:finsler-vector-fields}
In this section, we introduce Finsler and horizontal vector fields.

\begin{definition}
\label{definition:finsler-vector-field}
A \textbf{Finsler vector field} on a map $q\colon E\to M$ consists of a morphism $A\colon E\to\T M$ subject to the following condition:
\begin{equation*}
\begin{tikzcd}
E & {\T M} \\
& M
\arrow["A", from=1-1, to=1-2]
\arrow["q"', from=1-1, to=2-2]
\arrow["{p_M}", from=1-2, to=2-2]
\end{tikzcd}
\end{equation*}
\end{definition}

\begin{remark}
\label{remark:finsler-vector-fields}
It is important to realize that a Finsler vector field does not need to be a vector field in any meaningful way. This naming convention will be clear later.
\end{remark}

Just like vertical vector fields, Finsler vector fields form a tangent category.

\begin{proposition}
\label{proposition:tangent-category-finsler-vector-fields}
There is a tangent category $\FVF(\X,\TT)$ with the following structure.  An object of $\FVF(\X,\TT)$ consists of a pair $(q,A)$ formed by a tangent display map $q\colon E\to M$ and a Finsler vector field $A$ on $q$; a morphism $(f,g)\colon(q,A)\to(q',A')$ of $\FVF(\X,\TT)$ consists of a pair of morphisms $f\colon M\to M'$ and $g\colon E\to E'$ that commute with $q$ and $q'$, that is, $qf=gq'$, and commute with the Finsler vector fields, that is, $A\T f=A'g$. The tangent bundle functor $\T^\FVF\colon\FVF(\X,\TT)\to\FVF(\X,\TT)$ sends a pair $(q,A)$ to $(\T q,A_\T)$, where we define:
\begin{align*}
&A_\T\colon\T E\xrightarrow{\T A}\T^2M\xrightarrow{c}\T^2M
\end{align*}
Finally, the structural natural transformations of $\FVF(\X,\TT)$ are the same as the base category.  
\end{proposition}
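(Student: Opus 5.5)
We model the argument on the proof of Proposition~\ref{proposition:tangent-category-vertical-vector-fields}, which in turn follows \cite[Proposition~2.10]{cockett:differential-equations}. We check that the data form a category, define the tangent bundle functor on morphisms, and verify that the structural transformations of $(\X,\TT)$ lift.

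The category structure is immediate. Identities $(\id_M,\id_E)$ preserve Finsler vector fields. If $(f,g)$ and $(f',g')$ are morphisms, then $A\T f\T f'=A'g\T f'=A''gg'$, so composites are again morphisms.

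Next we check that $(\T q,A_\T)$ is an object.
\begin{itemize}
\item By Theorem~\ref{theorem:maximal-tangent-display-system}, $\T q$ is tangent display.
\item We need $A_\T p_{\T M}=\T q$. Using $c_Mp_{\T M}=\T p_M$ and \textbf{[FVF]} for $A$, we compute $\T A\,c_M\,p_{\T M}=\T A\,\T p_M=\T(Ap_M)=\T q$.
\end{itemize}
On morphisms we set $\T(f,g)\=(\T f,\T g)$. This respects Finsler fields by naturality of $c$: $\T A\,c_M\,\T^2f=\T A\,\T^2 f\,c_{M'}=\T(A\T f)c_{M'}=\T(A'g)c_{M'}=\T g\,\T A'\,c_{M'}$. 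Functoriality is inherited from $\T$.

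For the structural transformations we take $p=(p_M,p_E)$, $z=(z_M,z_E)$, $s=(s_M,s_E)$, $l=(l_M,l_E)$, $c=(c_M,c_E)$, and $n$ if present. Each must be a morphism in $\FVF(\X,\TT)$, which amounts to an equation between $A$ and the iterated fields $A_{\T^n}$.
\begin{itemize}
\item For $p$: we need $A_\T\T p_M=p_EA$. We have $\T A\,c_M\,\T p_M=\T A\,p_{\T M}=p_EA$ by naturality of $p$.
\item For $z$: we need $A\T z_M=z_E A_\T$. Here $z_E\T A\,c_M=A z_{\T M}c_M=A\,\T z_M$, using $z_{\T}c=\T z$.
\item For $l$ and $c$: the required equations reduce, through naturality, to the standard identities $l_{\T}\T c\,c_{\T}=c\T l$ and the Yang--Baxter and involution identities for $c$. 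This is exactly the computation in \cite[Proposition~2.10]{cockett:differential-equations}, with the vector field $X$ replaced by the map $A$ into $\T M$ rather than $\T E$.
\end{itemize}

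The main obstacle is that $\T_2^{\FVF}(q,A)$ and the other pullbacks required by the tangent structure must exist in $\FVF(\X,\TT)$ and be preserved by $\T$. We construct $\T_2(q,A)$ as $(\T_2q,A_{\T_2})$, where $\T_2q\colon\T_2E\to\T_2M$ is the induced map and $A_{\T_2}\=\<\pi_1A_\T,\pi_2A_\T\>$ composed with the canonical comparison $\T_2\T M\cong\T\T_2M$ built from $c$. We then show the pullback property by mimicking Lemma~\ref{lemma:U-VF-reflects-limits}: the forgetful functor to the arrow category creates these tangent limits, and the induced field is unique by the universal property of $\T\T_2M$. We also need $\T_2q$ to be tangent display; it is a tangent pullback of display maps, so this follows from Theorem~\ref{theorem:maximal-tangent-display-system}. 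The universality of the vertical lift is handled the same way, since all relevant diagrams are created from those in $(\X,\TT)$.
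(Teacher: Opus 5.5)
Your overall route is the paper's: its proof is only a pointer to \cite[Proposition~2.10]{cockett:differential-equations}. Your checks are correct for the object $(\T q,A_\T)$ and for $(\T f,\T g)$, $p$, $z$, $l$, $c$ being morphisms, and $s$ goes the same way using additivity of $c$. One notational point: in diagrammatic order the morphism condition must read $A\T f=gA'$, so your composite should be $A\T f\T f'=gA'\T f'=gg'A''$. Your later step $\T(A'g)=\T g\,\T A'$ shows this is what you meant.

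The one non-routine step, however, is justified incorrectly. You claim $\T_2q\colon\T_2E\to\T_2M$ is tangent display because ``it is a tangent pullback of display maps''. But $\T_2q$ is not the pullback of a display map along some morphism; it is a pullback taken in the arrow category. Theorem~\ref{theorem:maximal-tangent-display-system} only gives closure under pullback along arbitrary maps, under $\T$, and under composition. Display maps are not closed under arrow-category pullbacks in general; submersions of manifolds, for instance, are not. Concretely, use $\T_2M\times_{\T M}\T E\cong\F q\times_E\T E$. Then $\T_2q$ factors as two steps:
\begin{enumerate}
\item the pullback of the horizontal descent $\pi_q\colon\T E\to\F q$ along the projection $\F q\times_E\T E\to\F q$;
\item the pullback of $\T q$ along $\pi_2\colon\T_2M\to\T M$.
\end{enumerate}
The second factor is display, but nothing in the hypotheses makes $\pi_q$ display.

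The paper does not prove this point either. Right after Theorem~\ref{theorem:maximal-tangent-display-system}, it takes for granted that $\Dsply(\X,\TT)$ is a tangent subcategory of the arrow tangent category, which amounts to exactly this. You should cite that standing claim, or add it as a hypothesis, rather than derive it from pullback-stability. With it in hand, your creation-of-limits argument for $\T_2^{\FVF}(q,A)$ and for the universality of the lift goes through as written.
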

\begin{proof}
Similar to the proof of ~\cite[Proposition~2.10]{cockett:differential-equations}.    
\end{proof}

\begin{theorem}
\label{theorem:finsler-vector-fields}
Let $A\colon E\to\T M$ be a morphism in a tangent category $(\X,\TT)$ and let $q\colon E\to M$ be a tangent display map. The following are equivalent:
\begin{enumerate}
\item $A$ is a Finsler vector field on $q$;

\item $A$ factors through $\T^\F q\colon\F q\to\T M$ by defining a section $A_\F\colon E\to\F q$ of the Finsler bundle $\q^\F$.
\end{enumerate}
\end{theorem}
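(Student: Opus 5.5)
The plan is to argue directly from the universal property of the Finsler bundle pullback of Equation~\eqref{equation:finsler-bundle-pullback}, which is a pullback of $q$ along $p_M$ with projections $q^\F\colon\F q\to E$ and $\T^\F q\colon\F q\to\T M$. Statement [2] should be read as follows: there is a morphism $A_\F\colon E\to\F q$ with $A_\F q^\F=\id_E$, so that $A_\F$ is a section of $\q^\F$, and with $A_\F\T^\F q=A$.

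For [1] $\Rightarrow$ [2], assume $Ap_M=q$. The pair of morphisms $\id_E\colon E\to E$ and $A\colon E\to\T M$ satisfies the compatibility condition $\id_E q=q=Ap_M$. The pullback therefore induces a unique $A_\F\colon E\to\F q$ with $A_\F q^\F=\id_E$ and $A_\F\T^\F q=A$. The first equation says $A_\F$ is a section of the projection of $\q^\F$, and the second says $A$ factors through $\T^\F q$ via $A_\F$. Uniqueness of the factorisation comes for free from the pullback.

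For [2] $\Rightarrow$ [1], assume $A=A_\F\T^\F q$ with $A_\F q^\F=\id_E$. By commutativity of the Finsler pullback square, $\T^\F q\,p_M=q^\F q$. Hence
\begin{align*}
Ap_M=A_\F\T^\F q\,p_M=A_\F q^\F q=q,
\end{align*}
so $A$ is a Finsler vector field.

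There is no real obstacle; the only point needing care is the precise reading of ``factors through $\T^\F q$ by defining a section''. I will fix it as above, that is, as requiring both equations. I will also note that the two constructions are mutually inverse: going [1] $\to$ [2] $\to$ [1] returns $A_\F\T^\F q=A$, and going [2] $\to$ [1] $\to$ [2] returns $A_\F$ by uniqueness in the pullback. This yields a bijection between Finsler vector fields on $q$ and sections of $\q^\F$.
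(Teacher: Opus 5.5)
Your proof is correct and is essentially the paper's argument. For $[1]\Rightarrow[2]$ you induce $A_\F$ from the pair $(\id_E,A)$ via the Finsler pullback, and for $[2]\Rightarrow[1]$ you use $\T^\F q\,p_M=q^\F q$ to get $Ap_M=A_\F q^\F q=q$; your remark that the two constructions are mutually inverse is a harmless addition that the paper leaves implicit.
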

\begin{proof}
For starters, suppose that $A$ is a Finsler vector field on $q$. Thus, by using the universal property of the Finsler bundle, we can define a unique morphism $A_\F\colon E\to\F q$ making the following diagram commutative:
\begin{equation*}
\begin{tikzcd}
E \\
& {\F q} & E \\
& {\T M} & M
\arrow["{A_\F}", dashed, from=1-1, to=2-2]
\arrow[curve={height=-18pt}, equals, from=1-1, to=2-3]
\arrow["A"', curve={height=18pt}, from=1-1, to=3-2]
\arrow["{q^\F}", from=2-2, to=2-3]
\arrow["{\T^\F q}"', from=2-2, to=3-2]
\arrow["\lrcorner"{anchor=center, pos=0.125}, draw=none, from=2-2, to=3-3]
\arrow["q", from=2-3, to=3-3]
\arrow["{p_M}"', from=3-2, to=3-3]
\end{tikzcd}
\end{equation*}
By construction, $A_\F$ is a section of the Finsler bundle. Now, assume that $A$ factors through $\T^\F q$ via a section $A_\F\colon E\to\F q$ of the Finsler bundle. Thus, we shall compute:
\begin{align*}
&Ap_M=A_\F\T^\F q p_M=A_\F q^\F q=q
\end{align*}
Therefore, $A$ is a Finsler vector field.
\end{proof}

Thanks to the characterization of Theorem~\ref{theorem:finsler-vector-fields}, in the presence of a horizontal connection $\H$, a Finsler vector field $A$ can be used to define a vector field over $E$ obtained by composing the corresponding section $A_\F$ of the Finsler bundle $\q^\F$ by $\H$, that is, $A_\F\H$. Those vector fields of that form are known as \emph{horizontal} vector fields.

\begin{definition}
\label{definition:horizontal-vector-field}
Given a horizontal connection $\H\colon\F q\to\T E$ on a tangent display map $q\colon E\to M$, a \textbf{$\H$-horizontal vector field} on $q$ consists of a vector field $X\colon E\to\T E$ of $E$ such that, there exists a (necessarily unique) Finsler vector field $A\colon E\to\T M$ satisfying $X=A_\F\H$, where $A_\F$ is the associated section of the Finsler bundle of $q$.
\end{definition}

In Corollary~\ref{corollary:vertical-vector-fields-connection}, we showed that a vector field $X\colon E\to\T E$ is vertical if and only if $X$ satisfies the equation $X=X\phi$, provided the existence of a vertical connection. Similarly, horizontal vector fields are those vector fields that satisfy $X=X\psi$, where $\psi$ denotes the horizontal connection form.

\begin{corollary}
\label{corollary:horizontal-vector-fields-connection}
Given a horizontal connection on $q\colon E\to M$, a vector field $X\colon E\to\T E$ of $E$ is horizontal if and only if $X=X\psi$, where $\psi$ is the horizontal connection form.
\end{corollary}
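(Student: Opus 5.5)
We prove the two implications separately, mirroring the argument of Corollary~\ref{corollary:vertical-vector-fields-connection}. Recall that the horizontal connection form is $\psi=\pi_q\H$ and that $\H$ is a section of the horizontal descent by \textbf{[HC.1]}, i.e.\ $\H\pi_q=\id_{\F q}$. With this, $\psi\psi=\pi_q\H\pi_q\H=\psi$, and $\H\psi=\H\pi_q\H=\H$.

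\emph{Horizontal implies fixed by $\psi$.} Let $X$ be $\H$-horizontal, so $X=A_\F\H$ for a Finsler vector field $A$ with associated section $A_\F\colon E\to\F q$ from Theorem~\ref{theorem:finsler-vector-fields}. Then
\begin{align*}
X\psi=A_\F\H\pi_q\H=A_\F\H=X
\end{align*}
using \textbf{[HC.1]}.

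\emph{Fixed by $\psi$ implies horizontal.} Conversely, suppose $X=X\psi$ for a vector field $X$. Set $A_\F\=X\pi_q\colon E\to\F q$ and $A\=A_\F\T^\F q=X\pi_q\T^\F q=X\T q$.
\begin{itemize}
\item $A_\F$ is a section of $\q^\F$: since $\pi_qq^\F=p_E$ and $X$ is a vector field, $A_\F q^\F=Xp_E=\id_E$.
\item $A$ is a Finsler vector field: $Ap_M=X\T qp_M=Xp_Eq=q$, by naturality of $p$.
\item The section attached to $A$ by Theorem~\ref{theorem:finsler-vector-fields} is $A_\F$. That section is characterised by the Finsler pullback, i.e.\ by its composites with $q^\F$ and $\T^\F q$, and $A_\F$ has the right composites ($\id_E$ and $A$).
\end{itemize}
Finally, $X=X\psi=X\pi_q\H=A_\F\H$, so $X$ is $\H$-horizontal.

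For uniqueness of $A$ in Definition~\ref{definition:horizontal-vector-field}, suppose $X=A_\F\H$. Composing with $\pi_q$ gives $A_\F=X\pi_q$, and hence $A=X\T q$ is forced. No step is a real obstacle; the only point needing care is matching the section $A_\F$ with the one produced by Theorem~\ref{theorem:finsler-vector-fields}, and this follows from the pullback's universal property.
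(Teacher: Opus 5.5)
Your proof is correct and follows the same route as the paper. In one direction $X\psi=A_{\F}\H\pi_q\H=A_{\F}\H=X$; in the other, $X\pi_q$ is a section of the Finsler bundle with $X=X\pi_q\H$. You are slightly more careful than the paper on two points: you make explicit that $X\pi_q$ is the section attached to the Finsler vector field $X\T q$, and you cite \textbf{[HC.1]} for $\H\psi=\H$, which is the axiom actually used there, whereas the paper cites \textbf{[HCF.1]}.
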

\begin{proof}
Suppose that a vector field $X\colon E\to\T E$ satisfies $X=X\psi$. By definition, $\psi=\pi_q\H$, where $\H$ is the horizontal connection. Thus, $X=X\pi_q\H$. However, $X\pi_q q^\F=Xp_E=\id_E$, where we used that $X$ is a vector field and that $\pi_qq^\F=p_E$. Thus, $X\pi_q$ is a section of the Finsler bundle and therefore, $X$ is necessarily horizontal. Now, suppose that $X$ is a horizontal vector field. Thus, $X=A_\F\H$ for some section $A_\F$ of the Finsler bundle. Thus, $X\psi=A_\F\H$. However, by \textbf{[HCF.1]}, $\H\psi=\H$. Thus, $X\psi=A_\F\H\psi=A_\F\H=X$.
\end{proof}

\subsection{The decomposition induced by the connection}
\label{subsection:decomposition}
Thanks to Theorem~\ref{theorem:splitting-of-ses}, an Ehresmann connection on a submersion is equivalent to a splitting of the fundamental short exact sequence. A splitting allows one to decompose the tangent bundle $\p_E$ as a biproduct $\p_E\cong\q^\V\oplus_E\q^\F$, that is, $\T E\cong\V q\oplus_E\F q$, where $\oplus_E$ is a biproduct in the category of differential bundles over $E$ (Corollary~\ref{corollary:decomposition-connection}). This decomposition induces a decomposition of vector fields into a vertical and a horizontal component.

\begin{theorem}
\label{theorem:unique-decomposition-of-vectors}
Given an Ehresmann connection $(\R,\H)$ on a tangent display map, every vector field $X\colon E\to\T E$ can be uniquely decomposed into a vertical and a horizontal component:
\begin{align*}
&X=X_\V+X_\H
\end{align*}
where $X_\V\=X\phi$ is a vertical vector field and $X_\H\=X\psi$ is a horizontal vector field.
\end{theorem}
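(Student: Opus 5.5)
We first check that $X_\V\=X\phi$ and $X_\H\=X\psi$ are vector fields of the required kind, where $\phi=\R\iota_q$ and $\psi=\pi_q\H$. By \textbf{[VCF.2]} and \textbf{[HCF.2]}, both $\phi$ and $\psi$ are linear endomorphisms of $\p_E$, so $\phi p_E=p_E=\psi p_E$. Hence $X\phi p_E=Xp_E=\id_E$, and likewise for $\psi$; so both are vector fields.

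Both are idempotent, because $\iota_q\R=\id_{\V q}$ and $\H\pi_q=\id_{\F q}$. Therefore $X_\V\phi=X\phi\phi=X\phi=X_\V$, and Corollary~\ref{corollary:vertical-vector-fields-connection} shows $X_\V$ is vertical. Similarly $X_\H\psi=X_\H$, and Corollary~\ref{corollary:horizontal-vector-fields-connection} shows $X_\H$ is horizontal. Concretely, the horizontal part is $X_\H=(X\pi_q)\H$, where $X\pi_q$ is the section of $\q^\F$ induced by the Finsler vector field $X\T q$.

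Next comes the decomposition itself. Since $X_\V p_E=X_\H p_E$, the sum $X_\V+X_\H=\<X\phi,X\psi\>s_E$ is defined. It equals $X\<\R\iota_q,\pi_q\H\>s_E=X$ by \textbf{[EC.2]}, that is, \textbf{[ECF.2]}.

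For uniqueness, suppose $X=V+W$ with $V$ vertical and $W$ horizontal. We apply $\phi$ to this equation. Since $\phi$ is linear, it commutes with the fibrewise sum, so $\phi$ preserves $s_E$: $\<V,W\>s_E\phi=\<V\phi,W\phi\>s_E$. Here $V\phi=V$ by Corollary~\ref{corollary:vertical-vector-fields-connection}. For $W$ we have $W=W\psi$, so $W\phi=W\psi\phi$. By \textbf{[ECF.1]}, $\psi\phi=p_Ez_E$, hence $W\phi=Wp_Ez_E=z_E$. Unitality of the sum then gives $X\phi=V$.

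The symmetric argument with $\psi$ needs $\phi\psi=p_Ez_E$. This holds because $\iota_q\psi=q^\V z_E$ by \textbf{[HCF.3]}, so $\R\iota_q\psi=\R q^\V z_E=p_Ez_E$. We conclude $X\psi=W$, so $(V,W)=(X_\V,X_\H)$.

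The main obstacle is the bookkeeping that lets linear morphisms pass through the sum of vector fields: showing that $\<V,W\>s_E\phi=\<V\phi,W\phi\>s_E$ is well typed and correct. This follows from \textbf{[VCF.2]}, since linear morphisms of differential bundles preserve the additive structure.
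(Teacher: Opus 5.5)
Your proof is correct and follows the paper's argument. Existence comes from \textbf{[ECF.2]}, and uniqueness comes from applying $\phi$ and $\psi$ to $X=V+W$, using linearity together with the orthogonality identities $\psi\phi=p_Ez_E$ and $\phi\psi=p_Ez_E$. You are also more careful than the paper in two places: it attributes both identities to \textbf{[ECF.1]}, which gives only the first, whereas you derive $\phi\psi=p_Ez_E$ from \textbf{[HCF.3]} and \textbf{[VC.2]}; and it never checks that $X\phi$ and $X\psi$ are vertical and horizontal, which you verify via idempotency.
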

\begin{proof}
For starters, let us show that $X_\V+X_\H$ correspond to $X$. Using \textbf{[ECF.2]}, we shall compute:
\begin{align*}
&X_\V+X_\H=X\phi+X\psi=X(\phi+\psi)=X\id_{\T E}=X
\end{align*}
Now, suppose that $X'_\V$ and $X'_\H$ are vertical and horizontal vector fields such that $X=X'_\V+X'_\H$. By Corollaries~\ref{corollary:vertical-vector-fields-connection} and~\ref{corollary:horizontal-vector-fields-connection}, $X'_\V=X'_\V\phi$ and $X'_\H=X'_\H\psi$. Therefore, by using \textbf{[ECF.1]}, we can prove:
\begin{align*}
&X_\V=X\phi=(X_\V'+X_\H')\phi=X_\V'\phi\phi+X_\H'\psi\phi=X_\V'\phi+X_\H'p_Ez_E=X_\V'+0=X_\V'\\
&X_\H=X\psi=(X_\V'+X_\H')\psi=X_\V'\phi\psi+X_\H'\psi\psi=X_\V'p_Ez_E+X_\H'\psi=0+X_\H'=X_\H'
\end{align*}
This proves that the decomposition is necessarily unique.
\end{proof}

\subsection{The Finsler tangent category}
\label{subsection:strong-finsler-vector-fields}
Theorem~\ref{theorem:vertical-vector-fields} characterizes vertical vector fields as vector fields in the slice tangent category. In this section, we take a brief detour from the paper's main story to explore the following question. Can Finsler vector fields be characterized as vector fields in a suitable tangent category? This exploration, which we believe is worth including, will not, however, be necessary to the rest of the paper. Therefore, the reader may feel free to skip to the next section.

We shall start by observing that the tangent structure of the slice tangent category is induced, in some appropriate sense, by the vertical bundle. In fact, in the slice, the tangent bundle of an object $q$, $q$ being a tangent display map $q\colon E\to M$, is the object $\T^\V q$, which corresponds to the morphism $\T^\V q\colon\V q\to M$, obtained by pulling back $\T q$ along $z_M$, as in Diagram~\eqref{equation:vertical-bundle-pullback}. Moreover, the underlying map of the projection $p^\V\colon\T^\V q\to q$ corresponds to the projection $\q^\V=\iota_qp_E\colon\V q\to E$ of the vertical bundle of $q$. Inspired by this observation, we shall construct a new tangent category, denoted by $\F(\X,\TT)$ as follows.

The underlying category is the category $\Dsply(\X,\TT)$ tangent display maps.

The tangent bundle functor $\T^\F\colon\Dsply(\X,\TT)\to\Dsply(\X,\TT)$ sends each $q\colon E\to M$ to $\T^\F q\colon\F E\to\T M$, defined by the tangent pullback:
\begin{equation*}
\begin{tikzcd}
{\F q} & E \\
{\T M} & M
\arrow["{q^\F}", from=1-1, to=1-2]
\arrow["{\T^\F q}"', from=1-1, to=2-1]
\arrow["\lrcorner"{anchor=center, pos=0.125}, draw=none, from=1-1, to=2-2]
\arrow["q", from=1-2, to=2-2]
\arrow["{p_M}"', from=2-1, to=2-2]
\end{tikzcd}
\end{equation*}
Furthermore, $\T^\F$ sends a morphism $(f,g)\colon q\to q'$ to the morphism $(\T f,\F_fg)\colon\T^\F q\to\T^\F q'$ defined by Lemma~\ref{lemma:functoriality-F}. Next, we shall describe the structural natural transformations of the tangent structure $\TT^\F$, starting from the projection. The projection $p^\F_q\colon\T^\F q\to q$ corresponds to the pair $p^\F\=(p_M,q^\F)$. Now, recall that, since $q^\F$ is a tangent pullback of a differential bundle, $\p_M\colon\T M\to M$ being the tangent bundle, also $q^\F$ acquires the structure of a differential bundle, which is, by definition, the Finsler bundle $\q^\F$ of $q$. Using the zero $z_q^\F$ and the sum $s_q^\F$ morphisms of $\q^\F$, we may introduce the zero and sum morphism of $(\Dsply(\X,\TT),\TT^\F)$ as the following commutative diagrams
\begin{equation*}
\begin{tikzcd}
E & {\F q} \\
M & {\T M}
\arrow["{z_q^\F}", from=1-1, to=1-2]
\arrow["q"', from=1-1, to=2-1]
\arrow["{\T^\F q}", from=1-2, to=2-2]
\arrow["{z_M}"', from=2-1, to=2-2]
\end{tikzcd}\quad
\begin{tikzcd}
{\F_2q} & {\F q} \\
{\T_2M} & {\T M}
\arrow["{s_q^\F}", from=1-1, to=1-2]
\arrow["{\T^\F q\times_M\T^\F q}"', from=1-1, to=2-1]
\arrow["{\T^\F q}", from=1-2, to=2-2]
\arrow["{s_M}"', from=2-1, to=2-2]
\end{tikzcd}
\end{equation*}
respectively. Finally, we introduce the vertical lift and the canonical flip, by invoking once again the universal property of the tangent pullback that defines $\T^\F q$:
\begin{equation*}
\adjustbox{width=\linewidth}{
\begin{tikzcd}
{\F q} &&& \\
& {\F\T^\F q} & {\F q} & E \\
& {\T^2M} & {\T M} & M \\
{\T M}
\arrow["{l^\F_q}", dashed, from=1-1, to=2-2]
\arrow["{q^\F}", curve={height=-18pt}, from=1-1, to=2-4]
\arrow["{\T^\F q}"', from=1-1, to=4-1]
\arrow["{(\T^\F q)^\F}", from=2-2, to=2-3]
\arrow["{{\T^\F}^2q}"', from=2-2, to=3-2]
\arrow["\lrcorner"{anchor=center, pos=0.125}, draw=none, from=2-2, to=3-3]
\arrow["{q^\F}", from=2-3, to=2-4]
\arrow["{\T^\F q}"{description}, from=2-3, to=3-3]
\arrow["\lrcorner"{anchor=center, pos=0.125}, draw=none, from=2-3, to=3-4]
\arrow["q", from=2-4, to=3-4]
\arrow["{p_{\T M}}"', from=3-2, to=3-3]
\arrow["{p_M}"', from=3-3, to=3-4]
\arrow["{l_M}"', from=4-1, to=3-2]
\arrow["{p_M}"', curve={height=18pt}, from=4-1, to=3-4]
\end{tikzcd}\quad
\begin{tikzcd}
{\F\T^\F q} &&& {\F\T^\F q} \\
& {\F\T^\F q} & {\F q} & E \\
& {\T^2M} & {\T M} & M \\
{\T^2M} &&& {\T M}
\arrow["{(\T^\F q)^\F}", from=1-1, to=1-4]
\arrow["{c^\F_q}", dashed, from=1-1, to=2-2]
\arrow["{{\T^\F}^2q}"', from=1-1, to=4-1]
\arrow["{q^\F}", from=1-4, to=2-4]
\arrow["{(\T^\F q)^\F}", from=2-2, to=2-3]
\arrow["{{\T^\F}^2q}"', from=2-2, to=3-2]
\arrow["\lrcorner"{anchor=center, pos=0.125}, draw=none, from=2-2, to=3-3]
\arrow["{q^\F}", from=2-3, to=2-4]
\arrow["{\T^\F q}"{description}, from=2-3, to=3-3]
\arrow["\lrcorner"{anchor=center, pos=0.125}, draw=none, from=2-3, to=3-4]
\arrow["q", from=2-4, to=3-4]
\arrow["{p_{\T M}}"', from=3-2, to=3-3]
\arrow["{p_M}"', from=3-3, to=3-4]
\arrow["{c_M}"', from=4-1, to=3-2]
\arrow["{p_{\T M}}"', from=4-1, to=4-4]
\arrow["{p_M}"', from=4-4, to=3-4]
\end{tikzcd}
}
\end{equation*}

\begin{lemma}
\label{lemma:finsler-tangent-category}
The following diagrams
\begin{equation*}
\begin{tikzcd}
{\F q} & E \\
{\T M} & M
\arrow["{q^\F}", from=1-1, to=1-2]
\arrow["{\T^\F q}"', from=1-1, to=2-1]
\arrow["q", from=1-2, to=2-2]
\arrow["{p_M}"', from=2-1, to=2-2]
\end{tikzcd}\quad
\begin{tikzcd}
E & {\F q} \\
M & {\T M}
\arrow["{z_q^\F}", from=1-1, to=1-2]
\arrow["q"', from=1-1, to=2-1]
\arrow["{\T^\F q}", from=1-2, to=2-2]
\arrow["{z_M}"', from=2-1, to=2-2]
\end{tikzcd}\quad
\begin{tikzcd}
{\F_2q} & {\F q} \\
{\T_2M} & {\T M}
\arrow["{s_q^\F}", from=1-1, to=1-2]
\arrow["{\T^\F q\times_M\T^\F q}"', from=1-1, to=2-1]
\arrow["{\T^\F q}", from=1-2, to=2-2]
\arrow["{s_M}"', from=2-1, to=2-2]
\end{tikzcd}
\end{equation*}
\begin{equation*}
\begin{tikzcd}
{\F q} & {\F\T^\F q} \\
{\T M} & {\T^2M}
\arrow["{l^\F_q}", from=1-1, to=1-2]
\arrow["{\T^\F q}"', from=1-1, to=2-1]
\arrow["{{\T^\F}^2q}", from=1-2, to=2-2]
\arrow["{l_M}"', from=2-1, to=2-2]
\end{tikzcd}\quad
\begin{tikzcd}
{\F\T^\F q} & {\F\T^\F q} \\
{\T^2M} & {\T^2M}
\arrow["{c^\F_q}", from=1-1, to=1-2]
\arrow["{{\T^\F}^2q}"', from=1-1, to=2-1]
\arrow["{{\T^\F}^2q}", from=1-2, to=2-2]
\arrow["{c_M}"', from=2-1, to=2-2]
\end{tikzcd}
\end{equation*}
are tangent pullbacks in the base tangent category.
\end{lemma}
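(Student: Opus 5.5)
We prove each square is a tangent pullback by recognising it as the left half of a pasting whose right half and outer rectangle are tangent pullbacks, and then invoking Lemma~\ref{lemma:property-tangent-pullbacks}. The key observation is that every square in the statement sits over the defining tangent pullback~\eqref{equation:finsler-bundle-pullback} of $\F q$. Its vertical arrows are $\T^\F q$, or $\T^\F q\times_M\T^\F q$, or ${\T^\F}^2q$. Its horizontal arrows are structural maps of $\TT$ together with their lifts, which are defined through the universal property of that same pullback.

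The first square is the defining pullback itself, so there is nothing to prove.

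For the zero square, paste it to the right with the Finsler pullback. The top composite is $z_q^\F q^\F=\id_E$. The bottom composite is $z_Mp_M=\id_M$. The outer rectangle is therefore the square with sides $\id_E$, $q$, $q$, $\id_M$, which is trivially a tangent pullback (every $\T^m$ preserves it). Lemma~\ref{lemma:property-tangent-pullbacks} then gives the claim.

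For the sum square, paste it with the Finsler pullback along $s_M$. The top composite is $s_q^\F q^\F=\pi_1^\F q^\F$. Hence the outer rectangle factors as the square with vertical sides $\F_2q\to\T_2M$ and $\T^\F q$, top $\pi_1^\F$ and bottom $\pi_1$, stacked on the Finsler pullback. That first square is a tangent pullback because $\F_2q\cong\T_2M\times_M E$: it is the iterated tangent pullback of $q$ along $p_M$ twice, and these pullbacks are preserved by $\T^m$ since $q$ is tangent display. Composing, the outer rectangle is a tangent pullback, and the lemma finishes.

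For $l^\F_q$ and $c^\F_q$ we use the same strategy with the two-step pasting already drawn in their definitions. Paste each square with $(\T^\F q)^\F$ over $p_{\T M}$ and then with $q^\F$ over $p_M$; both of these are tangent pullbacks by construction. For $l^\F_q$, the top composite is $l^\F_q(\T^\F q)^\F q^\F=q^\F$. The bottom composite is $l_Mp_{\T M}p_M=p_M z_M p_M$, and since $z_M p_M$ is the composite $p_M z_M$ in diagrammatic order that is idempotent-free, this equals $p_M$ because $l_Mp_{\T M}=p_Mz_M$. So the outer rectangle is the Finsler pullback itself, which is a tangent pullback, and Lemma~\ref{lemma:property-tangent-pullbacks} applied twice yields the left square. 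For $c^\F_q$, the outer rectangle has top $(\T^\F q)^\F q^\F$ and bottom $c_Mp_{\T M}p_M=p_{\T M}p_M$, using $c_M p_{\T M}=\T p_M$ and $\T p_M p_M=p_{\T M}p_M$ by naturality. This is exactly the two-step Finsler pasting precomposed with nothing, hence a tangent pullback.

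The main obstacle is bookkeeping rather than ideas. The risk is in checking carefully that the bottom composites reduce via the tangent axioms ($l_Mp_{\T M}=p_Mz_M$, $c_Mp_{\T M}=\T p_M$, and naturality of $p$) to the maps over which $\F\T^\F q$ was formed. For the sum case there is the additional point of justifying that $\F_2q$ is a tangent pullback over $\T_2M$, which follows from the tangent-display property of $q$ together with Lemma~\ref{lemma:property-tangent-pullbacks}.
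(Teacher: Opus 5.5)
Your proof is correct and follows the paper's overall strategy: paste each square onto the defining Finsler pullback and apply Lemma~\ref{lemma:property-tangent-pullbacks}. The first two squares are handled exactly as in the paper; the differences are in how the outer rectangles are shown to be tangent pullbacks.

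\textbf{Sum square.} The paper checks the universal property of the outer rectangle directly, via a general argument about the induced square on fibred products over a tangent pullback square. You instead factor that rectangle as the square $\pi_1^\F$ over $\pi_1$ followed by the Finsler square. You then observe that the former is a tangent pullback because $\F_2q\cong\T_2M\times_ME$. This is the same content, packaged as one more pasting.

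\textbf{Squares for $l^\F_q$ and $c^\F_q$.} The paper pastes only with the square $(\T^\F q)^\F$ over $p_{\T M}$. It must therefore first derive $l^\F_q(\T^\F q)^\F=q^\F z^\F_q$ and $c^\F_q(\T^\F q)^\F=\T^\F(q^\F)$. It then recognises the outer rectangle either as the Finsler square stacked on the zero square (which needs the zero case first), or as a square over $\T p_M$ whose pullback property it justifies only briefly. Your three-square pasting uses nothing but the defining equations of $l^\F_q$ and $c^\F_q$. The outer rectangle is then literally the Finsler pullback, respectively the two-step Finsler pasting itself, which is cleaner and fully justified.

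One wording slip: the remark that $z_Mp_M$ is ``idempotent-free'' is meaningless. The computation is just $l_Mp_{\T M}p_M=p_Mz_Mp_M=p_M$, since $z_Mp_M=\id_M$.
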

\begin{proof}
The first diagram is simply the tangent pullback diagram of Equation~\eqref{equation:curvature}. Now, consider the following diagram:
\begin{equation*}
\begin{tikzcd}
E & {\F q} & E \\
M & {\T M} & M
\arrow["{z_q^\F}", from=1-1, to=1-2]
\arrow["q"', from=1-1, to=2-1]
\arrow["{q^\F}", from=1-2, to=1-3]
\arrow["{\T^\F q}", from=1-2, to=2-2]
\arrow["q", from=1-3, to=2-3]
\arrow["{z_M}"', from=2-1, to=2-2]
\arrow["{p_M}"', from=2-2, to=2-3]
\end{tikzcd}
\end{equation*}
The right square diagram is the first diagram, which we have already shown to be a tangent pullback. Moreover, the outer diagram is straightforwardly a tangent pullback since $z_Mp_M=\id_M$ and $z_q^\F q^\F=\id_E$. Therefore, by Lemma~\ref{lemma:property-tangent-pullbacks}, the left square diagram is also a tangent pullback. To prove that the third diagram is a tangent pullback, we shall adopt a similar technique. Let us start by considering the following diagram:
\begin{equation}
\label{equation:sum-diagram-is-pullback}
\begin{tikzcd}
{\F_2q} & {\F q} & E \\
{\T_2M} & {\T M} & M
\arrow["{s_q^\F}", from=1-1, to=1-2]
\arrow["{\T^\F q\times_M\T^\F q}"', from=1-1, to=2-1]
\arrow["{q^\F}", from=1-2, to=1-3]
\arrow["{\T^\F q}", from=1-2, to=2-2]
\arrow["q", from=1-3, to=2-3]
\arrow["{s_M}"', from=2-1, to=2-2]
\arrow["{p_M}"', from=2-2, to=2-3]
\end{tikzcd}
\end{equation}
Once yet, the right diagram is a tangent pullback. Moreover, $s_Mp_M=\pi_1p_M=\pi_2p_M$ and $z_q^\F q^\F=\pi_1 q^\F=\pi_2 q^\F$. We shall prove that the outer diagram is again a tangent pullback, by considering the following general situation. Consider two tangent display maps $q\colon E\to M$ and $q'\colon E'\to M'$ and a morphism $(f,g)\colon q\to q'$ such that the commutative square $fq=gq'$ is a tangent pullback. Now, consider two maps $\alpha\colon X\to M'$ and $\beta\colon X\to E$ making the following diagram
\begin{equation*}
\adjustbox{width=.5\linewidth}{
\begin{tikzcd}
X &&&& \\
&& E \\
& {E_2} & {E'} && M \\
& {E'_2} && E & {M'} \\
&&& {E'}
\arrow["\gamma"{description}, curve={height=-30pt}, dashed, from=1-1, to=2-3]
\arrow["{\<\alpha,\beta\>}"{description}, dashed, from=1-1, to=3-2]
\arrow["\beta", curve={height=-24pt}, from=1-1, to=3-5]
\arrow["\alpha"', curve={height=18pt}, from=1-1, to=4-2]
\arrow["\gamma"{description, pos=0.2}, curve={height=-30pt}, dashed, from=1-1, to=4-4]
\arrow["g"{description}, from=2-3, to=3-3]
\arrow["q"{description}, from=2-3, to=3-5]
\arrow["{\pi_2}"{description}, from=3-2, to=2-3]
\arrow["{g\times_fg}"{description}, from=3-2, to=4-2]
\arrow["{\pi_1}"{description}, from=3-2, to=4-4]
\arrow["{q'}"{description}, from=3-3, to=4-5]
\arrow["f", from=3-5, to=4-5]
\arrow["{\pi_2}"{description}, from=4-2, to=3-3]
\arrow["{\pi_1}"{description}, from=4-2, to=5-4]
\arrow["q"{description}, from=4-4, to=3-5]
\arrow["g"{description}, from=4-4, to=5-4]
\arrow["{q'}"{description}, from=5-4, to=4-5]
\end{tikzcd}
}
\end{equation*}
commutative. As represented in the diagram, from the universal property of the tangent pullback given by $(f,g)\colon q\to q'$, there exists a unique map $\gamma\colon X\to E$, and by using the universal property of the top face diagram, we construct a unique map $\<\alpha,\beta\>\colon X\to E_2$, proving that the desired diagram, is in fact a tangent pullback. In particular, the outer diagram of Equation~\eqref{equation:sum-diagram-is-pullback} is then a tangent pullback and therefore, by invoking once again Lemma~\ref{lemma:property-tangent-pullbacks}, the third diagram is a tangent pullback. Finally, to prove that the last two diagrams are also tangent pullbacks, we may consider the following diagrams:
\begin{equation*}
\begin{tikzcd}
{\F q} & {\F\T^\F q} & {\F q} \\
{\T M} & {\T^2M} & {\T M}
\arrow["{l^\F_q}", from=1-1, to=1-2]
\arrow["{\T^\F q}"', from=1-1, to=2-1]
\arrow["{(\T^F q)^\F}", from=1-2, to=1-3]
\arrow["{{\T^\F}^2q}", from=1-2, to=2-2]
\arrow["{\T^\F q}", from=1-3, to=2-3]
\arrow["{l_M}"', from=2-1, to=2-2]
\arrow["{p_{\T M}}"', from=2-2, to=2-3]
\end{tikzcd}\quad
\begin{tikzcd}
{\F\T^\F q} & {\F\T^\F q} & {\F q} \\
{\T^2M} & {\T^2M} & {\T M}
\arrow["{c^\F_q}", from=1-1, to=1-2]
\arrow["{{\T^\F}^2q}"', from=1-1, to=2-1]
\arrow["{(\T^\F q)^\F}", from=1-2, to=1-3]
\arrow["{{\T^\F}^2q}", from=1-2, to=2-2]
\arrow["{\T^\F q}", from=1-3, to=2-3]
\arrow["{c_M}"', from=2-1, to=2-2]
\arrow["{p_{\T M}}"', from=2-2, to=2-3]
\end{tikzcd}
\end{equation*}
Once again, in both scenarios, the right hand square diagram is a tangent pullback. Furthermore, using that $l_Mp_{\T M}=p_Mz_M$ and that $l_q^\F(\T^\F q)^\F=q^\F z_q^\F$, and that $c_Mp_{\T M}=\T p_M$ and $c_q^\F(\T^\F q)^\F=\T^\F(q^\F)$, we shall rewrite the outer diagrams as follows:
\begin{equation*}
\begin{tikzcd}
{\F q} & E & {\F q} \\
{\T M} & M & {\T M}
\arrow["{q^\F}", from=1-1, to=1-2]
\arrow["{\T^\F q}"', from=1-1, to=2-1]
\arrow["{z_q^\F}", from=1-2, to=1-3]
\arrow["q", from=1-2, to=2-2]
\arrow["{\T^\F q}", from=1-3, to=2-3]
\arrow["{p_M}"', from=2-1, to=2-2]
\arrow["{z_q}"', from=2-2, to=2-3]
\end{tikzcd}\quad
\begin{tikzcd}
{\F\T^\F q} & {\F q} \\
{\T^2M} & {\T M}
\arrow["{\T^\F(q^\F)}", from=1-1, to=1-2]
\arrow["{{\T^\F}^2q}"', from=1-1, to=2-1]
\arrow["{\T^\F q}", from=1-2, to=2-2]
\arrow["{\T p_M}"', from=2-1, to=2-2]
\end{tikzcd}
\end{equation*}
The left hand side diagram is two tangent pullback diagrams stack to each other, while, for the right hand side one, we can argue it is a tangent pullback from the preservation of the pullback by the tangent bundle functor.
\end{proof}

\begin{proposition}
\label{proposition:finsler-tangent-category}
There is a tangent structure $\TT^\F$ over the category $\Dsply(\X,\TT)$ of tangent display maps of a tangent category $(\X,\TT)$, whose tangent bundle functor and structural natural transformations are $\T^\F$, $p^\F$, $z^\F$, $s^\F$, $l^\F$, and $c^\F$. Furthermore, if $(\X,\TT)$ admits negatives, so does $(\Dsply(\X,\TT),\TT^\F)$. Finally, the horizontal descent $(\id_{\T M},\pi_q)\colon\T q\to\T^\F q$ induces a morphism of tangent structures $(\id_{\T M},\pi_q)\colon\TT\Rightarrow\TT^\F$ on $\Dsply(\X,\TT)$.
\end{proposition}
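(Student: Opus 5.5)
The strategy is to reduce every axiom of a tangent structure on $\Dsply(\X,\TT)$ to the corresponding axiom in $(\X,\TT)$, using the universal property of the tangent pullbacks defining $\F q$, $\F_2q$, $\F\T^\F q$, and so on. Every object $\F^n q$ built by iterating $\T^\F$ is, by Lemma~\ref{lemma:finsler-tangent-category} and Lemma~\ref{lemma:property-tangent-pullbacks}, a tangent pullback of $q$ along an iterated projection $\T^nM\to M$; concretely, ${\T^\F}^nq$ is the pullback of $q$ along $p_{\T^{n-1}M}\cdots p_M$. Consequently, two parallel morphisms into such an object coincide as soon as their composites with the two legs, to $\T^nM$ and to $E$, coincide. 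Each structural morphism $p^\F,z^\F,s^\F,l^\F,c^\F$ has first component the corresponding structural map of $M$, and its second component is determined by the pullback. Hence every equation the axioms require reduces to two checks. On the $\T^nM$-leg it is exactly the known axiom for $\TT$ at $M$. On the $E$-leg it follows from the identities $q^\F$-composites, e.g.\ $l^\F_q(\T^\F q)^\F q^\F=q^\F$ and $c^\F_q(\T^\F q)^\F=\T^\F(q^\F)$, which hold by construction.

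\textbf{Functoriality and naturality.} First, $\T^\F$ lands in tangent display maps, because tangent display maps are stable under tangent pullback (Theorem~\ref{theorem:maximal-tangent-display-system}). Functoriality of $\T^\F$ and naturality of the structural transformations then follow from the uniqueness clause of the pullback, as in Lemma~\ref{lemma:functoriality-F}.

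\textbf{Axioms.} Next comes the additive bundle structure $(p^\F,z^\F,s^\F)$: the required $n$-fold pullbacks $\T^\F_nq$ exist and are preserved by ${\T^\F}^m$ by the third diagram of Lemma~\ref{lemma:finsler-tangent-category}, and the additive bundle axioms transfer from those of $\q^\F$ (Definition~\ref{definition:finsler-bundle}) paired with those of $\p_M$. The equations for $l^\F$ and $c^\F$ — additivity, $c^2=\id$, $lc=l$, the Yang–Baxter and coassociativity equations — are checked legwise as above. Negatives are handled the same way: define $n^\F_q$ by pairing $\T^\F q\,n_M$ with $q^\F$.

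\textbf{Main obstacle: universality of the vertical lift.} The step I expect to be hardest is showing that the square for $\xi^\F_q=\<l^\F_q,z^\F_{\T^\F q}\>\T^\F s^\F_q$ over $z^\F$ is a tangent pullback. The approach is to show that $\xi^\F$ is the pullback of $\xi_M$, i.e.\ that the relevant cube has all vertical faces tangent pullbacks. I would reuse the general ``pullback of a pullback'' argument from the proof of Lemma~\ref{lemma:finsler-tangent-category}: the square for $\xi_M$ is a tangent pullback in $(\X,\TT)$; the objects $\F_2q$, $\F\T^\F q$ and $\F q$ are all pullbacks of $q$ along maps into $M$ that are compatible with $\xi_M$ and $z_M$. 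Lemma~\ref{lemma:property-tangent-pullbacks} then transfers the universal property, and preservation by $\T^m$ is inherited because every square involved is a tangent pullback.

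\textbf{The morphism $(\id_{\T M},\pi_q)$.} For the morphism of tangent structures $\TT\Rightarrow\TT^\F$, I check that $(\id_{\T M},\pi_q)$ is natural in $q$ and commutes with $p,z,s,l,c$. Again each equation is checked on the two legs, reducing to $\pi_q\T^\F q=\T q$, $\pi_qq^\F=p_E$, naturality of the structural maps, and Lemma~\ref{lemma:horizontal-descent-linear}.
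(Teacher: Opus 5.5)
Your proposal is correct and follows essentially the same route as the paper. The equational axioms, negatives, and the comparison $(\id_{\T M},\pi_q)$ are all reduced to the axioms of $\TT$ via the tangent pullbacks of Lemma~\ref{lemma:finsler-tangent-category}, and universality of the lift comes from showing that the square $(\xi_M,\xi^\F_q)$ is a tangent pullback in the base, which is exactly the paper's argument. One small slip: the lift square must be preserved by ${\T^\F}^m$, not $\T^m$; this follows by running the same pasting argument on $\T^m$ of the $\xi_M$ square, since ${\T^\F}^m$ of each object is again a pullback of $q$.
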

\begin{proof}
We shall not complete all the details of this proof, since it is rather technical and not particularly enlightning. Instead, we simply argue that, by Lemma~\ref{lemma:finsler-tangent-category}, all the diagrams that define the structural natural transformations of $\TT^\F$ are in fact tangent pullbacks in $(\X,\TT)$, the equational axioms of a tangent category follow directly from $\TT$ being a tangent structure and by the universal property of these tangent pullbacks. To establish the universal property of the vertical lift, consider a morphism $(f,g)\colon q'\to{\T^\F}^2q$ which equalizes $\T^\F p^\F_q$ and $p^\F_{\T^\F q}p^\F qz^\F_q$. Using the universal property of the vertical lift $l_M$, we obtain a unique morphism $\hat f\colon M'\to\T_2M$. Moreover, from the universal properties of the tangent pullbacks of Lemma~\ref{lemma:finsler-tangent-category}, we can argue that the map $(\xi_M,\xi_q^\F)\colon\T^\F_2q\to{\T^\F}^2q$ that establishes the universal property of the lift, defines a tangent pullback in the base category, as well. Therefore, we can pull back $g$ along $\xi_M$ as follows:
\begin{equation*}
\begin{tikzcd}
{E'} &&& \\
&& {F_2q} & {\F\T^\F q} \\
&& {\T_2M} & {\T^2M} \\
{M'}
\arrow["{\hat g}", dashed, from=1-1, to=2-3]
\arrow["g", curve={height=-18pt}, from=1-1, to=2-4]
\arrow["{q'}"', from=1-1, to=4-1]
\arrow["{\xi_q^\F}", from=2-3, to=2-4]
\arrow["{\T_2^\F q}"', from=2-3, to=3-3]
\arrow["\lrcorner"{anchor=center, pos=0.125}, draw=none, from=2-3, to=3-4]
\arrow["{{\T^\F}^2q}", from=2-4, to=3-4]
\arrow["{\xi_M}"', from=3-3, to=3-4]
\arrow["{\hat f}"', dashed, from=4-1, to=3-3]
\arrow["f"', curve={height=18pt}, from=4-1, to=3-4]
\end{tikzcd}
\end{equation*}
This proves the universal property of the vertica lift for $(\Dsply(\X,\TT),\TT^\F)$. We leave it to the reader to show that the horizontal descent defines a morphism of tangent structures.
\end{proof}

One may argue that, since the commutative square diagrams that correspond to the definition of the structural natural transformations of the tangent category $(\Dsply(\X,\TT),\TT^\F)$ are tangent pullbacks by Lemma~\ref{lemma:finsler-tangent-category}, one may restrict the whole tangent category to a subcategory of $\Dsply(\X,\TT)$ defined as follows. We denote by $\Dsply_\pb(\X,\TT)$ the subcategory of $\Dsply(\X,\TT)$ with the same objects of $\Dsply(\X,\TT)$ but where morpshisms are pairs $(f,g)\colon q\to q'$ for which the diagram
\begin{equation*}
\begin{tikzcd}
E & {E'} \\
M & {M'}
\arrow["g", from=1-1, to=1-2]
\arrow["q"', from=1-1, to=2-1]
\arrow["{q'}", from=1-2, to=2-2]
\arrow["f"', from=2-1, to=2-2]
\end{tikzcd}
\end{equation*}
is a tangent pullback in $(\X,\TT)$.

\begin{proposition}
\label{proposition:finsler-strong-tangent-category}
The tangent structure $\TT^\F$ of Proposition~\ref{proposition:finsler-tangent-category} restricts to the subcategory $\Dsply_\pb(\X,\TT)$, defining a new tangent category $(\Dsply_\pb(\X,\TT),\TT^\F)$.
\end{proposition}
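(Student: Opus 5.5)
To prove that $\TT^\F$ restricts to $\Dsply_\pb(\X,\TT)$, I would check two things. First, the subcategory contains all identities and is closed under composition. Second, the tangent bundle functor $\T^\F$ sends morphisms of $\Dsply_\pb(\X,\TT)$ to morphisms of $\Dsply_\pb(\X,\TT)$, and every component of the structural natural transformations $p^\F,z^\F,s^\F,l^\F,c^\F$ (and $n^\F$, when negatives exist) lies in $\Dsply_\pb(\X,\TT)$. Once these hold, the equational axioms and the universality of the vertical lift are inherited from Proposition~\ref{proposition:finsler-tangent-category}. Naturality and the equations hold in the larger category. The universal pullback of the lift, $(\xi_M,\xi^\F_q)$, is itself a tangent pullback square, as argued in the proof of that proposition.

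For closure, identity squares are trivially tangent pullbacks. The pasting of two tangent pullbacks is a tangent pullback, since each $\T^m$ preserves both and pasted pullbacks are pullbacks.

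For the structural transformations, the components $p^\F_q=(p_M,q^\F)$, $z^\F_q$, $s^\F_q$, $l^\F_q$ and $c^\F_q$ are exactly the five squares of Lemma~\ref{lemma:finsler-tangent-category}, which are all tangent pullbacks. The one wrinkle is $s^\F_q$. It goes out of the object $\T^\F_2q$, the pullback of $\T^\F q$ over $q$ in $\Dsply(\X,\TT)$, and that object has underlying map $\T^\F q\times_M\T^\F q\colon\F_2q\to\T_2M$. I would note that this is the correct fibre product in the subcategory too, since the projections $\pi_i$ are themselves tangent pullback squares by the same pasting argument. If negatives are present, $n^\F$ is an involution, so its square is an isomorphism of squares and hence a tangent pullback.

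The main step is functoriality. Given a morphism $(f,g)\colon q\to q'$ whose square is a tangent pullback, I must show that $(\T f,\F_fg)\colon\T^\F q\to\T^\F q'$ is again a tangent pullback square. I would mimic Lemma~\ref{lemma:pullback-horizontal-connections}: paste this square with the Finsler square of $q'$,
\begin{equation*}
\F q\xrightarrow{\F_fg}\F q'\xrightarrow{q'^\F}E',\qquad \T M\xrightarrow{\T f}\T M'\xrightarrow{p_{M'}}M'.
\end{equation*}
Using $\F_fg\,q'^\F=q^\F g$ and naturality $\T f\,p_{M'}=p_Mf$, the outer rectangle equals the pasting of the Finsler square of $q$ with the square $(f,g)$. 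Both of these are tangent pullbacks, so the outer rectangle is one. Since the right square is a tangent pullback by definition of $\F q'$, Lemma~\ref{lemma:property-tangent-pullbacks} gives that the left square is a tangent pullback.

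I expect this functoriality step to be the only real obstacle. The bookkeeping is to verify the defining equations of $\F_fg$ from Lemma~\ref{lemma:functoriality-F}, so that the rectangle identification is legitimate.
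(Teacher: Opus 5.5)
Your proposal is correct and follows the route the paper indicates. The paper leaves this proof to the reader and points only to Lemma~\ref{lemma:finsler-tangent-category} for the structural squares, while your functoriality step is exactly the pasting argument of Lemma~\ref{lemma:pullback-horizontal-connections}, which you may simply cite. The one point you leave implicit is that mediating maps of cones in $\Dsply_\pb(\X,\TT)$ are again tangent pullback squares, so that $\T^\F_nq$ and the universal square of the lift remain pullbacks in the subcategory; this follows from the cancellation half of Lemma~\ref{lemma:property-tangent-pullbacks} that you already use.
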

\begin{proof}
We leave the proof of this proposition to the reader.  
\end{proof}

\begin{definition}
\label{definition:finsler-tangent-category}
We call the tangent category of Proposition~\ref{proposition:finsler-tangent-category} the \textbf{Finsler tangent category} of $(\X,\TT)$, while the tangent category of Proposition~\ref{proposition:finsler-strong-tangent-category} will be called the \textbf{strong Finsler tangent category} of $(\X,\TT)$. We shall denote the former by $\F(\X,\TT)$, and the latter by $\F_\pb(\X,\TT)$.
\end{definition}

Our goal was to construct a tangent category whose vector fields correspond to Finsler vector fields. Our constructions of the strong and the non-strong Finsler tangent categories get very close to this.  

\begin{lemma}
\label{lemma:finsler-vector-fields-as-vector-fields}
A vector field in either the Finsler tangent category $\F(\X,\TT)$ or the strong Finsler tangent category $\F_\pb(\X,\TT)$ consists precisely of a tangent display map $q\colon E\to M$ together with a pair $(X,A_\F)$ formed by an ordinary vector field $X\colon M\to\T M$ on $M$ in the base tangent category together with a section $A_\F\colon E\to\F q$ of the Finsler bundle $\q^\F$ of $q$, satisfying the following condition:
\begin{equation*}
\begin{tikzcd}
E & {\F q} \\
M & {\T M}
\arrow["A", from=1-1, to=1-2]
\arrow["q"', from=1-1, to=2-1]
\arrow["{\T^\F q}", from=1-2, to=2-2]
\arrow["X"', from=2-1, to=2-2]
\end{tikzcd}
\end{equation*}
Furthermore, this diagram is necessarily a tangent pullback.
\end{lemma}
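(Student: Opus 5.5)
We unfold the definition of a vector field in $\F(\X,\TT)$ (respectively $\F_\pb(\X,\TT)$). Such a vector field on an object $q\colon E\to M$ is a morphism $(X,A)\colon q\to\T^\F q$ of the underlying category that is a section of the projection $p^\F_q=(p_M,q^\F)$. Concretely, it is a pair $X\colon M\to\T M$, $A\colon E\to\F q$ with $qX=A\T^\F q$ (the displayed square), and the section condition splits componentwise into $Xp_M=\id_M$ and $Aq^\F=\id_E$. The first says $X$ is an ordinary vector field on $M$. The second says $A=A_\F$ is a section of the Finsler bundle $\q^\F$. Conversely, any such pair with the commuting square is a morphism of $\Dsply(\X,\TT)$ which is a section of $p^\F_q$, hence a vector field in $\F(\X,\TT)$. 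This handles the non-strong case once we also check that the object part matches, which it does since objects are the same tangent display maps.

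For the strong case, and for the final claim, we must show the square $qX=A\T^\F q$ is automatically a tangent pullback. We do this with Lemma~\ref{lemma:split-pullback}. Place the square next to the defining Finsler pullback (Equation~\eqref{equation:finsler-bundle-pullback}), forming
\begin{equation*}
\begin{tikzcd}
E & {\F q} & E \\
M & {\T M} & M
\arrow["A", from=1-1, to=1-2]
\arrow["q"', from=1-1, to=2-1]
\arrow["{q^\F}", from=1-2, to=1-3]
\arrow["{\T^\F q}", from=1-2, to=2-2]
\arrow["q", from=1-3, to=2-3]
\arrow["X"', from=2-1, to=2-2]
\arrow["{p_M}"', from=2-2, to=2-3]
\end{tikzcd}
\end{equation*}
Here $(A,q^\F)$ and $(X,p_M)$ are section–retraction pairs, and the outer rectangle is the identity square on $q$, which is trivially a tangent pullback. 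Lemma~\ref{lemma:split-pullback} then gives that the left square is a tangent pullback, with $\T$ ranging over the iterates of the tangent functor. Alternatively, Lemma~\ref{lemma:property-tangent-pullbacks} applies directly, since the right square is a tangent pullback and the outer one is too. So every vector field in $\F(\X,\TT)$ is a morphism of $\Dsply_\pb(\X,\TT)$, and the two notions of vector field coincide.

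The only real subtlety is bookkeeping the orientation of the split pairs. Since the composite $Aq^\F$ (respectively $Xp_M$) is the identity, the outer square is literally an identity square, so the argument goes through. We also need $\F_\pb$ to have the same projection $p^\F$ as $\F$, which holds by Proposition~\ref{proposition:finsler-strong-tangent-category}.
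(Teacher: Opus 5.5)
Your proof is correct. The paper states this lemma without proof, but your argument is the one it uses for the analogous zero square in Lemma~\ref{lemma:finsler-tangent-category}. There the square for $z^\F$ is pasted with the Finsler pullback~\eqref{equation:finsler-bundle-pullback}, the outer rectangle is observed to be an identity square, and Lemma~\ref{lemma:property-tangent-pullbacks} is invoked. Your componentwise unpacking $(X,A)p^\F_q=(Xp_M,Aq^\F)=(\id_M,\id_E)$ of the section condition is also right.

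One correction: your primary justification via Lemma~\ref{lemma:split-pullback} is wrong. That lemma concerns rows $A\xrightarrow{r}B\xrightarrow{s}A$ with $sr=\id_B$, i.e.\ retraction first and section second, so the outer square is built from the idempotents $rs$. This is how the paper uses it, with $\phi=\R_\phi\iota_\phi$ and $p_Mz_M$. Your rows $E\xrightarrow{A}\F q\xrightarrow{q^\F}E$ and $M\xrightarrow{X}\T M\xrightarrow{p_M}M$ put the section first, and in that orientation the equivalence is false. An identity outer square is always a tangent pullback, yet neither inner square need be one. For a counterexample, take sets with the trivial tangent structure, with top row (one-point set $\to$ two-point set $\to$ one-point set) and bottom row all identities on the one-point set. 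Neither inner square is a pullback.

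So the outer square being an identity is not what makes your argument work. What makes it work is that the right-hand square is the Finsler tangent pullback, which is exactly the hypothesis of Lemma~\ref{lemma:property-tangent-pullbacks}. Drop the first justification and the remark about ``bookkeeping the orientation'', and keep your ``alternative'' argument as the proof.
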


By Theorem~\ref{theorem:finsler-vector-fields}, sections of the Finsler bundle are in bijective correspondence with Finsler vector fields. Unfortunately, not every Finsler vector field $A\colon E\to\T M$ has an underlying ordinary vector field $X\colon M\to\T M$.

\begin{definition}
\label{definition:full-finsler-vector-field}
A \textbf{full Finsler vector field} over a map $q\colon E\to M$ consists of a pair $(X,A)$ formed by a vector field $X\colon M\to\T M$ on $M$ together with a Finsler vector field $A\colon E\to\T M$ over $q$, satisfying the following condition:
\begin{equation*}
\begin{tikzcd}
E & {\T M} \\
M
\arrow["A", from=1-1, to=1-2]
\arrow["q"', from=1-1, to=2-1]
\arrow["X"', from=2-1, to=1-2]
\end{tikzcd}
\end{equation*}
\end{definition}

\begin{theorem}
\label{theorem:finsler-vector-fields-as-vector-fields}
Consider a tangent display map $q\colon E\to M$. Given a tangent display map $q\colon E\to M$, the following are equivalent:
\begin{enumerate}
\item A full vector field $(X,A)$ over $q$;

\item A vector field $(X,A_\F)$ over $q$ in the Finsler tangent category;

\item A vector field $(X,A_\F)$ over $q$ in the strong Finsler tangent category.
\end{enumerate}
In particular, full Finsler vector fields over tangent display maps define a tangent category $\VF(\F(\X,\TT))$.
\end{theorem}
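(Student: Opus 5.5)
The plan is to reduce everything to Lemma~\ref{lemma:finsler-vector-fields-as-vector-fields}, which already identifies vector fields in both $\F(\X,\TT)$ and $\F_\pb(\X,\TT)$. It remains to match these with full Finsler vector fields, using the bijection of Theorem~\ref{theorem:finsler-vector-fields} between Finsler vector fields $A\colon E\to\T M$ and sections $A_\F\colon E\to\F q$ of $\q^\F$.

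First I unpack a vector field in $\F(\X,\TT)$ on an object $q$. It is a morphism $(X,A_\F)\colon q\to\T^\F q$ in $\Dsply(\X,\TT)$, that is, a commuting square $A_\F\T^\F q=qX$, which is a section of $p^\F_q=(p_M,q^\F)$. The section condition splits componentwise into $Xp_M=\id_M$, so $X$ is an ordinary vector field, and $A_\F q^\F=\id_E$, so $A_\F$ is a section of the Finsler bundle. Set $A\=A_\F\T^\F q$. By Theorem~\ref{theorem:finsler-vector-fields}, $A$ is a Finsler vector field and $A_\F$ is its associated section. The commuting square then reads exactly $A=qX$, which is the triangle of Definition~\ref{definition:full-finsler-vector-field}.

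Conversely, start from a full Finsler vector field $(X,A)$. Take $A_\F$ to be the section induced by Theorem~\ref{theorem:finsler-vector-fields}. Then $A_\F\T^\F q=A=qX$, so $(X,A_\F)$ is a morphism $q\to\T^\F q$. It is a section of $p^\F_q$, since $Xp_M=\id_M$ and $A_\F q^\F=\id_E$. The two constructions are mutually inverse by the uniqueness in the universal property of $\F q$. This proves $[1]\Leftrightarrow[2]$.

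For $[2]\Leftrightarrow[3]$, the only extra requirement in $\F_\pb(\X,\TT)$ is that the square $(X,A_\F)$ be a tangent pullback. This is the step I expect to need care, though the last clause of Lemma~\ref{lemma:finsler-vector-fields-as-vector-fields} already asserts it. To check it directly, I would paste the square $A_\F\T^\F q=qX$ on the right with the Finsler pullback square $q^\F q=\T^\F q\,p_M$. The composite rectangle has top row $A_\F q^\F=\id_E$ and bottom row $Xp_M=\id_M$, so it is the identity square on $q$, which is trivially a tangent pullback. The right square is a tangent pullback by definition, so Lemma~\ref{lemma:property-tangent-pullbacks} makes the left square a tangent pullback. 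Hence every vector field in $\F(\X,\TT)$ is already a vector field in $\F_\pb(\X,\TT)$, and the converse is immediate since $\F_\pb(\X,\TT)$ is a subcategory.

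Finally, for the last clause I invoke the standard fact \cite[Proposition~2.10]{cockett:differential-equations} that vector fields in any tangent category form a tangent category. Applied to $\F(\X,\TT)$, this gives $\VF(\F(\X,\TT))$, whose objects, by the equivalence just proved, are full Finsler vector fields over tangent display maps.
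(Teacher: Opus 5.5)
Your proof is correct. The paper states this theorem, and Lemma~\ref{lemma:finsler-vector-fields-as-vector-fields}, without proof, and your argument follows exactly the intended route: unpack the square $A_\F\T^\F q=qX$ componentwise as a section of $(p_M,q^\F)$, match it to a full Finsler vector field via Theorem~\ref{theorem:finsler-vector-fields}, and obtain the tangent-pullback clause by pasting with the Finsler square against the identity square on $q$ and applying Lemma~\ref{lemma:property-tangent-pullbacks}, which is the same trick the paper uses for the zero square in Lemma~\ref{lemma:finsler-tangent-category}.
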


\begin{remark}
\label{remark:lie-bracket-full-finsler-vector-fields}
An important consequence of Theorem~\ref{theorem:finsler-vector-fields-as-vector-fields} is that, when the base tangent category $(\X,\TT)$ admits negatives, there is a well-defined notion of Lie bracket of full Finsler vector fields. This is surprising since, as we will explain later, horizontal vector fields (which are related to Finsler vector fields) are not closed under the Lie bracket. In particular, the failure of the Lie bracket of two horizontal vector fields to be horizontal is measured by the curvature.

However, it is crucial to remember which tangent category the Lie bracket is defined in. The Lie bracket of full Finsler vector fields is induced by the Finsler tangent structure, while the Lie bracket under which horizontal vector fields fail to be closed is induced by the base tangent category.

One may expect that the two Lie brackets should agree, since there is a comparison morphism $(\id_{\T M},\pi_q)\colon\TT\Rightarrow\TT^\F$ between the two corresponding tangent structures. However, this is not the case, since this morphism of tangent structures induces a functor $\VF(\Dsply(\X,\TT),\TT)\to\F(\X,\TT)$ from the tangent category with the ordinary tangent structure to the Finsler one, not the other way around. Finally, one should also observe that not every Finsler vector field is full.
\end{remark}


\section{Distributions and involutivity}
\label{section:distributions}
In the previous section, we introduced vertical and Finsler vector fields, which correspond to sections of the vertical and of the Finsler bundle, respectively. It is often convenient to regard vertical vector fields not merely as sections of the vertical bundle but instead as maps with values directly in the tangent bundle $\T E$ of the total space $E$ of $q\colon E\to M$. A similar situation happens in the presence of a (horizontal) connection $\H\colon\F q\to\T E$ which embeds the Finsler bundle into the tangent bundle and allows us to regard Finsler vector fields as maps with values in $\T E$. This perspective is facilitated by the concept of \emph{distributions}.

In differential geometry, a \emph{distribution} over an object $E$ consists of a vector sub-bundle $D\subseteq\T E$ of the tangent bundle of $E$. In this section, we generalize this concept to tangent categories, and prove an important property of the vertical bundle: as a distribution, the vertical bundle is \emph{involutive}. Concretely, this means that vertical vector fields are closed under the Lie bracket associated to the tangent bundle of $E$. We begin by formally introducing the concept of a distribution.

\begin{definition}
\label{definition:distribution}
In a tangent category $(\X,\TT)$, a \textbf{tangent distribution} on $M$ consists of a differential bundle $\q\colon D\to M$ together with a morphism $\iota\colon D\to\T M$ subject to the following conditions:
\begin{description}
\item[TD.1] $\iota$ is tangent monic, that is, for every $n\geq0$, $\T^n\iota$ is monic;

\item[TD.2] $\iota\colon\q\to\p_M$ is a linear morphism of differential bundles over $M$.
\end{description}
Given a tangent distribution ($\q\colon D\to M,\iota)$ of $M$, a \textbf{$(\q,\iota)$-vector field} is a vector field $X\colon M\to\T M$ of $M$ such that there exists a (necessarily unique) section $\hat X\colon M\to D$ of the projection $q\colon D\to M$ of the distribution, satisfying $X=\hat X\iota$.
\end{definition}

Thanks to Lemma~\ref{lemma:iota-tangent-monic}, the vertical bundle $\q^\V\colon\V q\to E$ of a tangent display map $q\colon E\to M$ is a tangent distribution on $E$, where the inclusion map is $\iota_q$.

\begin{definition}
\label{definition:vertical-distribution}
The \textbf{vertical distribution} of a tangent display map $q\colon E\to M$ is the tangent distribution $(\q^\V,\iota_q)$, where $\q^\V$ denotes the vertical bundle of $\q$ and $\iota_q$ the inclusion of $\V q$ into $\T E$. We shall denote this distribution simply by $\q^\V$.
\end{definition}

Using the language of distributions, vertical vector fields correspond precisely to $\q^\V$-vector fields. We now introduce the involutivity condition.

\begin{definition}
\label{definition:involutive-distribution}
In a tangent category with negatives, a tangent distribution $(\q,\iota)$ is \textbf{involutive} provided that $\q$-vector fields form a Lie subalgebra $\VF(\X,\TT;\q,\iota)$ of the Lie algebra $\VF(\X,\TT;M)$ of vector fields. Concretely, for every pair of sections $\hat X,\hat Y\colon M\to D$ of the projection $q\colon D\to M$, there exists a (necessarily unique) section $\hat Z\colon M\to D$ satisfying the following equation
\begin{align*}
&[\hat X\iota,\hat Y\iota]=\hat Z\iota
\end{align*}
where $[,]$ denotes the Lie bracket of the tangent bundle $\p_M$.
\end{definition}

Not only is the vertical bundle a distribution, but crucially, it is also involutive. This is a consequence of Theorem~\ref{theorem:vertical-vector-fields}.

\begin{corollary}
\label{corollary:vertical-bundle-involutive}
In a tangent category with negatives, the vertical bundle $\q^\V$ of a tangent display map $q\colon E\to M$ is an involutive distribution on $E$.
\end{corollary}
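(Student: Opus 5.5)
We have to show that for vertical vector fields $X,Y$ on $q$ (equivalently $X=\hat X\iota_q$, $Y=\hat Y\iota_q$ with $\hat X,\hat Y$ sections of $\q^\V$), the bracket $[X,Y]$ computed in $(\X,\TT)$ is again vertical. It then factors uniquely through $\iota_q$, since $\iota_q$ is monic by Lemma~\ref{lemma:iota-tangent-monic}. By Theorem~\ref{theorem:vertical-vector-fields}, $X$ is vertical exactly when $q\colon(E,X)\to(M,z_M)$ is a morphism of vector fields, i.e.\ $X\T q=qz_M$. So it suffices to prove two facts. First, morphisms of vector fields preserve Lie brackets: if $fX'=X\T f$ and $fY'=Y\T f$, then $f[X,Y]'=[X',Y']\T f$ (written diagrammatically, with $f$ applied first). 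Second, $[z_M,z_M]=z_M$.

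The second fact is immediate. We have $z_M\T z_M=z_Mz_{\T M}$, and likewise with $c_M$ inserted, since $z_M\T z_M c_M=z_M z_{\T M}$. The difference inside the bracket is therefore the zero of $\T^2M$ over $z_M$, and $\{z_Mz_{\T M}\}=z_M$.

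For the first fact, I would check naturality of each ingredient of $[X,Y]=\{X\T Y-Y\T Xc_E\}$ along $f=q$:
\begin{itemize}
\item $X\T Y\T^2q=X\T q\T z_M=qz_M\T z_M$, using functoriality of $\T$ and $Y\T q=qz_M$;
\item similarly $Y\T Xc_E\T^2q=Y\T X\T^2q c_M=qz_M\T z_M c_M$, by naturality of $c$;
\item subtraction on $\T^2$ (sum via $\T s$ and negation via $\T n$) commutes with $\T^2q$ by naturality of $s$ and $n$;
\item the operation $\{-\}$ is natural: if $g$ is in the domain of $\{-\}$ on $E$, then $g\T^2q$ is in the domain on $M$, and $\{g\}\T q=\{g\T^2q\}$.
\end{itemize}
This last point follows because $\xi$ is natural, so $\tilde g\,\T_2 q$ satisfies the defining equation for $\widetilde{g\T^2q}$, and uniqueness in the universal property of the vertical lift gives the identification. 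Combining these, $[X,Y]\T q=q[z_M,z_M]=qz_M$.

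The main obstacle is the bookkeeping in the naturality of $\{-\}$ and of the subtraction: one must confirm that the precondition of the universal property (equalizing $\T p$ and $pp z$) is satisfied by the composite, and that naturality of $\xi_M=(l_M\times z_{\T M})\T s_M$ holds. Both follow from naturality of $l$, $z$, $s$ and the fact that $\T$ preserves the pullback $\T_2$. Alternatively, I would simply cite the known result that morphisms of vector fields preserve brackets in $\VF(\X,\TT)$ (\cite{cockett:jacobi}). With this in hand, $[X,Y]$ is a vertical vector field, so it factors as $\hat Z\iota_q$ by the proof of Theorem~\ref{theorem:vertical-vector-fields}, and $\q^\V$ is involutive.
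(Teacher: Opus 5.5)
Your proof is correct, but it takes a different route from the one the paper writes out. The paper mentions your route only in passing (``One can prove this directly by showing that $[X,Y]\T q=qz_M$'') and instead gives a conceptual argument:
\begin{itemize}
\item by Theorem~\ref{theorem:vertical-vector-fields}, a vertical vector field $X$ is the same as a vector field $\hat X\colon q\to\T^\V q$ in the slice tangent category $(\X,\TT)/M$;
\item the domain functor $\Pi$ of Lemma~\ref{lemma:tangent-morphism-slice} is a Cartesian tangent morphism with distributive law $\iota_q$, so it lifts to $\VF[\Pi]$;
\item $\VF[\Pi]$ preserves Lie brackets, which gives $[X,Y]=[\hat X,\hat Y]^\V\iota_q$.
\end{itemize}

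You instead verify directly that $[X,Y]\T q=q[z_M,z_M]=qz_M$. This uses only naturality of $p,z,s,n,c,l$ and the uniqueness in the universal property of the vertical lift. Your checks of the naturality of $\{-\}$ and of $[z_M,z_M]=z_M$ are right.

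What your approach buys:
\begin{itemize}
\item It is elementary and self-contained.
\item It proves the general fact that morphisms of vector fields preserve brackets. This is essentially what underlies the paper's appeal to bracket-preservation by $\VF[\Pi]$.
\item It shows that $[X,Y]\T q=qz_M$ holds for an arbitrary map $q$. The display hypothesis is needed only to factor $[X,Y]$ through $\iota_q$.
\end{itemize}

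What the paper's approach buys: it identifies the factorization explicitly as the slice bracket $[\hat X,\hat Y]^\V$. So the Lie algebra of vertical vector fields is that of vector fields on $q$ in $(\X,\TT)/M$. The price is relying on facts about the slice tangent category that are not proved there (its negation, the lift of $\Pi$ to vector fields, bracket preservation).

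Two small slips, neither of which breaks the argument:
\begin{itemize}
\item The conclusion of your first fact should read $[X,Y]\T f=f[X',Y']$; as written it is ill-typed.
\item The difference inside the bracket is taken with $s_{\T E}$ and $n_{\T E}$ (in the fibres of $p_{\T E}$), not with $\T s$ and $\T n$. Only the former satisfies the precondition $f\T p=fppz$ needed for $\{-\}$. Naturality of $s$ and $n$ handles either structure, so your commutation with $\T^2q$ still holds.
\end{itemize}
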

\begin{proof}
One can prove this directly by showing that $[X,Y]\T q=qz_M$. Instead, we will give a more conceptual proof, which involves Theorem~\ref{theorem:vertical-vector-fields}. By this result, a vertical vector field $X\colon E\to\T E$ is equivalent to a vector field $\hat X\colon q\to\T^\V q$ on $q$ in the slice tangent category. If $\hat X$ and $\hat Y$ denote the corresponding vector fields in the slice of two vertical vector fields $X$ and $Y$, one can compute the Lie bracket of $\hat X$ and $\hat Y$ in the slice tangent category and obtain a vector field $[\hat X,\hat Y]^\V\colon q\to\T^\V q$.

Furthermore, each lax tangent morphism $(F,\alpha)\colon(\X,\TT)\to(\X',\TT')$ lifts to a lax tangent morphism $\VF[F,\alpha]\colon\VF(\X,\TT)\to\VF(\X',\TT')$ which sends every vector field $X$ to $FX\alpha_M\colon FM\to\T'FM$. In particular, the tangent morphism $\Pi\colon(\X,\TT)/M\to(\X,\TT)$ of Lemma~\ref{lemma:tangent-morphism-slice} lifts to a tangent morphism $\VF[\Pi]\colon\VF(\X,\TT)/M)\to\VF(\X,\TT)$. However, by Theorem~\ref{theorem:vertical-vector-fields}, the image of a vector field in $(\X,\TT)/M$ along  $\Pi$ is a vertical vector field.

Thus $\VF[\Pi]([\hat X,\hat Y]^\V)$ is a vertical vector field of $q$. In particular, $\VF[\Pi]([\hat X,\hat Y]^\V)$ corresponds to the vertical vector field $[\hat X,\hat Y]^\V\iota_q$ of $q$. However, for every lax tangent morphism $(F,\alpha)$, $\VF[F,\alpha]$ preserves the Lie bracket. So, in particular, we obtain:
\begin{align*}
&[\hat X,\hat Y]^\V\iota_q=[\hat X\iota_q,\hat Y\iota_q]=[X,Y]
\end{align*}
proving that the Lie bracket $[X,Y]$ of vertical vector fields $X$ and $Y$ is again a vertical vector field.
\end{proof}

The vertical bundle defines an involutive tangent distribution, and the corresponding $\q^\V$-vector fields are precisely the vertical vector fields. One might expect that Finsler bundles should give rise to $\q^\F$-vector fields. Unfortunately, the Finsler bundle is, in general, not a tangent distribution for $E$, let alone being involutive. In fact, a connection is exactly the mathematical device required to make $\q^\F$ into a tangent distribution, the \emph{horizontal distribution}. We shall start by proving that, given a horizontal connection, $(\q^\F,\H)$ defines a tangent distribution.

\begin{lemma}
\label{lemma:horizontal-distribution}
A horizontal connection $\H\colon\F q\to\T E$ of a tangent display map $q\colon E\to M$ turns the Finsler bundle $\q^\F$ of $q$ into a tangent distribution of $E$.
\end{lemma}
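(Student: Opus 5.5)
We have to verify the two axioms of Definition~\ref{definition:distribution} for the pair $(\q^\F,\H)$. \textbf{[TD.2]} requires no new work: it is literally axiom \textbf{[HC.2]}, which says that $\H\colon\q^\F\to\p_E$ is a linear morphism of differential bundles over $E$. So the whole content of the lemma is \textbf{[TD.1]}, namely that $\T^n\H$ is monic for every $n\geq0$.

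For \textbf{[TD.1]} I will use \textbf{[HC.1]}, $\H\pi_q=\id_{\F q}$. This says $\H$ is a split monomorphism with retraction $\pi_q$. Applying the functor $\T^n$ gives $\T^n\H\,\T^n\pi_q=\T^n(\H\pi_q)=\T^n\id_{\F q}=\id_{\T^n\F q}$. Hence each $\T^n\H$ is again a split monomorphism, and in particular monic. This mirrors the remark in the proof of Lemma~\ref{lemma:vertical-bundle-from-split-abstract-connections}, where tangent monicity of $\iota_\phi$ was deduced from its being a section. It is also simpler than Lemma~\ref{lemma:iota-tangent-monic}, which needed pullback stability of monics.

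The underlying differential bundle of the distribution is the Finsler bundle $\q^\F\colon\F q\to E$. By Definition~\ref{definition:finsler-bundle} it is a (display) differential bundle over $E$, so $(\q^\F,\H)$ meets all the requirements of a tangent distribution on $E$.

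I do not expect any real obstacle. The only point to be careful about is that functors preserve split monos, since they preserve composites and identities. For this reason no hypothesis on $\T$ preserving monics or pullbacks is needed, and $q$ need not be a submersion.
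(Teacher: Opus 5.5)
Your proposal is correct and matches the paper's proof: \textbf{[TD.2]} is exactly \textbf{[HC.2]}, and \textbf{[TD.1]} follows from \textbf{[HC.1]} because $\H$ is a section of $\pi_q$, so each $\T^n\H$ is a split mono with retraction $\T^n\pi_q$. You simply spell out the functoriality step (functors preserve split monos) that the paper leaves implicit.
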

\begin{proof}
By \textbf{[HC.2]}, $\H$ is a linear morphism; moreover, by \textbf{[HC.1]}, $\H$ is a section of the horizontal descent of $q$. Thus, $\H$ is tangent monic.
\end{proof}

\begin{definition}
\label{definition:horizontal-distribution}
The \textbf{horizontal distribution} of a tangent display map $q\colon E\to M$ equipped with a horizontal connection $\H$ is the tangent distribution $(\q^\F,\H)$ of Lemma~\ref{lemma:horizontal-distribution}. We shall denote this distribution by $\q^\H$.
\end{definition}

As vertical vector fields correspond to $\q^\V$-vector fields, so horizontal vector fields correspond to $\q^\H$-vector fields.


\section{Curvature and parallel transport}
\label{section:curvature}

In this final section, we discuss how to define and work with curvature and parallel transport for an Erhesmann connection in a tangent category.

\subsection{The curvature of an Ehresmann connection}
\label{subsection:curvature}
Corollary~\ref{corollary:vertical-bundle-involutive} proves that the vertical distribution is involutive, that is, vertical vector fields are closed under the Lie bracket, and they form a Lie subalgebra of the Lie algebra of ordinary vector fields. One may hope that the horizontal distribution also satisfies the involutivity condition. However, this is not the case. In fact, the failure of horizontal vector fields to be closed under Lie bracket is measured by an important geometric quantity, known as the \emph{curvature} of the connection. In this section, we introduce this concept in the context of tangent categories and prove that the exterior derivative of the curvature vanishes for horizontal vector fields, giving a Bianchi identity for this notion of curvature.

\par However, instead of directly defining the curvature as a measure of the failure of the horizontal distribution to be involutive, we shall take a different route, which enlightens the categorical nature of tangent categories. We will prove later on that our definition agrees with the classical notion.  However, we begin with a philosophical detour to justify our intuition.

\par From a very general point of view, a \emph{categorical} theory of a certain class of objects aims to provide the minimal setting in which such an object may be defined. For instance, categorically, a group consists of an object $G$ in a Cartesian category $\X$, equipped with three maps $e\colon\*\to G$, $\mu\colon G\times G\to G$, and $\iota\colon G\to G$, axiomatizing the unit, the multiplication, and the inverse operation. This definition not only makes sense in the standard setting where $G$ is a set, but it also applies to new contexts, including the category of group objects $\CategoryFont{Grp}(\X)$ of a Cartesian category $\X$, itself.

\par It turns out that a group object in the category of group objects consists of an object $G$ in the original category together with two group structures, which, however, must necessarily coincide and make $G$ into an \emph{Abelian} group. This is not a special feature of group objects, but rather a general phenomenon: a type of structure in the category of those structures is two of these structures with a comparison law between them. For example, a monoid in monoids is a commutative monoid and a monad in the $2$-category of monads consists of two monads over the same object together with a distributive law between those monads~\cite{street:formal-theory-monads}.

\par Even in the context of tangent categories, this phenomenon appears for vector fields: a vector field in the tangent category of vector fields is a pair of commutative vector fields, that is, a pair $X,Y\colon M\to\T M$ of vector fields, satisfying $[X,Y]=0$.

\par Our proposed definition for the curvature makes use of the following idea. A non-flat connection \emph{fails} to be a morphism of connections, and the obstruction is measured by the curvature. This idea was inspired by the definition of the curvature of a \emph{Koszul connection} $\K\colon\T E\to E$ on a differential bundle $\q\colon E\to M$, proposed in~\cite[Definition~3.20]{cockett:connections}, given by the formula:
\begin{align*}
&\Curv_\K=c_E\T\K\K-\T\K\K\colon\T^2E\to E
\end{align*}
From this expression, we have noticed that a Koszul connection $\K$ is in fact flat, that is, the curvature vanishes, if and only if $\K\colon(\T\q,c_E\T\K)\to(\T\q,c_E\T\K)$ is a morphism of Koszul connections, where $c_E\T\K$ turns out to be a Koszul connection on $\T\q$.

\par To make this idea precise, we need to modify slightly the context we are working on. Given a tangent display submersion $q\colon E\to M$, equipped with an Ehresmann connection $(\R,\H)$, the fundamental short exact sequence of $q$ is the sequence:
\begin{equation*}
\begin{tikzcd}
{\0_E} & {\q^\V} & {\p_E} & {\q^\F} & {\0_E}
\arrow[from=1-1, to=1-2]
\arrow["{\iota_q}", from=1-2, to=1-3]
\arrow["{\pi_q}", from=1-3, to=1-4]
\arrow[from=1-4, to=1-5]
\end{tikzcd}
\end{equation*}
Thanks to the properties of the vertical and the Finsler bundle, one can apply the tangent bundle functor and obtain a new short exact sequence:
\begin{equation*}
\begin{tikzcd}
{\T(\0_E)} & {\T(\q^\V)} & {\T(\p_E)} & {\T(\q^\F)} & {\T(\0_E)}
\arrow[from=1-1, to=1-2]
\arrow["{\T\iota_q}", from=1-2, to=1-3]
\arrow["{\T\pi_q}", from=1-3, to=1-4]
\arrow[from=1-4, to=1-5]
\end{tikzcd}
\end{equation*}
Notice that this is not the fundamental short exact sequence of $\T q$, because the central term is $\T\p_E$ instead of $\p_{\T E}$. One might also argue that, $\T(\q^\V)$ and $\T(\q^\F)$ are not the vertical and the Finsler bundle of $\T q$, since, by notation, they should be $(\T q)^\V$ and $(\T q)^\F$, instead. However, one should remember that the vertical bundle and the Finsler bundle of a tangent display map are only specified up to a unique isomorphism (Remarks~\ref{remark:vertical-bundle-choice} and~\ref{remark:finler-bundle-choice}), since they are defined by a choice of tangent pullbacks. This means that $\T(\q^\V)$ and $\T(\q^\F)$ are, in fact, different, but legitimate representations of the vertical and the Finsler bundles of $\T q$. With that in mind, we can use the canonical flip to make the central term, back to $\p_{\T E}$, obtaining a correct representation of the fundamental short exact sequence of $\T q$:
\begin{equation*}
\begin{tikzcd}
{\0_{\T E}} & {\T(\q^\V)} & {\p_{\T E}} & {\T(\q^\F)} & {\0_{\T E}}
\arrow[from=1-1, to=1-2]
\arrow["{\T\iota_q c_E}", from=1-2, to=1-3]
\arrow["{c_E \T\pi_q}", from=1-3, to=1-4]
\arrow[from=1-4, to=1-5]
\end{tikzcd}
\end{equation*}
With this choice of the vertical and the Finsler bundles, an Ehresmann connection $(\R,\H)$ on $q$, defines a new Ehresmann connection on $\T q$ given by $(c_E\T\R,\T\H c_E)$. Notice that with the standard choice of the vertical and the Finsler bundle, that is, $(\T q)^\V$ and $(\T q)^\F$, the corresponding Ehresmann connection on $\T q$ would be $(c_E\T\R\gamma^\V,\gamma^\F\T\H c_E)$, instead. Unfortunately, this choice does not work for our purposes, which is why we need to work with $\T(\q^\V)$ instead of $(\T q)^\V$.

\par It is easy to work with the connection forms, instead of directly with the vertical and the horizontal connections. So, in particular, $(c_E\T\phi,\T\psi c_E)$ defines an Ehresmann connection form for $\T q$.

\par Now, recall that given two tangent display maps $q\colon E\to M$ and $q'\colon E'\to M$ over the same object $M$, respectively equipped with a vertical connection form $\phi$ and $\phi'$, a \textbf{morphism of vertical connection forms} consists of a morphism $f\colon E\to E'$ that commutes with $q$ and $q'$, that is, $fq'=q$, and commutes with the connection forms; that is, $\T f\phi'=\phi\T f$.

\par We are now in the position to define the concepts of flatness and of curvature of an Ehresmann connection.

\begin{definition}
\label{definition:flat-connection}
An Ehresmann connection $(\R,\H)$ on a tangent display map $q\colon E\to M$ is \textbf{flat} provided that the following diagram
\begin{equation}
\label{equation:curvature}
\begin{tikzcd}
{\T^2E} & {\T^2E} \\
{\T^2E} & {\T^2E}
\arrow["{c_E\T\phi}", from=1-1, to=1-2]
\arrow["{\T\psi}"', from=1-1, to=2-1]
\arrow["{\T\psi}", from=1-2, to=2-2]
\arrow["{c_E\T\phi}"', from=2-1, to=2-2]
\end{tikzcd}
\end{equation}
commutes, where $\phi$ and $\psi$ denote the vertical and the horizontal connection forms of the connection.
\end{definition}

We may now give a fully categorical characterization of flatness.

\begin{proposition}
\label{proposition:flatness}
Consider a tangent display map $q\colon E\to M$. An Ehresmann connection $(\R,\H)$ is flat if and only if the horizontal connection form $\psi$ defines a morphism of vertical connection forms $(\id_{\T M},\psi)\colon(\T q,c_E\T\phi)\to(\T q,c_E\T\phi)$.
\end{proposition}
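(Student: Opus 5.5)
The proposition is essentially an unfolding of definitions, so the plan is to make the notion of morphism of vertical connection forms explicit in this situation and compare it with the square of Equation~\eqref{equation:curvature}.

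First I check that the data make sense. By the discussion preceding Definition~\ref{definition:flat-connection}, $c_E\T\phi$ is a vertical connection form on $\T q\colon\T E\to\T M$. This uses the choice of vertical bundle $\T(\q^\V)$ with inclusion $\T\iota_q c_E$, which is a legitimate representative of the vertical bundle of $\T q$ by Remark~\ref{remark:vertical-bundle-choice}. Next, the candidate morphism is the pair $(\id_{\T M},\psi)$, where $\psi\colon\T E\to\T E$ is the horizontal connection form. For this to be a morphism of bundles over $\T M$, in the sense recalled just before Definition~\ref{definition:flat-connection}, I need $\psi\T q=\T q$. This follows from \textbf{[HCF.1]} together with $\pi_q\T^\F q=\T q$:
\begin{align*}
\psi\T q=\psi\pi_q\T^\F q=\pi_q\T^\F q=\T q.
\end{align*}
So $\psi$ is always a morphism of the underlying objects of the slice over $\T M$, independently of flatness. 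It is also tangent display-compatible, since $\T q$ is tangent display by Theorem~\ref{theorem:maximal-tangent-display-system}.

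Second, I write out the compatibility condition. In the paper's diagrammatic notation, $f$ is a morphism of vertical connection forms when $\T f\,\phi'=\phi\,\T f$. Substituting $f=\psi$ and $\phi=\phi'=c_E\T\phi$, the condition becomes
\begin{align*}
\T\psi\, c_E\T\phi \;=\; c_E\T\phi\,\T\psi,
\end{align*}
which is exactly the commutativity of the square in Equation~\eqref{equation:curvature}. Hence flatness of $(\R,\H)$ is equivalent to $(\id_{\T M},\psi)$ being such a morphism, and both directions are immediate.

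The only point needing care is the first step, namely that $c_E\T\phi$ genuinely satisfies \textbf{[VCF.1]}--\textbf{[VCF.3]} for $\T q$ with respect to the chosen vertical bundle. This is what makes ``morphism of vertical connection forms'' meaningful. Each axiom transports along $\T$ and the flip: for \textbf{[VCF.1]}, $\T\iota_q c_E\,c_E\T\phi=\T(\iota_q\phi)=\T\iota_q$ since $c_Ec_E=\id$; for \textbf{[VCF.2]}, linearity follows from naturality of $c$ and the axiom $l\T c\, c_\T = c\T l$; for \textbf{[VCF.3]}, I use $c_E\T p_E=p_{\T E}$ and $\T z_M c_M=z_{\T M}$. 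The paper already asserts this fact, so I would cite it rather than reprove it.
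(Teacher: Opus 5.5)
Your core argument is the paper's own proof. You get $\psi\T q=\T q$ from \textbf{[HCF.1]} and $\pi_q\T^\F q=\T q$; the paper uses $\H\T q=\T^\F q$ from Lemma~\ref{lemma:horizontal-connection} instead, which makes no difference. You then read the compatibility condition $\T\psi\;c_E\T\phi=c_E\T\phi\;\T\psi$ off as the square~\eqref{equation:curvature}. As an unwinding of the definitions this is complete, and it is all the paper does.

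Your last paragraph, however, is wrong, even though you single it out as the point needing care, and citing the paper does not rescue it.

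\textbf{[VCF.1].} With the inclusion $\T\iota_qc_E$, this axiom requires $\T\iota_qc_E\;c_E\T\phi=\T\iota_qc_E$. Your computation gives $\T\iota_q$ instead, which is different in general.

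\textbf{[VCF.2].} In fact $c_E\T\phi$ is not even a bundle endomorphism of $\p_{\T E}$. Naturality of $p$ and $c_Ep_{\T E}=\T p_E$ give $c_E\T\phi\;p_{\T E}=\T p_E\;\phi\neq p_{\T E}$. For instance, for $q\colon E\to\ast$ in a Cartesian tangent category, \textbf{[VCF.1]} forces $\phi=\id_{\T E}$, and then $c_E\T\phi=c_E$ just swaps the two projections.

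\textbf{[VCF.3].} Here you land on $p_{\T E}\;\T q\;\T z_M$, not $p_{\T E}\;\T q\;z_{\T M}$. The identity $\T z_Mc_M=z_{\T M}$ only becomes usable after a trailing flip.

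The form actually determined by $c_E\T\R$ and $\T\iota_qc_E$ is $c_E\T\R\;\T\iota_qc_E=c_E\T\phi\;c_E$. This is the form the paper itself uses for the tangent bundle functor of $\AVC(\X,\TT)$, and all three axioms hold for it. The paper's sentence about $(c_E\T\phi,\T\psi c_E)$ drops a flip in each component.

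Your equivalence uses none of \textbf{[VCF.1]}--\textbf{[VCF.3]}, so it survives as a formal statement about the displayed equation. You should delete the well-definedness claim rather than cite it. Be aware also that with the genuine form the morphism condition becomes $\T\psi\;c_E\T\phi\;c_E=c_E\T\phi\;c_E\;\T\psi$, which is not the square~\eqref{equation:curvature}. In fact the square as written can only commute when the vertical bundle is trivial:
\begin{itemize}
\item its upper path is $c_E\T(\phi\psi)=p_{\T E}\T z_E$;
\item post-composing both paths with $p_{\T E}$ gives $\T p_E\;\phi=p_{\T E}p_Ez_E$;
\item precomposing with $\T z_E$ then yields $\phi=p_Ez_E$.
\end{itemize}
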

\begin{proof}
Let us start by assuming that the connection is flat. From Lemma~\ref{lemma:horizontal-connection}, we compute that $\psi\T q=\pi_q\H\T q=\pi_q\T^\F q=\T q$. Thus, $(\id_{\T M},\psi)\colon\T q\to\T q$ defines a morphism of tangent display maps. Moreover, the flatness condition establishes precisely that $\T\psi c_E\T\phi=c_E\T\phi\T\psi$, that is, that $(\id_{\T M},\psi)$ is a morphism of vertical connection forms. Conversely, the latter condition is precisely flatness.
\end{proof}

When the tangent category admits negatives, one can measure the failure of a connection to be flat by quantifying the failure of the diagram of Equation~\eqref{equation:curvature} to commute.

\begin{definition}
\label{definition:curvature}
The (\textbf{categorical}) \textbf{curvature} of an Ehresmann connection $(\R,\H)$ on a tangent display map $q\colon E\to M$ in a tangent category with negatives is the morphism
\begin{align*}
&\Curv_{(\R,\H)}\=c_E\T\phi\T\psi-\T\psi c_E\T\phi\colon\T^2E\to\T^2E
\end{align*}
\end{definition}

\begin{remark}
\label{remark:curvature-vertical}
Notice that the formula that defines the curvature of an Ehresmann connection involves both the vertical and the horizontal connection. However, as extensively discussed in the previous sections, with negatives, one is fully determined by the other. Therefore, it is natural to wonder if one may express the curvature entirely using the vertical or the horizontal component of the connection. This is possible, thanks to \textbf{[ECF.2]}, which establishes that $\phi+\psi=\id_{\T E}$. In particular, this allows us to write $\phi$ as $\id_{\T E}-\psi$ and viceversa, $\psi=\id_{\T E}-\phi$, when negatives are available. After some obvious computations, one obtains the following two formulas:
\begin{align*}
&\Curv_\R\=p_{\T E}\T z_E-c_E\T\phi+\T\phi c\T\phi    &&\Curv_\H\=p_{\T E}\T z_E-\T\psi c_E+\T\psi c_E\T\psi
\end{align*}
We will refer to $\Curv_\R$ and $\Curv_\H$ as the \textbf{vertical curvature} of a vertical connection and the \textbf{horizontal curvature} of a horizontal connection. When $\R$ and $\H$ are the vertical and the horizontal components of an Ehresmann connection, then $\Curv_\R=\Curv_{(\R,\H)}=\Curv_\H$.
\end{remark}

To our knowledge, the definition of curvature does not exist in the literature in this form. In fact, this definition relies on the categorical nature of tangent categories: the categorical curvature measures the failure of the diagram~\eqref{equation:curvature} to commute. This formulation of the curvature is \emph{intrinsic} since it does not rely on the language of tensors or differential forms. Pictorially, we can represent the curvature as a $2$-cell
\begin{equation*}
\begin{tikzcd}
{\T^2E} & {\T^2E} \\
{\T^2E} & {\T^2E}
\arrow["{c_E\T\phi}", from=1-1, to=1-2]
\arrow["{\T\psi}"', from=1-1, to=2-1]
\arrow["{\Curv_{(\R,\H)}}"{description}, Rightarrow, from=1-2, to=2-1]
\arrow["{\T\psi}", from=1-2, to=2-2]
\arrow["{c_E\T\phi}"', from=2-1, to=2-2]
\end{tikzcd}
\end{equation*}
that measures the failure of the horizontal connection form to be a morphism of vertical connection forms. In the following, we shall assume that the ambient tangent category admits negatives.

\begin{lemma}
\label{lemma:curvature}
The curvature $\Curv_{(\R,\H)}$ of an Ehresmann connection $(\R,\H)$ can be rewritten as follows:
\begin{align*}
&\Curv_{(\R,\H)}=p_{\T E}\T z_E-\T\psi c_E\T\phi
\end{align*}
\end{lemma}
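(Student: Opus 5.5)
The plan is to show that the first summand of the curvature simplifies, namely
\begin{align*}
&c_E\T\phi\T\psi=p_{\T E}\T z_E.
\end{align*}
Substituting this into Definition~\ref{definition:curvature} gives the stated formula at once. So the whole argument reduces to one identity in the diagrammatic convention, where $c_E\T\phi\T\psi$ means first $c_E$, then $\T\phi$, then $\T\psi$. Since $c_E$ is invertible, it suffices to show that $\T\phi\T\psi=c_Ep_{\T E}\T z_E$, or directly to compute $\T(\phi\psi)$ after the flip.

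The key input is \textbf{[ECF.1]}, which in diagrammatic order reads $\psi\phi=p_Ez_E$: the vertical part of a horizontal vector is zero. The composite that actually appears is $\T\phi\T\psi=\T(\phi\psi)$, in the opposite order, so I also need $\phi\psi=p_Ez_E$. I would derive this from the splitting.

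\textbf{Deriving $\phi\psi=p_Ez_E$.} Recall $\phi=\R\iota_q$ and $\psi=\pi_q\H$. Then
\begin{align*}
&\phi\psi=\R\iota_q\pi_q\H.
\end{align*}
By Lemma~\ref{lemma:vertical-bundle-kernel}, $\iota_q\pi_q=q^\V z_q^\F$. Since $\H$ is linear it preserves zeros, so $z_q^\F\H=z_E$. Hence
\begin{align*}
&\phi\psi=\R q^\V z_E=p_Ez_E,
\end{align*}
using $\R q^\V=p_E$ from \textbf{[VC.2]}. This is the same computation as \textbf{[HCF.3]}. Applying $\T$ gives $\T\phi\T\psi=\T p_E\T z_E$.

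\textbf{Commuting the flip past.} It then remains to simplify $c_E\T p_E\T z_E$. By the tangent category axiom $c_E\T p_E=p_{\T E}$, this equals $p_{\T E}\T z_E$, which is exactly the required first term.

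The only genuine step is keeping the diagrammatic composition order straight. One must check that the order needed is $\phi\psi$ and not the one literally stated in \textbf{[ECF.1]}, and that the flip identity is $c\T p=p_{\T}$ rather than its transpose. If the order came out the other way, the fallback is to use \textbf{[ECF.1]} directly together with $c_E\T(p_Ez_E)=p_{\T E}\T z_E$; the argument goes through identically either way.
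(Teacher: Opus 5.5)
Your proof is correct, and its skeleton is the paper's. Both reduce the lemma to $c_E\T\phi\T\psi=p_{\T E}\T z_E$ by showing $\phi\psi=p_Ez_E$, applying $\T$, and finishing with $c_E\T p_E=p_{\T E}$.

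The difference is in how you obtain $\phi\psi=p_Ez_E$. The paper uses \textbf{[ECF.2]} with negatives to write $\T\psi=\id_{\T^2E}\ominus\T\phi$. Idempotence of $\phi$ then gives $\T\phi\T\psi=\T\phi\ominus\T\phi=\T p_E\T z_E$. You instead factor $\phi\psi=\R\iota_q\pi_q\H$ through the vertical and Finsler bundles. You then use three facts:
\begin{itemize}
\item the kernel square $\iota_q\pi_q=q^\V z_q^\F$;
\item zero-preservation $z_q^\F\H=z_E$ of the linear map $\H$;
\item $\R q^\V=p_E$.
\end{itemize}
In effect this is \textbf{[HCF.3]} precomposed with $\R$.

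The two routes buy different things. Yours needs neither negatives, nor idempotence, nor the compatibility axioms \textbf{[EC.1]}--\textbf{[EC.2]}: it shows $\phi_\R\psi_\H=p_Ez_E$ for any vertical connection and any horizontal connection on $q$. The paper's route is purely algebraic in the forms and never mentions $\V q$ or $\F q$. It therefore applies verbatim to any linear idempotent $\phi$ paired with $\psi=\id_{\T E}-\phi$, at the cost of negatives, which are assumed anyway for the curvature to make sense.

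Your attention to composition order is well placed. \textbf{[ECF.1]} gives $\psi\phi$, the order used later in the proof of Theorem~\ref{theorem:curvature}, whereas this lemma needs $\phi\psi$. The last line of the paper's proof even writes $c_E\T\psi\T\phi$ where $c_E\T\phi\T\psi$ is meant.
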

\begin{proof}
The proof follows immediately from the following computation. Recall that, from \textbf{[ECF.2]}, we can write $\psi$ as $\id_{\T E}-\phi$. Thus, using the fact that $\phi$ is an idempotent, we shall compute that
\begin{align*}
&\T\phi\T\psi=\T\phi(\id_{\T^2E}\ominus\T\phi)=\T\phi\ominus\T\phi^2=\T\phi\ominus\T\phi=\T p_E\T z_E
\end{align*}
where $\ominus$ denotes the difference operator induced by $\T s_E$ and $\T n_E$ (in contrast with $-$ which is the difference operator induced by $s_{\T E}$ and $n_{\T E}$). From this computation, it is immediate to rewrite $c_E\T\psi\T\phi$ as $p_{\T E}\T z_E$, since $c_E\T p_E=p_{\T E}$.
\end{proof}

Thanks to the previous lemma, we shall now prove that the curvature precisely measures the failure of the horizontal distribution to be involutive.

\begin{theorem}
\label{theorem:curvature}
Consider a tangent display map $q\colon E\to M$ equipped with an Ehresmann connection $(\R,\H)$. Given two horizontal vector fields $X_\H,Y_\H\colon E\to\T E$, the following formula holds:
\begin{align*}
&\left[X_\H,Y_\H\right]_\V=\left\{Y_\H\T X_\H\Curv_{(\R,\H)}\right\}
\end{align*}
where $Z_\V\=Z\phi$ denotes the vertical component of a vector field $Z\colon E\to\T E$. Moreover, if the connection $(\R,\H)$ is flat, the horizontal distribution $\q^\H$ is involutive.
\end{theorem}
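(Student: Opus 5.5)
We will derive the formula by applying the vertical connection form $\phi$ to the Lie bracket and then rewriting the result through Lemma~\ref{lemma:curvature}. We start from the definition $[X_\H,Y_\H]=\{X_\H\T Y_\H-Y_\H\T X_\H c_E\}$, and write $X\=X_\H$, $Y\=Y_\H$ for short. By Corollary~\ref{corollary:horizontal-vector-fields-connection} we have $X=X\psi$ and $Y=Y\psi$. By \textbf{[ECF.1]}, $\psi\phi=p_Ez_E$.

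The first step is to push $\phi$ inside the bracket operator $\{-\}$. The operation $\{-\}$ is defined through the universal property of the vertical lift $l_E$. The map $\phi$ is linear by \textbf{[VCF.2]}, so $l_E\T\phi=\phi l_E$. From this we expect the naturality identity $\{f\}\phi=\{f\T\phi\}$, valid for every $f\colon E\to\T^2E$ in the domain of $\{-\}$. We prove it by composing the defining equation $\tilde f\xi_E=f$ with $\T\phi$, using that $\xi_E\T\phi=(\phi\times\phi)\xi_E$ by linearity of $\phi$. We also need that $\T\phi$ preserves the relevant sums and differences in the ``$+$'' structure induced by $s_{\T E}$. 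We justify this using that $\T\phi$ is additive for $\T s_E$, which is how $\ominus$ enters in Lemma~\ref{lemma:curvature}, and that $c_E$ swaps the two additive structures. This gives
\begin{align*}
[X,Y]_\V=\{X\T Y\T\phi-Y\T Xc_E\T\phi\}.
\end{align*}

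Next we kill the first term. Since $Y=Y\psi$, we get $X\T Y\T\phi=X\T Y\T\psi\T\phi$. By \textbf{[ECF.1]}, $\T\psi\T\phi=\T p_E\T z_E$. Hence $X\T Y\T\phi=X\T Y\T p_E\T z_E=X\T z_E=Xz_{\T E}\T\ldots$, which is a zero (vertical lift of zero) term and contributes nothing inside $\{-\}$. For the second term we write $X=X\psi$, so $Y\T Xc_E\T\phi=Y\T X\T\psi c_E\T\phi$. Lemma~\ref{lemma:curvature} gives $\T\psi c_E\T\phi=p_{\T E}\T z_E-\Curv_{(\R,\H)}$. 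Therefore
\begin{align*}
-Y\T X\T\psi c_E\T\phi=Y\T X\Curv_{(\R,\H)}-Y\T Xp_{\T E}\T z_E,
\end{align*}
and the last summand is again a zero term, because $Y\T Xp_{\T E}=Yp_E X$. Collecting terms gives $[X,Y]_\V=\{Y\T X\Curv_{(\R,\H)}\}$. We must check that each intermediate expression lies in the domain of $\{-\}$, i.e.\ equalizes $\T p_E$ and $p_{\T E}p_Ez_E$. This follows since the curvature is built from linear maps and $\Curv\,\T p_E=\Curv\, p_{\T E}p_Ez_E$ can be computed from $\phi p_E=p_E$.

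For the flatness claim: if the connection is flat, then $\Curv_{(\R,\H)}$ is the zero map, so $[X,Y]\phi=0$. By Theorem~\ref{theorem:unique-decomposition-of-vectors}, $[X,Y]=[X,Y]\psi$, and Corollary~\ref{corollary:horizontal-vector-fields-connection} then says the bracket is horizontal, so $\q^\H$ is involutive. The main obstacle is the first step: justifying that $\phi$ commutes with $\{-\}$ and with the difference in the bracket. This requires careful bookkeeping between the two additive structures on $\T^2E$ (via $s_{\T E}$ versus $\T s_E$) and the flip. We would handle it through a general lemma: for a linear endomorphism $\phi$ of $\p_E$, postcomposition by $\T\phi$ respects the $s_{\T E}$-sum on maps in the domain of $\{-\}$, and $\{f\}\phi=\{f\T\phi\}$.
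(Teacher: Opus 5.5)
Your route is the paper's: push $\phi$ inside $\{-\}$ using linearity, insert $\psi$ via horizontality, collapse $\T\psi\T\phi$ with \textbf{[ECF.1]}, and rewrite $\T\psi c_E\T\phi$ with Lemma~\ref{lemma:curvature}. The step that fails is where you discard $X\T z_E$, twice, as a ``zero term''.

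The Lie bracket and $\{-\}$ are computed in the additive structure of $p_{\T E}$ (sum $s_{\T E}$, negation $n_{\T E}$). In that structure $X\T z_E$ is not zero. It lies over $z_E$, but it is not $z_Ez_{\T E}$: applying $\T p_E$ gives $X$ on one side and $z_E$ on the other. What is true is that $X\T z_E=Xz_{\T E}c_E$ is the zero of the \emph{other} structure, the one of $\T p_E$, over $X$. This is exactly the confusion between the two additive structures on $\T^2E$ that you warned about. For the same reason, $X\T z_E$ is not even in the domain of $\{-\}$ on its own, since $X\T z_E\T p_E=X$. So it cannot ``contribute nothing'' there.

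The correct argument, as in the paper, is that your two leftover terms are the same map, $X\T Y\T\phi=X\T z_E=Y\T Xp_{\T E}\T z_E$, occurring with opposite signs. They therefore cancel in the group of maps $E\to\T^2E$ lying over $z_E$:
\begin{align*}
&[X,Y]_\V=\left\{X\T z_E+Y\T X\Curv_{(\R,\H)}-X\T z_E\right\}=\left\{Y\T X\Curv_{(\R,\H)}\right\}.
\end{align*}
You already computed both pieces, so the repair is immediate. As written, however, the derivation rests on a false claim.

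Two smaller points.

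First, distributing $\T\phi$ over the $s_{\T E}$-difference is not an obstacle. $\T$ of any map commutes with $s_{\T E}$ and $n_{\T E}$ by naturality of $s$ and $n$, with no need for linearity or the flip. Linearity is only used for $\{f\}\phi=\{f\T\phi\}$, which your argument via $\xi_E\T\phi=(\phi\times\phi)\xi_E$ handles correctly.

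Second, the identity $\Curv_{(\R,\H)}\T p_E=\Curv_{(\R,\H)}p_{\T E}p_Ez_E$ does not hold for the curvature on its own. From $p_{\T E}\T z_E\T p_E=p_{\T E}$ and $\T\psi c_E\T\phi\T p_E=p_{\T E}\psi$ one gets $\Curv_{(\R,\H)}\T p_E=p_{\T E}\phi$, which in general does not factor through $z_E$. No separate domain check is needed anyway. Membership in the domain of $\{-\}$ is inherited along your chain of equalities from $X\T Y-Y\T Xc_E$, because $\T\phi$ preserves that domain whenever $\phi p_E=p_E$.
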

\begin{proof}
Consider two horizontal vector fields $X_\H$ and $Y_\H$. By definition, the Lie bracket of $X_\H$ and $Y_\H$ can be written as follows:
\begin{align*}
&\left[X_\H,Y_\H\right]=\left\{X_\H\T Y_\H-Y_\H\T X_\H c_E\right\}
\end{align*}
Now, using the properties of the bracket $\{-\}$ proved in~\cite[Lemma~2.14]{cockett:tangent-cats}, $\{f\}\phi=\{f\T\phi\}$, provided that the two sides of the equation are well-defined. However, when $\phi$ is linear, both the two sides are defined provided that $\{f\}$ is. Therefore
\begin{align*}
&\left[X_\H,Y_\H\right]_\V=\left[X_\H,Y_\H\right]\phi=\left\{X_\H\T Y_\H-Y_\H\T X_\H c_E\right\}\phi=\left\{\left(X_\H\T Y_\H-Y_\H\T X_\H c_E\right)\T\phi\right\}=\\
&\qquad=\left\{X_\H\T Y_\H\T\phi-Y_\H\T X_\H c_E\T\phi\right\}
\end{align*}
where, in the last step, we used the naturality of $s$ and $n$ to distribute $\T\phi$. However, since by Corollary~\ref{corollary:horizontal-vector-fields-connection} $X_\H=X_\H\psi$ and $Y_\H=Y_\H\psi$, we can also write:
\begin{align*}
&X_\H\T Y_\H\T\phi-Y_\H\T X_\H c_E\T\phi=X_\H\T Y_\H\T\psi\T\phi-Y_\H\T X_\H\T\psi c_E\T\phi
\end{align*}
By \textbf{[ECF.1]}, $\psi\phi=p_Ez_E$, therefore, we can rewrite the first term as follows
\begin{align*}
&X_\H\T Y_\H\T\psi\T\phi=X_\H\T Y_\H\T p_E\T z_E=X_\H\T(Y_\H p_E)\T z_E=X_\H\T z_E
\end{align*}
where we used that $Y_\H$ is a vector field and therefore, $Y_\H p_E=\id_E$. Now, the second term can be rearranged using Lemma~\ref{lemma:curvature}, as follows:
\begin{align*}
&-Y_\H\T X_\H\T\psi c_E\T\phi=Y_\H\T X_\H(p_{\T E}\T z_E-\T\psi c_E\T\phi-p_{\T E}\T z_E)=Y_\H\T X_\H(\Curv_{(\R,\H)}-p_{\T E}\T z_E)=\\
&\qquad=Y_\H\T X_\H\Curv_{(\R,\H)}-Y_\H\T X_\H p_{\T E}\T z_E
\end{align*}
However, using the naturality of $p$ and that $Y_\H$ is a vector field, we can also write the last term as follows:
\begin{align*}
&Y_\H\T X_\H p_{\T E}\T z_E=Y_\H p_EX_\H\T z_E=X_\H\T z_E
\end{align*}
Putting all the pieces together, we finally compute:
\begin{align*}
&\left[X_\H,Y_\H\right]_\V=\left\{X_\H\T Y_\H\T\psi\T\phi-Y_\H\T X_\H\T\psi c_E\T\phi\right\}=\left\{X_\H\T z_E+Y_\H\T X_\H\Curv_{(\R,\H)}-X_\H\T z_E\right\}
\end{align*}
The terms $X_\H\T z_E$ cancel each other out, leaving us with the desired formula:
\begin{align*}
&\left[X_\H,Y_\H\right]_\V=\left\{Y_\H\T X_\H\Curv_{(\R,\H)}\right\}
\end{align*}
Finally, if the connection is flat, $\Curv_{(\R,\H)}$ vanishes, that is, $[X_\H,Y_\H]_\V=0$. This means that the vertical component of the Lie bracket $[X_\H,Y_\H]$ of two horizontal vector fields vanishes. However, thanks to Proposition~\ref{theorem:unique-decomposition-of-vectors}, a vector field $Z\colon E\to\T E$ can be uniquely decomposed into the sum $Z_\V+Z_\H$ of a vertical vector field $Z_\V$ and a horizontal vector field $Z_\H$. Therefore, if the vertical component $[X_\H,Y_\H]_\V$ vanishes, means that $[X_\H,Y_\H]$ is necessarily horizontal, that is, the horizontal distribution is involutive.
\end{proof}

In general, requiring the horizontal distribution to be involutive does not suffice to show that the connection is flat. However, this implication holds when every morphism $f\colon\T^2E\to N$ is uniquely determined by the assignment
\begin{align*}
&\hat f\colon\HVF(\X,\TT;q)\times\HVF(\X,\TT;q)\to\X(E,N) &&\hat f(X_\H,Y_\H)\=Y_\H\T X_\H f
\end{align*}
which sends a pair of horizontal vector fields to the map $Y_\H\T X_\H F$. In this case, we shall say that horizontal vector fields \emph{separate} the morphisms of type $\T^2E\to N$.

\begin{definition}
\label{definition:separability}
In a tangent category $(\X,\TT)$, horizontal vector fields \textbf{separate linear maps} provided that, given a tangent display map $q\colon E\to M$ equipped with a horizontal connection $\H$, for every $n\geq0$, for every differential bundle $\q'$ and every pair of linear morphisms $f,g\colon\T^n\p_E\to\q'$ of differential bundles, the equations
\begin{align*}
&X_\H^1\T X_\H^2\dots\T^nX_\H^nf=X_\H^1\T X_\H^2\dots\T^nX_\H^ng
\end{align*}
for every $n$-tuple $X_\H^1\,X_\H^n$ of horizontal vector fields, imply that $f=g$.
\end{definition}

\begin{corollary}
\label{corollary:curvature}
In a tangent category with negatives where horizontal vector fields separate linear maps, an Ehresmann connection $(\R,\H)$ on a tangent display map $q\colon E\to M$ is flat if and only if the horizontal distribution $\q^\H$ is involutive.
\end{corollary}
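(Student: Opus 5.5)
The forward implication is already contained in Theorem~\ref{theorem:curvature}: if the connection is flat, then $\Curv_{(\R,\H)}$ vanishes and the horizontal distribution $\q^\H$ is involutive. So the real work is the converse. My plan is to show that involutivity forces $Y_\H\T X_\H\Curv_{(\R,\H)}=Y_\H\T X_\H\,(p_{\T E}\T z_E)$ for every pair of horizontal vector fields, and then to apply separability with $n=2$ to conclude $\Curv_{(\R,\H)}=p_{\T E}\T z_E$, which is the zero of the appropriate bundle. That equality means exactly that the square of Equation~\eqref{equation:curvature} commutes.

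First, assume $\q^\H$ is involutive and fix horizontal vector fields $X_\H,Y_\H$. By involutivity, $[X_\H,Y_\H]$ is horizontal, so by Corollary~\ref{corollary:horizontal-vector-fields-connection} it equals $[X_\H,Y_\H]\psi$. Combining \textbf{[ECF.1]} with the uniqueness in Theorem~\ref{theorem:unique-decomposition-of-vectors}, its vertical part is then $[X_\H,Y_\H]_\V=[X_\H,Y_\H]\psi\phi=z_E$. By Theorem~\ref{theorem:curvature}, this gives $\{Y_\H\T X_\H\Curv_{(\R,\H)}\}=z_E$.

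Second, I need to undo the bracket $\{-\}$. The morphism $\{f\}$ is $\tilde f\pi_1$, where $\tilde f\xi_E=f$ and $\pi_2$ records the base point. Here the base point is $Y_\H\T X_\H\Curv_{(\R,\H)}p_{\T E}p_E$, which simplifies to $\id_E$ once I show $\Curv_{(\R,\H)}p_{\T E}=\T p_E\,z_E$-type identities using Lemma~\ref{lemma:curvature}. Since $\xi_E$ is monic, $\{f\}=z_E$ forces $f=\<z_E,\id\>\xi_E$ applied appropriately, which is $z_E\T z_E$. This is the same as the value of $Y_\H\T X_\H$ composed with $p_{\T E}\T z_E$, because $Y_\H\T X_\H p_{\T E}\T z_E=X_\H p_E z_E\T z_E$-style naturality collapses it to $z_E\T z_E$. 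Hence $Y_\H\T X_\H\Curv_{(\R,\H)}=Y_\H\T X_\H\,p_{\T E}\T z_E$.

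Third, I apply Definition~\ref{definition:separability} with $n=2$, taking $f=\Curv_{(\R,\H)}$ and $g=p_{\T E}\T z_E$ as maps out of $\T^2E$. The main obstacle is checking that both are linear morphisms $\T^2\p_E\to\q'$ for a single differential bundle $\q'$, so that separability applies. I would take $\q'$ to be $\T\p_E$ (or $\p_{\T E}$, whichever matches the convention in $\T^n\p_E$). The map $g$ is clearly linear. For $f$, I would use Lemma~\ref{lemma:curvature} to write $\Curv_{(\R,\H)}=p_{\T E}\T z_E-\T\psi c_E\T\phi$, and verify linearity of each summand from \textbf{[VCF.2]}, \textbf{[HCF.2]} and the linearity of $c_E$ and $l$, using that differences of linear maps are linear by Lemma~\ref{lemma:additivity-differential-bundles}. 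Once separability yields $f=g$, it follows that $\T\psi c_E\T\phi=p_{\T E}\T z_E$. Combined with $c_E\T\phi\T\psi=p_{\T E}\T z_E$ (the computation in the proof of Lemma~\ref{lemma:curvature}), both composites in Equation~\eqref{equation:curvature} coincide, so the connection is flat.
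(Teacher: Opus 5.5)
Your route is the paper's up to the point where you undo the bracket: involutivity kills $[X_\H,Y_\H]_\V$, Theorem~\ref{theorem:curvature} gives $\{Y_\H\T X_\H\Curv_{(\R,\H)}\}=z_E$, and the vertical lift then gives $Y_\H\T X_\H\Curv_{(\R,\H)}=z_E\T z_E$. This agrees with the paper's $z_Ez_{\T E}$ by naturality of $z$.

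The gap is your comparison morphism $g=p_{\T E}\T z_E$. It is not the zero that $\Curv_{(\R,\H)}$ should equal. By the computation in the proof of Lemma~\ref{lemma:curvature}, it is the composite $c_E\T\phi\T\psi$, one leg of the square~\eqref{equation:curvature}. As a result, the identity you feed into separability is false. By naturality of $p$ and $Y_\H p_E=\id_E$,
\[ Y_\H\T X_\H\,p_{\T E}\T z_E=Y_\H p_EX_\H\T z_E=X_\H\T z_E . \]
Since $\T z_E$ is monic (a section of $\T p_E$), this equals $z_E\T z_E$ only when $X_\H=z_E$. Your naturality collapse needs a factor $p_Ez_E$ after $X_\H$ that is not there. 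So the hypothesis of Definition~\ref{definition:separability} fails for the pair $(\Curv_{(\R,\H)},p_{\T E}\T z_E)$.

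Your last step is also inconsistent with this choice of $g$. With the paper's convention that the difference in $\Curv_{(\R,\H)}$ is taken using $s_{\T E}$ and $n_{\T E}$, the equation $\Curv_{(\R,\H)}=p_{\T E}\T z_E$ together with Lemma~\ref{lemma:curvature} would give $\T\psi c_E\T\phi=0$. It would not give $\T\psi c_E\T\phi=p_{\T E}\T z_E$.

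The repair, which is exactly the paper's argument, is to compare with the genuine zero $g=p_{\T E}p_Ez_Ez_{\T E}$.
\begin{itemize}
\item Naturality of $p$ and $X_\H p_E=Y_\H p_E=\id_E$ give $Y_\H\T X_\H\,g=Y_\H p_EX_\H p_Ez_Ez_{\T E}=z_Ez_{\T E}=z_E\T z_E$. This matches $Y_\H\T X_\H\Curv_{(\R,\H)}$.
\item Separability then yields $\Curv_{(\R,\H)}=p_{\T E}p_Ez_Ez_{\T E}$, so the curvature vanishes.
\item Adding $\T\psi c_E\T\phi$ to both sides of $p_{\T E}\T z_E-\T\psi c_E\T\phi=0$, and using $p_{\T E}\T z_E=c_E\T\phi\T\psi$, gives $c_E\T\phi\T\psi=\T\psi c_E\T\phi$, which is flatness.
\end{itemize}
Your worry about checking linearity of $f$ and $g$ before invoking separability is legitimate; the paper applies separability without checking it. It is secondary, though, to the mis-identified zero.
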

\begin{proof}
By Theorem~\ref{theorem:curvature}, we already know that if the connection is flat, the horizontal distribution is necessarily involutive. Now, assume that the horizontal distribution is involutive, that is, for every pair of horizontal vector fields $X_\H$ and $Y_\H$, the Lie bracket $[X_\H,Y_\H]$ is again a horizontal vector field. From the decomposition of Proposition~\ref{theorem:unique-decomposition-of-vectors}, we conclude that the vertical component $[X_\H,Y_\H]_\V$ must vanish. Therefore, by the formula of Theorem~\ref{theorem:curvature}, $\{Y_\H\T X_\H\Curv_{(\R,\H)}\}=0$. However, using the universal property of the vertical lift, this implies that $Y_\H\T X_\H\Curv_{(\R,\H)}$ must vanish, that is, $Y_\H\T X_\H\Curv_{(\R,\H)}=z_Ez_{\T E}$. However, since $X_\H$ and $Y_\H$ are vector fields, $z_Ez_{\T E}=X_\H p_EY_\H p_Ez_Ez_{\T E}z_Ez_{\T E}$, which, by using the naturality of $p$, can be rewritten as $X_\H p_EY_\H p_Ez_Ez_{\T E}=X_\H\T Y_\H p_{\T E}p_Ez_Ez_{\T E}$. Finally, by the separability condition, this implies that $\Curv_{(\R,\H)}=p_{\T E}p_Ez_Ez_{\T E}$. Therefore, since the curvature vanishes, the connection is flat.
\end{proof}

The separability condition holds in important cases, such as in the tangent category of finite-dimensional smooth manifolds. This is one of the reasons why differential geometers prefer to work directly with the curvature tensor instead of with an intrinsic notion of curvature as in Definition~\ref{definition:curvature}.

\subsection{The structural equation and the Bianchi identity}
\label{subsection:bianchi}
An important identity proved in differential geometry, known as Cartan's structural equation, used in the context of principal connections on principal bundles~\cite{hamilton:gauge-theory}, relates the curvature of a connection with the exterior differential of the connection. In this section, we extend this identity to the tangent-categorical context by introducing a proper notion of the exterior differential. We will use this identity to prove another important identity: the Bianchi identity. In gauge field theory, this identity establishes a local law of conservation of energy. We advise the interested reader to see~\cite{hamilton:gauge-theory}.

We may start by introducing the exterior differential. As before, let us assume the ambient tangent category has negatives.

\begin{definition}
\label{definition:differential-forms}
A (\textbf{vector-valued}) \textbf{differential form of rank $n$} on an object $E$ consists of a function
\begin{align*}
&\omega\colon\VF_n(\X,\TT;E)\to\VF(\X,\TT;E)
\end{align*}
where $\VF_n(\X,\TT;E)\=\VF(\X,\TT;E)\times{\dots}\times\VF(\X,\TT;E)$ and $n\geq1$. Furthermore, $\omega$ is required to be \textbf{additive in each variable}, that is,
\begin{align*}
&\omega(X_1\,0\,X_n)=0\\
&\omega(X_1\,X_i+Y_i\,X_n)=\omega(X_1\,X_i\,X_n)+\omega(X_1\,Y_i\,X_n)
\end{align*}
and \textbf{alternating}, that is
\begin{align*}
&\omega(X_{\sigma(1)}\,X_{\sigma(n)})=(-)^\sigma\omega(X_1\,X_n)
\end{align*}
where $(-)^\sigma$ denotes the sign of the permutation $\sigma$ on $n$ indices.
\end{definition}

We may define the \textbf{exterior derivative} of a differential form $\omega$ by the following formula
\begin{align}
\begin{split}
\label{equation:exterior-derivative}
&\d\omega\colon\VF_{n+1}(\X,\TT;E)\to\VF(\X,\TT;E)\\
&\d\omega(X_1\,X_{n+1})\=\sum_{k=1}^{n+1}(-)^{k+1}\left[X_k,\omega\left(X_1\,\hat X_k\,X_{n+1}\right)\right]+\\
&\qquad+\sum_{i<j}(-)^{i+j}\omega\left(\left[X_i,X_j\right],X_1\,\hat X_i\,\hat X_j\,X_{n+1}\right)
\end{split}
\end{align}
where $\hat X_k$ indicates the omission of that vector field.

\begin{remark}
\label{remark:exterior-derivative}
Formula~\eqref{equation:exterior-derivative} extends to vector-valued differential forms the \emph{invariant formula} of the exterior derivative of ordinary differential forms of a smooth manifold, as in~\cite[Proposition~14.32]{lie:smooth-manifolds}. In its original formulation for ordinary differential forms, the Lie bracket $[X_k,\omega(\dots)]$ in the first sum, are replaced by the Lie derivative $\mathscr{L}_X\omega$.
\end{remark}

\begin{theorem}
\label{theorem:exterior-differential}
The exterior differential of vector-valued differential forms squares to zero, that is, the following formula
\begin{align*}
&\d\d\omega=0
\end{align*}
holds for every vector-valued differential form $\omega$.
\end{theorem}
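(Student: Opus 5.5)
The plan is to recognize formula~\eqref{equation:exterior-derivative} as the Chevalley--Eilenberg coboundary of the Lie algebra $\mathfrak g\=\VF(\X,\TT;E)$ with coefficients in its adjoint module, and then to run the classical proof that this coboundary squares to zero. The classical proof uses only three ingredients, which I would verify first in our setting.
\begin{enumerate}
\item The bracket $[-,-]$ on vector fields is biadditive and antisymmetric.
\item The bracket satisfies the Jacobi identity (the latter established for tangent categories with negatives in~\cite{cockett:jacobi}).
\item The operations $+$, $0$ and $-$ on $\VF(\X,\TT;E)$ make it an abelian group.
\end{enumerate}
The classical argument never uses scalars beyond $\mathbb Z$, so additivity in each variable of $\omega$ is all the multilinearity needed.

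Step one is to check that $\d\omega$ is again a differential form of rank $n+1$. Additivity in each variable follows from biadditivity of the bracket together with additivity of $\omega$. For alternation it suffices to check transpositions of adjacent arguments $X_m,X_{m+1}$. In the first sum, the two terms with $k=m,m+1$ swap into each other with the correct sign, and every other term changes sign by the alternation of $\omega$. In the second sum, the term with $(i,j)=(m,m+1)$ changes sign by antisymmetry of the bracket, and the remaining terms are permuted among themselves with compensating signs.

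Step two is to expand $\d\d\omega(X_1\,X_{n+2})$ by applying the formula twice and to sort the resulting terms into five families, each of which I would show cancels separately.
\begin{enumerate}
\item[(A)] Double brackets $[X_k,[X_l,\omega(\dots\hat X_k\dots\hat X_l\dots)]]$. Pairing the $(k,l)$ and $(l,k)$ terms and using Jacobi in the form $[X_k,[X_l,Z]]-[X_l,[X_k,Z]]=[[X_k,X_l],Z]$, this family collapses to a sum of terms $[[X_k,X_l],\omega(\dots)]$.
\item[(B)] Terms $[[X_i,X_j],\omega(\dots)]$ produced when the outer first sum meets the inner first sum with $X_k=[X_i,X_j]$. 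These cancel the collapsed family (A) exactly.
\item[(C)] Mixed terms $[X_k,\omega([X_i,X_j],\dots)]$, which arise in two ways: outer bracket applied to an inner second-sum term, and outer second sum applied to an inner first-sum term. These cancel pairwise with opposite signs.
\item[(D)] Terms $\omega([[X_i,X_j],X_k],\dots)$. For each triple $i<j<k$ these appear as a cyclic sum, which vanishes by Jacobi.
\item[(E)] Terms $\omega([X_i,X_j],[X_k,X_l],\dots)$ with $\{i,j\}\cap\{k,l\}=\emptyset$. Each appears twice with opposite signs, because $\omega$ is alternating.
\end{enumerate}

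The main obstacle is the sign bookkeeping in step two, especially in families (C) and (D). There the position of the inserted bracket and the omitted indices shift the signs $(-)^{k+1}$ and $(-)^{i+j}$ depending on the relative order of $i,j,k$. I would handle this by first reordering every term, using alternation of $\omega$, so that bracket arguments come first and the remaining $X$'s appear in increasing order, and then comparing signs case by case on $i<j<k$.

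If the direct route becomes unwieldy, I would instead set up the contraction $\iota_X\omega\=\omega(X,-)$ and the Lie derivative $\mathscr L_X\omega(Y_1\,Y_n)\=[X,\omega(Y_1\,Y_n)]-\sum_i\omega(Y_1\,[X,Y_i]\,Y_n)$. Both are purely algebraic, so the usual Cartan identities $\mathscr L_X=\iota_X\d+\d\iota_X$ and $\d\mathscr L_X=\mathscr L_X\d$ hold here. With these in hand, I would prove $\iota_X\d\d\omega=0$ by induction on the rank: $\iota_X\d\d\omega=\mathscr L_X\d\omega-\d\iota_X\d\omega=\d\mathscr L_X\omega-\d(\mathscr L_X\omega-\d\iota_X\omega)=\d\d\iota_X\omega$, which vanishes by the inductive hypothesis. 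Since $X$ is arbitrary, this gives $\d\d\omega=0$.
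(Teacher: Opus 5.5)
Your proposal is correct and follows essentially the same route as the paper. Both expand $\d\d\omega$ directly into the same families of terms: the mixed terms $[X_k,\omega([X_i,X_j],\dots)]$ cancel pairwise, the double brackets combine with the $[[X_i,X_j],\omega(\dots)]$ terms and vanish by Jacobi and antisymmetry, the terms $\omega([[X_i,X_j],X_l],\dots)$ vanish by Jacobi on each triple, and the terms $\omega([X_i,X_j],[X_k,X_l],\dots)$ cancel by alternation of $\omega$. Your preliminary check that $\d\omega$ is again additive and alternating, which the paper leaves implicit, and your Cartan-calculus fallback are sound additions but not needed.
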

\begin{proof}
Despite being a standard result in differential geometry known as the Poincar\'e lemma, in our setting, we lack a local expression for differential forms, since in a general tangent category, there is no well-defined notion of local charts. Unfortunately, this identity is usually proved using the local form of differential forms, which makes it an easy exercise. It is not as trivial when using the invariant formula we have used here.  Therefore, we will give a full proof of this result. We start by introducing some handy notation that will simplify the computations.

\par For starters, we shall denote by $\Altsum_ia_i$, an alternating sum of terms $a_1\,a_n$, that is, the sum $\sum_{i=1}^n(-)^{i+1}a_i$. This notation will be useful when we need to change the meaning of the index $i$, while keeping track of the alternating sign. Secondly, we shall denote by $\vec X$ a tuple of vector fields $X_1\,X_n$ and by $\vec X_{\hat i_1\,\hat i_k}$ the tuple $X_1\,\hat X_{i_1}\,\hat X_{i_k}\,X_n$, where the $i_1$-th-$1_k$-th terms have been removed from $\vec X$. To become comfortable with this notation, we shall start by rewriting the defining formula of the exterior derivative for an arbitrary differential form $\theta$:
\begin{align*}
&\d\theta\left(\vec X\right)=\Altsum_k\left[X_k,\theta\left(\vec X_{\hat k}\right)\right]+\sum_{i<j}\left(-\right)^{i+j}\theta\left(\left[X_i,X_j\right],\vec X_{\hat i,\hat j}\right)
\end{align*}
Therefore, with this notation, $\d\d\omega$ can be written as follows:
\begin{align}
\label{equation:poincare-1}
&\d\d\omega\left(\vec X\right)=\Altsum_k\left[X_k,\d\omega\left(\vec X_{\hat k}\right)\right]+\sum_{i<j}\left(-\right)^{i+j}\d\omega\left(\left[X_i,X_j\right],\vec X_{\hat i,\hat j}\right)
\end{align}
To avoid getting trapped in a long, complicated chain of identities, we split the right-hand side into two pieces. We shall start by unwrapping the first bit:
\begin{align}
\label{equation:poincare-2}
\begin{split}
\Altsum_k\left[X_k,\d\omega\left(\vec X_{\hat k}\right)\right]&=\underbracket{\Altsum_k\left[X_k,\Altsum_{k'\neq k}\left[X_{k'},\omega\left(\vec X_{\hat k,\hat k'}\right)\right]\right]}_{\text{(1)}}+\\
&+\underbracket{\Altsum_k\left[X_k,\sum_{i'<j';i',j'\neq k}\left(-\right)^{i'+j'}\omega\left(\left[X_{i'},X_{j'}\right],\vec X_{\hat i',\hat j',\hat k}\right)\right]}_{\text{(2)}}
\end{split}
\end{align}
Let us have a closer look at the term (1). If we want to express the alternating sum as an actual sum, we need to split the internal sum into two pieces, as follows:
\begin{align*}
(1)=\sum_{k=2}^{n+2}(-)^{k+1}\left[X_k,\sum_{k'=1}^{k-1}(-)^{k'+1}[X_{k'},\omega\left(\vec X_{\hat k',\hat k}\right)\right]-\sum_{k=1}^{n+1}(-)^{k+1}\left[X_k,\sum_{k'=k+1}^{n+2}(-)^{k'+1}[X_{k'},\omega\left(\vec X_{\hat k',\hat k}\right)\right]
\end{align*}
where the negative sign between the two sums comes from shifting the alternating sum in $k'$ to one position, since $k'\neq k$. Now, using the additivity of the Lie bracket, and after renaming the indices $k\mapsto i$ and $k'\mapsto j$, we rearrange the two terms as follows:
\begin{align*}
(1)&=\sum_{j<i}(-)^{i+j}\left[X_i,[X_j,\omega\left(\vec X_{\hat i,\hat j}\right)\right]-\sum_{i<j}(-)^{i+j}\left[X_i,\left[X_j,\omega\left(\vec X_{\hat i,\hat j}\right)\right]\right]
\end{align*}
However, in the first term, we can swap the two indices by virtue of the identity $\sum_{j<i}=\sum_{i=2}^{n+2}\sum_{j=1}^{i-1}=\sum_{j=1}^{n+1}\sum_{i=j+1}^{n+2}$. Therefore, after renaming the indices $i\mapsto j$, $j\mapsto i$, we rewrite the terms as follows:
\begin{align}
\label{equation:poincare-3}
(1)&=\sum_{i<j}(-)^{i+j}\left[X_j,[X_i,\omega\left(\vec X_{\hat i,\hat j}\right)\right]-\sum_{i<j}(-)^{i+j}\left[X_i,[X_j,\omega\left(\vec X_{\hat i,\hat j}\right)\right]
\end{align}
Let us now come back to Equation~\eqref{equation:poincare-1}, and let us take a closer look at the second sum. In particular, since the first argument of $\d\omega$ is $[X_i,X_j]$, it is useful to separate the first term from the rest of the alternating sums that define $\d\omega$, as follows:
\begin{align*}
\sum_{i<j}\left(-\right)^{i+j}\d\omega\left(\left[X_i,X_j\right],\vec X_{\hat i,\hat j}\right)&=
\underbracket{\sum_{i<j}(-)^{i+j}\left[\left[X_i,X_j\right],\omega\left(\vec X_{\hat i,\hat j}\right)\right]}_{\text{(A)}}+\\
&-\underbracket{\sum_{i<j}(-)^{i+j}\Altsum_{k'\neq i,j}\left[X_{k'},\omega\left(\left[X_i,X_j\right],\vec X_{\hat i,\hat j,\hat k'}\right)\right]}_{\text{(B)}}+\\
&+\underbracket{\sum_{i<j}(-)^{i+j}\sum_{j'\neq i,j}(-)^{j'+1}\omega\left(\left[\left[X_i,X_j\right],X_{j'}\right],\vec X_{\hat i,\hat j,\hat j'}\right)}_{\text{(C)}}+\\
&-\underbracket{\sum_{i<j}(-)^{i+j}\sum_{i'<j';i',j'\neq i,j}(-)^{i'+j'}\omega\left(\left[X_{i'},X_{j'}\right],\left[X_i,X_j\right],\vec X_{\hat i,\hat i',\hat j,\hat j'}\right)}_{\text{(D)}}
\end{align*}
Crucially, the terms (B) and (D) carry a negative sign due to the shift in the alternating sum. After simple arrangements, it is not hard to see that the terms (2) of Equation~\eqref{equation:poincare-2} and (B) cancel each other out. Furthermore, the term (C) contains the expressions $[[X_i,X_j],X_{j'}]$, where $i<j$ and $j'\neq i,j$. Let us restrict first to the case when $j'<i$. Then, $j'<i<j$. Therefore, within the sum, for each of such triple of indices $j'<i<j$, there will be the terms $[[X_{j'},X_i],X_j]$, $[[X_j,X_{j'}],X_i]$, and $[[X_i,X_j],X_{j'}]$. However, by the Jacobi identity, the sum of these three terms vanishes. The same exact situation happens when $i<j'<j$ and when $i<j<j'$, and moreover, every term in the sum of (C) is of one of these forms. Therefore, by the Jacobi identity used multiple times, the term (C) vanishes entirely. The term (D) also vanishes by antisymmetry. In fact, within the sum, the pairs of indices $(i,j)$ and $(i',j')$ swap, and therefore, by antisymmetry of the Lie bracket, the whole sum must vanish. The only remaining terms are (1) of Equation~\eqref{equation:poincare-2} and (A). Using Equation~\eqref{equation:poincare-3}, we can write the sum (1)+(A) as follows:
\begin{align*}
&\text{(1)}+\text{(A)}=\sum_{j<i}(-)^{i+j}\left(\left[X_j,\left[X_i,\omega\left(\vec X_{\hat i,\hat j}\right)\right]\right]-\left[X_i,[X_j,\omega\left(\vec X_{\hat i,\hat j}\right)\right]+\left[\left[X_i,X_j\right],\omega\left(\vec X_{\hat i,\hat j}\right)\right]\right)
\end{align*}
Let us call $X\=X_i$, $Y\=X_j$, and $Z\=\omega(\vec X_{\hat i,\hat j})$. Thus, the term in the brackets reads as follows:
\begin{align*}
&[Y,[X,Z]]-[X,[Y,Z]]+[[X,Y],Z]=[Y,[X,Z]]+[Z,[X,Y]]+[Y,[Z,X]]+[[X,Y],Z]=0
\end{align*}
where in the first step we used the Jacobi identity and the latter, the antisymmetry. This proves that $\d\d\omega=0$.
\end{proof}

The vertical connection form $\phi$ of an Ehresmann connection defines a differential form
\begin{align*}
&\phi\colon\VF(\X,\TT;E)\to\VF(\X,\TT;E)            &&\phi(X)\=X_\V=X\phi
\end{align*}
that sends every vector field on the total space $E$ to its vertical component.

\begin{definition}
\label{definition:curvature-tensor}
The \textbf{curvature form} of an Ehresmann connection $(\R,\H)$ on a tangent display map $q\colon E\to M$ is the vector-valued differential form defined by the following formula
\begin{align*}
&\Curv_{(\R,\H)}\colon\VF(\X,\TT;E)\times\VF(\X,\TT;E)\to\VF(\X,\TT;E)\\
&\Curv_{(\R,\H)}(X,Y)\=\d\phi(X_\H,Y_\H)
\end{align*}
where, as usual, $X_\H=X\psi$ and $Y_\H=Y\psi$ denote the horizontal components of the vector fields $X$ and $Y$, respectively.
\end{definition}

The relation between the curvature differential form and the categorical curvature is expressed by the structural equation. To compare this identity with the existing literature, we need to introduce a suitable notion of the wedge product between vector-valued differential forms. To this end, consider two differential forms $\omega$ and $\tau$ of rank $n$ and $m$, respectively. Then, $[\omega\wedge\tau]$ is the differential form of rank $n+m$ defined by the following formula:
\begin{align*}
&[\omega\wedge\tau](X_1\,X_{n+m})\=\frac1{n!m!}\sum_{\sigma}(-)^\sigma\left[\omega\left(X_{\sigma(1)}\,X_{\sigma(n)}\right),\tau\left(X_{\sigma(n+1)}\,X_{\sigma(n+m)}\right)\right]
\end{align*}
The reader may be bothered by the factor $1/{n!m!}$, which is not well-defined in our context. However, this is simply a factor that keeps track of the repetitions that appear when expanding the whole formula, due to the antisymmetry of the Lie bracket and the alternating property of the differential forms. In practice, once the formula is fully expanded, no fractional factor is multiplied to the term. To match the notation used in the diffential geometry literature, we shall use the following identities:
\begin{align*}
&\frac12[\phi\wedge\phi](X,Y)=[X_\V,Y_\V]    &&[\phi\wedge\psi]_\V(X,Y)=[X_\V,Y_\H]_\V+[X_\H,Y_\V]_\V
\end{align*}
Notice that, when the commutative monoid $\VF(\X,\TT;E)$ of vector fields over $E$ has characteristic distinct from $2$, that is, $2X=0$ implies $X=0$, then $1/2[\phi\wedge\phi]$ is the differential form uniquely determined by the equation $2(1/2[\phi\wedge\phi])(X,Y)=2[X_\V,Y_\V]$.

\begin{theorem}[Structural equation]
\label{theorem:structural-equation}
The following identities hold:
\begin{align*}
&\Curv_{(\R,\H)}(X,Y)=-[X_\H,Y_\H]_\V       &&\Curv_{(\R,\H)}=\d\phi+\frac12[\phi\wedge\phi]-\left([\phi\wedge\id_{\T E}]-[\phi\wedge\psi]_\V\right)
\end{align*}
\end{theorem}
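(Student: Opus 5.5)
The plan is to prove the first identity by direct expansion, and then obtain the second by decomposing every bracket via Theorem~\ref{theorem:unique-decomposition-of-vectors} and regrouping.

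\textbf{First identity.} By Definition~\ref{definition:curvature-tensor} and formula~\eqref{equation:exterior-derivative} with $n=1$,
\begin{align*}
\Curv_{(\R,\H)}(X,Y)=\d\phi(X_\H,Y_\H)=[X_\H,\phi(Y_\H)]-[Y_\H,\phi(X_\H)]-\phi([X_\H,Y_\H]).
\end{align*}
By \textbf{[ECF.1]} we have $\psi\phi=p_Ez_E$. Hence $\phi(X_\H)=X\psi\phi=Xp_Ez_E=z_E=0$ as a vector field, and likewise $\phi(Y_\H)=0$. Since the bracket is additive in each variable, the first two terms vanish. The third term is $[X_\H,Y_\H]\phi=[X_\H,Y_\H]_\V$. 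So $\Curv_{(\R,\H)}(X,Y)=-[X_\H,Y_\H]_\V$.

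Combined with Theorem~\ref{theorem:curvature}, this also gives $\Curv_{(\R,\H)}(X,Y)=-\{Y_\H\T X_\H\Curv_{(\R,\H)}\}$. This links the curvature form to the categorical curvature, although we do not need that for the equation itself.

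\textbf{Second identity.} Evaluate both sides on an arbitrary pair $(X,Y)$ and expand everything using $X=X_\V+X_\H$ and $Y=Y_\V+Y_\H$ together with bilinearity of the bracket.

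First, again from~\eqref{equation:exterior-derivative},
\begin{align*}
\d\phi(X,Y)=[X,Y_\V]-[Y,X_\V]-[X,Y]_\V .
\end{align*}
Expand $[X,Y]$ into its four pieces $[X_\V,Y_\V]$, $[X_\V,Y_\H]$, $[X_\H,Y_\V]$ and $[X_\H,Y_\H]$, and take vertical parts of each. By Corollary~\ref{corollary:vertical-bundle-involutive} the vertical distribution is involutive, so $[X_\V,Y_\V]$ is vertical and $[X_\V,Y_\V]_\V=[X_\V,Y_\V]$.

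Next, read off the wedge terms from the identities stated just before the theorem, namely $\tfrac12[\phi\wedge\phi](X,Y)=[X_\V,Y_\V]$ and $[\phi\wedge\psi]_\V(X,Y)=[X_\V,Y_\H]_\V+[X_\H,Y_\V]_\V$. For the remaining wedge term, use the wedge formula with $n=m=1$ to get
\begin{align*}
[\phi\wedge\id_{\T E}](X,Y)=[X_\V,Y]-[Y_\V,X].
\end{align*}

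\textbf{Bookkeeping.} Substitute all of these into $\d\phi+\tfrac12[\phi\wedge\phi]-[\phi\wedge\id_{\T E}]+[\phi\wedge\psi]_\V$. Then:
\begin{itemize}
\item the mixed vertical parts $-[X_\V,Y_\H]_\V-[X_\H,Y_\V]_\V$ coming from $\d\phi$ cancel against the $[\phi\wedge\psi]_\V$ term;
\item the terms $[X,Y_\V]-[Y,X_\V]$ cancel against $-([X_\V,Y]-[Y_\V,X])$ by antisymmetry;
\item the term $-[X_\V,Y_\V]$ coming from $-[X,Y]_\V$ cancels against $\tfrac12[\phi\wedge\phi](X,Y)$.
\end{itemize}
What remains is exactly $-[X_\H,Y_\H]_\V$, which equals $\Curv_{(\R,\H)}(X,Y)$ by the first identity.

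\textbf{Main obstacle.} I expect the difficulty to lie in the bookkeeping and sign conventions rather than in any single step. The sign and normalisation of $[\phi\wedge\id_{\T E}]$ (whether the $1/n!m!$ convention introduces the antisymmetrised pair as written) must match the signs in the displayed identities. The argument also relies on the vertical-component map $Z\mapsto Z\phi$ being additive and commuting with negatives, which holds because $\phi$ is linear. If the signs do not align, I would recheck the convention for $[\phi\wedge\id]$ against the stated formula for $[\phi\wedge\psi]_\V$ and adjust the grouping accordingly.
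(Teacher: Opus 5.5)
Your proof is correct and follows essentially the paper's route: evaluate both sides on a pair $(X,Y)$, expand $\d\phi(X,Y)=[X,Y_\V]-[Y,X_\V]-[X,Y]_\V$ using $X=X_\V+X_\H$, use involutivity of the vertical distribution to get $[X_\V,Y_\V]_\V=[X_\V,Y_\V]$, and match the remaining terms against the wedge products. Your observation that $[\phi\wedge\id_{\T E}](X,Y)=[X,Y_\V]-[Y,X_\V]$ cancels the first two terms of $\d\phi(X,Y)$ outright makes the bookkeeping cleaner and easier to verify than the paper's rearrangement of $[\phi\wedge\psi]$.
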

\begin{proof}
We shall start by unwrapping the definition of the exterior derivative of the connection form $\phi$:
\begin{align*}
\d\phi(X,Y)&=~[X,Y_\V]-[Y,X_\V]-[X,Y]_\V                                            \Tag{\text{Formula}~\eqref{equation:exterior-derivative}}\\
&=~[X_\V,Y_\V]+[X_\H,Y_\V]+[X_\V,Y_\V]+[X_\V,Y_\H]+\\
&~\qquad-[X_\V,Y_\V]_\V-[X_\V,Y_\H]_\V-[X_\H,Y_\V]_\V-[X_\H,Y_\H]_\V            \Tag{\text{Corollary}~\ref{corollary:decomposition-connection}}\\
&=~[X_\V,Y_\V]+[X_\V,Y_\H]_\H+[X_\H,Y_\V]_\H-[X_\H,Y_\H]_\V     \Tag{[X_\V,Y_\V]\text{ is vertical}}
\end{align*}
When we take $X$ and $Y$ as horizontal vector fields, the only term that survives is $-[X_\H,Y_\H]_\V$. Therefore, the curvature $\Curv_{(\R,\H)}(X,Y)$ is equal to $-[X_\H,Y_\H]_\V$. Let us now use the wedge product of vector-valued differential forms to express the remaining terms. By using the defining formula we obtain:
\begin{align*}
&[\phi\wedge\phi](X,Y)=[X_\V,Y_\V]-[Y_\V,X_\V]=2[X_\V,Y_\V]\\
&[\phi\wedge\psi](X,Y)=[X_\V,Y_\H]-[Y_\V,X_\H]=[X_\V,Y_\H]+[X_\H,Y_\H]
\end{align*}
Using the bilinearity of the wedge product and \textbf{[EC.2]}, we can also write:
\begin{align*}
&[\phi\wedge\psi]=[\phi\wedge\id_{\T E}]-[\phi\wedge\phi]=[\phi\wedge\id_{\T E}]_\V+[\phi\wedge\id_{\T E}]_\H-[\phi\wedge\phi]\\
&\quad=[\phi\wedge\id_{\T E}]_\V+[\phi\wedge\phi]_\H+[\phi\wedge\psi]_\H-[\phi\wedge\phi]
\end{align*}
However, $[\phi\wedge\phi]_\H(X,Y)=2[X_\V,Y_\V]_\H$ is necessarily zero, since the vertical distribution is involutive. Thus, we obtain that:
\begin{align*}
&[X_\V,Y_\H]+[X_\H,Y_\H]=([\phi\wedge\id_{\T E}]_\V+[\phi\wedge\phi]_\H+-[\phi\wedge\phi])(X,Y)
\end{align*}
Putting everything together, we obtain the desired formula.
\end{proof}

\begin{remark}
\label{remark:structural-equation}
In the context of principal connections over principal bundles in the differential geometry literature, the second identity of Theorem~\ref{theorem:structural-equation} is known as \textit{Cartan's structural equation}, and takes the following simplified form:
\begin{align*}
&\Curv_{(\R,\H)}=\d\phi+\frac12[\phi\wedge\phi]
\end{align*}
In particular, using~\cite[Lemma~5.5.5]{hamilton:gauge-theory} and the fact that Lie derivatives of fundamental vector fields vanish, the term $[\phi\wedge\id_{\T E}]-[\phi\wedge\psi]_\V$ disappears.
\end{remark}

\begin{remark}
\label{remark:structural-equation-2}
In the differential geometry literature, the structural equation for the curvature form of an Ehresmann connection can also be expressed in terms of the Fr\"olicher-Nijenhuis bracket $[-,-]^{\mathsf{FN}}$ and takes the form~\cite[Section~17]{kolar:natural-operations-diff-geom}:
\begin{align*}
&\Curv_{(\R,\H)}=\frac12[\phi,\phi]^{\mathsf{FN}}
\end{align*}
We decided not to investigate this operation in this paper and instead express the structural equations in relation to the exterior derivative.
\end{remark}

We can finally now prove a version of the Bianchi identity in the context of Ehresmann connections in a tangent category.

\begin{theorem}[Bianchi identity]
\label{theorem:bianchi-identity}
The following identity holds:
\begin{align*}
&\left(\d\Curv_{(\R,\H)}+[\id_{\T E}\wedge\Curv_{(\R,\H)}]_\H\right)(X_\H,Y_\H,Z_\H)=0
\end{align*}
\end{theorem}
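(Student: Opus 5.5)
We evaluate the Bianchi expression directly on three horizontal vector fields $X_\H,Y_\H,Z_\H$. We use the first identity of Theorem~\ref{theorem:structural-equation}, $\Curv_{(\R,\H)}(U,V)=-[U_\H,V_\H]_\V$, which is valid for arbitrary vector fields $U,V$ (by Definition~\ref{definition:curvature-tensor} the curvature form only sees horizontal components). Since $X_\H\psi=X_\H$ by Corollary~\ref{corollary:horizontal-vector-fields-connection}, on horizontal arguments we have $\Curv(X_\H,Y_\H)=-[X_\H,Y_\H]_\V$.

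Expanding $\d\Curv$ by Formula~\eqref{equation:exterior-derivative} produces two groups of terms. The first group is
\begin{align*}
&[X_\H,\Curv(Y_\H,Z_\H)]-[Y_\H,\Curv(X_\H,Z_\H)]+[Z_\H,\Curv(X_\H,Y_\H)].
\end{align*}
The second group is
\begin{align*}
&-\Curv([X_\H,Y_\H],Z_\H)+\Curv([X_\H,Z_\H],Y_\H)-\Curv([Y_\H,Z_\H],X_\H).
\end{align*}

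For the second group we substitute $\Curv(U,V)=-[U_\H,V_\H]_\V$. This gives, up to sign, the cyclic sum of terms $[[X_\H,Y_\H]_\H,Z_\H]_\V$. We then decompose $[X_\H,Y_\H]=[X_\H,Y_\H]_\V+[X_\H,Y_\H]_\H$ using Theorem~\ref{theorem:unique-decomposition-of-vectors}. By the Jacobi identity, the cyclic sum of the full brackets $[[X_\H,Y_\H],Z_\H]_\V$ vanishes, since taking vertical components (precomposition with $\phi$) is additive. Hence the second group equals, up to sign, the cyclic sum of $[[X_\H,Y_\H]_\V,Z_\H]_\V$.

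For the first group, each term has the form $[X_\H,W_\V]$ with $W_\V=-[Y_\H,Z_\H]_\V$ vertical. We split it as $[X_\H,W_\V]_\V+[X_\H,W_\V]_\H$. The vertical parts are again cyclic sums of $[Z_\H,[X_\H,Y_\H]_\V]_\V$. By antisymmetry of the bracket these are exactly the negatives of the terms left over from the second group, so the two groups cancel after tracking the signs of the alternating sums. What remains of $\d\Curv(X_\H,Y_\H,Z_\H)$ is the horizontal part, namely the cyclic sum $\sum_{\mathrm{cyc}}[X_\H,\Curv(Y_\H,Z_\H)]_\H$.

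Finally, we expand the wedge term. Here $[\id_{\T E}\wedge\Curv]$ has rank $1+2$, and by the definition of the wedge product it evaluates to $\sum_{\mathrm{cyc}}\pm[X_\H,\Curv(Y_\H,Z_\H)]$. The factor $1/(1!2!)$ only absorbs the duplicate terms coming from the antisymmetry of $\Curv$. Taking horizontal parts yields precisely the negative of the leftover from $\d\Curv$, which gives the identity.

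The main obstacle is sign bookkeeping: we must verify that the alternating signs in Formula~\eqref{equation:exterior-derivative} and in the wedge expansion align, so that the vertical parts cancel via Jacobi and antisymmetry and the horizontal parts cancel against $[\id_{\T E}\wedge\Curv]_\H$. We first write everything explicitly as six cyclic terms before simplifying. We also need additivity of $(-)_\V$ and $(-)_\H$ on vector fields, which follows from the linearity of $\phi$ and $\psi$ ([VCF.2], [HCF.2]).
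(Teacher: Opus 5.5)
Your reduction of $\d\Curv$ is correct, and it is a more direct route than the paper's. The paper rewrites $\Curv$ via the structural equation, removes $\d\d\phi$ using Theorem~\ref{theorem:exterior-differential}, and evaluates $\d\theta$ on horizontal fields. Writing $\Curv$ for $\Curv_{(\R,\H)}$, you show that the second group is vertical and cancels the vertical part of the first group by Jacobi and antisymmetry. This gives $\d\Curv(X_\H,Y_\H,Z_\H)=\sum_{\mathrm{cyc}}[X_\H,\Curv(Y_\H,Z_\H)]_\H$.

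The step that fails is the last one, where you leave the sign as $\pm$ and then assert that it produces the negative of this leftover. The sign is not free. With the paper's definition of the wedge product, the six permutations pair off and give
\begin{equation*}
[\id_{\T E}\wedge\Curv](X,Y,Z)=[X,\Curv(Y,Z)]-[Y,\Curv(X,Z)]+[Z,\Curv(X,Y)]=\sum_{\mathrm{cyc}}[X,\Curv(Y,Z)],
\end{equation*}
which is literally the first group of $\d\Curv(X,Y,Z)$. Hence your computation proves $(\d\Curv-[\id_{\T E}\wedge\Curv]_\H)(X_\H,Y_\H,Z_\H)=0$. The identity as stated is then equivalent to $2[\id_{\T E}\wedge\Curv]_\H(X_\H,Y_\H,Z_\H)=0$.

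No sign bookkeeping can close this gap, because that term does not vanish in general. In smooth manifolds, take $q\colon\mathbb{R}^3\to\mathbb{R}^2$, $(x,y,u)\mapsto(x,y)$, with horizontal distribution spanned by $\partial_x$ and $\partial_y+x\partial_u$. Take the horizontal fields $X_\H=u\partial_x$, $Y_\H=\partial_x$, $Z_\H=\partial_y+x\partial_u$. Then $\Curv(Y_\H,Z_\H)=-\partial_u$, $\Curv(Z_\H,X_\H)=u\partial_u$ and $\Curv(X_\H,Y_\H)=0$. So the cyclic sum is $[u\partial_x,-\partial_u]_\H+[\partial_x,u\partial_u]_\H=\partial_x\neq0$, and the left-hand side of the statement equals $2\partial_x$. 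One also checks directly that $\d\Curv(X_\H,Y_\H,Z_\H)=\partial_x$.

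The paper's own proof reaches the $+$ sign only through a slip. It writes $\d\Curv=\d(\d\phi+\theta)$, whereas the computation in the proof of Theorem~\ref{theorem:structural-equation} gives $\d\phi=\theta+\Curv$, so in fact $\d\Curv=-\d\theta$. With that corrected, its chain $\d\theta=[\Curv\wedge\psi]_\H=[\Curv\wedge\id_{\T E}]_\H=-[\id_{\T E}\wedge\Curv]_\H$ on horizontal triples gives exactly what your argument gives. So the statement you can actually prove, by your route or the paper's, is $(\d\Curv-[\id_{\T E}\wedge\Curv]_\H)(X_\H,Y_\H,Z_\H)=0$. Equivalently, $(\d\Curv+[\Curv\wedge\id_{\T E}]_\H)(X_\H,Y_\H,Z_\H)=0$.
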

\begin{proof}
From the structural equation and using that the exterior derivative squares to zero, we can reduce the derivative of the curvature form to the following expression
\begin{align*}
&\d\Curv_{(\R,\H)}=\d\left(\d\phi+\theta\right)=\d\theta
\end{align*}
where $\theta(X,Y)=[X_\V,Y_\V]+[X_\V,Y_\H]_\H+[X_\H,Y_\V]_\H$. Notice that, $\theta(X_\H,Y_\H)=0$. Let us compute the exterior derivative of $\theta$ explicitly. From the formula~\eqref{equation:exterior-derivative}:
\begin{align*}
&\d\theta(X,Y,Z)=[X,\theta(Y,Z)]-[Y,\theta(X,Z)]+[Z,\theta(X,Y)]-\theta([X,Y],Z)+\theta([X,Z],Y)-\theta([Y,Z],X)
\end{align*}
By taking $X,Y,Z$ as horizontal vector fields and using that $\theta$ vanishes for horizontal vector fields, we can reduce it to the following:
\begin{align*}
&\d\theta(X_\H,Y_\H,Z_\H)=-\theta([X_\H,Y_\H],Z_\H)+\theta([X_\H,Z_\H],Y_\H)-\theta([Y_\H,Z_\H],X_\H)
\end{align*}
However, for a vector field $X$ and a horizontal vector field $Y_\H$, $\theta(X,Y_\H)=[X_\V,Y_\H]_\H$, since the other terms vanish. Therefore:
\begin{align*}
&\d\theta(X_\H,Y_\H,Z_\H)=-[[X_\H,Y_\H]_\V,Z_\H]+[[X_\H,Z_\H]_\V,Y_\H]-[[Y_\H,Z_\H]_\V,X_\H]
\end{align*}
Let us now compute the wedge product $[\Curv_{(\R,\H)}\wedge\psi]$. By expanding the defining formula (notice that the factor $1/n!m!$ handles the duplication due to the antisymmetry):
\begin{align*}
&[\Curv_{(\R,\H)}\wedge\psi](X,Y,Z)=-[[X_\H,Y_\H]_\V,Z_\H]-[[Z_\H,X_\H]_\V,Y_\H]-[[Y_\H,Z_\H]_\V,X_\H]
\end{align*}
Therefore, we obtain the formula:
\begin{align*}
&\d\theta(X_\H,Y_\H,Z_\H)=[\Curv_{(\R,\H)}\wedge\psi]_\H(X_\H,Y_\H,Z_\H)
\end{align*}
We can also rewrite the right term as follows
\begin{align*}
&[\Curv_{(\R,\H)}\wedge\psi]_\H=[\Curv_{(\R,\H)}\wedge\id_{\T E}]_\H-[\Curv_{(\R,\H)}\wedge\phi]_\H=[\Curv_{(\R,\H)}\wedge\id_{\T E}]_\H
\end{align*}
where the term $[\Curv_{(\R,\H)}\wedge\phi]_\H$ vanishes since $\Curv_{(\R,\H)}(X,Y)$ is vertical and the vertical distribution is involutive. This proves the final formula:
\begin{align*}
&\d\Curv_{(\R,\H)}(X_\H,Y_\H,Z_\H)=[\Curv_{(\R,\H)}\wedge\id_{\T E}]_\H(X_\H,Y_\H,Z_\H)
\end{align*}
The right-hand side can be brought to the left as $[\id_{\T E}\wedge\Curv_{(\R,\H)}]_\H(X_\H,Y_\H,Z_\H)$.
\end{proof}

\begin{remark}
\label{remark:bianchi-identity}
For principal connections in differential geometry, the term $[\id_{\T E}\wedge\Curv_{(\R,\H)}]_\H$ vanishes, leaving $\d\Curv_{(\R,\H)}(X_\H,Y_\H,Z_\H)=0$.
\end{remark}

\subsection{Parallel transport}
\label{subsection:parallel-transport}

One of the most important consequences of the existence of a connection in differential geometry is the resulting parallel transport.  Given a curve in the base space $M$ of a bundle $q: E \to M$ with a connection and a point ``upstairs'' (that is, in $E$), parallel transport allows one to move the curve upstairs, so that it starts at the given point, and stays ``parallel'' relative to the connection.

In differential geometry, the existence of parallel transport involves solving a certain ordinary differential equation.  In tangent category theory, one needs to add additional structure to talk about ordinary differential equations and their solutions; such a structure is called a \emph{curve object}.  

In \cite{cockett:connections}, curve objects are first introduced.  They allow one to talk about (ordinary) differential equations, and they assume that any solution to a differential equation is unique.  They \emph{do not} assume that every differential equation has a (total) solution, as this is not true in smooth manifolds (solutions may only exist locally, but not globally).  The paper also considers \emph{linearly complete} curve objects, which assume that  \emph{linear} differential equations always have (total) solutions (again, this is true in smooth manifolds). 

With these assumptions, \cite[Theorem 5.20]{cockett:connections} shows that every Koszul connection on a differential bundle in a tangent category with a linearly complete curve object gives rise to parallel transport in that bundle. This is achieved by associating to any Koszul connection a linear differential equation, whose (unique) solution gives rise to the required parallel transport.  

However, the situation is slightly different for connections on a (not necessarily differential/vector) bundle.  In this case, the resulting vector field one can build from the connection is not necessarily linear, so a (total) solution to it may not exist.  

In this section, we describe what this means for a connection in a Cartesian tangent category with a curve object.  We show that given a connection, one can build an associated vector field.  We then show that \emph{if} the vector field has a total solution (which may not always be the case), then the resulting solution gives a notion of parallel transport for the connection.  The proofs themselves follow almost identically to their versions in the differential bundle case.  

We leave it to future work to describe tangent categories whose curve objects have partial solutions to vector fields; in this case we anticipate that every connection in such a setting would give rise to a partially-defined parallel transport (as is true in smooth manifolds).    

We begin by recalling dynamical systems and curve objects in a tangent category.  

\begin{definition}
\label{definition:dynamical-system}
A \textbf{dynamical system} on an object $M$ consists of a map $s\colon N\to M$ (which represents the \emph{initial condition} parametrized by $N$) and a vector field $X\colon M \to \T M$.
\end{definition}

We may denote a dynamical system by $(M,s,X)$.

\begin{definition}
\label{definition:curve-object}
A \textbf{curve object} in a Cartesian tangent category is a dynamical system $(C,c_0,c_1)$ where $c_0\colon\*\to C$ which has the following three properties:
\begin{description}
\item[CO.1] \textbf{Uniqueness of solutions}. For any dynamical system $(M,s,X)$, there is at most one map $\gamma\colon C \times N \to M$ such that the following diagrams commute:
\begin{equation*}
\begin{tikzcd}
N & {C \times N} & {T(C \times N)} \\
& M & TM
\arrow["{\< !c_0, \id_N\>}", from=1-1, to=1-2]
\arrow["s"', from=1-1, to=2-2]
\arrow["{c_1 \times z_N}", from=1-2, to=1-3]
\arrow["\gamma", from=1-2, to=2-2]
\arrow["{T(\gamma)}", from=1-3, to=2-3]
\arrow["X"', from=2-2, to=2-3]
\end{tikzcd}
\end{equation*}
When it exists, the map $\gamma$ is called a \textbf{solution}\footnote{For details on how this can be seen as a solution to a differential equation, see \cite[Section 3.2]{cockett:differential-equations}} of the system $(M,s,X)$.

\item[CO.2] \textbf{Commutativity of $c_1$}. $c_1T(c_1)c = c_1T(c_1)$.

\item[CO.3] \textbf{Completeness of $c_1$}. The system $(C,c_0,c_1)$ has a solution.  
\end{description}
\end{definition}

\begin{remark}
\label{remark:parallel-transport-curve-object}
We will not need Conditions \textbf{[CO.1]} and \textbf{[CO.2]} for parallel transport.
\end{remark}

For the rest of this section, we work in a Cartesian tangent category, which is a tangent category with finite products (including the terminal object) preserved by the tangent bundle functor, with a fixed curve object $(C,c_0, c_1)$.  

\begin{lemma}
Suppose that $\H\colon\F q\to\T E$ is a horizontal connection on a tangent display map $q\colon E\to M$, $\gamma\colon C \to M$ is a map, and
\begin{equation*}
\begin{tikzcd}
\gamma^\*(E) & {E} \\
C & {M}
\arrow["\pi_1", from=1-1, to=1-2]
\arrow["\pi_0"', from=1-1, to=2-1]
\arrow["{q}", from=1-2, to=2-2]
\arrow["{\gamma}"', from=2-1, to=2-2]
\end{tikzcd}
\end{equation*}
is a pullback.  Then there is a vector field $\gamma_\H\colon\gamma^\*(E) \to \T (\gamma^\*(E))$ on $\gamma^\*(E)$, defined by
\begin{align*}
&\gamma_\H\= \< \pi_0 c_1, \<\pi_0 c_1 \T(\gamma), \pi_1\>\H\>
\end{align*}
\end{lemma}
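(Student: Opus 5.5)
The plan is to show two things: that $\gamma_\H$ is a well-defined morphism into $\T(\gamma^\*(E))$, and that it is a section of $p_{\gamma^\*(E)}$. Since $q$ is tangent display, the pullback $\gamma^\*(E)$ is a tangent pullback. Hence $\T(\gamma^\*(E))$ is the pullback of $\T q$ along $\T\gamma$, with projections $\T\pi_0$ and $\T\pi_1$. The notation $\<-,-\>$ in the statement should therefore be read as the map into this pullback induced by its two components, $\pi_0c_1\colon\gamma^\*(E)\to\T C$ and $\<\pi_0c_1\T\gamma,\pi_1\>\H\colon\gamma^\*(E)\to\T E$.

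The first step is to check that the inner pairing $\<\pi_0c_1\T\gamma,\pi_1\>\colon\gamma^\*(E)\to\F q$ is well defined. By the universal property of the Finsler bundle (Diagram~\eqref{equation:finsler-bundle-pullback}), this amounts to the equation $\pi_0c_1\T\gamma p_M=\pi_1q$. I would use naturality of $p$ to rewrite the left side as $\pi_0c_1p_C\gamma$, and since $c_1$ is a vector field this equals $\pi_0\gamma$. The pullback square gives $\pi_0\gamma=\pi_1q$, as required. In other words, $\pi_0c_1\T\gamma$ is a Finsler vector field on the pulled-back map $\pi_1 q$, and its associated section of the Finsler bundle (Theorem~\ref{theorem:finsler-vector-fields}) is exactly this pairing.

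The second step is to check compatibility of the outer pairing, namely $\pi_0c_1\T\gamma=\<\pi_0c_1\T\gamma,\pi_1\>\H\T q$. By Lemma~\ref{lemma:horizontal-connection}, $\H\T q=\T^\F q$, so the right side is $\<\pi_0c_1\T\gamma,\pi_1\>\T^\F q=\pi_0c_1\T\gamma$, and the outer map exists.

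The final step is the vector field axiom $\gamma_\H p_{\gamma^\*(E)}=\id$. Both sides are maps into the pullback $\gamma^\*(E)$, so it suffices to compare their composites with $\pi_0$ and $\pi_1$. Using naturality of $p$ ($p\pi_0=\T\pi_0p$, and similarly for $\pi_1$), the $\pi_0$-component becomes $\pi_0c_1p_C=\pi_0$. The $\pi_1$-component becomes $\<\pi_0c_1\T\gamma,\pi_1\>\H p_E$, which by \textbf{[HC.2]} ($\H p_E=q^\F$) equals $\<\pi_0c_1\T\gamma,\pi_1\>q^\F=\pi_1$. Joint monicity of the pullback projections then gives the identity.

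None of these steps is deep. The only delicate point is the very first one: one must make sure that $\T$ preserves the pullback $\gamma^\*(E)$, so that $\T(\gamma^\*(E))$ really is the pullback whose universal property is being used. This is guaranteed because $q$ is tangent display, so the pullback is a tangent pullback.
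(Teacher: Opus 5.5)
Your verification is correct and is essentially the argument the paper relies on. The paper's proof simply refers to the first part of the proof of Theorem~5.20 in~\cite{cockett:connections}, which checks exactly your three conditions: the pairing into $\F q$ via naturality of $p$ and $c_1p_C=\id_C$, the outer pairing via $\H\T q=\T^\F q$, and the section property via $\H p_E=q^\F$. You also correctly note that $\T$ preserves the pullback $\gamma^\*(E)$ because $q$ is tangent display.

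One harmless imprecision in your aside: Theorem~\ref{theorem:finsler-vector-fields} attaches to a Finsler vector field a section of a Finsler bundle over $\gamma^\*(E)$, whereas your pairing is a map $\gamma^\*(E)\to\F q$ lying over $\pi_1$. The two correspond by base change rather than being literally equal.
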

\begin{proof}
That this is a well-defined vector field is exactly the same as the first parts of the proof of Theorem 5.20 in \cite{cockett:connections}.
\end{proof}

\begin{theorem}
(Parallel transport) Suppose that $(\R,\H)$ is an Ehresmann connection on a tangent display map $q\colon E\to M$, $\gamma\colon C \to M$ is a map, $e_0\colon\*\to M$ is a map such that $e_0q = c_0\gamma$, and a (necessarily unique) solution to the system $(\gamma^\*(E), \<c_0, e_0\>, \gamma_\H)$ exists.  Then there exists a unique map $\hat{\gamma}\colon C \to E$ such that:
\begin{itemize}
\item \textbf{$\hat{\gamma}$ starts at $e_0$}, that is, $c_0 \hat{\gamma} = e_0$;

\item \textbf{$\hat{\gamma}$ is above $\gamma$}, that is, $\hat{\gamma}q = \gamma$;

\item \textbf{$\hat{\gamma}$ is parallel}, that is, the following diagram commutes:
\begin{equation*}
\begin{tikzcd}
C & {\T C} & {\T E} \\
& E & {\V q}
\arrow["{c_1}", from=1-1, to=1-2]
\arrow["{\hat{\gamma}}"', from=1-1, to=2-2]
\arrow["{\T\hat{\gamma}}", from=1-2, to=1-3]
\arrow["\R", from=1-3, to=2-3]
\arrow["{z_q^\V}"', from=2-2, to=2-3]
\end{tikzcd}
\end{equation*}
\end{itemize}
\end{theorem}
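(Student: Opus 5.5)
Let $\delta\colon C\to\gamma^\*(E)$ be the (necessarily unique) solution of the dynamical system $(\gamma^\*(E),\<c_0,e_0\>,\gamma_\H)$, with $N=\*$, so that $C\times\*\cong C$. The plan is to set $\hat\gamma\=\delta\pi_1\colon C\to E$ and check the three properties in turn. The solution equations give
\begin{align*}
&c_0\delta=\<c_0,e_0\> &&c_1\T\delta=\delta\gamma_\H.
\end{align*}
The first two properties should be short.

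For the starting point, compose the initial condition with $\pi_1$ to get $c_0\hat\gamma=c_0\delta\pi_1=\<c_0,e_0\>\pi_1=e_0$.

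For lying above $\gamma$, the aim is first to show $\delta\pi_0=\id_C$. I would observe that $(C,c_0,c_1)$ is solved by $\id_C$. Applying $\T\pi_0$ to the solution equation, and using that the first component of $\gamma_\H$ is $\pi_0c_1$, gives $c_1\T(\delta\pi_0)=\delta\pi_0c_1$, together with $c_0\delta\pi_0=c_0$. So $\delta\pi_0$ also solves $(C,c_0,c_1)$. I would use \textbf{[CO.3]} together with uniqueness of solutions to conclude $\delta\pi_0=\id_C$. The remark says \textbf{[CO.1]} is not needed, so as an alternative I would try to make this step structural, via the pullback defining $\gamma^\*(E)$. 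The fallback is to allow \textbf{[CO.1]} here, as is done in the proof of~\cite[Theorem~5.20]{cockett:connections}. Once $\delta\pi_0=\id_C$ holds, we get $\hat\gamma q=\delta\pi_1q=\delta\pi_0\gamma=\gamma$.

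For parallelism, compose $c_1\T\delta=\delta\gamma_\H$ with $\T\pi_1$. Since $\gamma_\H\T\pi_1=\<\pi_0c_1\T\gamma,\pi_1\>\H$, this yields
\begin{align*}
&c_1\T\hat\gamma=\delta\<\pi_0c_1\T\gamma,\pi_1\>\H=\<c_1\T\gamma,\hat\gamma\>\H,
\end{align*}
using $\delta\pi_0=\id_C$. So $c_1\T\hat\gamma$ factors through $\H$ via a map into $\F q$. By \textbf{[EC.1]}, $\H\R=q^\F z^\V_q$, and since $\<c_1\T\gamma,\hat\gamma\>q^\F=\hat\gamma$, we get $c_1\T\hat\gamma\R=\hat\gamma z^\V_q$, which is the required parallelism.

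For uniqueness, take any $\hat\gamma'$ satisfying the three properties. Then $\delta'\=\<\id_C,\hat\gamma'\>\colon C\to\gamma^\*(E)$ is well defined because $\hat\gamma'q=\gamma$. The main obstacle is showing that $\delta'$ solves the system, and specifically the horizontality $c_1\T\hat\gamma'=\<c_1\T\gamma,\hat\gamma'\>\H$. For this I would use \textbf{[EC.2]}: write $v\=c_1\T\hat\gamma'$ as $\<v\R\iota_q,v\pi_q\H\>s_E$. Parallelism makes $v\R\iota_q=\hat\gamma'z_E$, a zero vector, so $v=v\pi_q\H$ by unitality. Then identify $v\pi_q$ with $\<c_1\T\gamma,\hat\gamma'\>$ using the universal property of $\F q$, since $v\T q=c_1\T\gamma$ and $vp_E=\hat\gamma'$. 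With $\delta'$ a solution, uniqueness of the solution (\textbf{[CO.1]}) gives $\delta'=\delta$, and hence $\hat\gamma'=\hat\gamma$.
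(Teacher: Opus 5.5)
Your proposal is correct and follows the paper's route. The paper defers to the proof of \cite[Theorem~5.20]{cockett:connections}, replacing its Koszul identity by $\H\R=q^\F z^\V_q$ from \textbf{[EC.1]} exactly as in your parallelism step, and your uniqueness argument via \textbf{[EC.2]} spells out what the paper leaves implicit. Two small remarks: \textbf{[CO.3]} is not needed for $\delta\pi_0=\id_C$, since $\id_C$ already solves $(C,c_0,c_1)$; \textbf{[CO.1]}, however, is genuinely needed both for that step and for the uniqueness of $\hat\gamma$ (the statement's ``necessarily unique'' already presupposes it), so falling back on it is the right move rather than looking for a structural substitute.
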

\begin{proof}
Again, most of this follows similarly to the proof of Theorem 5.20 in \cite{cockett:connections}.  The only difference is the proof and statement of the last condition.  The proof is very similar to the proof in the middle of page 883 in that reference, except that
\[ \H\K = \pi_1 q 0_q \]
in that proof is replaced by
\[ \H \R = q^\F z_q^\V. \]
The result then follows after a few simple calculations.  
\end{proof}



\end{document}